\newif\ifsmfart
\let\precneq\prec
 \let\mathsec\mathsf
\let\mathcal\mathscr}
\let\mathscr\mathcal}
\let\smf@boldmath\relax\makeatother
\theoremstyle{plain}
\newtheorem{prop}[subsubsection]{Proposition}
\newtheorem{theo}[subsubsection]{Theorem}
\newtheorem{coro}[subsubsection]{Corollary}
\newtheorem{lemm}[subsubsection]{Lemma}
\newtheorem{defi}[subsubsection]{Definition}
\theoremstyle{definition}
\theoremstyle{remark}
\newtheorem{rema}[subsubsection]{Remark}
\newtheorem{rems}[subsubsection]{Remarks}
\def\subsection{\@startsection{subsection}{2}%
   \z@{.7\linespacing\@plus.3\linespacing}{0.3\linespacing}
   {\normalfont\bfseries\smf@boldmath}}
 \let\c@equation\c@subsubsection
 \let\cl@equation\cl@subsubsection
 \let\theequation\thesubsubsection
\def\l@table{\@tocline{0}{3pt plus2pt}{0pt}{}{\itshape}}
\def\Card{\mathop\#}
\def\Cl{\mathord{\mathscr C\!\ell}}
\def\Cl{\mathscr C}
\def\Clan{\mathscr C^{\text{\upshape an}}}
\def\Clanmax{\mathscr C^{\text{\upshape an,max}}}
\def\Aff{{\mathbf A}}
\def\AD{{\mathbb A}}
\def\C{{\mathbf C}}
\def\F{{\mathbf F}}
\def\gm{{\mathbf G}_m}
\def\N{{\mathbf N}}
\def\P{{\mathbf P}}
\def\Q{{\mathbf Q}}
\def\R{{\mathbf R}}
\def\Z{{\mathbf Z}}
\def\GL{{\operatorname{GL}}}
\def\SL{{\operatorname{SL}}}
\def\End{{\operatorname{End}}}
\def\Sym{{\operatorname{Sym}}}
\def\ord{\operatorname{ord}}
\def\Tube{{\mathsf T}}
        \def\EP{\operatorname{EP}}
\let\ra\rightarrow
\let\hra\hookrightarrow
\let\epsilon\varepsilon \let\eps\epsilon
\let\epsilon\varepsilon
\let\phi\varphi
\let\emptyset\varnothing
\let\leq\leqslant
\let\geq\geqslant
\def\an{\text{\upshape an}}
\def\id{\operatorname {id}}
\def\pl{\textsc {pl}\xspace}
\def\Lie{\operatorname{Lie}}
\def\abs#1{\left\lvert{#1}\right\rvert}
\def\norm#1{\left\|{#1}\right\|}
\def\DeclareMathOperator#1#2{\def #1{\operatorname{#2}}}
\DeclareMathOperator{\Re}{Re} 
\DeclareMathOperator{\Im}{Im} 
\DeclareMathOperator{\Pic}{Pic}
\DeclareMathOperator{\Gal}{Gal}
\DeclareMathOperator{\tr}{tr}
\DeclareMathOperator{\Spec}{Spec}
\DeclareMathOperator{\Proj}{Proj}
\DeclareMathOperator{\rang}{rank}
\DeclareMathOperator{\codim}{codim}
\DeclareMathOperator{\div}{div}
\DeclareMathOperator{\Val}{Val}
\DeclareMathOperator{\vol}{vol}
\DeclareMathOperator{\Hom}{Hom}
\DeclareMathOperator{\Aut}{Aut}
\DeclareMathOperator{\Res}{Res}
\DeclareMathOperator{\Val}{Val}
\DeclareMathOperator{\Ind}{Ind}
\DeclareMathOperator{\Fr}{Fr}
\def\loccit{\emph{loccit.}~\ignorespaces}
\def\cf{\emph{cf.}~\ignorespaces}
\def\theenumi{\alph{enumi}}
\title{Igusa integrals and volume asymptotics 
   in analytic and adelic geometry}
\author{Antoine Chambert-Loir}
\address{Universit\'e de Rennes~1, IRMAR--UMR 6625 du CNRS, Campus de Beaulieu, 35042 Rennes Cedex, France}
\address{Institut universitaire de France}
\email{antoine.chambert-loir@univ-rennes1.fr}
\author{Yuri Tschinkel}
\address{Courant Institute, NYU, 251 Mercer St.  New York, NY 10012, USA}
\email{tschinkel@cims.nyu.edu}
\begin{document}
\date{\today}
 
\begin{abstract}
We establish asymptotic formulas  for volumes of
height balls in analytic varieties over local fields
and in adelic points of algebraic varieties over number fields,
relating the Mellin transforms of height functions
to Igusa integrals and to global geometric  invariants
of the underlying variety.
In the adelic setting, this involves the construction
of general Tamagawa measures.
\end{abstract}
 
\ifsmfart 
 \begin{altabstract}
Nous établissons un développement asymptotique du volume
des boules de hauteur dans des variétés analytiques
sur des corps locaux et sur les points adéliques de variétés algébriques sur des corps de nombres. Pour cela, nous relions les transformées de Mellin
des fonctions hauteur à des intégrales de type Igusa et à des
invariants géométriques globaux de la variété sous-jacente. 
Dans le cas adélique, nous construisons des mesures de Tamagawa
dans un cadre général.
 \end{altabstract}
\fi

\keywords{Heights, Poisson formula, Manin's conjecture, Tamagawa measure}
\subjclass{11G50 (11G35, 14G05)}

\maketitle

\tableofcontents
\clearpage
\section{Introduction}

The study of rational and integral points on algebraic varieties
defined over a number field often leads to  considerations
of volumes  of real, $p$-adic or adelic spaces.
A typical problem in arithmetic geometry is to
establish asymptotic expansions, when $B\ra\infty$, for the number 
$N_f(B)$ of solutions in rational integers smaller than~$B$
of a polynomial equation $f(\mathbf x)=0$. 

When applicable,
the circle method gives an answer in terms of a ``singular integral''
and a ``singular series'', which  itself can be viewed as a product
of $p$-adic densities.
The size condition  is only reflected in a parameter in
the singular integral, whose asymptotic expansion 
therefore governs that of $N_f(B)$.

More generally, one considers systems of polynomial equations,
\emph{i.e.}, algebraic varieties over a number field
or schemes of finite type over rings of integers,
together with embedding into a projective or affine space.
Such an embedding induces a \emph{height function} (see, \emph{e.g.},
\cite{lang1983,serre1997,hindry-silverman2000})
such that there are only finitely many solutions of bounded height,
in a fixed number field, resp. ring of integers.
A natural generalization of the problem above is to understand
the asymptotic behavior of the number of such solutions, as well
as  their distribution in the ambient space
for the local or adelic topologies, when the bound grows to infinity.

Apart from applications of the circle method,
many other instances of this problem have been successfully
investigated in recent years, in particular, in the context
of linear algebraic groups and their homogeneous spaces.
For such varieties, techniques from ergodic theory and harmonic
analysis are very effective;
for integral points, see
\cite{duke-r-s1993}, \cite{eskin-mcmullen1993},
\cite{eskin-m-s1996}, \cite{borovoi-r1995},
\cite{maucourant2007}, \cite{gorodnik-o-s2006};
for rational points, see
\cite{batyrev-m90}, \cite{franke-m-t89},
\cite{peyre95}, \cite{batyrev-t96},
\cite{chambert-loir-t2002}.

In most cases, the proof  subdivides into two parts: 
\begin{enumerate}
\item comparison of the point counting with a volume asymptotic;
\item explicit computation of this volume asymptotic.
\end{enumerate}
In this paper, we develop a general geometric framework
for the second part, \emph{i.e.}, for the understanding
of densities and volumes occurring in the counting problems above.
We now explain the main results.

\subsection{Tamagawa measures for algebraic varieties}

Let $F$ be a number field. Let $\Val(F)$ be the set
of equivalence classes of absolute values of~$F$.
For $v\in\Val(F)$, we write $v\mid p$ if $v$ defines the $p$-adic
topology on~$\Q$, and $v\mid \infty$ if it is archimedean.
For $v\in\Val(F)$,  let $F_v$ be the corresponding completion of~$F$
and, if $v$ is ultrametric, let $\mathfrak o_v$ be its ring of integers.
We identify~$v$ with the specific absolute value~$\abs\cdot_v$
on~$F_v$ defined by the formula $\mu(a\Omega)=\abs a_v\mu(\Omega)$,
where $\mu$ is any Haar measure on the additive group~$F_v$,
$a\in F_v$ and $\Omega$ is a measurable subset of~$F_v$
of finite measure.

Let $X$ be a smooth projective algebraic variety over~$F$.
Fix an adelic metric on its canonical line bundle~$K_X$
(see Section~\ref{subsubsec.adelic-metrics}).
For any~$v\in\Val (F)$, the set $X(F_v)$ carries an analytic topology
and  the chosen $v$-adic metric on~$K_X$
induces a Radon measure~$\tau_{X,v}$ on~$X(F_v)$
(see Section~\ref{subsubsec.volume-forms}).

Let $\mathscr X$ be a projective flat model of~$X$ over the ring
of integers of~$F$. Then, for almost all finite places~$v$,
the measure~$\tau_{X,v}$ coincides with the measure 
on $\mathscr X(\mathfrak o_v)=X(F_v)$ defined by Weil in~\cite{weil82}.

Let $D$ be an effective divisor on~$X$ which, geometrically,
has strict normal crossings and set $U=X\setminus D$.
Let $\mathsec f_D$ denote the  canonical section 
of the line bundle~$\mathscr O_X(D)$ (corresponding to the regular
function~$1$ when $\mathscr O_X(D)$ is viewed as a sub-sheaf
of the sheaf of meromorphic functions); by construction,
its divisor is~$D$.
Let us also fix an adelic metric on this line bundle.
We let $\mathscr D$ be the Zariski closure of~$D$ in the model~$\mathscr X$
and $\mathscr U=\mathscr X\setminus\mathscr D$ be its complement.

For any place~$v\in\Val(F)$, we define a measure
\[ \tau_{(X,D),v}=\frac1{\norm{\mathsec f_D}_v} \tau_{X,v} \]
on $U(F_v)$. Note that it is still a Radon measure; however $U(F_v)$
has infinite volume. If $U$ is an algebraic group,
this construction allows to recover the Haar measure of~$U(F_v)$
(see~\ref{subsubsec.haar-tamagawa}).

Let $\AD_F$ be the adele ring of~$F$, that is, the restricted
product of the fields~$F_v$ with respect to the subrings~$\mathfrak o_v$.

A nonzero Radon measure~$\tau$ on the adelic space~$U(\AD_F)$ 
induces measures~$\tau_v$ on any of the sets~$U(F_v)$,
which are well-defined up to a factor. Conversely, we can recover
the Radon measure~$\tau$ as the product of such measures~$\tau_v$
if the set of measures $(\tau_v)_{v\nmid\infty}$
satisfies the convergence condition: \emph{the infinite product 
$\prod_{v\nmid\infty} \tau_v(\mathscr U(\mathfrak o_v))$
is absolutely convergent.}

A family of convergence factors for $(\tau_v)$
is a family $(\lambda_v)_{v\nmid\infty}$
of positive real numbers such that the family of
measures $(\lambda_v\tau_v)$ satisfies the above convergence condition.

Our first result in this article is a definition
of a measure on~$U(\AD_F)$ via an appropriate choice
of convergence factors.

Let $\bar F$ be an algebraic closure of~$F$ and let $\Gamma=\Gal(\bar F/F)$
be the absolute Galois group.
Let $M$ be a free $\Z$-module of finite rank endowed with a continuous
action of~$\Gamma$; we let $\mathrm L(s,M)$ be the corresponding
Artin L-function, and, for all finite places $v\in\Val(F)$,
$\mathrm L_v(s,M)$ its local factor at $v$.
The function $s\mapsto \mathrm L(s,M)$ is holomorphic
for $\Re(s)>1$ and admits a meromorphic continuation to~$\C$;
let $\rho$ be its order at $s=1$ and define
\[ \mathrm L^*(1,M) = \lim_{s\ra 1} (s-1)^{-\rho} \mathrm L(s,M);\]
it is a positive real number.

\begin{theo}\label{theo.1.tamagawa}
Assume that $\mathrm H^1(X,\mathscr O_X)=\mathrm H^2(X,\mathscr O_X)=0$.
The abelian groups $M^0=\mathrm H^0(U_{\bar F},\gm)/\bar F^*$
and $M^1=\mathrm H^1(U_{\bar F},\gm)/\text{torsion}$ are free
$\Z$-modules of finite rank with a continuous action of~$\Gamma$.
Moreover, the family $(\lambda_v)$
given by
\[ \lambda_v= \mathrm L_v(1,M^0)/\mathrm L_v(1,M^1) \]
is a family of convergence factors.
\end{theo}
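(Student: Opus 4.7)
The plan is to prove the two assertions in turn.

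For the structural claim, the main tool is the divisor class exact sequence
\[
0 \to \H^0(X_{\bar F}, \gm) \to \H^0(U_{\bar F}, \gm) \to \bigoplus_{i} \Z\, D_i \to \Pic(X_{\bar F}) \to \Pic(U_{\bar F}) \to 0,
\]
in which the $D_i$ are the geometric irreducible components of $D_{\bar F}$.  Since $X$ is projective and geometrically integral, $\H^0(X_{\bar F}, \gm) = \bar F^*$, so $M^0$ identifies with the kernel of $\bigoplus_i \Z D_i \to \Pic(X_{\bar F})$, hence is a subgroup of a free $\Z$-module of finite rank.  The hypothesis $\H^1(X, \mathscr O_X) = 0$ forces $\Pic^0(X_{\bar F}) = 0$, so $\Pic(X_{\bar F}) = \NS(X_{\bar F})$ is finitely generated and so is $\Pic(U_{\bar F})$; quotienting by torsion gives $M^1$ free of finite rank.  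Both modules are defined over a finite Galois extension $F'/F$ (a splitting field for the components of $D$ and for a generating system of $\Pic(U_{\bar F})$), so the $\Gal(\bar F/F)$-action factors through $\Gal(F'/F)$ and is in particular continuous.

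For the convergence of $\prod_{v \nmid \infty} \lambda_v\, \tau_v(\mathscr U(\mathfrak o_v))$, I argue place by place for all but finitely many $v$.  At such $v$, Weil's formula gives $\tau_v(\mathscr U(\mathfrak o_v)) = q_v^{-d}\,\#\mathscr U(k_v)$ with $d = \dim X$, and Grothendieck--Lefschetz yields
\[
\frac{\#\mathscr U(k_v)}{q_v^d} = \sum_{i=0}^{2d} (-1)^i q_v^{-d}\, \tr\bigl(\Fr_v \mid \H^i_c(U_{\bar F}, \Q_\ell)\bigr).
\]
The core step is to extract the contributions of weight $\ge 2d-2$.  Via the $\ell$-adic Gysin sequence for $(X,D)$, combined with $\H^1(X_{\bar F}, \Q_\ell) = 0$ (from $\H^1(X,\mathscr O_X) = 0$) and $\H^2(X_{\bar F}, \Q_\ell(1)) = \NS(X_{\bar F}) \otimes \Q_\ell$ (from $\H^2(X,\mathscr O_X) = 0$ via the Lefschetz $(1,1)$-theorem), one obtains the Galois-equivariant isomorphism $\H^1(U_{\bar F}, \Q_\ell(1)) \cong M^0 \otimes \Q_\ell$ and identifies the lowest weight piece of $\H^2(U_{\bar F}, \Q_\ell(1))$ with $M^1 \otimes \Q_\ell$; Deligne's Riemann hypothesis then bounds everything else by $O(q_v^{-3/2})$ after division by $q_v^d$.

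Invoking Poincar\'e duality and the fact that $\Fr_v$ acts on the lattices $M^0, M^1$ through a finite quotient (so its eigenvalues are roots of unity and $\tr(\Fr_v \mid M^\vee) = \tr(\Fr_v \mid M)$), these inputs combine into
\[
\frac{\#\mathscr U(k_v)}{q_v^d} = 1 + q_v^{-1}\bigl(\tr(\Fr_v \mid M^1) - \tr(\Fr_v \mid M^0)\bigr) + O(q_v^{-3/2}),
\]
while expanding the Euler factors gives $\lambda_v = 1 + q_v^{-1}\bigl(\tr(\Fr_v \mid M^0) - \tr(\Fr_v \mid M^1)\bigr) + O(q_v^{-2})$.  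The critical $q_v^{-1}$-terms cancel in the product, giving $\lambda_v\,\tau_v(\mathscr U(\mathfrak o_v)) = 1 + O(q_v^{-3/2})$, and $\sum_v q_v^{-3/2} < \infty$ yields the required absolute convergence.  The main obstacle I anticipate is the weight-filtration analysis on $\H^2(U_{\bar F}, \Q_\ell(1))$: one must verify that its weight-$1$ component (coming from the $\H^1$ of the components of $D$) lives entirely in the $O(q_v^{-3/2})$ error and does not disturb the cancellation at order $q_v^{-1}$; a secondary concern is the careful bookkeeping of conventions between geometric and arithmetic Frobenius in the trace formula versus the Artin $L$-factors.
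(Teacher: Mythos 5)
Your proposal is correct, and the second half takes a genuinely different route from the paper. For the structural claim, you use the same divisor class exact sequence as the paper (the paper obtains it from the long exact sequence in \'etale cohomology of $1\to\gm\to i_*\gm\to\bigoplus j_{\alpha*}\Z\to 0$; your formulation is equivalent), so that part matches.

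For the convergence estimate, the paper avoids the cohomology of the open variety~$U$ entirely. After first reducing (by blowing up and resolving singularities, using birational invariance of $\mathrm H^i(X,\mathscr O_X)$) to the case where $Z=X\setminus U$ is a divisor, it writes $\Card\mathscr U(k_v)=\Card\mathscr X(k_v)-\Card\mathscr Z(k_v)$, applies the Lefschetz trace formula on the smooth proper~$X$ (where cohomology is pure, so everything beyond $\mathrm H^0,\mathrm H^2$ is directly $\mathrm O(q_v^{-3/2})$), and proves a separate Lang--Weil type lemma giving $\Card\mathscr Z(k_v)=\tr(\Fr_v\mid\Pi(Z_{\bar F}))\,q_v^{\dim Z}+\mathrm O(q_v^{\dim Z-1/2})$. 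The exact sequence then identifies $\Pi(Z_{\bar F})_\Q-\Pic(X_{\bar F})_\Q=M^0_\Q-M^1_\Q=\EP(U)$, which is exactly the virtual module controlling $\lambda_v$. Your approach instead applies Grothendieck--Lefschetz directly to $U$ via compactly supported cohomology, then uses Poincar\'e duality and the weight filtration on $\mathrm H^1(U_{\bar F},\Q_\ell)$ and $\mathrm H^2(U_{\bar F},\Q_\ell)$ to isolate the top-weight contributions, with the Kummer sequence identifying $\mathrm H^1(U_{\bar F},\Q_\ell(1))\cong M^0\otimes\Q_\ell$ and the image of $\Pic(X_{\bar F})\otimes\Q_\ell$ giving the weight-zero part of $\mathrm H^2(U_{\bar F},\Q_\ell(1))$ as $M^1\otimes\Q_\ell$. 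The obstacle you flag at the end --- that the weight-one piece coming from $\mathrm H^1$ of the boundary components lands in the $\mathrm O(q_v^{-3/2})$ error --- is real but does resolve cleanly: it sits in the weight-$3$ part of $\mathrm H^2(U)$, hence in weight $\le 2d-3$ of $\mathrm H^{2d-2}_c(U)$, which after division by $q_v^d$ contributes at most $q_v^{-3/2}$. Your route is more conceptual and avoids the Lang--Weil lemma and the blow-up reduction (which is anyway unnecessary here, since in Theorem~\ref{theo.1.tamagawa} the complement $D$ is given as an SNC divisor), at the price of needing the weight analysis on the cohomology of the open $U$; the paper's route is more elementary, needing only purity for the proper $X$ and a crude estimate for $Z$.
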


Assume that the hypotheses 
of the Theorem hold.
We then define a Radon measure on~$U(\AD_F)$ by the formula
\[ \tau_{(X,D)} = \frac{\mathrm L^*(1,M^1)}{\mathrm L^*(1,M^0)}
   \prod_{v\in\Val(F)} \left(\lambda_v \tau_{(X,D),v}\right), \]
where $\lambda_v$ is given by the Theorem if $v\nmid\infty$
and $\lambda_v=1$ else.
We call it the \emph{Tamagawa measure on~$U$, or, more precisely, 
on $U(\mathbb A_F)$.} 
This generalizes
the construction of a Tamagawa measure on an algebraic group,
where there is no~$M^1$ (see, \emph{e.g.}, \cite{weil82})
or on a projective variety,
where there is no~$M^0$,
in~\cite{peyre95}.

\subsection{Volume asymptotics in analytic geometry}

Let $L$ be an effective divisor in~$X$ whose support contains
the support of~$D$; let again $\mathsec f_L$ be the canonical
section of the line bundle $\mathscr O_X(L)$. Fix an
adelic metric on~$\mathscr O_X(L)$ and a place~$v\in \Val(F)$.
For any positive real number~$B$, the set of all $x\in U(F_v)$
such that $\norm{\mathsec f_L(x)}_v\geq 1/B$ is a compact
subset in~$U(F_v)$. It has thus finite volume $V(B)$
with respect to the measure~$\tau_{(X,D),v}$.

Let us decompose the divisor~$D_v=D_{F_v}$ as a sum
of irreducible divisors:
\[ D_{v}=\sum_{\alpha\in\mathscr A_v} d_{\alpha,v} D_{\alpha,v}. \]
For $\alpha\in\mathscr A_v$,
let $\lambda_{\alpha,v}$ be the multiplicity of~$D_{\alpha,v}$
in~$L_v$; there exists an effective divisor~$E_v$ on~$X_{F_v}$
such that
\[ L_v= E_v+\sum_{\alpha\in\mathscr A} \lambda_{\alpha,v} D_{\alpha,v} .\]
For any subset $A\subseteq\mathscr A_v$, we let $D_{A,v}$ be the
intersections of the $D_{\alpha,v}$, for $\alpha\in A$.

Let now $a_v(L,D)$ be the least rational number such that 
for any $\alpha\in\mathscr A_v$, with $D_{\alpha,v}(F_v)\neq\emptyset$,
one has
$a_v(L,D)\lambda_{\alpha,v}\geq d_{\alpha,v}-1$.
Let $\mathscr A_v(L,D)$ be the set of those $\alpha\in\mathscr A_v$
where equality holds and
$b_v(L,D)$ the maximal cardinality of subsets $A\in\mathscr A_v(L,D)$
such that $D_{A,v}(F_v)\neq\emptyset$.
To organize the combinatorial structure of these subsets,
we introduce variants of the simplicial complex considered, e.g., 
in~\cite{clemens1969} in the context of Hodge theory.


\begin{theo}\label{theo.2.volume-local}
Assume that $v$ is archimedean.

If $a_v(L,D)>0$, then $b_v(L,D)\geq 1$ and 
there exists a positive real number~$c$ such that
\[ V(B) \sim c B^{a_v(L,D)} (\log B)^{b_v(L,D)-1}. \]

If $a_v(L,D)=0$, then there exists a positive real number~$c$
such that
\[ V(B) \sim c  B^{a_v(L,D)} (\log B)^{b_v(L,D)}. \]
\end{theo}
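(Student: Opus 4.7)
The approach is to study $V(B)$ via its Mellin transform, the local Igusa integral
$$ Z(s) = \int_{U(F_v)} \|\mathsec f_L(x)\|_v^s \, d\tau_{(X,D),v}(x). $$
Since $V(B) = \tau_{(X,D),v}\bigl(\{\|\mathsec f_L\|_v \geq 1/B\}\bigr)$ is the tail of the pushforward of $\tau_{(X,D),v}$ by $\|\mathsec f_L\|_v$, and $Z(s)$ is the Mellin transform of this pushforward, Mellin inversion gives, for $c > \max(a_v(L,D),0)$,
$$ V(B) = \frac{1}{2\pi \mathrm i}\int_{c - \mathrm i\infty}^{c + \mathrm i\infty} Z(s)\, \frac{B^s}{s}\, ds. $$
The plan is therefore to locate the rightmost pole of $Z(s)$, determine its order, and estimate the contour shifted just past it.

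The analysis of $Z(s)$ is carried out in local analytic charts adapted to the SNC divisor $D$. Cover $X(F_v)$ by finitely many relatively compact charts $\Omega$; on a chart meeting the stratum $D_{A,v}$ ($A\subseteq \mathscr A_v$), there are coordinates $(z_\alpha)_{\alpha\in A}$ and transverse smooth coordinates $w$ such that $D_{\alpha,v}\cap\Omega = \{z_\alpha=0\}$, while no other $D_\beta$ meets $\Omega$. Continuity of the adelic metrics gives
$$ \|\mathsec f_D\|_v = u_D \prod_{\alpha\in A} |z_\alpha|_v^{d_\alpha}, \qquad \|\mathsec f_L\|_v = u_L \prod_{\alpha\in A} |z_\alpha|_v^{\lambda_\alpha}, $$
with $u_D,u_L$ smooth and positive, $u_L$ absorbing the contribution of $E_v$; the measure $\tau_{X,v}$ is locally of the form $g\,|dz\,dw|$ for some smooth positive $g$. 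A partition of unity subordinate to the covering reduces $Z(s)$ to a finite sum of compactly supported integrals whose integrand is a smooth nonnegative function times $\prod_{\alpha\in A}|z_\alpha|_v^{s\lambda_\alpha - d_\alpha}$. Elementary one-dimensional computations (Taylor-expanding the smooth factor along each coordinate axis and iterating) show that each such integral is holomorphic for $\Re(s) > \max_{\alpha\in A}(d_\alpha-1)/\lambda_\alpha$ and extends meromorphically to $\C$ with simple poles at the real points $s = (d_\alpha - 1 - k)/\lambda_\alpha$, $k\in\N$. Hence $Z(s)$ is meromorphic on $\C$ with rightmost pole at $s = a_v(L,D)$, whose order is bounded above by $b_v(L,D)$. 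This bound is attained: choosing a chart centered at a point of $D_{A,v}(F_v)$ with $A \subseteq \mathscr A_v(L,D)$ and $|A| = b_v(L,D)$ produces a local integral with a pole of order exactly $b_v(L,D)$, and positivity of the integrand prevents cancellation between different charts. The leading Laurent coefficient $Z_{-b}$ at $s = a_v(L,D)$ is therefore a strictly positive real number. When $a_v(L,D)>0$, the existence of some $\alpha$ with $(d_\alpha-1)/\lambda_\alpha = a_v(L,D)>0$ and $D_{\alpha,v}(F_v)\ne\varnothing$ moreover forces $b_v(L,D)\geq 1$.

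Compactness of the supports and smoothness of the integrands give, via repeated integration by parts in the transverse directions, rapid decay of $Z(\sigma + \mathrm it)$ in $|t|$ uniformly on vertical strips, legitimizing the Mellin inversion above. Shifting the contour just past $s = a_v(L,D)$, the residue of $Z(s)B^s/s$ dominates: if $a_v(L,D)>0$, the pole at $s = a_v(L,D)$ has order $b_v(L,D)$ and its residue equals $c\,B^{a_v(L,D)}(\log B)^{b_v(L,D)-1}(1+o(1))$ with $c = Z_{-b}/(a_v(L,D)\,(b_v(L,D)-1)!)>0$; if $a_v(L,D)=0$, the extra $1/s$ raises the pole order by one and the residue becomes $c\,(\log B)^{b_v(L,D)}(1+o(1))$ with $c = Z_{-b}/b_v(L,D)!>0$. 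The main obstacle is the combinatorial identification of the leading pole order as exactly $b_v(L,D)$ (not something larger) together with the positivity of the leading coefficient: this is handled by positivity of the integrand and by the observation that an extremal stratum $D_{A,v}$ with $A\subseteq\mathscr A_v(L,D)$ and $|A|=b_v(L,D)$ contributes a chart whose local Igusa integral has a pole of precisely that order, which is where the simplicial combinatorics of $\mathscr A_v(L,D)$ alluded to just before the theorem enters the picture.
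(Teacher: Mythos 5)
Your overall plan is the same as the paper's: introduce the local Mellin transform $Z(s)=\int_{U(F_v)}\norm{\mathsec f_L}_v^s\,\mathrm d\tau_{(X,D),v}$, analyze it in charts adapted to the SNC divisor, identify the rightmost pole $s=a_v(L,D)$ and its order $b_v(L,D)$ via the analytic Clemens complex, and conclude by a Tauberian step. However, the step where you justify the Mellin inversion has a genuine gap.

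You claim ``rapid decay of $Z(\sigma+\mathrm it)$ in $|t|$, uniformly on vertical strips, via repeated integration by parts in the transverse directions.'' This is the wrong mechanism: the singular factor $\prod_\alpha|z_\alpha|_v^{s\lambda_\alpha-d_\alpha}$ does not involve the transverse variables $w$, so integration by parts in $w$ gains nothing in $|\Im s|$. Decay in $|\Im s|$ can only come from integration by parts in the coordinates $z_\alpha$ along the divisor, and that integration by parts is obstructed precisely by the singularity at $z_\alpha=0$; after the meromorphic continuation past $\Re(s)=a_v(L,D)$ the best uniform estimate one gets cheaply is polynomial (the paper's Proposition~\ref{prop.local-continuation} asserts only $|\mathscr M(\Phi;s)|\le c\prod_\alpha(1+|s_\alpha|)$, i.e.\ polynomial \emph{growth} of the regularized factor, not decay of $Z$). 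With only polynomial control, the integral $\int_{c-\mathrm i\infty}^{c+\mathrm i\infty}Z(s)B^s\,\frac{\mathrm ds}{s}$ on a line $\Re(s)=a_v(L,D)-\delta$ is not obviously absolutely convergent, and the naive contour shift fails.

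The paper avoids this problem by proving the Tauberian Theorem~\ref{theo.tauber} of the appendix, which applies under the hypothesis of \emph{moderate growth} of $Z$ in vertical strips. Instead of inverting with the kernel $B^s/s$, it works with $B^s/s^{k+1}$ for $k>\kappa$: the smoothed quantities $V_k(B)=\frac1{k!}\int(\log^+ B/f)^k\,\mathrm d\mu$ then have absolutely convergent inversion integrals (Lemma~\ref{lemm.tauber.k}), and one descends from $V_k$ to $V_{k-1}$ and eventually to $V=V_0$ by the elementary sandwich inequality of Lemma~\ref{lemm.tauber.iteration}. Your argument needs one of the following to be repaired: either (i) establish a strict (power-saving) decay of $Z$ on the line $\Re(s)=a_v(L,D)-\delta$ by carefully tracking the boundary terms in the integration by parts in the singular coordinates, or (ii) replace the direct Mellin inversion by the smoothed/Cesàro version used in the paper. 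As written, the leap from ``$Z$ is meromorphic with a single pole of order $b_v(L,D)$ in the strip'' to the asymptotic for $V(B)$ is not justified.

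A minor additional quibble: your description of the meromorphic continuation of a single chart integral, ``simple poles at $s=(d_\alpha-1-k)/\lambda_\alpha$,'' is imprecise — when several of these candidate poles coincide (which is exactly the combinatorial situation that produces the order $b_v(L,D)$) the pole is no longer simple. Your subsequent text makes clear you understand this, but it should be stated correctly.
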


With the notation above, we also give an explicit formula for the constant~$c$.
It involves integrals  over the sets $D_{A,v}(F_v)$
such that $\Card(A)=b_v(L,D)$, with respect to measures induced
from~$\tau_{(X,D),v}$ via the adjunction formula.

To prove this theorem, we introduce the Mellin transform
\[ Z(s) = \int_{U(F_v)} \norm{\mathsec f_L(x)}_v^s \,\mathrm d\tau_{(X,D),v}(x)\]
and establish its analytic properties. We regard $Z(s)$ 
as an integral over the compact analytic manifold~$X(F_v)$
of the function~$\norm{\mathsec f_L}_v^s$ with respect 
to a singular measure,  connecting the study of such zeta functions 
with the theory of Igusa local zeta functions, 
see~\cite{igusa1974a,igusa1974b}.
In particular, we show that $Z(s)$ is holomorphic
for $\Re(s)>a_v(L,D)$ and admits a meromorphic
continuation to some half-plane $\{\Re(s)>a_v(L,D)-\eps\}$,
with a pole at $s=a_v(L,D)$ of order~$b_v(L,D)$.
This part of the proof works over any local field.

If $v$ is archimedean (and $\eps>0$ is small enough), then
$Z(s)$ has no other pole in this half-plane.
Our volume estimate then follows from a standard Tauberian theorem.
When $v$ in non-archimedean, we can only deduce a weaker
estimate, \emph{i.e.}, upper and lower bounds of the stated
order of magnitude (Corollary~\ref{coro.asymptotic.volume-ultrametric}).

\subsection{Asymptotics of adelic volumes}

Assume that
$\mathrm H^1(X,\mathscr O_X)=\mathrm H^2(X,\mathscr O_X)=0$.
Theorem~\ref{theo.1.tamagawa} gives us Tamagawa measures~$\tau_{(X,D)}$
and~$\tau_X$ on the adelic spaces~$U(\AD_F)$ and $X(\AD_F)$ respectively.

Suppose furthermore that the supports of the divisors~$L$ and~$D$
are equal.
We then define a \emph{height function} $H_L$ on
the adelic space~$U(\AD_F)$ by the formula
\[ H_L ((x_v)_v)= \prod_{v\in\Val(F)} \norm{\mathsec f_L(x_v)}_v^{-1}. \]
This function $H_L\colon U(\AD_F)\ra\R_+$ is continuous and proper.
In particular, for any real number~$B$, the subset of~$U(\AD_F)$
defined by the inequality $H_L(\mathbf x)\leq B$ is compact,
hence has finite volume~$V(B)$  with respect to~$\tau_{(X,D)}$.
We are interested in the asymptotic behavior of $V(B)$
as $B\ra\infty$.

Let us decompose the divisors~$L$  and~$D$ as the sum
of their irreducible components (over~$F$).  
Since $L$ and~$D$ have the same support, one can write
\[ D = \sum_{\alpha\in\mathscr A} d_\alpha D_\alpha, \qquad
   L = \sum_{\alpha\in\mathscr A} \lambda_\alpha D_\alpha \]
for some positive integers~$d_\alpha$ and~$\lambda_\alpha$.
Let $a(L,D)$ be the least positive rational number such that
the~$\Q$-divisor $E=a(L,D)L-D$ is effective;
in other words,
\[ a(L,D) = \max_{\alpha\in\mathscr A} {d_\alpha}/\lambda_\alpha. \]
Let moreover~$b(L,D)$ be the number of $\alpha\in\mathscr A$ for
which equality is achieved. 

To the $\Q$-divisor~$E$, we can also attach a height function
on~$X(\AD_F)$ given by
\[ H_E(\mathbf x) = \prod_{v\in \Val(F)} \norm{\mathsec f_L(x_v)}_v^{-a(L,D)} \norm{\mathsec f_D(x,v)}_v \]
if $x_v\in U(F_v)$ for all~$v$, and by $H_E(\mathbf x)=+\infty$
else.
The product can diverge to~$+\infty$ 
but $H_E$ has a positive lower bound, reflecting the
effectivity of~$E$. In fact,
the function $H_E^{-1}$ is continuous on~$X(\AD_F)$.

\begin{theo}\label{theo.3.volume-global}
When $B\ra\infty$, one has the following asymptotic expansion
\[ V(B)  \sim \frac1{a(L,D) (b(L,D)-1)!} B^{a(L,D)} (\log B)^{b(L,D)-1}
   \int_{X(\AD_F)} H_E(\mathbf x)^{-1} \,\mathrm d\tau_X(\mathbf x). \]
\end{theo}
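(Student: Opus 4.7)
The plan is to study the global Mellin transform
\[ Z(s) = \int_{U(\AD_F)} H_L(\mathbf{x})^{-s} \, d\tau_{(X,D)}(\mathbf{x}). \]
By the product structure of $H_L$ and the construction of the Tamagawa measure, for $\Re(s)$ large enough one has
\[ Z(s) = \frac{\mathrm L^*(1,M^1)}{\mathrm L^*(1,M^0)} \prod_{v\in\Val(F)} \lambda_v Z_v(s), \qquad Z_v(s) = \int_{U(F_v)} \norm{\mathsec f_L(x_v)}_v^s \, d\tau_{(X,D),v}(x_v), \]
with $\lambda_v = 1$ at archimedean places. A Tauberian theorem of Wiener--Ikehara--Delange type will then extract the asymptotic of $V(B)$ from a sufficiently precise meromorphic continuation of $Z(s)$ past the line $\Re(s) = a(L,D)$, together with identification of the leading polar coefficient.

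First, I would invoke the local analytic results underlying Theorem~\ref{theo.2.volume-local}, which, as the excerpt notes, hold over any local field: each $Z_v(s)$ is holomorphic on $\Re(s) > a_v(L,D)$, admits meromorphic continuation slightly to the left, and has a pole of order $b_v(L,D)$ at $s = a_v(L,D)$ whose residue is an explicit integral over the maximal strata $D_{A,v}(F_v)$. This controls the finitely many archimedean and bad-reduction places. At almost all finite places, a Denef-style reduction expresses $\lambda_v Z_v(s)$ as an explicit rational function of $q_v^{-s}$ encoding $\mathbf F_{q_v}$-point counts on the reduced strata $D_{A,v}$.

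The central step is to assemble these local pieces into a globally meromorphic Euler product. Using the Galois permutation representations on the geometric components of the $D_\alpha$, which under the hypothesis $\mathrm H^1(X, \mathscr O_X) = \mathrm H^2(X, \mathscr O_X) = 0$ are exactly the modules $M^0, M^1$ of Theorem~\ref{theo.1.tamagawa}, one factors
\[ \prod_v \lambda_v Z_v(s) = \Big(\prod_{\alpha\in\mathscr A} \mathrm L(s \lambda_\alpha - d_\alpha + 1, \chi_\alpha)\Big) \cdot \Phi(s), \]
where the $\chi_\alpha$ are the Galois characters attached to the irreducible components $D_\alpha$ and $\Phi(s)$ is an Euler product that converges absolutely on some neighborhood of $\Re(s) = a(L,D)$. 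Since each $\chi_\alpha$ is a transitive permutation character, $\mathrm L(s,\chi_\alpha)$ has a simple pole at $s=1$, so by the definitions of $a(L,D)$ and $b(L,D)$ the $L$-function product has a pole of order exactly $b(L,D)$ at $s = a(L,D)$, which is therefore the only singularity of $Z(s)$ on that line.

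The residue $\lim_{s\to a(L,D)}(s - a(L,D))^{b(L,D)} Z(s)$ is then an explicit product of local residues. Each one, by adjunction along a maximal stratum $D_A$ with $\Card(A) = b(L,D)$ and $a(L,D)\lambda_\alpha = d_\alpha$ for $\alpha \in A$, is an integral over $D_A(F_v)$ against the canonical residue measure extracted from $\tau_{(X,D),v}$; these assemble into $\int_{X(\AD_F)} H_E^{-1} \, d\tau_X(\mathbf x)$, the non-maximal strata not contributing because $E = a(L,D)L - D$ vanishes to lower order there. A Tauberian theorem of Delange type then yields the claimed asymptotic, the prefactor $1/(a(L,D)(b(L,D)-1)!)$ being the standard normalization for a pole of order $b(L,D)$ at $s = a(L,D)$. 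The main obstacle is the middle step: verifying via the good-reduction computation that the convergence factors $\lambda_v$ are precisely what is needed so that $\Phi(s)$ extends strictly past $\Re(s) = a(L,D)$, i.e.\ that the local $\mathbf F_{q_v}$-point counts on the strata $D_{A,v}$ interact with the Galois representations on the geometric components of the $D_\alpha$ exactly through the ratios $\mathrm L_v(1,M^0)/\mathrm L_v(1,M^1)$.
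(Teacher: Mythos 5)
Your overall scheme matches the paper: write $Z(s)=\int_{U(\AD_F)}H_L^{-s}\,\mathrm d\tau_{(X,D)}$, factor it as an Euler product, use Denef's formula at almost all places to peel off Dedekind zeta factors $\zeta_{F_\alpha}(s\lambda_\alpha-d_\alpha+1)$, show the remaining Euler product $\phi(s)$ extends holomorphically past $\Re(s)=a(L,D)$ using exactly the estimate $\mathscr I_v\prod\zeta_{F_\alpha,v}^{-1}=1+\mathrm O(q_v^{-1-\eps})$ which the paper obtains from Denef's formula combined with Weil/Lang--Weil point counts and the choice of convergence factors $\mathrm L_v(1,\EP(U))$, and then apply a Tauberian theorem.

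However, your computation of the leading coefficient is wrong, and the error reflects a genuine confusion between the local and adelic mechanisms of pole formation. In the local theory (Theorem~\ref{theo.2.volume-local}), the pole of $Z_v$ at $s=a_v(L,D)$ comes from the singularity of the measure $\tau_{(X,D),v}$ along the divisor, and the residue is indeed a sum of integrals over strata $D_A(F_v)$ against residue measures. But in the adelic theory, the pole of $Z(s)$ at $s=a(L,D)$ does \emph{not} arise this way. Each individual local factor $Z_v(s)$ is holomorphic in a neighborhood of $s=a(L,D)$ (since $a_v(L,D)$ involves $d_\alpha-1$ while $a(L,D)$ involves $d_\alpha$, so $a_v < a$ strictly, and at finite places the local Euler factors $\zeta_{F_\alpha,v}(s\lambda_\alpha-d_\alpha+1)$ have poles only where $\Re(s\lambda_\alpha-d_\alpha+1)=0$, not at $1$). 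The pole at $s=a(L,D)$ is created by the \emph{accumulation over infinitely many places} packaged in the global Dedekind zeta functions. Consequently, the residue is
\[ \lim_{s\to\sigma}(s-\sigma)^{b}Z(s)=\phi(\sigma)\prod_{\alpha\in\mathscr A_{L,D}}\frac{\zeta_{F_\alpha}^*(1)}{\lambda_\alpha}=\prod_{\alpha\in\mathscr A_{L,D}}\frac1{\lambda_\alpha}\int_{X(\AD_F)}H_E(\mathbf x)^{-1}\,\mathrm d\tau_X(\mathbf x), \]
a \emph{full-dimensional} integral over $X(\AD_F)$ twisted by $H_E^{-1}$ — not a sum of strata integrals. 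There is no ``adjunction along a maximal stratum $D_A$'' in this computation; the analytic Clemens complexes govern only the local volume estimates at a \emph{fixed} place, which is precisely why Theorem~\ref{theo.3.volume-global} needs no Clemens-complex input whereas Theorem~\ref{theo.2.volume-local} does. Relatedly, the phrase claiming the permutation modules on the geometric components of the $D_\alpha$ ``are exactly $M^0,M^1$'' is imprecise: they enter via the four-term exact sequence $0\to M^0\to\Pi(Z_{\bar F})\to\Pic(X_{\bar F})\to\Pic(U_{\bar F})\to 0$, and it is only the \emph{difference} $[M^0]-[M^1]$ that equals $[\Pi(Z_{\bar F})]-[\Pic(X_{\bar F})]$ in the Grothendieck group. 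This identity, not an isomorphism, is what aligns the convergence factors with the Dedekind zeta factors.
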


As a particular case, let us take $L=D$. We see that $a(L,D)=1$,
$b(L,D)$ is the number of irreducible components of~$D$
and the integral in the Theorem is equal
to the Tamagawa volume $\tau_X(X(\AD_F))$ defined by Peyre.

\medskip

In both local and adelic situations, our techniques 
are valid for any metrization of the underlying line bundles.
As was explained by Peyre in~\cite{peyre95} in the context
of rational points, 
this implies \emph{equidistribution theorems},
see Corollary~\ref{coro.equidistribution.local} in
the local case, and Theorem~\ref{theo.volume.global}
in the adelic case.

\subsection*{Roadmap of the paper}

Section~\ref{sec.general} 
is concerned with heights and measures on adelic spaces.
We first recall notation and definitions
for adeles, adelic metrics and measures on analytic manifolds. 
In Subsection~\ref{subsec.heights.adelic},
we then define height functions on adelic spaces
and establish their basic properties.
The construction of global Tamagawa measures
is done in Subsection~\ref{subsec.global-tamagawa}.
We conclude this Section by a general equidistribution theorem.

Section~\ref{s.igusa} is devoted to the theory 
of geometric analogues of Igusa integrals,
both in the local and adelic settings.
These integrals define holomorphic functions in several variables
which admit meromorphic continuations. (In the adelic case,
these meromorphic continuations may have natural boundaries.) 
To describe their first poles we
introduce in Subsection~\ref{subsec.clemens} the geometric, algebraic
and analytic Clemens complexes which encode the 
incidence properties of divisors  involved in the definition
of our geometric Igusa integrals.
We then apply this theory in Subsections~\ref{subsec.geometric-volume} 
and~\ref{subsec.asymptotics.adelic},
where we establish 
Theorems~\ref{theo.2.volume-local} and~\ref{theo.3.volume-global}
about volume asymptotics.

In Section~\ref{sec.examples}, we make explicit the main results of our article
in the case of wonderful compactifications
of semi-simple groups. 
In particular, we explain how to recover
the volume estimates  established in~\cite{maucourant2007}
for Lie groups,
and in~\cite{gorodnik-m-o2009,shalika-tb-t2007} for adelic groups.

\subsection*{Acknowledgments}
During the preparation of this work, we have benefited from conversations
with Michel Brion and François Loeser. 
The first author would like to acknowledge the support of the Institut
universitaire de France, and to thank the organizers 
of a conference at the Maxwell Institute,
Edinburgh, 2008, for having given him the opportunity to present
the results of this paper.
The second author 
was partially supported by NSF grant DMS-0602333.

\clearpage
\section{Metrics, heights, and Tamagawa measures}
\label{sec.general}

\subsection{Metrics and measures on local fields}

\subsubsection{Haar measures and absolute values}\label{subsubsec.local-fields}
Let $F$ be a local field of characteristic zero, \emph{i.e.}, either $\R$,
$\C$, or a finite extension
of the field $\Q_p$ of $p$-adic numbers.
Fix a Haar measure $\mu$ on~$F$. Its ``modulus'' 
is an absolute value $\abs{\cdot}$ on~$F$, defined
by $\mu(a\Omega)=\abs a\mu(\Omega)$
for any $a\in F$ and any measurable subset $\Omega\subset F$.
For $F=\R$, this is the usual absolute value, for $F=\C$, it is its square.
For $F=\Q_p$, it is given by $\abs{p}=1/p$ and if $F'/F$ is a
finite extension, one has $\abs{a}_{F'}=\abs{\mathrm N_{F'/F}(a)}_F$.

\subsubsection{Smooth functions}
If $F$ is $\R$ or $\C$, we say that a function $f$ 
defined on an open subset
of the $n$-dimensional affine space~$F^n$ 
is \emph{smooth} if it is $\mathrm C^\infty$.
If $F$ is non-archimedean, this will mean that $f$ is locally
constant. This notion is local and extends to functions defined 
on open subsets of $F$-analytic manifolds. Observe moreover
than for any open subset~$U$ of~$F^n$ and any non-vanishing
analytic function~$f$  on~$U$, the function $x\mapsto \abs {f(x)}$
is smooth.

On a compact $F$-analytic manifold~$X$, a smooth function~$f$
has a sup-norm $\norm{f}=\sup_{x\in X}\abs{f(x)}$. In the
archimedean case, using charts (so, non-canonically),
we can also measure norms of derivatives
and define norms $\norm{f}_r$ (measuring the maximum of sup-norms
of all derivatives of~$f$ of orders~$\leq r$).
In the ultrametric case, using a distance~$d$,
we can define a norm $\norm{f}_1$
as follows:
\[ \norm{f}_1 = \norm{f}\big( 1 + \sup _{f(x)\neq f(y)} \frac
{1}{d(x,y)}\big) .\]
For $r> 1$, we define $\norm{f}_r=\norm{f}_1$.

\subsubsection{Metrics on line bundles}\label{subsubsec.metrics}
Let $X$ be an analytic variety over a locally compact valued field $F$
and let $\mathscr L$ be a line bundle on~$X$. 
We define a metric on $\mathscr L$
to be a collection of functions $\mathscr L(x)\ra\R_+$,
for all $x\in X$, 
denoted by $\ell\mapsto \norm{\ell}$
such that 
\begin{itemize}
\item for $\ell\in\mathscr L(x)\setminus\{0\}$, $\norm{\ell}>0$;
\item
for any $a\in F$, $x\in X$ and any $\ell\in\mathscr L(x)$,
$\norm{a\ell}=\abs{a}\norm{\ell}$ ;
\item for any open subset $U\subset X$ and any section $\ell\in
\Gamma(U,\mathscr L)$, the function $x\mapsto \norm{\ell(x)}$
is continuous on~$U$.
\end{itemize}
We say that a metric on a line bundle $\mathscr L$
is smooth if for any non-vanishing local section
$\ell\in\Gamma(U,\mathcal L)$, the function $x\mapsto \norm{\ell(x)}$
is smooth on~$U$.

For a metric to be smooth, it suffices that there exists
an open cover~$(U_i)$ of~$X$, and, for each~$i$, a non-vanishing 
section $\ell_i\in\Gamma(U_i,\mathcal L)$ such that the
function $x\mapsto\norm{\ell_i(x)}$ is smooth on~$U_i$.
Indeed, let $\ell\in\Gamma(U,\mathcal L)$ be a local non-vanishing
section of~$\mathcal L$; for each~$i$ there is a non-vanishing
regular function~$f_i\in\mathscr O_X(U_i\cap U)$ such
that $\ell=f_i\ell_i$ on~$U_i\cap U$, hence 
$\norm{\ell}=\abs{f_i}\norm{\ell_i}$. Since the absolute value
of a non-vanishing regular function is smooth, $\norm{\ell}$
is a smooth function on~$U_i\cap U$. Since this holds for all~$i$,
$\norm{\ell}$ is smooth on~$U$.

The trivial line bundle $\mathscr O_X$ admits a canonical metric,
defined by $\norm{1}=1$.
If $\mathscr L$ and $\mathscr M$ are two metrized line bundles on~$X$,
there are metrics on $\mathscr L\otimes\mathscr M$ and on
$\mathscr L^\vee$ defined by
\[ \norm{\ell\otimes m}=\norm{\ell}\norm{m},
\qquad \norm{\phi}=\abs{\phi(\ell)} / {\norm{\ell}}, \]
with $x\in X$,  $\ell\in\mathscr L(x)$, $m\in\mathscr M(x)$,
and $\phi\in\mathscr L^\vee(x)$.

\subsubsection{Divisors, line bundles and metrics}\label{subsubsec.Q-Cartier}
The theory of the preceding paragraph also applies if
$X$ is an analytic subspace of some $F$-analytic manifold, \emph{e.g.}, 
an algebraic variety, even if it possesses singularities.
Recall that by definition, a function on such a space~$X$ is
smooth if it extends to a smooth function
in a neighborhood of~$X$ in the ambient space.

Let $D$ be an effective Cartier divisor on~$X$ and let $\mathscr O_X(D)$
be the corresponding line bundle. It admits a canonical
section~$\mathsec f_D$, whose divisor 
is equal to~$D$. If $\mathscr O_X(D)$ is endowed with a metric,
the function~$\norm{\mathsec f_D}$ is positive on~$X$ and
vanishes along~$D$.

More generally, let $D$ be an effective $\Q$-divisor,
that is, a linear combination of irreducible divisors
with rational coefficients such that a multiple~$nD$,
for some positive integer~$n$, is a Cartier divisor.
By a metric on~$\mathscr O_X(D)$ we mean
a metric on $\mathscr O_X(nD)$. By $\norm{\mathsec f_D}$,
we mean the function $\norm{\mathsec f_{nD}}^{1/n}$.
It does not depend on the choice of~$n$.

\subsubsection{Metrics defined by a model}
Here we assume that $F$ is non-archimedean, and let
$\mathfrak o_F$ be its ring of integers.
Let $X$ be a projective variety over~$F$ and $L$
a line bundle on~$X$. Choose
 a projective flat $\mathfrak O_F$-scheme $\mathscr X$ and a line bundle
$\mathscr L$ on~$\mathscr X$ extending $X$ and $L$.
These choices determine a metric on the line bundle defined by $\mathscr
L$ on the analytic variety $X(F)$, by the following recipe:
for $x\in X(F)$ let $\tilde x\colon \Spec\mathfrak o_F\ra\mathscr
X$ be the unique morphism extending~$x$; by definition, the
set of $\ell\in L(x)$ such that $\norm{\ell}\leq 1$ is
equal to $\tilde x^*\mathscr L$, which is a lattice in $L(x)$.

Let~$\mathscr U$ be an open subset of~$\mathscr X$ over which
the line bundle~$\mathscr L$ is trivial and let $\eps\in\Gamma(\mathscr U,\mathscr L)$ be a trivialization of~$\mathscr L$ on~$\mathscr U$.
Then, for any point $x\in \mathscr U(F)$ such that $x$ 
extends to a morphism $\tilde x\colon\Spec\mathfrak o_F\ra\mathscr U$,
one has $\norm{\eps(x)}=1$. Indeed, $\tilde x^*\eps$ is a basis
of the free $\mathfrak o_F$-module~$\tilde x^*\mathscr L$.
Since $\mathscr X$ is projective, restriction to the
generic fiber identifies the set $\mathscr U(\mathfrak o_F)$ with
a compact open subset of~$\mathscr U_F(F)$,
still denoted by $\mathscr U(\mathfrak o_F)$.
Observe that these compact open subsets cover~$X(F)$.

Let $F'/F$ be a field extension. A model of $X$ determines
a model of $X_{F'}=X\otimes_F F'$ over the ring $\mathfrak o_{F'}$,
and thus metrics on the analytic variety $X(F')$.
Conversely, note that a model of $X_{F'}$ determines metrics
on $X(F')$ and hence, by restriction, also on $X(F)$.

\subsubsection{Example: projective space}\label{exam.projective-space.metric}
Let $F$ be a locally valued field and $V$ a finite-dimensional
vector space over~$F$. Let~$\abs\cdot$ be a norm on~$V$.
Let~$\P(V^*)=\Proj\Sym^\bullet V^*$ be the projective space 
of lines in~$V$; denote by~$[x]$ the point of~$\P(V^*)$
associated to a non-zero element~$x\in V$, \emph{i.e.},
the lines it generates.
This projective space carries a tautological
ample line bundle, denoted $\mathscr O(1)$, whose
space of global sections is precisely~$V^*$.
The formula
\[ \norm{\ell([x])} = \frac {\abs{\ell(x)}}{\norm{x}}, \]
for $\ell\in V^*$ and $x\in V$,
defines a norm on the $F$-vector space~$\mathscr O(1)_{[x]}$.
These norms define a metric on~$\mathscr O(1)$.

Assume moreover that $F$ is non-archimedean and let~$\mathfrak o_F$
be its ring of integers. If the norm of~$V$ takes
its values in the set~$\abs{F^\times}$,
the unit ball~$\mathscr V$ of~$V$ 
is an~$\mathfrak o_F$-submodule of~$V$ which is free of rank~$\dim V$
(\cite{weil1967}, p.~28, Proposition~6 and p.~29).
Then, the $\mathfrak o_F$-scheme $\P(\mathscr V^\vee)$
is a model of~$\P(V^\vee)$ and the metric on~$\mathscr O(1)$
is the one defined by this model and the canonical extension 
of the line bundle~$\mathscr O(1)$ it carries.

\subsubsection{Volume forms}\label{subsubsec.volume-forms}
Let $X$ be an analytic manifold over a local field $F$.
To simplify the exposition, assume that $X$ is equidimensional
and let~$n$ be its dimension.
It is standard when $F=\R$ or $\C$, and also true in general,
that any $n$-form $\omega$ on an open subset $U$ of~$X$ defines
a measure, usually denoted $\abs\omega$, on $U$, through the following
formula. Choose local coordinates $x_1,\dots,x_n$ on $U$ and write
\[ \omega= f(x_1,\dots,x_n) \,\mathrm dx_1\wedge \dots\wedge \mathrm dx_n.
\]
These coordinates allow to identify (a part of) $U$ with an open
subset of $F^n$ which we endow with the product measure $\mu^n$.
(Recall that a Haar measure has been fixed on~$F$
in~\ref{subsubsec.local-fields}.)
It pulls back to a measure which we denote
\[ \abs{\mathrm dx_1} \, \cdots \, \abs{\mathrm dx_n} \]
and by definition, we let 
\begin{equation}  \label{eq.abs.omega}
 \abs\omega= \abs{f(x_1,\dots,x_n)}\,
\abs{\mathrm dx_1} \, \cdots \, \abs{\mathrm dx_n} .\end{equation}
By the change of variables formula in multiple integrals,
this is independent of the choice of local coordinates.

\subsubsection{Metrics and measures}\label{ss.metrics-measures}
Certain manifolds~$X$ possess a canonical
(up to scalar) non-vanishing~$n$-form,
sometimes called a \emph{gauge form}.
Examples are analytic groups, or Calabi-Yau varieties. 
This property leads to the definition of a canonical measure on~$X$
(again, up to a scalar).
In the case of groups, this has been studied by Weil in the
context of Tamagawa numbers (\cite{weil82});
in the case of Calabi-Yau varieties, this measure has been used
by Batyrev to prove that smooth birational Calabi-Yau varieties
have equal Betti numbers (\cite{batyrev99}).

Even when $X$ has no global $n$-forms, it is still possible to define a measure
on $X$ provided the canonical line bundle $\omega_X=\bigwedge^n \Omega^1_X$
is endowed with a \emph{metric}. Indeed, we may then
attach to any local non-vanishing $n$-form
$\omega$ the local measure $\abs\omega/\norm\omega$. It
is immediate that these measures
patch and define a measure~$\tau_X$ on the whole of~$X$. This is a Radon
measure locally equivalent to any Lebesgue measure. 
In particular, if $X$ is compact, its  volume
with respect to this measure is a positive real number.
This construction is classical in differential geometry
as well as in arithmetic geometry (see~\cite{serre1981}, bottom
of page~146);
its introduction in the context of the counting problem
of points of bounded height on algebraic varieties over number
fields is due to E.~Peyre (\cite{peyre95}).

\subsubsection{Singular measures}
For the study of integral points, 
we will have to consider several variants of this construction.
We assume here that $X$ is an algebraic variety over a local field~$F$
and that we are given an auxiliary effective Cartier $\Q$-divisor~$D$ in~$X$.
Let $U=X\setminus\abs D$ be the complement of the support of~$D$
in~$X$.

Let us first consider the case where $D$ is a Cartier divisor
and assume that the line bundle $\omega_X(D)$ is endowed with
a \emph{metric}. 
If $\mathsec f_D$ is the canonical section of~$\mathcal O_X(D)$
and $\omega$ a local $n$-form, we can then consider the nonnegative 
function~$\norm{\mathsec f_D\omega}$; it vanishes on~$\abs D$.
In the general case, we follow the conventions
of~Section\ref{subsubsec.Q-Cartier} and will freely
talk of the function~$\norm{\omega\mathsec f_D}$,
defined as $\norm{\omega^n\mathsec f_{nD}}$ where $n$ is any
positive integer such that~$nD$ is a Cartier divisor,
assuming the line bundle $\omega_X^{\otimes n}(nD)$ is
endowed with a metric. 

Then, the measures $\abs\omega/\norm{\mathsec f_D\omega}$, 
for $\omega$ any local non-vanishing $n$-form
$\omega$, patch and define a measure~$\tau_{(X,D)}$ 
on the open submanifold~$U(F)$.
This measure is a Radon measure on~$U(F)$, locally equivalent to the Lebesgue
measure.
However, the  volume of~$U(F)$ is infinite;
this is in particular the case if $D$ is a divisor (with integer
coefficients) and the smooth locus of~$D$ has $F$-rational points.

We shall always identify this measure and the (generally ``singular'',
\emph{i.e.}, non-Radon) measure on~$X(F)$ obtained by push-out.

\subsubsection{Example: gauge forms}
Let us show how the definition of a measure using a gauge form
can be viewed as a particular case of this construction.
Let~$\omega$ be a meromorphic differential form on~$X$; let
$D$ be the \emph{opposite} of its divisor. One can write
$D=\sum d_\alpha D_\alpha$,
where the $D_\alpha$ are codimension~$1$ irreducible subvarieties in~$X$
and $d_\alpha$ are integers. 
Let $U=X\setminus \abs D$ be the complement of the support of~$D$.
In other words, $\omega$
defines a trivialization of the line bundle $\omega_X(D)$.
There is a unique metrization of~$\omega_X(D)$ such that~$\omega$,
viewed as a global section of~$\omega_X(D)$,
has norm~$1$ at every point in~$X$. 
The measure on the manifold~$U(F)$
defined by this metrization coincides with the measure~$\abs\omega$
defined by $\omega$ as a gauge form.

Moreover, if the line bundle $\mathcal O_X(D)$ is endowed
with a metric, then so is the line bundle~$\omega_X$.
In this case, the manifold~$X(F)$ is endowed with three measures,
$\tau_U$, $\tau_X$ and~$\tau_{(X,D)}$. Locally, one has:
\begin{itemize}
\item $\tau_X=\abs\omega/\norm{\omega}$;
\item the measure $\tau_U$ is its restriction to~$U$;
\item $\tau_{(X,D)}=\abs\omega=\abs\omega/\norm{\mathsec f_D\omega}
      = \norm{\mathsec f_D}^{-1} \tau_X$. 
\end{itemize}
Note that on each open, relatively compact subset of~$X(F)\setminus \abs D$,
the measures $\tau_X$ and~$\tau_{(X,D)}$ are comparable.

\subsubsection{Example: compactifications of algebraic groups}\label{subsubsec.haar-tamagawa}
We keep the notation of the previous paragraph,
assuming moreover that $U$ is an algebraic group~$G$ over~$F$
of which $X$ is an equivariant compactification,
and that the restriction to~$G$ of~$\omega$ is invariant.
If we consider~$\omega$ as a gauge form, it then defines
a invariant measure $\abs\omega$ on~$G(F)$, in other words,
a Haar measure on this locally compact group,
and also $\tau_{(X,D)}$.

\subsubsection{Residue measures}\label{subsubsec.residue-measures}
Let us return to the case of a general manifold~$X$.
Let $Z$ be a closed submanifold of $X$. To get a measure on~$Z$,
we need as before a metrization on $\omega_Z$. However, the ``adjunction
formula'' 
\[ \omega_X|_Z \simeq  \omega_Z \otimes \det \mathcal N_Z (X)  \]
implies that given a metric on $\omega_X$, it suffices
to endow the determinant of the normal bundle of $Z$ in $X$
with a metric.
The case where $Z=D$ is a divisor (\emph{i.e.},
locally in charts, $Z$ is defined by the vanishing of a coordinate) 
is especially interesting.
In that case, one has
\[ \omega_D = \omega_X(D)|_D \]
and a metric on $\omega_X$ plus a metric on $\mathcal O_X(D)$
automatically define a metric on $\omega_D$, hence a measure on
$D$.

Let us give an explicit formula for this measure. Let
$\omega\in\Gamma(\omega_D)$ be  a local $(n-1)$-form and $\tilde\omega$
any lift of $\omega$ to $\bigwedge^{n-1}\Omega^1_X$.
If $u\in\mathcal O(-D)$ is a local equation for $D$, 
the image of $\omega$ in $\omega_X(D)|_D$ is nothing but
the restriction to~$D$ of the differential form with logarithmic
poles
$\tilde\omega\wedge u^{-1}\mathrm du$. Let $\mathsec f_D$ be
the canonical section of $\mathcal O_X(D)$. Then,
\[ \norm{\omega}=\norm{\tilde\omega\wedge u^{-1}\mathrm du}
  = \norm{\tilde\omega\wedge \mathrm du}
  \, \norm{u^{-1}} . \]
By definition, $\mathsec f_D$ corresponds locally to the function~1,
hence, for $x\not\in D$, 
\[ \norm{u^{-1}(x)} = \frac{1}{\abs {u(x)}} \norm{\mathsec f_D(x)} \] 
and this possesses a finite limit when $x$ approaches~$D$,
by the definition of a metric.
(As a section of $\mathscr O_X(D)$, $\mathsec f_D$ vanishes at order~$1$
on $D$). Moreover, the function $\lim \norm{\mathsec f_D}/\abs u$
on~$D$ defined by 
\[ x\mapsto \lim_{\substack{y\ra x\\ y\not\in D}}
          \norm{\mathsec f_D(y)}/\abs{u(y)} \]
is continuous and positive on~$D$.

By induction, if $Z$ is the transverse intersection of smooth divisors $D_j$
($1\leq j\leq m$),
with metrizations on all $\mathscr O_X(D_j)$, we have a similar formula:
\begin{equation}
\label{eq.abs.omega.D}
 \norm{\omega} = \norm{\tilde\omega\wedge \mathrm du_1\wedge\cdots \wedge \mathrm du_m}
     \lim \frac{\norm{\mathsec f_{D_1}}}{\abs{u_1}}\, \cdots
   \lim \frac{\norm{\mathsec f_{D_m}}}{\abs{u_m}},
\end{equation}
where $u_1,\ldots,u_m$ are local equations for the divisors
$D_1,\ldots,D_m$.

\subsubsection{Residue measures in an algebraic context}
\label{sss.residue-algebraic}
Let $X$ be a smooth algebraic variety over~$F$; endow the
canonical line bundle $\omega_X$ on~$X(F)$ with a metric.
Let~$F'$ be a finite Galois extension of~$F$ and 
let~$D_j$, for $1\leq j\leq m$, be smooth
irreducible divisors on~$X_{F'}$, whose union is
a divisor~$D$ with strict normal crossings which is defined over~$F$.
Then the intersection $Z=\bigcap_{j=1}^m D_j$ is defined over~$F$;
if $Z(F)\neq\emptyset$, then $Z(F)$ is a smooth $F$-analytic submanifold
of $X(F)$, of dimension~$\dim Z=\dim X-m$.
Moreover, the normal bundle of~$Z$ in~$X_{F'}$
is isomorphic to the restriction
to~$Z$ of the vector bundle $\bigoplus_{j=1}^m\mathscr O_{X'}(D_j)$,
hence the isomorphism $\det\mathscr N_Z(X)\simeq\mathscr O_X(D)$,
at least after extending the scalars to~$F'$.

Endow the line bundles $\mathscr O_{X_{F'}}(D_j)$  
on the ${F'}$-analytic manifold~$X({F'})$ with metrics. 
By the previous formulas, we obtain from the 
metrics on~$\mathscr O_{X_{F'}}(D_j)$ a metric on
the determinant of the normal bundle of~$Z({F'})$ in the manifold~$X({F'})$.
The restriction of this metric to~$Z(F)$ gives us a metric
on the determinant of the normal bundle of~$Z(F)$ in~$X(F)$. 

Accordingly,
we obtain a positive Radon measure~$\tau_Z$ on~$Z(F)$
which is locally comparable to any Lebesgue measure.
In particular, one has $\tau_Z(Z(F))=0$ if and only if $Z(F)=\emptyset$.

\subsection{Adeles of number fields: metrics and heights}

\subsubsection{Notation}

We specify some common notation concerning number fields
and adeles that we will use throughout this text.

Let $F$ be a number field and 
$\Val(F)$ the set of places of~$F$.
For $v\in \Val(F)$, we let $F_v$ be the $v$-adic completion of~$F$.
This is a local
field; its absolute value, defined as in~\S\ref{subsubsec.local-fields},
is denoted by~$\abs\cdot_v$. 
With these normalizations, the \emph{product formula} holds. Namely,
for all $a\in F^*$, one has
$\prod_{v\in\Val(F)} \abs a_v=1$,
where only finitely many factors differ from~$1$.

Let $v\in\Val(F)$. The absolute value $\abs{\cdot}_v$
is archimedean when $F_v=\R$ or $F_v=\C$; we will
say that $v$ is infinite, or archimedean.
Otherwise, the absolute value~$\abs\cdot_v$ is
ultrametric and the place $v$ is called finite, or non-archimedean.

Let $v$ be a finite place of~$F$. 
The set~$\mathfrak o_v$ of all $a\in F_v$
such that $\abs a_v\leq 1$ is a subring of~$F_v$,
called the ring of $v$-adic integers. Its subset $\mathfrak m_v$
consisting of all $a\in F_v$ such that $\abs a_v<1$ is 
its unique maximal ideal. The residue field $\mathfrak o_v/\mathfrak m_v$
is denoted by~$k_v$. It is a finite field and we write~$q_v$
for its cardinality. The ideal~$\mathfrak m_v$
is principal; a generator will be called a uniformizing
element at~$v$. For any such element~$\varpi$, one has
$\abs\varpi_v=q_v^{-1}$.

Fix an algebraic closure~$\bar F$ of~$F$. The group~$\Gamma_F$
of all $F$-automorphisms of~$\bar F$ is called the absolute
Galois group of~$F$.
For any finite place~$v\in\Val(F)$, fix an extension~$\abs{\cdot}_{\bar v}$ of
the absolute value~$\abs\cdot_v$ to~$\bar F$.
The subgroup of~$\Gamma_F$ consisting of all $\gamma\in\Gamma_F$
such that $\abs{\gamma(a)}_{\bar v}=\abs{a}_{\bar v}$
for all $a\in\bar F$ is called the decomposition subgroup of~$\Gamma_F$
at~$v$ and is denoted~$\Gamma_v$.

These data determine an algebraic closure~$\bar k_v$ of the residue field~$k_v$,
together with a surjective group homomorphism $\Gamma_v\ra\Gal(\bar k_v/k_v)$.
Its kernel~$\Gamma^0_v$ is called the inertia subgroup of~$\Gamma_F$ at~$v$.
Any element in~$\Gamma_v$ 
mapping to the Frobenius automorphism $x\mapsto x^{q_v}$
in~$\Gal(\bar k_v/k_v)$ is called an arithmetic Frobenius element at~$v$;
its inverse is called a geometric Frobenius element at~$v$.
The subgroups~$\Gamma_v$ and~$\Gamma_v^0$, and Frobenius elements, depend 
on the choice of the chosen extension of the absolute value~$\abs\cdot_v$;
another choice changes them by conjugation in~$\Gamma_F$.

The ring of adeles~$\AD_F$ of the field~$F$ is the restricted product
of all local fields~$F_v$, for $v\in\Val(F)$,
with respect to the subrings $\mathfrak o_v$ for finite places~$v$.
It is a locally compact topological ring and carries
a Haar measure~$\mu$. The quotient space~$\AD_F/F$ is compact;
we shall often assume that $\mu$ is normalized so that
$\mu(\AD_F/F)=1$.

\subsubsection{The adelic space of an algebraic variety}
Let $U$ be an algebraic variety over~$F$.
The space $U(\AD_F)$ of adelic points of~$U$ has a natural locally
compact topology which we recall now.
Let $\mathscr U$ be a model of~$U$ over the integers of~$F$, 
\emph{i.e.}, a scheme which is flat and of finite type over~$\Spec\mathfrak o_F$
together with an isomorphism of~$U$ with $\mathscr U\otimes F$.
The natural maps $\AD_F\ra F_v$ induce a map from
$U(\AD_F)$ to $\prod_{v\in \Val(F)} U(F_v)$. It is not surjective
unless $U$ is proper over~$F$; indeed its image --- usually
called the restricted product --- can be described
as the set of all $(x_v)_v$ in the product such that
$x_v\in\mathscr U(\mathfrak o_v)$ for almost all finite places~$v$.
Since two models are isomorphic over a dense open subset 
of~$\Spec\mathfrak o_F$,
observe also that this condition is independent of the choice of the
specific model~$\mathscr U$.

We endow each $U(F_v)$ with the natural $v$-adic topology;
notice that it is locally compact.
Moreover, for any finite place~$v$,
$\mathscr U(\mathfrak o_v)$ is open and compact in~$U(F_v)$ for this topology.

We then endow $U(\AD_F)$ with the ``restricted product topology'',
a basis of which is given by products $\prod_v \Omega_v$, 
where for each~$v\in\Val(F)$, $\Omega_v$ is an open subset of~$U(F_v)$,
subject to the additional condition that $\Omega_v=\mathscr U(\mathfrak o_v)$
for almost all finite places~$v$.
Let $\Omega\subset U(\AD_F)$ be any subset of the form $\prod_v \Omega_v$,
with $\Omega_v\subset U(F_v)$, such that $\Omega_v=\mathscr U(\mathfrak o_v)$
for almost all finite places~$v$.
Then $\Omega$ is compact if and only each~$\Omega_v$ is compact.
It follows that the topology on~$U(\AD_F)$ is locally compact.

\subsubsection{Adelic metrics}\label{subsubsec.adelic-metrics}
Let $X$ be a proper variety  over a number field~$F$
and
 $\mathscr L$ a line bundle on~$X$.
An adelic metric on~$\mathscr L$ is a collection of $v$-adic metrics
on the associated line bundles on the $F_v$-analytic varieties $X(F_v)$,
for all places $v$ in~$F$,
which, except for a finite number of them, are defined by a single
model $(\mathscr X,\mathscr L)$ over the ring of integers of~$F$. 

By standard properties of schemes of finite presentation, any two
flat proper models are isomorphic at almost all places
and will therefore define the same $v$-adic metrics at these places.

\subsubsection{Example: projective space}
\label{exam.projective-space.adelic.metric}
For any valued field~$F$, let us endow the vector-space~$F^{n+1}$ 
with the norm given by 
$\abs{(x_0,\ldots,x_n)}=\max(\abs{x_0},\ldots,\abs{x_n})$.
When $F$ is a number field and $v\in\Val(F)$,
the construction~\ref{exam.projective-space.metric} on the field~$F_v$
furnishes
a metric on the line bundle~$\mathscr O(1)$ on~$\mathbf P^n$.
This collection of metrics is an adelic metric on~$\mathscr O(1)$,
which we will call the \emph{standard adelic metric}.

\subsubsection{Extension of the ground field}
If $F'$ is any finite extension of~$F$, observe that 
an adelic metric on the line bundle~$\mathscr L\otimes F'$
over~$X\otimes F'$ induces by restriction an adelic metric on
the line bundle~$\mathscr L$ over~$X$. We only need
to check that the family of metrics defined by restriction
are induced at almost all places by a model of~$X$ on~$F$.

Indeed, fix a model~$(\mathscr X',\mathscr L')$ of~$(X\otimes F',
\mathscr L\otimes F')$ over~$\Spec\mathfrak o_{F'}$,
as well as a model~$(\mathscr X,\mathscr L)$ over~$\Spec\mathfrak o_F$.
There is a non-zero integer~$N$ 
such that the identity map $X\otimes F'\ra X\otimes F'$
extends to an isomorphism $\mathscr X\otimes \mathfrak o_{F'}[\frac1N]
\ra\mathscr X'\otimes\mathfrak o_{F'}[\frac1N]$.
Consequently, at all finite places of~$F'$ which do not divide~$N$,
both models define the same metric on~$L\otimes F'$. 
It follows that the metrics of~$L$ are defined 
by the model~$(\mathscr X,\mathscr L)$
at all but finitely many places of~$F$.

\subsubsection{Heights}\label{subsubsec.heights}
Let $X$ be a proper variety over a number field~$F$
and let $\mathscr L$ be a line bundle on~$X$ endowed
with an adelic metric.
Let $x\in X(F)$ and let $\ell$ be any non-zero element in $\mathscr L(x)$.
For almost all places~$v$, one has $\norm{\ell}_v=1$;
consequently, the product $\prod_v \norm{\ell}_v$
converges absolutely. It follows from the product formula
that its value does not depend on the choice of~$\ell$.
We denote it by $H_{\mathscr L}(x)$ and call it
the (exponential) \emph{height} of~$x$ with respect to the 
metrized line bundle~$\mathscr L$.

As an example, assume that $X=\mathbf P^n$ and $\mathscr L=\mathscr O(1)$,
endowed with its standard adelic metric (\ref{exam.projective-space.adelic.metric}). Then for any point $x\in\mathbf P^n(F)$ with
homogeneous coordinates $[x_0\cdots:x_n]$ such that $x_0\neq 0$,
we can take the element~$\ell$ to be the value at~$x$ of
the global section~$X_0$. Consequently, the definition
of the standard adelic metric implies that
\[ H_{\mathscr L}(x)
  = \prod_{v\in\Val(F)} \left(\frac{\abs{x_0}_v}{\max(\abs{x_0}_v,\ldots,\abs{x_n}_v)}\right)^{-1}
  =  \prod_{v\in\Val(F)} \max(\abs{x_0}_v,\ldots,\abs{x_n}_v), \]
in view of the product formula $\prod_{v\in\Val(F)}\abs{x_0}_v=1$.
We thus recover the classical (exponential) height of the point~$x$
in the projective space.

In general, the function 
\[ \log H_{\mathscr L} \colon X(F)\ra \R \]
is a representative of the class of height functions 
attached to the line bundle~$\mathscr L$
using Weil's classical ``height machine'' 
(see~\cite{hindry-silverman2000}, especially Part~B, \S10).

When $\mathscr L$ is ample, the set of points $x\in X(F)$
such that $H_{\mathscr L}(x)\leq B$ is finite for any real
number~$B$ (Northcott's theorem). 

\subsection{Heights on adelic spaces}
\label{subsec.heights.adelic}
Let $\mathsec f$ be a non-zero global section of~$\mathscr L$
and let~$Z$ denote its divisor.
We extend the definition of $H_{\mathscr L}$
to the adelic space $X(\AD_F)$ by defining
\begin{equation}
\label{eq.height.f}
  H_{\mathscr L,\mathsec f}(\mathbf x) = \big(\prod_v \norm{\mathsec f}_v(x)
\big)^{-1}
\end{equation}
for any $\mathbf x\in X(\AD_F)$ such that the infinite product makes sense.

To give an explicit example, 
assume again that $X$ is the projective space~$\mathbf P^n$
and that $\mathscr L$ is the line bundle~$\mathscr O_{\mathbf P}(1)$ endowed
with its standard adelic metric;
let us choose $\mathsec f=x_0$.
Then, $U=X\setminus \div(s)$ can be identified to the set of points~$[1:\cdots:x_n]$ of~$\mathbf P^n$ whose first homogeneous coordinate 
is equal to~$1$, \emph{i.e.}, to the affine space~$\mathbf A^n$.
For any  $\mathbf x=(x_v)_\in U(\AD_F)$, given by $\mathbf x=[1:\mathbf x_1:\cdots:\mathbf x_n]$, with
$\mathbf x_i=(x_{i,v})_v\in \AD_F$, one has
\[ H_{\mathscr L,\mathsec f}(x)
 =  \prod_{v\in\Val(F)} \max(1,\abs{x_{1,v}}_v,\ldots,\abs{x_{n,v}}_v). \]

\begin{lemm}\label{lemm.northcott.adelic}
Let $U$ be any open subset of~$X$.

\begin{enumerate} \def\theenumi{\roman{enumi}}
\item For any $\mathbf x\in X(\AD_F)$,
the infinite product defining~$H_{\mathscr L,\mathsec f}(\mathbf x)$ 
converges to an element of $(0,+\infty]$.
\item 
The resulting function is lower semi-continuous and admits a positive
lower bound on the adelic space $X(\AD_F)$.
\item
If $\mathsec f$ does not vanish on~$U$,
then the restriction of~$H_{\mathscr L,\mathsec f}$
to $U(\AD_F)$ is continuous (for the topology of~$U(\AD_F)$).
\item
Assume that $X=U\cup Z$.  Then, for any real number~$B$, 
the set of points
$\mathbf x\in U(\AD_F)$ such that $H_{\mathscr L,\mathsec f}(\mathbf x)\leq
B$ is a compact subset of~$U(\AD_F)$.
\end{enumerate}
\end{lemm}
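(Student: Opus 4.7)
The strategy hinges on one integrality fact. Fix a flat proper model $(\mathscr X, \mathscr L)$ inducing the adelic metric, let $\mathscr Z$ be the Zariski closure of $Z$ in $\mathscr X$, set $\mathscr U = \mathscr X \setminus \mathscr Z$, and let $S_0 \subset \Val(F)$ be a finite set outside of which $\mathsec f$ extends to a section of $\mathscr L$ with divisor $\mathscr Z$ and the $v$-adic metric coincides with the model metric. For $v \notin S_0$ and $x_v \in X(F_v) = \mathscr X(\mathfrak o_v)$, the pullback $\tilde x_v^*\mathsec f$ lies in the $\mathfrak o_v$-lattice $\tilde x_v^*\mathscr L$, so $\norm{\mathsec f(x_v)}_v \leq 1$, with equality exactly when $x_v \in \mathscr U(\mathfrak o_v)$; otherwise $\norm{\mathsec f(x_v)}_v^{-1} \geq q_v$.

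Part (i) is then immediate: all but finitely many factors exceed $1$, so the product converges in $(0, +\infty]$. For (ii), compactness of $X(F_v)$ combined with continuity of $\norm{\mathsec f}_v$ gives an upper bound $M_v \geq \norm{\mathsec f}_v$ on $X(F_v)$, with $M_v = 1$ for $v \notin S_0$; the uniform lower bound $H_{\mathscr L, \mathsec f} \geq \prod_v M_v^{-1} > 0$ follows. Lower semi-continuity I obtain by writing $H_{\mathscr L, \mathsec f}$ as the supremum, over finite sets $S \supseteq S_0$, of partial products $\prod_{v \in S}\norm{\mathsec f}_v^{-1}$; each partial product is lower semi-continuous on $X(\AD_F)$ as a finite product of non-negative lsc functions, and a supremum of lsc functions is lsc.

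For (iii), I enlarge $S_0$ so that $\mathsec f$ generates $\mathscr L$ over $\mathscr U$, which forces $\norm{\mathsec f(x_v)}_v = 1$ on $\mathscr U(\mathfrak o_v)$ for all $v \notin S_0$. Around any point of $U(\AD_F)$, the restricted product topology supplies a basic open neighborhood on which $x_v \in \mathscr U(\mathfrak o_v)$ outside some finite $S_1 \supseteq S_0$, so there $H_{\mathscr L, \mathsec f}$ reduces to a finite product of continuous positive functions.

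Part (iv) is the main obstacle. The assumption $X = U \cup Z$ ensures that each compact sublevel set $K_v(c) = \{x_v \in X(F_v) : \norm{\mathsec f(x_v)}_v \geq c\}$ lies in $U(F_v)$. If $H_{\mathscr L, \mathsec f}(\mathbf x) \leq B$, the upper bounds $\norm{\mathsec f(x_w)}_w \leq M_w$ yield a uniform lower bound $\norm{\mathsec f(x_v)}_v \geq c_v' > 0$ at each $v$, confining $x_v$ to $K_v(c_v')$. The decisive observation is that from $\prod_{v \notin S_0}\norm{\mathsec f(x_v)}_v^{-1} \leq C := B\prod_{w \in S_0}M_w$, any $v \notin S_0$ with $x_v \notin \mathscr U(\mathfrak o_v)$ contributes a factor at least $q_v$, forcing $q_v \leq C$; since only finitely many places have residue field of bounded size, there is a finite $S_1 \supseteq S_0$ outside which $x_v \in \mathscr U(\mathfrak o_v)$ automatically. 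The sublevel set then sits inside the compact box $\prod_{v \in S_1} K_v(c_v') \times \prod_{v \notin S_1}\mathscr U(\mathfrak o_v)$ of $U(\AD_F)$, and is itself closed by the lower semi-continuity of part (ii), hence compact.
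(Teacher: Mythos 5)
Your proof is correct and follows essentially the same route as the paper's: a model-defined finite set $S_0$, the observation $\norm{\mathsec f}_v\leq 1$ outside $S_0$ with $\norm{\mathsec f}_v^{-1}\geq q_v$ off $\mathscr U(\mathfrak o_v)$, the supremum-of-partial-products description for lower semi-continuity, and for (iv) the combination of closedness (from (ii)) with containment in a compact box $\prod_{v\in S_1}K_v(c'_v)\times\prod_{v\notin S_1}\mathscr U(\mathfrak o_v)$. Your treatment of the finiteness of $S_1$ (from $q_v\leq C$ whenever $x_v\notin\mathscr U(\mathfrak o_v)$) makes explicit what the paper phrases as ``$q_v$ big enough'', but the underlying mechanism --- that model-induced norms are powers of $q_v^{-1}$ --- is the same.
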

\begin{proof}
For any place~$v$, let us set $c_v=\max(1,\norm{\mathsec f}_v)$.
One has $c_v<\infty$ since $X(F_v)$ is compact and the
function $\norm{\mathsec f}_v$ on $X(F_v)$ is continuous.
Moreover, it follows from
the definition of an adelic metric that $\norm{\mathsec f}_v=1$
for almost all~$v$, so that $c_v=1$ for almost all places~$v$. 

Consequently, 
the infinite series $\sum_{v\in\Val(F)} \log\norm{\mathsec f_v}^{-1}$
has (almost all of) its terms nonnegative; it converges to a real
number or to~$+\infty$. This shows that the infinite
product $H_{\mathscr L,\mathsec f}(\mathbf x)$
converges to an element in~$(0,+\infty]$.
Letting $C=1/\prod_v c_v$,
one has $H_{\mathscr L,\mathsec f}(\mathbf x)\geq C$
for any~$\mathbf x\in X(\AD_F)$, which proves~(i) and
the second half of~(ii).

Let $V_1$ be the set of places $v\in\Val(F)$ such that $c_v>1$;
one has
\[ \log H_{\mathscr L,\mathsec f}(\mathbf x)
 = \sup_{V_1\subset V\subset\Val(F)}  \sum_{v\in V} \log\norm{\mathsec f_v}^{-1},
\]
where $V$ runs within the finite subsets of $\Val(F)$ containing~$V_1$.
This expression shows that 
the function $\mathbf x\mapsto \log H_{\mathscr L,\mathsec f}(\mathbf x)$
is a supremum of lower semi-continuous functions, hence
is lower semi-continuous on~$X(\AD_F)$, hence the remaining
part of Assertion~(ii).

Let us now prove~(iii). Let $\mathscr X$
be a flat projective model of~$X$ over~$\mathfrak o_F$,
let $\mathscr Z$  be the Zariski closure of~$Z$
and $\mathscr V=\mathscr X\setminus\mathscr Z$,
let $\mathscr D$ be the Zariski closure of~$X\setminus U$
and let $\mathscr U=\mathscr X\setminus\mathscr D$.

By definition of an adelic metric,
for almost all finite places~$v\in\Val(F)$,
$\norm{f}_v$ is identically equal to~$1$ on~$\mathscr U(\mathfrak o_v)$.
By definition of the topology of $U(\AD_F)$, this implies that
the function~$H_{\mathscr L,\mathsec f}$ 
is locally given by a finite product of continuous functions; 
it is therefore continuous.

Let us finally establish~(iv).
Consider integral models as above; since $U$ contains~$V$
by assumption, then $\mathscr U\supset \mathscr V$.

For any $\mathbf x=(x_v)_v\in X(\AD_F)$,
one has 
\[ \norm{\mathsec f}(x_v) = H_{\mathscr L,\mathsec f}(\mathbf x)^{-1}
  \prod_{w\neq v} \norm{\mathsec f}(x_v)^{-1}
 \geq C^{-1} H_{\mathscr L,\mathsec f}(\mathbf x)^{-1} .
\]
If moreover $H_{\mathscr L,\mathsec f}(\mathbf x)\leq B$,
then $\norm{\mathsec f}(x_v)\geq (BC)^{-1}$.
The set of points~$x_v\in X(F_v)$ satisfying this inequality
is a closed subset of the compact space~$X(F_v)$, hence is compact.
Moreover, this set is contained in $U(F_v)$ because,
by assumption, $\mathsec f$ vanishes on $X\setminus U$.
Consequently, this set is a compact subset of $U(F_v)$.

If the cardinality~$q_v$ of the residue field~$k_v$
is big enough, so that the $v$-adic metric on~$\mathscr L$
is defined by the line bundle~$\mathscr O(\mathscr Z)$ at the place~$v$,
then $-\log\norm{\mathsec f}_v(x_v)$ is a non-negative integer
times $\log q_v$. The inequality $-\log\norm{\mathsec f}_v(x_v)\leq\log(BC)$
then implies that $-\log\norm{\mathsec f}_v(x_v)=0$.

Let $E_B$ denote the set of points $\mathbf x\in U(\AD_F)$
such that $H_{\mathscr  L,\mathsec f}(\mathbf x)\leq B$.
By what we proved,  there exists a finite set of places~$V$
(depending on~$B$) such that
$E_B$ is the product of a compact subset of $\prod_{v\in V} U(F_v)$
and of $\prod_{v\not\in V} \mathscr V(\mathfrak o_v)$.
Since $\mathscr V(\mathfrak o_v)\subset\mathscr U(\mathfrak o_v)$,
we see that $E_B$ is relatively compact in $U(\AD_F)$.
By lower semi-continuity, it is also closed in~$U(\AD_F)$,
hence compact.
\end{proof}

We observe that the hypotheses in~(iii) and~(iv) are actually necessary.
They hold in the important case where
$U=X\setminus Z$; then, $H_{\mathscr L,\mathsec f}$
defines a continuous exhaustion of $U(\AD_F)$ by compact subsets.

\smallbreak
\subsection{Convergence factors and Tamagawa measures on adelic spaces}\label{subsec.global-tamagawa}
\nobreak
\subsubsection{Volumes and local densities} \label{ss.local.densities}
Let $X$ be a smooth, proper, and geometrically integral
algebraic variety over a number field~$F$. 
Fix an adelic metric on the canonical line bundle~$\omega_X$;
by the results recalled in~\ref{subsubsec.volume-forms},
for any place $v$ of~$F$, this induces  a measure~$\tau_{X,v}$ on $X(F_v)$
and its restriction~$\tau_{U,v}$ to~$U(F_v)$.

Let $U$ be a Zariski open subset of~$X$ and let $Z=X\setminus U$.
Fix a model $\mathscr X$ of~$X$, and
let $\mathscr Z$ be the Zariski closure of~$Z$ in~$\mathscr X$
and $\mathscr U=\mathscr X\setminus \mathscr Z$.

By a well-known formula going back to Weil (\cite{weil82},
see also~\cite{salberger98}, Corollary~2.15),
the equality
\begin{equation} \label{eq.weil-formula}
 \tau_{U,v}(\mathscr U(\mathfrak o_v)) = q_v^{-\dim X} \Card  \mathscr U(k_v) .
\end{equation}
holds for almost all non-archimedean places~$v$.

\subsubsection{Definition of an ${\rm L}$-function}

\begin{defi}\label{defi.P(u)}
We define $\EP (U)$  to be the following virtual
$\Q[\Gamma_F]$-module:
\[ \left[\mathrm H^0(U_{\bar F},\gm)/\bar F^* \right]_\Q - 
   \left[\mathrm H^1(U_{\bar F},\gm)\right]_\Q.
\] 
\end{defi}
It is an object of the Grothendieck group of the category
of finite dimensional $\Q$-vector spaces endowed with a continuous
action of~$\Gamma_F$, meaning that there is a finite extension~$F'$
of~$F$ such that the subgroup $\Gamma_{F'}$ acts trivially.
Such a virtual Galois-module 
(we shall often skip the word ``virtual'')
has an Artin L-function, given by an Euler product
\[ \mathrm L(s,\EP (U))= \prod_{\text{$v$ finite}} \mathrm L_v(s,\EP (U)),
\qquad
    \mathrm L_v(s,\EP (U))=\det\big(1-q_v^{-s}\Fr_v\big|\EP(U)^{\Gamma_v^0}\big)^{-1},
\]
where $\Fr_v$ is a geometric Frobenius element
and $\Gamma_v^0$ is an inertia subgroup at the place~$v$.

\begin{lemm}
For any finite place~$v$ of~$F$, $\mathrm L_v(s,\EP (U))$ 
is a positive real number. 
\end{lemm}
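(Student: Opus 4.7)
Expand the Euler factor using the definition of $\EP(U)$ as a formal difference. Writing $M^0=\mathrm H^0(U_{\bar F},\gm)/\bar F^{*}$ and $M^1=\mathrm H^1(U_{\bar F},\gm)$, and $M^i_\Q=M^i\otimes\Q$, one has
\[ \mathrm L_v(s,\EP(U)) = \frac{\det\bigl(1 - q_v^{-s}\Fr_v \bigm| (M^1_\Q)^{\Gamma_v^0}\bigr)}{\det\bigl(1 - q_v^{-s}\Fr_v \bigm| (M^0_\Q)^{\Gamma_v^0}\bigr)}. \]
It therefore suffices to show that for every finite-dimensional continuous $\Q$-rep\-re\-sen\-ta\-tion~$V$ of $\Gamma_v$ with open kernel, the quantity $\det(1 - q_v^{-s}\Fr_v\mid V^{\Gamma_v^0})$ is a strictly positive real number for real $s>0$; both~$M^0_\Q$ and~$M^1_\Q$ satisfy this hypothesis by the very definition of~$\EP(U)$ as an object of the Grothendieck group of such representations.

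Next, reduce to a computation of eigenvalues. The open-kernel hypothesis implies that $\Fr_v$ acts on $V^{\Gamma_v^0}$ with finite order, so its eigenvalues on $V^{\Gamma_v^0}\otimes_\Q\C$ are roots of unity. Since $V$ is defined over~$\Q$, the characteristic polynomial of~$\Fr_v$ lies in $\Q[T]$; thus its non-real eigenvalues come in complex-conjugate pairs while its real eigenvalues belong to $\{-1,+1\}$. Grouping the eigenvalues accordingly yields the factorization
\[ \det\bigl(1 - q_v^{-s}\Fr_v \bigm| V^{\Gamma_v^0}\bigr) = (1 - q_v^{-s})^{n_+}(1 + q_v^{-s})^{n_-}\prod_{\zeta}\abs{1 - q_v^{-s}\zeta}^2, \]
where $n_\pm$ are the multiplicities of $\pm 1$ as eigenvalues and $\zeta$ ranges over representatives of the non-real conjugate pairs (so that each pair contributes the factor $(1-q_v^{-s}\zeta)(1-q_v^{-s}\bar\zeta)=\abs{1-q_v^{-s}\zeta}^2$, using that $q_v^{-s}$ is real).

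Finally, read off positivity from this product. For any real $s>0$, each factor is strictly positive: $1-q_v^{-s}>0$ since $q_v\geq 2$ forces $q_v^{-s}<1$; $1+q_v^{-s}>0$ trivially; and each $\abs{1-q_v^{-s}\zeta}^2>0$ because $q_v^{-s}\zeta$ has nonzero imaginary part. Both determinants entering the ratio are therefore strictly positive reals, and so is their quotient. The argument is essentially elementary; the only inputs beyond elementary algebra are the rationality of the characteristic polynomial of~$\Fr_v$, which is immediate from the $\Q$-structure on~$V$, and the finite-order property of~$\Fr_v$, which is built into the hypothesis that $\EP(U)$ lies in the Grothendieck group of continuous $\Q[\Gamma_F]$-modules with open kernel. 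I do not foresee any serious obstacle.
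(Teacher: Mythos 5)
Your proof is correct and reaches the same conclusion as the paper, but by a slightly different route. The paper works directly with the \emph{virtual} characteristic polynomial $f(X)=\det(X-\Fr_v\mid\EP(U))$, which is a rational function; it factors $f$ as a product of cyclotomic polynomials $\prod_n\Phi_n(X)^{a_n}$ with $a_n\in\Z$ (invoking irreducibility of the $\Phi_n$ over $\Q$) and then notes $\Phi_n(x)>0$ for real $x>1$ by the inductive relation $\prod_{d\mid n}\Phi_d(X)=X^n-1$. You instead split $\EP(U)$ into the two genuine representations $M^0_\Q$ and $M^1_\Q$, observe that each characteristic polynomial lies in $\Q[T]$, and pair up non-real eigenvalues with their conjugates, so each determinant becomes a product of manifestly positive factors $(1\pm q_v^{-s})$ and $\abs{1-q_v^{-s}\zeta}^2$. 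Your version is slightly more elementary (it avoids cyclotomic irreducibility), and it is also a touch more careful on one point the paper elides: the local factor is computed on inertia invariants, and you explicitly take $(M^i_\Q)^{\Gamma_v^0}$ before arguing, whereas the paper writes $\det(X-\Fr_v\mid\EP(U))$ as if $\Gamma_v^0$ acted trivially. Both methods rest on the same two facts — $\Fr_v$ has finite order on the invariants (so eigenvalues are roots of unity) and the determinant is a real number because the module has a $\Q$-structure — so the underlying ideas coincide even though the concluding computation is organized differently.
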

\begin{proof}
We have $\mathrm L_v(s,\EP (U))=q_v^{s\rang \EP (U)}f(q_v^s)$,
where the rational function
\[
f(X)=\det(X-\Fr_v|\EP (U))
\] 
is the virtual characteristic polynomial
of~$\Fr_v$ acting on~$\EP (U)$.
Since the action of~$\Gamma_F$ on~$\EP (U)$ factorizes
through a finite quotient, the rational function
has only roots of unity
for zeroes and poles.

By  irreducibility
of the cyclotomic polynomials~$\Phi_n$ in~$\Q[X]$, this
implies the existence of rational
integers $(a_n)_{n\geq 1}$, almost all zero,
such that 
\[ \det(X-\Fr_v|\EP (U)) = \prod_{n\geq 1} \Phi_n(X)^{a_n}. \]
From the inductive definition of the cyclotomic polynomials~$\Phi_n$,
namely $\prod_{d|n}\Phi_n(X)=X^n-1$,
we see that $\Phi_n(x)>0$ for any real number~$x>1$
and any positive integer~$n$. In particular, $f(x)>0$
for any real number~$x>1$ and $\mathrm L_v(1,\EP (U))>0$,
as claimed.
\end{proof}

(To simplify notation,
we shall also put $\mathrm L_v=1$ for any archimedean place~$v$.)
More generally, if $S$ is an arbitrary set of places of~$F$,
we define
\[ \mathrm L^S(s,\EP (U))
= \prod_{ v\not\in S } \mathrm L_v(s,\EP (U))
.\]
This Euler product converges for $\Re(s)>1$ to a holomorphic function of~$s$
in that domain;
by Brauer's theorem~\cite{brauer1947}, 
it admits a meromorphic continuation to the whole complex plane.
We denote by 
\[ \mathrm L^{S}_{*}(1,\EP (U))
      = \lim_{s\ra 1} \mathrm L^S(s,\EP (U)) (s-1)^{-r},
\qquad  r=\ord_{s=1} \mathrm L^S(s,\EP (U)), \]
its ``principal value'' at~$s=1$.

From the previous lemma, we deduce the following corollary.
\begin{coro}
For any finite set of places~$S$ of~$F$,
$\mathrm L^S_*(1,\EP (U))$ is a positive real number.
\end{coro}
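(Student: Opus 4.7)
The plan is to combine the positivity of each local factor with the meromorphic continuation of $\mathrm{L}^S(s,\EP(U))$ guaranteed by Brauer's theorem, and then pass to the limit along the real axis from the right.

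First I would observe that the computation in the proof of the previous lemma works for every real $s>1$, not just at $s=1$. Indeed, by the same reasoning,
\[
\mathrm{L}_v(s,\EP(U)) = q_v^{s\,\rang\EP(U)}\prod_{n\geq 1}\Phi_n(q_v^s)^{a_n},
\]
and since $\Phi_n(x)>0$ for every real $x>1$, each local factor $\mathrm{L}_v(s,\EP(U))$ is a strictly positive real number for every real $s>1$. Because the Euler product $\mathrm{L}^S(s,\EP(U)) = \prod_{v\notin S}\mathrm{L}_v(s,\EP(U))$ converges absolutely in the half-plane $\Re(s)>1$, one obtains $\mathrm{L}^S(s,\EP(U))>0$ for every real $s>1$.

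By Brauer's theorem, $\mathrm{L}^S(s,\EP(U))$ admits a meromorphic continuation to $\C$, so in a punctured neighbourhood of $s=1$ one may factor
\[
\mathrm{L}^S(s,\EP(U)) = (s-1)^r\, h(s),\qquad r=\ord_{s=1}\mathrm{L}^S(s,\EP(U)),
\]
with $h$ holomorphic at $s=1$ and $h(1)\neq 0$. For real $s>1$ one has $(s-1)^{-r}>0$ irrespective of the sign of the integer $r$, so the preceding paragraph gives $h(s) = \mathrm{L}^S(s,\EP(U))\,(s-1)^{-r} > 0$. Letting $s\to 1^{+}$ yields $\mathrm{L}^S_*(1,\EP(U)) = h(1)\geq 0$, and the non-vanishing of $h$ at $s=1$ upgrades this inequality to strict positivity.

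The argument is essentially an extension of the previous lemma's cyclotomic-polynomial computation to the whole real ray $s>1$, combined with a one-sided limit argument; there is no serious obstacle. The one subtle point worth emphasizing is that $r$ may a priori be negative, since $\EP(U)$ is only a virtual module and $\mathrm{L}^S(s,\EP(U))$ may have a pole at $s=1$. This is harmless because $(s-1)^{-r}$ is positive on $(1,\infty)$ for every integer $r$, so the sign argument is insensitive to whether one is dealing with a zero or a pole.
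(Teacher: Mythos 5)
Your proof is correct, and it is essentially the argument the paper has in mind: the paper simply states "From the previous lemma, we deduce the following corollary" without writing out the details, and your proposal supplies exactly the right filling. You correctly note that the cyclotomic-polynomial positivity from the lemma holds for every real $s>1$ (not just $s=1$), that absolute convergence of the Euler product then gives $\mathrm{L}^S(s,\EP(U))>0$ on $(1,\infty)$, and that the sign of $(s-1)^{-r}$ on $(1,\infty)$ is positive regardless of the sign of the integer $r$, so the one-sided limit together with nonvanishing of the leading Laurent coefficient yields strict positivity. Your explicit remark that $r$ may be negative because $\EP(U)$ is a virtual module is a genuine subtlety worth flagging, and you handle it properly.
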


We now show how this L-function furnishes renormalization factors
for the local measures~$\tau_{U,v}$.

\begin{theo}\label{prop.convergence}
In addition to the notation and assumptions of~\S\ref{ss.local.densities},
suppose that
\[ \mathrm H^1(X,\mathscr O_X)=\mathrm H^2(X,\mathscr O_X)=0. \]
Then 
$\mathrm L_v(1,\EP (U))\tau_{U,v}(\mathcal U(\mathfrak o_v))
=1+\mathrm O(q_v^{-3/2})$. In particular,
the infinite product 
\[ \prod_{v\not\in S} \mathrm L_v(1,\EP (U))
   \tau_{U,v}(\mathcal U(\mathfrak o_v)) \]
converges absolutely.
\end{theo}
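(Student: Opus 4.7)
The plan is to combine Weil's formula with the Grothendieck--Lefschetz trace formula, Poincar\'e duality and Deligne's purity theorem, then to identify the two leading Frobenius contributions to the \'etale cohomology of $U_{\bar F}$ with those appearing in $\mathrm L_v(1,\EP(U))^{-1}$ via the Gysin sequence, which is controlled by the cohomological vanishing hypotheses on~$X$. At all but finitely many finite places, the model $\mathscr X$ is smooth and $\mathscr D$ is a relative strict normal crossings divisor, so that Weil's formula~\eqref{eq.weil-formula} gives $\tau_{U,v}(\mathscr U(\mathfrak o_v)) = q_v^{-d}\Card\mathscr U(k_v)$ with $d=\dim X$. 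Fixing a prime $\ell\neq\mathrm{char}(k_v)$, the Grothendieck--Lefschetz trace formula reads
\[
 \Card \mathscr U(k_v) = \sum_{i=0}^{2d} (-1)^i \tr\bigl(\Fr_v \mid \H^i_c(\mathscr U_{\bar k_v},\Q_\ell)\bigr),
\]
and smooth and proper base change identifies the cohomology of the special fibre with the $\Gamma_v^0$-invariants of $\H^i_c(U_{\bar F},\Q_\ell)$ at almost all $v$.

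By Deligne's purity theorem (Weil~II), the eigenvalues of $\Fr_v$ on $\H^i_c(U_{\bar F},\Q_\ell)$ have complex absolute value at most $q_v^{i/2}$. Consequently the terms with $i\leq 2d-3$ contribute $\mathrm O(q_v^{-3/2})$ to $q_v^{-d}\Card\mathscr U(k_v)$, while the top-degree term $\H^{2d}_c(U_{\bar F},\Q_\ell)\simeq\Q_\ell(-d)$ contributes exactly~$1$ since $U$ is geometrically integral. It remains to match the $i=2d-1$ and $i=2d-2$ terms, which by Poincar\'e duality are controlled by $\H^1(U_{\bar F},\Q_\ell)$ and the weight-$2$ part of $\H^2(U_{\bar F},\Q_\ell)$. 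The hypothesis $\H^1(X,\mathscr O_X)=0$ forces $\Pic^0 X=0$, whence $\H^1(X_{\bar F},\Q_\ell)=0$; combined with the Kummer sequence and $\H^2(X,\mathscr O_X)=0$, this gives $\H^2(X_{\bar F},\Q_\ell(1))\simeq\NS(X_{\bar F})\otimes\Q_\ell$. Plugging these into the Gysin exact sequence attached to $D\subset X$ and comparing with the standard exact sequence
\[
 \bar F^* \to \H^0(U_{\bar F},\gm) \to \mathrm{Div}_D(X_{\bar F}) \to \Pic X_{\bar F} \to \Pic U_{\bar F} \to 0
\]
defining $M^0$ and $M^1$ yields canonical $\Gamma_F$-equivariant isomorphisms $\H^1(U_{\bar F},\Q_\ell(1))\simeq M^0\otimes\Q_\ell$ and an analogous identification of the weight-$2$ piece of $\H^2(U_{\bar F},\Q_\ell(1))$ with $M^1\otimes\Q_\ell$.

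Expanding $\det\bigl(1-q_v^{-1}\Fr_v\mid(M^i\otimes\Q)^{\Gamma_v^0}\bigr) = 1 - q_v^{-1}\tr(\Fr_v\mid(M^i\otimes\Q)^{\Gamma_v^0}) + \mathrm O(q_v^{-2})$ for $i=0,1$ and taking the ratio produces the first-order expansion of $\mathrm L_v(1,\EP(U))^{-1}$ in $q_v^{-1}$. Comparing it with the cohomological expansion of $q_v^{-d}\Card\mathscr U(k_v)$ obtained above, and accounting for the Tate twist and the sign $(-1)^{2d-1}$ produced by Poincar\'e duality, one sees that the two first-order contributions cancel, proving the asserted estimate $\mathrm L_v(1,\EP(U))\tau_{U,v}(\mathscr U(\mathfrak o_v)) = 1 + \mathrm O(q_v^{-3/2})$. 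Absolute convergence of the infinite product then follows from $\sum_v q_v^{-3/2}<\infty$. The main technical obstacle is the careful tracking of Tate twists, Galois actions, and sign conventions in Poincar\'e duality and in the Artin $L$-factor normalization (geometric versus arithmetic Frobenius), so that the two first-order corrections agree \emph{on the nose} rather than merely up to an additional $\mathrm O(q_v^{-3/2})$ term.
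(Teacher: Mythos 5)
Your plan is correct but follows a genuinely different route from the paper's. The paper splits $\Card\mathscr U(k_v)=\Card\mathscr X(k_v)-\Card\mathscr Z(k_v)$, applies the Lefschetz formula and Deligne's bounds to the \emph{proper} smooth~$X$, and handles~$Z$ by an elementary irreducible-component count; crucially, it first reduces, via resolution of singularities and the birational invariance of $\H^i(X,\mathscr O_X)$, to the case where $Z$ is a divisor, which is what makes the $\gm$-exact-sequence identification $\EP(U)=\Pi(Z_{\bar F})_\Q-\Pic(X_{\bar F})_\Q$ available. You instead apply the Lefschetz formula directly to the \emph{open} $\mathscr U$ with compactly-supported cohomology and Poincar\'e duality. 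Both routes ultimately pin down the $q_v^{-1}$-coefficient of $q_v^{-d}\Card\mathscr U(k_v)$ and match it with $\tr(\Fr_v\mid M^0)-\tr(\Fr_v\mid M^1)$; yours is more intrinsic to~$U$ but, as you yourself flag, demands bookkeeping you have not actually performed: purity of weight~$2$ for $\H^1(U_{\bar F},\Q_\ell)$ (using $\H^1(X_{\bar F},\Q_\ell)=0$ together with the fact that $W_1\H^1(U)$ is the image of $\H^1(X)$), the identification of the weight-$0$ graded pieces of $\H^1(U,\Q_\ell(1))$ and $\H^2(U,\Q_\ell(1))$ with $M^0\otimes\Q_\ell$ and $M^1\otimes\Q_\ell$ (note that the relevant weight is~$0$ after the $(1)$-twist, not~$2$ as written), and the cancellation of signs coming from Poincar\'e duality and the degree parity. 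Two smaller points: the theorem concerns an arbitrary closed $Z=X\setminus U$, not a priori a strict normal crossings divisor, so you need either the paper's reduction step or an argument that only the codimension-one part of~$Z$ intervenes in $M^0$, $M^1$, and the relevant weight graded pieces of $\H^*_c(U)$; and the comparison for $\H^*_c(\mathscr U)$ at good-reduction places is \emph{smooth} base change (with inertia acting trivially, so the $\Gamma_v^0$-invariants are vacuous), not smooth-and-proper.
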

\begin{rema}
For a smooth projective variety~$X$  and any integer~$i$,
the vector-space $\mathrm H^0(X,\Omega^i_X)$
defines a birational invariant of~$X$. 
Let us sketch the proof.
For any birational morphism $f\colon X\dashrightarrow X'$,
there are open subsets~$V\subset X$ and~$V'\subset X'$ 
whose complementary subsets have codimension at least~$2$
such that $f$ is defined on~$V$ and~$f^{-1}$ is defined on~$V'$.
The restriction morphism $\mathrm H^0(X,\Omega^i_X)\ra\mathrm H^0(V,\Omega^i_X)$ is then  injective since $V$ is dense, and surjective by the
Hartogs principle; similarly, the restriction morphism
$\mathrm H^0(X',\Omega^i_{X'})\ra\mathrm H^0(V',\Omega^i_{X'})$ is an isomorphism. 
Consequently, the corresponding regular maps 
$g\colon V\ra X'$ and $g'\colon V'\ra X$
define morphisms $\mathrm H^0(X',\Omega^i_{X'})\ra\mathrm H^0(X,\Omega^i_X)$
and $\mathrm H^0(X,\Omega^i_X)\ra\mathrm H^0(X',\Omega^i_{X'})$
both compositions of which equal the identity map,
hence are isomorphisms.

When the ground field is  the field of complex numbers, Hodge
theory identifies these vector-spaces with the conjugates
of the cohomology spaces $\mathrm H^i(X,\mathscr O_X)$.
Consequently, these spaces define birational invariants
of smooth complex projective varieties. Moreover,
by the Lefschetz  principle, this extends to smooth projective
varieties defined over a field of characteristic~$0$.
In particular,
the assumptions of the theorem are therefore 
conditions on the variety~$U$ and do not depend
on its smooth compactification~$X$.
\end{rema}
\begin{proof}
Blowing up the subscheme~$Z$ and resolving the singularities
of the resulting scheme, we obtain a smooth variety~$X'$, with
a proper morphism $\pi\colon X'\ra X$, which is an isomorphism
above~$U$, such that $Z'=\pi^{-1}(Z)$ is a divisor.

By the previous remark,
the variety~$X'$ satisfies 
$\mathrm H^1(X',\mathscr O_{X'})=\mathrm H^2(X',\mathscr O_{X'})=0$. 
Let $U'=\pi^{-1}(U)$; since~$\pi$ is an isomorphism,
any model $(\mathscr X',\mathscr Z')$ of~$(X',Z')$ 
will be such that $\mathscr U'=\mathscr X'\setminus\mathscr Z'$
is isomorphic to~$\mathscr U$, at least over a dense open
subset of~$\Spec\mathfrak o_F$. In particular, the
cardinalities of~$\mathscr U'(k_v)$ and~$\mathscr U(k_v)$
will be equal for almost all places~$v$.
In view of Weil's formula recalled as Equation~\eqref{eq.weil-formula},
this
implies that $\tau_{U',v}(\mathscr U'(\mathfrak o_v))=
\tau_{U,v}(\mathscr U(\mathfrak o_v))$ for almost all finite
places~$v$ of~$F$.

Consequently, assuming that~$Z$ is a divisor
does not reduce the generality of the argument which follows.

For almost all places~$v$, one has
\[ \tau_{U,v}(\mathcal U(\mathfrak o_v)) = q_v^{-\dim U} \Card  \mathcal U(k_v).\]
Evidently,
\[ \Card  \mathcal U(k_v) = \Card  \mathcal X(k_v) - \Card  \mathcal Z(k_v). \]

Let $\ell$ be a prime number.
By the smooth and proper base change theorems,
and by Poincaré duality,
the number of points of~$\mathscr X(k_v)$ can be computed,
for any finite place~$v$ of~$F$,
via the trace of a geometric Frobenius element~$\Fr_v\in\Gamma_v$ 
on the $\ell$-adic cohomology of~$Z_{\bar F}$,
as soon as 
$\mathscr X$ is smooth over the local ring of~$\mathfrak o_F$ at~$v$
and the residual characteristic at~$v$ is not equal to~$\ell$.
Specifically, for any such finite place~$v$, one has
the equality
\[ q_v^{-\dim X}\Card \mathcal X(k_v) = \sum_{i=0}^{2d} q_v^{-i/2}\tr(\Fr_v|\mathrm H^i(X_{\bar F},\Q_\ell)). \]
By Deligne's proof~\cite{deligne74} of the Weil conjectures (analogue of
the Riemann hypothesis) the eigenvalues of~$\Fr_v$
on $\mathrm H^i(X_{\bar F},\Q_\ell)$ are algebraic numbers with archimedean
eigenvalue~$q^{i/2}$. Therefore, the $i$th term of the sum above
is an algebraic number whose archimedean
absolute values are bounded by $\dim \mathrm H^i(X_{\bar F},\Q_\ell) q^{-i/2}$.
In particular, the sum of all terms corresponding to $i\geq 3$
is an algebraic number whose archimedean absolute values
are $\mathrm O(q_v^{-3/2})$ when $v$ varies through the
set of finite places of~$F$.

The term corresponding to $i=0$ is equal to~$1$ since $X$ is geometrically connected.

Moreover, 
it follows from Peyre's arguments (\cite{peyre95}, proof of Lemma~2.1.1
and Proposition~2.2.2) that one has 
\[ \mathrm H^1(X_{\bar F},\Q_\ell) = 0 \]
and that the cycle map induces an isomorphism of
$\Q_\ell[\Gamma_F]$-modules
\[ \Pic(X_{\bar F}) \otimes\Q_\ell \xrightarrow\sim
 \mathrm H^2(X_{\bar F},\Q_\ell(1)). \]

Let $\Pi(Z_{\bar F})$ denote the free $\Z$-module with basis
the set of irreducible components of $Z_{\bar F}$, endowed with
its Galois action.

\begin{lemm}
For almost all~$v$, one has the equality
\[ \Card  \mathcal Z(k_v) = n_v q_v^{\dim Z} + \mathrm O(q_v^{\dim Z-1/2}), \]
where $n_v=\tr(\Fr_v|\Pi(Z_{\bar F}))$ 
is the number of irreducible components of $\mathcal Z\otimes
k_v$ which are geometrically irreducible.
\end{lemm}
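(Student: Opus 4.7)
The plan is to reduce the count on $\mathcal Z(k_v)$ to counting $k_v$-points on the geometrically irreducible components one by one, and then to invoke the Lang--Weil estimates.

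First, I would choose a finite Galois extension $F'/F$ over which every irreducible component $Z_1,\dots,Z_r$ of $Z_{\bar F}$ is already defined. Viewing each $Z_i$ as a closed subscheme of $Z\otimes F'\subset\mathcal Z\otimes F'$, I spread them out to flat closed subschemes $\mathcal Z_i\subset\mathcal Z\otimes\mathfrak o_{F'}[1/N]$ for a suitable integer~$N$, and enlarge $N$ if necessary so that the $\Gal(F'/F)$-action on $\{Z_1,\dots,Z_r\}$ (which gives $\Pi(Z_{\bar F})$ its structure of permutation module) extends to an action permuting the~$\mathcal Z_i$, and so that $\mathcal Z_{\mathfrak o_{F'}[1/N]}=\bigcup_i\mathcal Z_i$ set-theoretically. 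Since the preceding paragraph has reduced the problem to the case when $Z$ is a divisor in the smooth variety~$X$, each $Z_i$ has dimension $\dim Z$ and the $Z_i$ are pairwise distinct, so every intersection $Z_i\cap Z_j$ with $i\neq j$ has dimension strictly smaller than $\dim Z$.

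Next, I restrict attention to the finite places~$v$ of~$F$ which do not divide~$N$, which exclude only finitely many places. For such~$v$, the irreducible components of $\mathcal Z\otimes\bar k_v$ are exactly the $\mathcal Z_i\otimes\bar k_v$, and $\Fr_v$ acts on this set through its image in $\Gal(F'/F)$. The number of fixed components is then $\tr(\Fr_v\mid\Pi(Z_{\bar F}))=n_v$; these fixed components are precisely the~$\mathcal Z_i\otimes k_v$ that are geometrically irreducible $k_v$-subvarieties of dimension~$\dim Z$. To each such one, the Lang--Weil estimates apply to give
\[ \Card(\mathcal Z_i\otimes k_v)(k_v) = q_v^{\dim Z}+O(q_v^{\dim Z-1/2}). \]
Summing over the $n_v$ fixed components produces the announced main term $n_v q_v^{\dim Z}+O(q_v^{\dim Z-1/2})$.

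For the remaining components of $\mathcal Z\otimes k_v$, corresponding to $\Fr_v$-orbits of length $\ell\geq 2$, any $k_v$-point must lie, over $\bar k_v$, in some intersection $Z_i\cap Z_{\sigma(i)}$ with $\sigma(i)\neq i$ (otherwise $\Fr_v$ would move it to a different $\bar k_v$-component). Such an intersection has dimension less than $\dim Z$, so the elementary bound $\Card V(k_v)\leq C q_v^{\dim V}$ on the $k_v$-points of a subvariety of bounded degree contributes only $O(q_v^{\dim Z-1})$. The same bound controls the overcounting of $k_v$-points lying simultaneously on several fixed components $\mathcal Z_i$. All these error terms are absorbed into $O(q_v^{\dim Z-1/2})$, yielding the estimate of the lemma.

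The main obstacle is uniformity of the error in~$v$: the implicit constants in Lang--Weil and in the trivial bound depend only on the dimensions and degrees of the subvarieties involved. Since all the $\mathcal Z_i$ and their pairwise intersections descend from a fixed finite family of subschemes over $\mathfrak o_{F'}[1/N]$, their geometric degrees are uniformly bounded, and the effective version of the Lang--Weil estimate delivers the required uniformity.
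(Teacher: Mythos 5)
Your proof is correct, and it takes the alternative route that the paper itself acknowledges in a footnote: the lemma ``can be deduced from the estimates of Lang--Weil.'' The paper's own argument instead reduces, via the elementary upper bound $\Card\mathcal Z(k_v)\ll q_v^{d}$ and resolution of singularities, to the case of a smooth projective $Z$, and then invokes Deligne's theorem (the Riemann hypothesis part of the Weil conjectures) together with Poincar\'e duality to express $q_v^{-d}\Card\mathcal Z(k_v)$ as $\tr(\Fr_v\mid\mathrm H^0(Z_{\bar F},\Q_\ell))+\mathrm O(q_v^{-1/2})$, identifying this $\mathrm H^0$ with the permutation module on the components. Your approach avoids both the cohomological machinery and the resolution step: you spread out the geometric components over $\mathfrak o_{F'}[1/N]$, observe that $\Fr_v$ acts on them through $\Gal(F'/F)$, apply effective Lang--Weil to each $\Fr_v$-fixed (hence geometrically irreducible) component, and dispose of the remaining ones---as well as of the overcounting on pairwise intersections---by the trivial degree-bounded estimate, since any $k_v$-point on a non-fixed component must lie on a lower-dimensional intersection $Z_i\cap Z_{\sigma(i)}$. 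Your uniformity observation (that all the relevant degrees are bounded once the components have been spread out over a fixed base) is exactly what makes the error term uniform in~$v$. The tradeoff: the paper's proof is short once one grants Deligne, whereas yours is more elementary, more self-contained, and does not require the variety to be smooth or projective (only equidimensionality, which you get from the earlier reduction to the divisor case). Both routes lead to the same identification of $n_v$ with $\tr(\Fr_v\mid\Pi(Z_{\bar F}))$.
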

\begin{proof}[Proof of the lemma]\footnote{The proof
below uses Deligne's proof of Weil's conjecture but
the result can be deduced from the estimates
of Lang-Weil in~\cite{lang-w54}.}
In fact, we prove, for any scheme $\mathscr Z/\mathfrak o_F$
which is flat, of finite type, and 
whose generic fibre~$Z$ has dimension~$d$,
the following equality
\[ \Card \mathcal Z(k_v) = n_v q_v^d + \mathrm O(q_v^{d-1/2}), \]
where $n_v$ is the number of fixed points of~$\Fr_v$
in the set of $d$-dimensional irreducible components of~$Z_{\bar F}$.

First observe that there is a general upper bound
\[ \Card \mathcal Z(k_v) \leq C q_v^d , \]
for some integer~$C>0$. When $\mathcal Z$ is quasi-projective,
this follows for example from Lemma~3.9 
in~\cite{chambert-loir-t2000b}, where $C$ can be
taken to be the degree of the closure of~$Z$ 
in a projective compactification; the general case follows
from this since $\mathcal Z$ is assumed to be of finite type.

Consequently, adding or removing a subscheme of lower dimension
gives an equivalent inequality, so that
we may assume that $\mathcal Z$ is projective and equidimensional. 
By resolution of singularities, we may also assume that
$Z$ is smooth.

Let $\ell$ be a fixed prime number. As above, we can compute
$\Card\mathcal Z(k_v)$ using the $\ell$-adic cohomology
of~$Z_{\bar F}$: one has an upper bound
\[ q_v^{-d}\Card \mathcal Z(k_v) = \tr(\Fr_v|\mathrm H^i(Z_{\bar F},\Q_\ell))+\mathrm O(q_v^{-1/2}). \]
Moreover, 
$\mathrm H^{0}(Z_{\bar F},\Q_\ell) \simeq \Q_\ell^{\Pi(Z_{\bar F})}$,
where $\Pi(Z_{\bar F})$ denotes the set of connected
components of $Z_{\bar F}$, the Frobenius element~$\Fr_v$ acting
 trivially on this ring.
Consequently, 
\[  \tr(\Fr_v|\mathrm H^{0}(Z_{\bar F},\Q_\ell)) =  \tr(\Fr_v|\Pi(Z_{\bar F}))
\]
and the lemma follows from this.
\end{proof}

Let us return to the proof of Theorem~\ref{prop.convergence}.
By the preceding lemma, we have
\[ q_v^{-\dim X} \Card  \mathcal Z(k_v) = q_v^{-1}  \tr(\Fr_v|\Pi(Z_{\bar F}))
       + \mathrm O(q_v^{-3/2}), \]
hence
\[ q_v^{-\dim X} \Card  \mathcal U(k_v)
	= 1 - \frac{1}{q_v} \tr\big(\Fr_v\big|\Pi(Z_{\bar F})\otimes\Q_\ell
  - \mathrm H^2_{\text{\'et}}( X_{\bar F},\Q_\ell(1)) \big) + \mathrm O(q_v^{-3/2}). \]

Let us show that the trace appearing in the previous
formula is equal to $\tr(\Fr_v|\EP (U))$.
The order of vanishing/pole along any irreducible component
of $Z_{\bar F}$ defines an exact sequence of abelian sheaves
for the {\'e}tale site of $X_{\bar F}$:
\[ 1 \ra \gm \ra i_*\gm \xrightarrow{\ord} 
         \bigoplus j_{\alpha*} \Z \ra 0, \]
where $i\colon U_{\bar F}\ra X_{\bar F}$ is the inclusion 
and $j_\alpha\colon Z_\alpha\ra X_{\bar F}$
are the inclusions of the irreducible components of $Z_{\bar F}$.
These sheaves are endowed with an action of $\Gamma_F$
for which the above exact sequence is equivariant.
Taking {\'e}tale cohomology gives an exact sequence of $\Gamma_F$-modules:
\[ 0 \ra \mathrm H^0(X_{\bar F},\gm) \ra \mathrm H^0(U_{\bar F},\gm)
 \ra \bigoplus \mathrm H^0(Z_\alpha,\Z) 
        \ra \mathrm H^1(X_{\bar F},\gm) \ra \mathrm H^1(U_{\bar F},\gm) . \]
Moreover, this last map is surjective: it can be identified with 
the restriction map $\Pic(X_{\bar F})\ra \Pic(U_{\bar F})$
which is surjective because $X_{\bar F}$ is smooth and $U_{\bar F}$
open in~$X_{\bar F}$.
The scheme $X_{\bar F}$ is proper, smooth and connected
hence $\mathrm H^0(X_{\bar F},\gm)={\bar F}^*$.
Moreover for any~$\alpha$, $Z_\alpha$ is connected which implies
$\mathrm H^0(Z_\alpha,\Z)=\Z$, that is $\bigoplus \mathrm H^0(Z_\alpha,\Z)=\Pi(Z_{\bar
F})$.
Finally, one obtains an exact sequence of $\Z[\Gamma_F]$-modules
\[ 0 \ra \mathrm H^0(U_{\bar F},\gm)/\bar F^* \ra \Pi(Z_{\bar F})
 \ra  \Pic(X_{\bar F})\ra \Pic(U_{\bar F})\ra 0 , \]
which after tensoring with $\Q_\ell$ gives an equality
of virtual representations
\begin{equation}\label{eq.virtual-eq}
 \EP (U) = \Pi(Z_{\bar F})_{\Q_\ell}- \Pic(X_{\bar F})_{\Q_\ell} = \Pi(Z_{\bar F})_{\Q_\ell}- \mathrm H^2_{\text{\'et}}(X_{\bar F},\Q_\ell(1)). 
\end{equation}
This implies that
\[ q_v^{-\dim X} \Card\mathscr U(k_v) 
= 1 - \dfrac1{q_v} \tr(\Fr_v|\EP (U)) + \mathrm O(q_v^{-3/2}). \]
Since the eigenvalues of~$\Fr_v$ on the two Galois modules
defining $\EP (U)$ are algebraic numbers
whose absolute values are bounded by $1$,
one has
\[ \det(1-q_v^{-1}\Fr_v|\EP (U))
 = 1-\dfrac1{q_v} \tr(\Fr_v|\EP (U)) + \mathrm O(q_v^{-2}). \]
Now,
\[ \mathrm L_v(1,\EP (U))q_v^{-\dim X}\Card\mathscr U(k_v) 
=\det(1-q_v^{-1}\Fr_v|\EP (U))^{-1} q_v^{-\dim X}\Card\mathscr U(k_v)
 = 1+\mathrm O(q_v^{-3/2}), \]
and the asserted absolute convergence follows.
\end{proof}

\begin{defi}\label{defi.tamagawa-measure}
Let $F$ be a number field;
let $X$ be a smooth proper, geometrically integral variety over~$F$
such that $\mathrm H^1(X,\mathscr O_X)=\mathrm H^2(X,\mathscr O_X)=0$.
Let  $Z$ be a Zariski closed
subset in~$X$, and let $U=X\setminus Z$.
The Tamagawa measure on the adelic space $U(\AD_F^S)$
is defined as the measure
\[ \tau_U^S = \mathrm L^{S}_{*}(1,\EP (U))^{-1}
  \left(\prod_{v\not\in S} \mathrm L_v(1,\EP (U))\, \tau_{U,v} \right)
\]
\end{defi}

By Proposition~\ref{prop.convergence}, this infinite
product of measures converges;
moreover, non-empty, open and relatively compact,
subsets of $U(\AD_F^S)$ have a positive (finite) $\tau_U^S$-measure.
In particular,
compact subsets of~$U(\AD_F^S)$ have a finite~$\tau_U^S$-measure.

\begin{rems}
In the literature, 
families $(\lambda_v)$ of positive real numbers such that
the product of measures $\prod_v(\lambda_v \tau_{U,v})$
converges absolutely are called sets of convergence factors
(see, \emph{e.g.}, \cite{weil82} or~\cite{salberger98}). 
Our theorem can thus be stated as saying that for any smooth
geometrically integral algebraic variety~$U$ having
a smooth compactification~$X$ satisfying $\mathrm H^1(X,\mathscr O_X)=\mathrm H^2(X,\mathscr O_X)=0$, the family $(\mathrm L_v(1,\EP (U)))$
is a set of convergence factors. Let us compare our construction
with the previously  known cases.

\emph a)
The Picard group of an affine connected algebraic group~$U$
is finite; in that case, $\EP (U)$ is therefore
the Galois module of characters of~$U$ and we recover
Weil's definition (\cite{weil82}).
For semi-simple groups, there are no non-trivial characters
and the product of the natural local densities converges
absolutely, as is well-known.
This is also the case for homogeneous  varieties~$G/H$
studied by  Borovoi and Rudnick in~\cite{borovoi-r1995},
where $G\supset H$ are semi-simple algebraic groups
without non-trivial rational characters.

\emph b)
On the opposite side, integral projective varieties have
no non-constant global functions, so that $\EP (U)=\Pic(U_{\bar F})$
if $U$ is projective. We thus recover Peyre's definition
of the Tamagawa measure of a projective variety~\cite{peyre95}.

\emph c)
Salberger~\cite{salberger98} has developed a theory of Tamagawa 
measures on ``universal torsors''
introduced by Colliot-Thélène and Sansuc~\cite{colliot-thelene-sansuc1987}.
A universal torsor is a principal homogeneous space~$E$
over an algebraic variety~$V$ whose structure group is an algebraic
torus~$T$ dual to the Picard group of~$V$ such that the canonical 
map $\Hom(T,\mathbf G_m)\ra \mathrm H^1(V,\mathbf G_m)$,
sending a character~$\chi$ to the $\mathbf G_m$-torsor
$\chi_*[E]$ deduced from~$E$ by push-out along~$\chi$,
is an isomorphism.
Such torsors have been successfully applied to the study of rational
points  on the variety~$V$ (Hasse principle, weak approximation,
counting of rational points of bounded height);
see~\cite{colliot-thelene-sansuc1987,peyre98,breteche98b,skorobogatov2001}.

By a fundamental theorem of Colliot-Thélène and Sansuc\footnote
  {Apply~\cite{colliot-thelene-sansuc1987}, Prop.~2.1.1
   with $K=\bar k$, the map $\hat S\ra\Pic(X_{\bar k}))$
   being an isomorphism by the very definition of a universal torsor.},
universal  torsors have no non-constant invertible global functions 
and their Picard group is trivial (at least up to torsion).
It follows that the virtual $\Gamma_F$-module $\EP (E)$
is trivial.
This gives a conceptual explanation for the discovery by Salberger
in~\cite{salberger98}
that the Tamagawa measures on universal torsors could be defined
by an absolutely convergent product of ``naive'' local measures,
without any regularizing factors.
\end{rems}

\subsubsection{}
Assume moreover that~$Z$ is the support
of a divisor~$D$ in~$X$ (more generally, of a Cartier $\Q$-divisor)
and that $\omega_X(D)$ is endowed with an adelic metric;
this induces a natural metric on~$\mathcal O_X(D)$.
For any place~$v$, the metric on~$\omega_X(D)$ on $X(F_v)$
gives rise to  a measure~$\tau_{(X,D),v}$ on~$U(F_v)$,
which is related to the measure~$\tau_{X,v}$ by the formula
\[ \mathrm d\tau_{(X,D),v} (x) = \frac1{\norm{\mathsec f_D}(x)}
\,\mathrm d\tau_{X,v} (x), \]
where $\mathsec f_D$ is the canonical section of~$\mathscr O_X(D)$.
By definition of an adelic metric, the metric on~$\mathscr O_X(D)$
is induced, for almost all finite places~$v$, by the line
bundle~$\mathscr O_{\mathscr X}(\mathscr D)$ on the integral
model~$\mathscr X$, where $\mathscr D$ is a Cartier divisor
with generic fibre~$D$. 
For such places~$v$, 
one has $\norm{\mathsec f_D}(x)=1$ for any point~$x\in \mathscr U(\mathfrak o_v)$, so that 
the measures $\tau_{(X,D),v}$ and $\tau_{X,v}$ coincide
on $\mathscr U(\mathfrak o_v)$.

This shows that the product
\[ \tau_{(X,D)}^S = \mathrm L^{S}_{*}(1,\EP (U))^{-1}
  \left(\prod_{v\not\in S} \mathrm L_v(1,\EP (U))\, \tau_{(X,D),v} \right)
\]
also defines a Radon measure on $U(\AD_F^S)$.
In fact, one has
\[ \mathrm d\tau_{(X,D)}^S (x) = \big( \prod_{v} \norm{\mathsec f_D}(x_v)
\big)^{-1} \, \mathrm d\tau_U^S(x) = H_D(x) \,\mathrm d\tau_U^S(x) ,\]
and $H_D(x)$ is the height of~$x$ relative to the metrized
line bundle $\mathscr O_X(D)$.

\subsubsection{Example: compactifications of algebraic groups}
As in the case of local fields, we briefly explain
the case of algebraic groups and show how our theory of measures 
interacts with the construction  of a Haar measure on an adelic group.
We keep the notation of the previous paragraph,
and assume moreover that $U$ is an algebraic group~$G$ over~$F$
of which $X$ is an equivariant compactification.
Let us fix a invariant differential form of maximal degree~$\omega$
on~$G$; viewed as a meromorphic global section of~$\omega_X$,
it has poles on each component of~$X\setminus G$ 
(see~\cite{chambert-loir-t2002}, Lemma~2.4);
we therefore write $-D$ for its divisor.

For any place~$v\in\Val(F)$, viewing $\omega$ as a gauge
form on~$G(F_v)$ defines a Haar measure~$\abs\omega_v$
on $G(F_v)$.
For almost all finite places~$v$, $G(\mathfrak o_v)$ is a well-defined
compact subgroup of~$G(F_v)$. 
We normalize the Haar measure $\mathrm dg_v$ on $G(F_v)$ be dividing
the measure~$\abs{\omega}_v$ by the quantity $ \Card G(k_v)q_v^{-\dim G}$ for these places~$v$, and by~$1$ at other places. 
By construction, for almost all~$v$, 
$\mathrm dg_v$ assigns volume~1 to the compact
open subgroup $G(\mathfrak o_v)$, hence the product
$\prod_v \mathrm dg_v$ is a well-defined Haar measure on~$G(\AD_F)$.

By Proposition~\ref{prop.convergence}
we have the estimate
\[ \mathrm L_v(1,\EP (G)) \Card  G(k_v) q_v^{-\dim G} = 1+\mathrm O(q_v^{-3/2}), \]
from which we deduce that their infinite product
converges absolutely.
Consequently, $\prod_{v} \mathrm L_v(1,\EP (G))\abs{\omega}_v $
is also a Haar measure on $G(\AD_F)$.

Now, by the very definition of the adelic measures
$\tau_{(X,D)}$ and~$\tau_X$ on~$G(\AD_F)$, one has
\begin{align*}
 \mathrm d\tau_{(X,D)}(g)  &= H_D(g)\,\mathrm d \tau_X(g) \\
& = \prod_v \norm{\omega(g)}_v \quad\times\quad
\mathrm L_*(1,\EP (G))^{-1}  \prod_v \left(
\mathrm L_v(1,\EP (G)) 
   \frac{\mathrm d\abs{\omega}_v(g)}{\norm{\omega(g)}_v} \right)
\\
& = \mathrm L_*(1,\EP (G))^{-1} \prod_v \big(\mathrm L_v(1,\EP (G))
 \,\mathrm d\abs\omega_v(g) ). 
\end{align*}
This shows that,
in the particular case of an equivariant compactification
of an algebraic group, our general definition of the measure~$\tau_{(X,D)}$
on $X\setminus\abs D$ gives rise to a Haar measure on $G(\AD_F)$.

\subsection{An abstract equidistribution theorem}

In some cases, it is possible to count integral (resp.~rational) points
of bounded height 
with respect to almost any normalization of the height,
\emph{i.e.}, with respect to any metrization on a given line bundle.
Analogously, we obtain below an asymptotic expansion 
for the volume of height balls for almost any normalization of
the height.

In this subsection, we show how to extract from the obtained asymptotic
behavior
a measure-theoretic information 
on the distribution of points of bounded height
or of height balls. 

The following Proposition is  an abstract general version
of a result going back to Peyre (\cite{peyre95}, Proposition~3.3).

\begin{prop}\label{prop.abstract.equi}
Let $X$ be a compact topological space, $U$ a subset of $X$
and $H\colon U\ra\R_+$ a function.  Let $\nu$ be a positive measure
on~$X$ such that for any real number~$B$,
the set $\{x\in U\,;\, H(x)\leq B\}$ has finite~$\nu$-measure.

Let $\mathrm S$ be a dense subspace of the space $\mathrm C(X)$
of continuous functions on~$X$, endowed with the sup. norm.
Assume that there exist a function $\alpha\colon \R_+\ra\R_+^*$ and 
a Radon measure $\mu$ on $X$  such that
for any positive function $\theta\in \mathrm S$, 
\[ \nu \big(  \{ x\in U\,;\, H(x)\leq \theta(x) B \}\big) \sim \alpha(B) \int_X
\theta(x)\,\mathrm d\mu(x) \]
for $B\ra+\infty$.
Then, for $B\ra +\infty$, the measures
\[ \nu_B = \frac{1}{\alpha(B)} \mathbf 1_{\{ H(x)\leq B\}} \,\mathrm d\nu(x)
         \]
on $X$ converge vaguely to the measure $\mu$. In other words:
\begin{enumerate}\def\theenumi{\roman{enumi}}
\item 
for any continuous function $f\in\mathrm C(X)$,
\[ \frac1{\alpha(B)} \int_X \mathbf 1_{\{ H(x)\leq B\}} f(x)
\,\mathrm d\nu(x)
 \ra \int_X f(x)\,\mathrm d\mu(x), \qquad
\text{for $B\ra+\infty$;}\]
\item
for every open set $\Omega\subset X$ which is $\mu$-regular,
\[ \nu  \big(\{x\in \Omega\cap U\,;\, H(x)\leq B \}\big) = \alpha(B) \mu(\Omega)
 + \mathrm o(\alpha(B)). 
\]
\end{enumerate}
\end{prop}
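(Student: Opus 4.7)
My strategy is to establish (ii) directly from the hypothesis and then deduce (i) from (ii) by a layer-cake identity. The main obstacle lies in the construction of suitable bracketing functions in $\mathrm S$: the hypothesis requires positivity, so the Urysohn substitute for $\mathbf 1_\Omega$ must be shifted to be strictly positive, introducing an error contribution from the region outside $\Omega$. Controlling this error is where the regular variation of $\alpha$ (itself a consequence of the hypothesis) becomes indispensable.

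As a preliminary step, I extract two facts from the hypothesis. Applying it to $\theta \equiv 1$ — approximated by positive elements of $\mathrm S$ using density — gives $\nu(\{H \leq B\}) \sim \alpha(B)\mu(X)$. Moreover, comparing the hypothesis for $c\theta$ with the hypothesis for $\theta$ at the scale $cB$ forces the scaling relation $\alpha(cB) \sim c\alpha(B)$ for every $c > 0$; in particular, $\nu(\{H \leq cB\}) \sim c\alpha(B)\mu(X)$.

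To prove (ii), fix a $\mu$-regular open $\Omega \subseteq X$ and $\epsilon > 0$. Using Urysohn's lemma, regularity of $\mu$, and density of $\mathrm S$, one constructs positive functions $\psi^+,\psi^- \in \mathrm S$ with $\psi^+ \geq 1$ on $\Omega$, $\psi^+ \leq \epsilon$ outside a small neighborhood of $\overline\Omega$, and $\int \psi^+\,\mathrm d\mu \leq \mu(\Omega) + \mathrm O(\epsilon)$, and similarly $\psi^- \leq 1+\epsilon$ on $\Omega$, $\psi^- \leq \epsilon$ outside $\Omega$, and $\int \psi^-\,\mathrm d\mu \geq \mu(\Omega) - \mathrm O(\epsilon)$ (positivity is ensured by adding a small constant shift to the Urysohn approximants). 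The inclusion $\Omega \cap \{H \leq B\} \subseteq \{H \leq \psi^+ B\}$ and the hypothesis applied to $\psi^+$ give $\limsup_B \alpha(B)^{-1}\nu(\Omega \cap \{H \leq B\}) \leq \mu(\Omega) + \mathrm O(\epsilon)$. For the lower bound, the inclusion
\[
\{H \leq \psi^- B\} \subseteq \bigl(\Omega \cap \{H \leq (1+\epsilon)B\}\bigr) \cup \{H \leq \epsilon B\},
\]
combined with the hypothesis applied to $\psi^-$ and the estimates $\nu(\{H \leq \epsilon B\}), \nu(\{B < H \leq (1+\epsilon)B\}) = \mathrm O(\epsilon\alpha(B))$ (consequences of the regular variation), yields $\liminf_B \alpha(B)^{-1}\nu(\Omega \cap \{H \leq B\}) \geq \mu(\Omega) - \mathrm O(\epsilon)$. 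Letting $\epsilon \to 0$ proves (ii).

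To deduce (i) from (ii), for continuous $f \geq 0$ on $X$ (the general case follows by splitting $f = f_+ - f_-$), apply Fubini's theorem to the layer-cake representation:
\[
\alpha(B)^{-1} \int f\,\mathbf 1_{H \leq B}\,\mathrm d\nu = \int_0^\infty \alpha(B)^{-1}\nu(\{f > t\} \cap \{H \leq B\})\,\mathrm dt.
\]
For all but countably many $t$, the open set $\{f > t\}$ is $\mu$-regular, so (ii) gives pointwise convergence of the integrand to $\mu(\{f > t\})$. Dominated convergence — with dominant $\alpha(B)^{-1}\nu(\{H \leq B\}) = \nu_B(X)$ bounded uniformly in $t$ by monotonicity and tending to $\mu(X)$ — then gives $\nu_B(f) \to \int_0^\infty \mu(\{f > t\})\,\mathrm dt = \int f\,\mathrm d\mu$, which is (i).
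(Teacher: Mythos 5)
Your proof is correct and is essentially a variant of the paper's bracketing argument, with one genuine addition. The paper proves (ii) by taking non-negative $f,g\in\mathrm S$ with $f\leq\mathbf 1_\Omega\leq g$ and $\int(g-f)\,\mathrm d\mu\leq\eps$, passing to the interpolants $f_\eps=\eps+(1-\eps)f$, $\chi_\eps=\eps+(1-\eps)\mathbf 1_\Omega$, $g_\eps=\eps+(1-\eps)g$ (which are automatically positive), and then comparing $N(f_\eps;B)\leq N(\chi_\eps;B)\leq N(g_\eps;B)$ together with the identity $0\leq N(\chi_\eps;B)-N_\Omega(B)\leq N_X(\eps B)$; your $\psi^\pm$ play the same role as $g_\eps,f_\eps$, and your set-theoretic inclusion $\{H\leq\psi^- B\}\subseteq(\Omega\cap\{H\leq(1+\eps)B\})\cup\{H\leq\eps B\}$ replaces the paper's manipulation of $\chi_\eps$. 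Both use exactly the same preliminary fact $\alpha(\lambda B)/\alpha(B)\to\lambda$ to absorb the $O(\eps)$ errors. The substantive difference is that the paper disposes of (i) with the remark ``it is of course sufficient to prove (ii),'' leaving the implication implicit, whereas you carry it out via the layer-cake identity and the observation that $\{f>t\}$ is $\mu$-regular for all but countably many $t$ (since the level sets $\{f=t\}$ are disjoint). That is a useful supplement: it converts the paper's Portmanteau-style appeal into a self-contained argument, at the small cost of checking that the integrand is eventually dominated by a constant, which you do.
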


Before entering the proof, let us introduce
one more notation.
For $\theta\in\mathrm C(X)$, $\theta>0$, let
\[ N(\theta;B) = \nu \big(  \{x\in U\,;\, H(x)\leq \theta(x) B \}\big). \]
For any open subset $\Omega\subset X$, let 
\[ N_\Omega(B) = \nu \big(  \{x\in \Omega\cap U\,;\, H(x)\leq B \}\big). 
\]
\begin{proof}
First remark that these hypotheses imply that for any $\lambda >0$,
$\alpha(\lambda B)/\alpha(B) \ra \lambda$ when $B\ra+\infty$.
Indeed, taking any positive~$\theta\in S$, 
\[ \frac{\alpha(\lambda B)}{\alpha(B)}
 = \frac{\alpha(\lambda B) \int_X \theta(x)\,\mathrm d\mu(x)}{\alpha(B) \int_X \theta(x)\,\mathrm d\mu(x)}
 \sim \frac{ N(\theta; \lambda B)}{N(\theta;B)}
 \sim \frac{ N(\lambda\theta; B)}{N(\theta;B)}
\sim \frac{\alpha(B) \int_X \lambda\theta(x)\,\mathrm d\mu(x)}{\alpha(B) \int_X \theta(x)\,\mathrm d\mu(x)} = \lambda. \]

Now, it is of course sufficient to prove that $N_\Omega(B)/
\alpha(B)\ra \mu(\Omega)$  for any $\mu$-regular open set $\Omega\subset
B$.
Fix $\eps>0$. Since $\Omega\subset X$
is $\mu$-regular, there exist continuous functions $f$ and $g$ on $X$
such that
\[  f \leq \mathbf 1_\Omega \leq g  \]
and such that $\int(g-f)\mathrm d\mu\leq \eps$.
Since $\mathrm S$ is dense in $\mathrm C(X)$, we may also assume
that $f$ and $g$ belong to~$\mathrm S$ and that they are 
non-negative.

Let $f_\eps=\eps+(1-\eps)f$, $\chi_\eps=\eps+(1-\eps)\mathbf 1_\Omega$
and $g_\eps=\eps+(1-\eps)g$. 
The inequality 
$ f_\eps\leq \chi_\eps\leq g_\eps $ implies that
\[ N(f_\eps;B)\leq N(\chi_\eps;B) \leq N(g_\eps;B). \]
Moreover, by definition, 
\[ 0\leq N(\chi_\eps;B)  -N_\Omega(B) \leq N_X(\eps B). \]
Therefore,
\begin{align*}
 \liminf N_\Omega(B)\alpha(B)^{-1}
 &  \geq \liminf N(\chi_\eps;B)\alpha(B)^{-1} 
            - \limsup  N_X(\eps B)\alpha(B)^{-1} \\
& \geq \liminf N(f_\eps;B) \alpha(B)^{-1} - \eps  \\
& \geq \int f_\eps \,\mathrm d\mu - \eps = \int f\,\mathrm d\mu + \mathrm
O(\eps). 
\end{align*}
Similarly,
\[ \limsup N_\Omega(B)\alpha(B)^{-1}
 \leq \int g \,\mathrm d\mu + \mathrm O(\eps). \]
When $\eps\ra 0$, one thus finds
\[ \lim N_\Omega(B)\alpha(B)^{-1} = \mu(\Omega), \]
as claimed.
\end{proof}

\clearpage
\def\lpar{\mathopen{(\!(}} \def\rpar{\mathclose{)\!)}}
\def\lbra{\mathopen{[\![}} \def\rbra{\mathclose{]\!]}}

\section{Geometric Igusa integrals: Preliminaries}
\label{s.igusa}

\subsection{Clemens complexes and variants}
\label{subsec.clemens}

In this section, we study analytic properties
of certain integrals of Igusa-type attached
to a divisor~$D$ in a variety~$X$.
This requires  the introduction of a simplicial
set which encodes the intersections of the various components of~$D$,
and which we call the \emph{Clemens complex}.
Such a simplicial set has been used by Clemens in~\cite{clemens1969}
in his study of the Picard-Lefschetz transformation
(see also~\cite{gordon1980}).

\subsubsection{Simplicial sets} \label{sss.simplicial-sets}
We recall classical definitions concerning simplicial sets.
A partially ordered set, abridged as \emph{poset},
is a set endowed with a binary relation~$\prec$ which is transitive
($a\prec b$ and $b\prec c$ implies $a\prec c$), antisymmetric
($a\prec b$ and $b\prec a$ implies $a=b$) and reflexive ($a\prec a$).
If $a\prec b$, we  say that $a$ is a face of~$b$;
in particular, the transitivity axiom for a poset means that a face of a face
is a face.

There are obvious definitions for morphisms of posets, sub-posets,
as well as actions of groups on posets.

As an example, if $\mathscr A$ is a set, the set $\mathscr P^*(\mathscr A)$
of non-empty subsets of~$\mathscr A$, together with the inclusion
relation, is a poset. We shall interpret it geometrically as
the simplex on the set of vertices~$\mathscr A$
and denote it by $\mathscr S_{\mathscr A}$.
More generally, a \emph{simplicial complex} on a set~$\mathscr A$ 
is a set of non-empty finite subsets of~$\mathscr A$, called \emph{simplices},
such that any non-empty subset of a simplex is itself a simplex.
The dimension of a simplex is defined as one less its cardinality:
points, edges,... are simplexes of dimension~$0$, $1$,...
The dimension of a simplicial complex is the supremum
of the dimensions of its faces. 
The order relation endows a simplicial complex with the structure of poset.
In fact, simplicial complexes are
sub-posets of the simplex~$\mathscr S_{\mathscr A}$.

Let $S$ be a poset. The dimension of an element~$s\in S$,
denoted $\dim(s)$,
is defined as the supremum of the lengths~$n$ of chains
$s_0\precneq\cdots\precneq s_n$, with $s_n\prec s$.
Similarly, the codimension~$\codim(s)$ of an element~$s\in S$
is defined as the supremum of the lengths of such chains 
with $s\prec s_0$.
Elements of dimension~$0$, $1$,\ldots, are called vertices,
edges,... The dimension~$\dim(S)$ of~$S$ is the supremum of all dimensions
of all of its elements.

It is important to observe that given a poset~$S$
and a sub-poset~$S'$, the dimension or the codimension of a face of~$S'$
may differ from their dimension or codimension as a face of~$S$.\footnote{This probably means that the terminology ``sub-poset'' is inappropriate;
sub-posets for which the dimension notion is compatible
are sometimes called ideals...}

Let $S$ be a poset. Categorically, an action of a group~$\Gamma$ on~$S$
is just a morphism of groups from~$\Gamma$ to the set $\Aut(S)$
of automorphisms of~$S$. In other words, it is the data, for any $\gamma\in\Gamma$, of a bijection $\gamma_*$ of~$S$ such that $\gamma_*s\prec \gamma_*s'$
is $s\prec s'$, subject to the compatibilities $(\gamma\gamma')_*=\gamma_*\gamma'_*$ for any~$\gamma$ and~$\gamma'\in\Gamma$.

 
Let there be given a poset~$S$ and an action of~$\Gamma$ on~$S$.
The set $S^\Gamma$ of fixed points of~$\Gamma$ in~$S$ is
a sub-poset of~$S$.
In the other direction, the set~$S/\Gamma$ of orbits of~$\Gamma$
in~$S$, can be endowed with the binary relation deduced
from~$\prec$ by passing to the quotient. Namely, for $s$ and~$s'$
in~$S$, with orbits $[s]$ and~$[s']$, we say that
$[s]\prec [s']$ if there exists $\gamma\in \Gamma$ such
that $s\prec \gamma_*s'$. (This condition does not depend 
on the actual choice of elements~$s$ and~$s'$ in their
orbits~$[s]$ and~$[s']$.)

There are obvious morphisms of posets, $S^\Gamma\ra S$ 
and $S\ra S_\Gamma$; these morphisms are the universal
morphisms of posets, respectively to~$S$ and from~$S$,
which commute with the action of~$\Gamma$ on~$S$.

%
 
As an example,
assume that $\mathscr S=\mathscr S_{\mathscr A}$, the simplex with
vertices in a set~$\mathscr A$. Actions of a group~$\Gamma$
on~$\mathscr S_{\mathscr A}$ correspond to actions of~$\Gamma$
on~$\mathscr A$. The simplicial sets $\mathscr S^\Gamma$
and~$\mathscr S_\Gamma$ are respectively the simplices with vertices
in the fixed-points~$\mathscr A^\Gamma$ and the orbits~$\mathscr A/\Gamma$.

\def\bmA{{\bar{\mathscr A}}}

\subsubsection{Incidence complexes}

Let $X$ be a (geometrically integral) variety 
over a perfect field~$F$ and let $D$ be a divisor in~$X$. 
Fix a separable closure~$\bar F$ of~$F$.
Let~$\bmA$ be the set of irreducible components of~$D_{\bar F}$;
for $\alpha\in\bmA$, we denote by $D_\alpha$ the corresponding component
of~$D_{\bar F}$. 
For any subset $A\subset \bmA$, we let $D_A=\bigcap_{\alpha\in A}D_\alpha$;
in  particular, $D_{\emptyset}=X_{\bar F}$.

We shall always make the assumption
that the divisor~$D_{\bar F}$ has simple normal crossings:
all irreducible components $D_\alpha$ of~$D_{\bar F}$ are supposed to be smooth
and to meet transversally.\footnote
  {This is equivalent to
  the weaker condition that $D$ has normal crossings together with
  the smoothness of  the geometric irreducible components of~$D$;
  the condition that~$D$ has strict normal crossings is however
  stronger since the smoothness of an irreducible component of~$D$
  implies that its geometric irreducible components don't meet.}
In particular, for any subset $A\subset\bmA$ such that $D_A\neq\emptyset$,
$D_A$ is a smooth subvariety of~$X_{\bar F}$ of codimension~$\Card A$. 

The closed subschemes~$D_A$,
for $A\subset\bmA$, are the closed strata of a stratification
$(D_A^\circ)_{A\subset\bmA}$, where, for any subset~$A\subset\bmA$,
$D_A^\circ$ is defined by the formula:
\[ D_A^\circ = D_A \setminus \bigcup _{B\supsetneq A} D_B. \]

There are in fact several natural posets that enter the picture,
encoding in various ways  the combinatorial data
of whether or not, for a given  subset~$A$ of~$\mathscr A$,
the intersection~$D_A$ is empty.
The following observation is crucial for our definitions to make sense:
\begin{prop}
Let $A$ and~$A'$ be two subsets of~$\bmA$ such that $A'\subset A$.
For any irreducible component~$Z$ of~$D_A$, there is a unique
irreducible component~$Z'$ of~$D_{A'}$ which contains~$Z$.
\end{prop}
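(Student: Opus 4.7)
The plan is to reduce everything to the observation that, under the strict normal crossings hypothesis, each stratum $D_{A'}$ is smooth, and on a smooth scheme, irreducible components and connected components coincide.

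First, since $A' \subset A$, one has the inclusion $D_A = \bigcap_{\alpha \in A} D_\alpha \subset \bigcap_{\alpha \in A'} D_\alpha = D_{A'}$. Consequently, any irreducible component $Z$ of $D_A$ is a closed irreducible subset of $D_{A'}$, and hence is contained in at least one irreducible component $Z'$ of $D_{A'}$. This takes care of existence.

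For uniqueness, I would invoke the hypothesis already recalled just before the proposition: when $D_{A'} \neq \emptyset$, the scheme $D_{A'}$ is smooth over $\bar F$ of codimension $\Card A'$ in $X_{\bar F}$. Since a smooth scheme is regular, and a regular scheme is locally integral, its irreducible components are pairwise disjoint and coincide with its connected components. Therefore, as $Z$ is irreducible, hence connected, it meets at most one irreducible component of $D_{A'}$.

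The only step that might require a word of justification is the passage from \emph{smoothness of each individual $D_\alpha$ plus transversality} to \emph{smoothness of $D_{A'}$}; this is exactly the content of the strict normal crossings assumption and is recorded in the sentence preceding the statement. Beyond that, the argument is essentially a tautology about smooth (equivalently, geometrically regular) schemes, so I do not expect a genuine obstacle: the proof should fit in a few lines.
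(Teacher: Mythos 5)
Your proof is correct and follows essentially the same route as the paper: the paper also argues that if $Z$ were contained in two distinct irreducible components of $D_{A'}$, those components would meet along $Z$, contradicting the disjointness of irreducible components of the smooth scheme $D_{A'}$ (guaranteed by the strict normal crossings hypothesis). Your added justification of "smooth $\Rightarrow$ regular $\Rightarrow$ locally integral $\Rightarrow$ irreducible components pairwise disjoint" is the correct unpacking of the paper's one-line remark.
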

\begin{proof}
If an irreducible component~$Z$ were contained in two
distinct irreducible components of~$D_{A'}$,
these two components would meet along~$Z$, which contradicts
the simple normal crossings assumption on~$D_{\bar F}$.
Indeed, $D_A$ being smooth, its irreducible components must be  disjoint.
\end{proof}

\subsubsection{The geometric Clemens complex}
The incidence complex~$\mathscr I_{\bar F}(D)$ 
defined by~$D$ is the sub-poset of~$\mathscr P^*(\bmA)$
consisting of non-empty subsets~$A$ of~$\bmA$ such that the intersection
$D_A$ is not empty.
More precisely, we define 
the geometric Clemens complex~$\Cl_{\bar F}(D)$ as the set
of all pairs $(A,Z)$, where $A\subset\bmA$ is any non-empty subset, and
$Z$ is an irreducible component of the scheme~$D_{A}$,
together with the partial order relation 
defined by $(A,Z)\prec (A',Z')$ if $A\subset A'$ and $Z\supset Z'$.
In other words, the set of vertices of~$\Cl_{\bar F}(D)$ is~$\bmA$,
there are edges corresponding to irreducible
component of each intersection $D_\alpha\cap D_{\alpha'}$, etc.

Mapping $(A,Z)$ to~$A$ induces a morphism of posets from
the geometric Clemens complex to the incidence complex.
In fact, we could have defined $\Cl_{\bar F}(D)$ without
any reference to~$\bmA$. Indeed,
thanks to the normal crossings condition, given an irreducible
component~$Z$ of a scheme~$D_A$, we may recover~$A$ as the set
of $\alpha\in \bmA$ such that $D_\alpha$ contains the generic point of~$Z$.

\medskip

Since $X$ is defined over~$F$, $\Gamma_F$ acts naturally 
on the set of integral subschemes of~$X_{\bar F}$.
Since $F$ is perfect, an integral subscheme~$Z$
of~$X_{\bar F}$ is defined over~$F$ if and only if
it is a fixed point for this action.

For any $\gamma\in\Gamma_F$ and any $\alpha\in\bmA$, observe
that $\gamma_* D_{\alpha}$ is an irreducible component of~$D_{\bar F}$,
because $D$ is defined over~$F$;
this induces an action of~$\Gamma_F$ on~$\bmA$, defined
by $\gamma_* D_\alpha=D_{\gamma\alpha}$, 
for $\alpha\in\bmA$ and $\gamma\in\Gamma_F$.
Moreover, if $Z$ is an irreducible component of
an intersection $D_A$, for $A\subset\bmA$, then
$\gamma_* Z$ is an irreducible component
of $\gamma_* D_A=D_{\gamma_* A}$.
Consequently, we have a natural action of~$\Gamma_F$ on
the geometric Clemens complex~$\Cl_{\bar F}(D)$,
given by $\gamma_*(A,Z)=(\gamma_* A,\gamma_* Z)$ for any  element~$(A,Z)$
of~$\Cl_{\bar F}(D)$.

The natural morphism of posets
$\Cl_{\bar F}(D)\ra \mathscr I_{\bar F}(D)$ mapping $(A,Z)$ to~$A$
is $\Gamma_F$-equivariant.

\subsubsection{Rational Clemens complexes}

Let us denote by $\mathscr I_F(D)$ and~$\Cl_F(D)$
the sub-posets of~$\mathscr I_{\bar F}(D)$ and $\Cl_{\bar F}(D)$
consisting of $\Gamma_F$-fixed faces.
These posets correspond to the intersections of the
divisors~$D_\alpha$ 
and to the irreducible components of these intersections
which are defined over the base field~$F$.
Alternatively, they can be defined without
any reference to the algebraic closure~$\bar F$ by considering
irreducible components of the divisor~$D$, their intersections
and the irreducible components of these which are geometrically
irreducible.

More generally, let $E$ be any extension of~$F$,
together with an embedding of~$\bar F$ in an algebraic closure~$\bar E$.
As an example, one may take any extension of~$F$ contained in~$\bar F$.
In our study below, $F$ will be a number field and $E$ will be the completion
of~$F$ at a place~$v$; the choice of an embedding $\bar F\hra\bar E$
corresponds to the choice of a  decomposition group at~$v$.
Under these conditions,  there is a natural  morphism
of groups from the Galois group~$\Gamma_E$ of~$\bar E/E$
to~$\Gamma_F$. In particular,
the posets~$\Cl_{\bar F}(D)$ and~$\mathscr I_{\bar F}(D)$ 
are endowed with an action of~$\Gamma_E$.

Let us define the $E$-rational Clemens complex, $\Cl_{E}(D)$, as
the sub-poset of~$\Cl_{\bar F}(D)$ fixed by~$\Gamma_E$.
In particular, 
for any face $(A,Z)$ of~$\Cl_E(D)$, the subschemes~$(D_A)_{\overline E}$
and $Z_{\overline E}$ are defined over~$E$. We shall
denote $(D_A)_E$ and~$Z_E$, or even $D_A$ and~$Z$,
the corresponding subschemes of~$X_E$. 
Observe that $Z_E$ is geometrically irreducible.

Conversely, let $A$ be any non-empty subset of~$\bar{\mathscr A}$
which is $\Gamma_E$-invariant; then, $(D_A)_{\overline E}$ is defined 
over~$E$ and corresponds to some subscheme $(D_A)_E$ of~$X_E$.
Let $Z$ be any irreducible component of~$(D_A)_E$
which is geometrically irreducible. Then, $Z_{\bar E}$
is an irreducible component of~$(D_A)_{\bar E}$.
By EGA IV (4.5.1), the set of irreducible components doesn't change
when one extends the ground field from an algebraically
closed field to any extension;
consequently, $Z_{\bar E}$ is defined over~$\bar F$
and $(A,Z)$ corresponds to a face of~$\Cl_E(D)$.

\subsubsection{$E$-analytic Clemens complexes}

Let $E$ be any perfect extension of~$F$ together with an embedding
$\bar F\hra\bar E$ as above.
We define the \emph{$E$-analytic Clemens complex},
denoted $\Clan_E(D)$, as
the sub-poset of~$\Cl_E(D)$ whose faces are those faces
$(A,Z)$ such that $Z(E)\neq\emptyset$.
(When we write $Z(E)$, we identify~$Z$  with the unique
$E$-subscheme of~$X_E$ whose extension to~$\bar E$ is~$Z_{\bar E}$.)

\smallskip

This complex can also be defined as follows.
Let first $A$ be any non-empty subset of~$\bar{\mathscr A}$ 
which is $\Gamma_E$-invariant and let $Z$ be an irreducible
component of~$D_A$. While $(D_A)_{\bar E}$ is defined over~$E$,
for the moment, $Z$ is just a subscheme of~$X_{\bar F}$.
However, we can define $Z(E)$ as the intersection in $X(\bar E)$
of $X(E)$ and of $Z(\bar E)$; this is indeed what
one would get if $Z_{\bar E}$ where defined over~$E$.

Assume that $Z(E)\neq\emptyset$.
Let $Z'$ be the smallest subscheme of~$X_E$ such that $Z'_{\bar E}$ 
contains~$Z$. It is irreducible: if $Z'$ were
a union $Z'_1\cup Z'_2$,
one of them, say~$Z'_1$, would be such that 
$(Z'_1)_{\bar E}$ contains $Z_{\bar E}$,
because $Z_{\bar E}$ is irreducible. Moreover, $Z'$ is contained
in~$(D_A)_E$, since $(D_A)_{\bar E}$ contains~$Z_{\bar E}$.
It follows that $Z'$ is an irreducible component of~$(D_A)_E$.
By assumption, $Z'(E)\neq\emptyset$ hence,
by EGA IV (4.5.17), $Z'$ is geometrically connected.
Since $D_A$ is smooth,  so are $(D_A)_{\bar E}$ and $Z'_{\bar E}$,
this last subscheme being a union of irreducible components
of~$(D_A)_{\bar E}$. But a connected smooth scheme is irreducible,
hence $Z'$ is irreducible and $Z'_{\bar E}=Z_{\bar E}$.
It follows that $Z_{\bar E}$ is defined over~$E$
and $(A,Z)$ corresponds to a face of $\Clan_E(D)$.

\smallskip

Let us finally observe that the dimension of a face~$(A,Z)$
of $\Clan_E(D)$ is equal to $\Card(A/\Gamma_E)-1$. 
Indeed, let $n=\Card(A/\Gamma_E)$ and consider
any sequence $A_1\subset\cdots\subset A_n=A$ of $\Gamma_E$-invariants
subsets of~$A$. Then, $D_{A_1}\supset D_{A_2}\supset\cdots\supset D_{A_n}$
and each of them has a unique irreducible component~$Z_i$ containing~$Z$.
Since $Z(E)\neq\emptyset$, $Z_i(E)$ is non-empty either,
and the $(A_i,Z_i)$ define a maximal increasing sequence
of faces of~$\Clan_E(D)$.

\smallskip

This notion will be of interest to us under the supplementary
assumption that $E$ is a locally compact valued field.
Indeed, such fields allow for a theory
of analytic manifolds as well as an implicit function theorem.
By the local description of~$X$ that
will be explained below, for any face $(A,Z)$ of $\Cl_E(D)$,
$Z(E)$ is either empty, or is an $E$-analytic submanifold of~$X(E)$
of codimension~$\Card(A)$ which is Zariski-dense in~$Z$.
However, as a face of~$\Clan_E(D)$, $(A,Z)$ has dimension~$\Card(A/\Gamma_E)-1$
and it is that invariant which will be relevant in our analysis below.


\subsubsection{Example}
Below we describe these Clemens complexes in special
cases, which are essentially governed by combinatorial data,
namely for toric varieties and equivariant compactifications
of semi-simple groups. 
We want to make clear that for general varieties, 
the three types of Clemens complexes
can be very different.

For example, let $X$ be the blow-up of the
projective space~$\P^n$ along
a smooth subvariety~$V$ defined over the ground field,
let $D$ be the exceptional divisor in~$X$
and let $\pi\colon X\ra\P^n$ be the canonical morphism.
The map~$\pi$ induces
a bijection between the set of irreducible  components of~$D_{\bar F}$ 
and the set of irreducible components of~$V_{\bar F}$,
as well as a bijection between the  set of irreducible
components of~$D$ and the set of irreducible components of~$V$.
Moreover, a component~$Z$ of~$D$ is geometrically irreducible
if and only if the corresponding component~$\pi(Z)$ of~$V$ is;
indeed, $Z$ is isomorphic to the projectivized normal bundle
of~$V$ in~$X$.
This description shows also that $Z(F)=\emptyset$
if and only $\pi(Z)(F)=\emptyset$.

To give a specific example, take $V$ to be the disjoint union
of a geometrically irreducible smooth curve~$C_1$, without rational points, 
of a geometrically irreducible smooth curve~$C_2$ with rational points
and of a smooth irreducible  curve~$C_3$ with two geometrically components~$C'_3$ and~$C''_3$.
Then, $\Cl(X,D)$ consists of four points corresponding to~$C_1, C_2, C'_2, C'_3$
(there are no intersections), 
the $F$-rational Clemens complex $\Cl_F(X,D)$ consists
of two points corresponding to~$C_1$ and~$C_2$,
and the $F$-analytic Clemens complex $\Clan_F(X,D)$
consists of the single point corresponding to~$C_1$.
We present further examples in Section~\ref{sec.examples}.

\subsection{Local description of a pair~$(X,D)$}

\subsubsection{Notation}
Let~$F$ be a perfect field, $\bar F$ an algebraic closure of~$F$,
$X$ a smooth algebraic variety over~$F$ and $D\subset X$ a reduced divisor
such that $D_{\bar F}$ has strict normal crossings in~$X_{\bar F}$.
Let us recall that 
this means that $D_{\bar F}$ is the union of irreducible smooth divisors
in~$X_{\bar F}$ which meet transversally. In other words,
each point of~$X_{\bar F}$ has a neighborhood~$U$, together
with an étale map $U\ra\Aff^n_{\bar F}$ such that for each irreducible
component~$Z$ of~$D_{\bar F}\cap U$, $Z\cap U$ is the preimage of some
coordinate hyperplane in~$\Aff^n_{\bar F}$.

Let~$\bmA$ be the set of irreducible components of~$D_{\bar F}$;
for $\alpha\in \bmA$, let~$D_\alpha$ denote the corresponding divisor,
so that $D_{\bar F}=\sum_{\alpha\in \mathscr A}D_\alpha$.
For $\alpha\in \bmA$, we denote by~$F_\alpha$ the field of definition
of~$D_\alpha$ in~$\bar F$; its Galois group~$\Gamma_{F_\alpha}$
is the stabilizer of~$D_\alpha$ in~$\Gamma_F$. 
For $A\subset\bmA$, we denote by~$D_A$ the intersection
of the divisors~$D_\alpha$, for $\alpha\in A$; by convention,
$D_\emptyset=X$.

Let $\alpha\in\bmA$.
The union~$\bigcup_{a\in\Gamma_F\alpha} D_a$
of the conjugates of~$D_\alpha$ is a divisor in~$X_{\bar F}$
which is defined over~$F$; the corresponding divisor of~$X$
will be denoted by~$\Delta_\alpha$.
Similarly, the intersection of the divisors~$D_a$,
for $a\in\Gamma_F\alpha$ is a smooth subscheme of~$X_{\bar F}$
which is defined over~$X$; the corresponding subscheme of~$X$
will be denoted by~$E_\alpha$. If $E_\alpha\neq\emptyset$,
then its codimension~$r_\alpha$ is equal to~$[F_\alpha:F]$.
Moreover, $\Delta_\alpha(F)=E_\alpha(F)$.

The choice of another element~$\alpha'=\gamma_*\alpha$ 
in the orbit~$\Gamma_F\alpha$
doesn't change~$\Delta_\alpha$ nor~$E_\alpha$. However, it changes
the field~$F_\alpha$ to the conjugate~$F_{\alpha'}=\gamma_*F_\alpha$
and its stabilizer~$\Gamma_{F_\alpha}$ to its conjugate subgroup
$\Gamma_{F_{\alpha'}}=\gamma\Gamma_{F_\alpha}\gamma^{-1}$.
Observe that $r_{\alpha'}=r_\alpha$.

\subsubsection{Local equations for the étale topology}
Let us now give local equations of~$D$ around each rational point of~$X$.
Fix a point~$\xi\in X(F)$ and let $\alpha\in\bmA$ be such that
$\xi\in D_\alpha$ (it is a $\Gamma_F$-invariant subset of~$\bmA$).

When $F_\alpha=F$, $D_\alpha=\Delta_\alpha\times\bar F$
is defined over~$F$ and the choice of a local equation for~$D_\alpha$
defines a smooth map from an open neighborhood~$U_\xi$ of~$\xi$ in~$X$
to the affine line which maps~$U_\xi\cap D_\alpha$ to~$\{0\}\subset\Aff^1$.

To explain the general case, let us introduce some more notation.
We write 
$\Aff_{F_\alpha}$  for the Weil restriction of scalars
$\Res_{F_\alpha/F}\Aff^1$
from~$F_\alpha$ to~$F$ of the affine line. It is endowed
with a canonical morphism $\Aff^1_{F_\alpha}\ra\left(\Res_{F_\alpha/F}\Aff^1\right)\times_F F_\alpha$ which induces, for any $F$-scheme~$V$,
a bijection
\[ \Hom(V,\Res_{F_\alpha/F}\Aff^1)=\Hom(V\times_F F_\alpha,\Aff^1). \]
In particular, the $F$-rational points of~$\Aff_{F_\alpha}$
are in canonical bijection with~$F_\alpha$.

The $F$-scheme~$\Aff_{F_\alpha}$
is an affine space of dimension~$r_\alpha=[F_\alpha:F]$.
Let us indeed choose a~$F$-linear basis $(u_1,\ldots,u_{r_\alpha})$
of~$F_\alpha$. Then, the morphism
$\Aff^{r_\alpha}\ra \Aff_{F_\alpha}$ given by 
$(x_1,\ldots,x_{r_\alpha})\mapsto \sum x_i u_i$
is an isomorphism.
In these coordinates, the norm-form~$\mathrm N_{F_\alpha/F}$ 
of~$F_\alpha$ 
is a homogeneous polynomial of degree~$[F_\alpha:F]$
which defines a hypersurface in~$\Aff_{F_\alpha}$; it has a single
$F$-rational point~$0$.

Moreover, the $r_\alpha$ $F$-linear embeddings of~$F_\alpha$ into~$\bar F$
induce an isomorphism $F_\alpha\otimes_F \bar F\simeq 
\bar F^{r_\alpha}$, 
hence an identification of~$\Aff_{F_\alpha}\times_F \bar F$
with the affine space~$\Aff^{r_\alpha}_{\bar F}$.
Under this identification, the divisor of the norm-form
corresponds to the union of the coordinates hyperplanes.

\begin{lemm}
There is a Zariski open neighborhood~$U_\xi$ of~$\xi$ in~$X$
and a smooth map
\[ 
x_{\alpha}\colon U_\xi\ra\Aff_{F_\alpha}
\]
which maps~$\Delta_\alpha\cap U_\xi$ to the hypersurface of~$\Aff_{F_\alpha}$
defined by the norm equation $\mathrm N_{F_\alpha/F}(x)=0$,
such that there is a diagram
\[  \xymatrix{  E_\alpha \ar@{^(->}[r] \ar[d] & \Delta_{\alpha} \ar@{^(->}[r] \ar[d] & **[r] U_\xi \subset X \ar[d] \\
   {\{0\}} \ar@{^(->}[r] &            {\{\mathrm N_{F_\alpha/F}(x)=0\}} \ar @{^(->}[r] & \Aff_{F\alpha}.} \]
\end{lemm}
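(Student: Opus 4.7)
The plan is to construct $x_\alpha$ by descending a local equation of $D_\alpha$, defined over $F_\alpha$, to an $F$-morphism via the Weil restriction formalism. First observe that since $\xi\in X(F)$ is $\Gamma_F$-fixed and $\xi\in D_\alpha$, one has $\xi\in\gamma_*D_\alpha=D_{\gamma\alpha}$ for every $\gamma\in\Gamma_F$, so $\xi\in E_\alpha\subset\Delta_\alpha$. Since $X_{F_\alpha}$ is smooth and $D_\alpha$ is a smooth Cartier divisor through~$\xi$, there is a Zariski open neighborhood $V\subset X_{F_\alpha}$ of~$\xi$ and a regular function $u\in\mathscr O_{X_{F_\alpha}}(V)$ cutting out $D_\alpha\cap V$. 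Replacing $V$ by the intersection of its finitely many $\Gamma_F$-translates (which is again open, and still contains the Galois-fixed point~$\xi$), one may assume $V=U_\xi\otimes_F F_\alpha$ for some open $U_\xi\subset X$ defined over~$F$. Thus $u\in\mathscr O_X(U_\xi)\otimes_F F_\alpha$, and by the universal property of $\Res_{F_\alpha/F}\Aff^1=\Aff_{F_\alpha}$ this corresponds to a morphism
\[ x_\alpha\colon U_\xi\ra \Aff_{F_\alpha}. \]

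To verify the remaining assertions I base-change to~$\bar F$. The decomposition $F_\alpha\otimes_F\bar F\simeq\prod_\sigma\bar F$, indexed by the $F$-embeddings $\sigma\colon F_\alpha\hookrightarrow\bar F$, identifies $\Aff_{F_\alpha}\otimes_F\bar F$ with $\Aff^{r_\alpha}_{\bar F}$, and turns $x_\alpha\otimes\bar F$ into the morphism $U_{\xi,\bar F}\ra\Aff^{r_\alpha}_{\bar F}$ whose $\sigma$th component is $\sigma_*u$, a local equation for the conjugate divisor $D_{\sigma\alpha}$ near~$\xi$. Under this identification, the norm form $\mathrm N_{F_\alpha/F}$ becomes the product of the coordinates, so its vanishing locus pulls back to $\bigcup_\sigma D_{\sigma\alpha}=\Delta_\alpha\otimes_F\bar F$, while the origin pulls back to $\bigcap_\sigma D_{\sigma\alpha}=E_\alpha\otimes_F\bar F$. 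The commutative square therefore holds after base change to~$\bar F$, and hence over~$F$ since every scheme and morphism involved is already defined over~$F$.

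Finally, smoothness of $x_\alpha$ at $\xi$ is, by the Jacobian criterion and faithfully flat descent, equivalent to the surjectivity of the tangent map $T_\xi X_{\bar F}\ra T_0 \Aff^{r_\alpha}_{\bar F}$ whose components are the differentials $d(\sigma_*u)(\xi)$. The strict normal crossings hypothesis on $D_{\bar F}$ forces the divisors $D_{\sigma\alpha}$ to meet transversally at~$\xi$, so these differentials are linearly independent; the tangent map is thus surjective, and smoothness holds on some Zariski open neighborhood of~$\xi$ to which we may shrink~$U_\xi$. The only delicate point is the Zariski-descent step: one needs the local equation of $D_\alpha$ to be available on an open subset defined over~$F$, which is why the argument relies on intersecting the $\Gamma_F$-translates of the original $F_\alpha$-neighborhood and on the fact that~$\xi$ is Galois-invariant.
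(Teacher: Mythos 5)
Your proof is correct and follows the same overall strategy as the paper (produce a local equation of~$D_\alpha$ over~$F_\alpha$, then convert it into an $F$-morphism to~$\Aff_{F_\alpha}$ via the universal property of Weil restriction), but you replace the paper's central descent step by a more elementary argument. The paper works with a generator~$f$ of the ideal of~$D_\alpha$ in the local ring $\mathscr O_{X_{\bar F},\xi}$, observes that $\gamma\mapsto f^\gamma/f$ is a $1$-cocycle for $\Gamma_{F_\alpha}$ with values in $\mathscr O_{X_{\bar F},\xi}^*$, and invokes Hilbert's Theorem~90 for the local ring to modify~$f$ into a $\Gamma_{F_\alpha}$-invariant generator, hence one defined over~$F_\alpha$. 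You sidestep this Galois cohomology entirely: since $X_{F_\alpha}$ is smooth, the local ring at the closed point over~$\xi$ is regular, hence a UFD, so the height-one prime ideal of the prime divisor~$D_\alpha$ is automatically principal there, giving the local equation directly over~$F_\alpha$. Both get the same conclusion; your route is shorter and uses only standard commutative algebra, while the paper's formulation makes the Galois descent explicit. You also add an explicit verification that $x_\alpha$ is smooth via the normal-crossings hypothesis (that the conjugate divisors $D_{\sigma\alpha}$ meet transversally at~$\xi$, making the differentials independent), which the paper leaves implicit until the subsequent discussion of local charts.
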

\begin{proof}
Choose an element~$f$ in the local ring~$\mathscr O_{X_{\bar F},\xi}$ 
which is a generator of the ideal of~$D_\alpha$, so that
$f=0$ is a local equation of~$D_\alpha$.
For any~$\gamma\in \Gamma_F$, $f^\gamma=0$
is a local equation of~$\gamma_* D_\alpha$.  Consequently,
$f^\gamma/f$ is a local unit at~$\xi$, for each
$\gamma\in\Gamma_{F_\alpha}$, and the map~$\gamma\mapsto f^\gamma/f$
is a $1$-cocycle for the Galois group~$\Gamma_{F_\alpha}$ with
values in the local ring~$\mathscr O_{X_{\bar F},\xi}^*$.  
By Hilbert's Theorem~90 for the multiplicative group,
this cocycle is a coboundary 
(see~\cite{milne80}, Proposition~4.9 and Lemma~4.10 for a non-elementary proof
in that context) and there exists a unit $u\in \mathscr O_{X_{\bar F},\xi}^*$
such that $f^\gamma/f=u^\gamma/u$ for any~$\gamma\in\Gamma_{F_\alpha}$.
Consequently, $fu^{-1}$ is an element of~$ \mathscr O_{X_{\bar F},\xi}$,
fixed by $\Gamma_{F_\alpha}$, which generates the ideal
of~$D_\alpha$.
In other words, 
there is a Zariski open neighborhood~$V$ of~$\xi$ in~$X$, 
and a function $g\in\Gamma(V\times_F {F_\alpha},\mathscr O_{V\times F_\alpha})$
such that~$g=0$ is an equation of~$D_\alpha$ in~$V\times_F {F_\alpha}$.
To the morphism $V\times_F F_\alpha \ra\Aff^1_{F_\alpha}$
corresponds by the universal property of the Weil restriction of scalars
a morphism, still denoted~$\theta$, $V\ra \Aff_{F_\alpha}$.

Through the canonical identification, over~$\bar F$, of~$\Aff_{F_\alpha}$ with
the affine space~$\Aff_{\bar F}^{r_\alpha}$, the map~$g$ induces an isomorphism
from~$D_\alpha$ to one of the coordinate hyperplanes,
from~$\Delta_\alpha$ to the union of the coordinate hyperplanes,
while $\xi$ maps to~$0$.
At the level of $F$-rational points, the diagram
$(V,\Delta_\alpha)\ra (\Aff_{F_\alpha},H)$ corresponds
to the map~$(V(F),D_\alpha(F))\ra (F_\alpha,0)$.
\end{proof}

 
\subsubsection{Local charts}
\label{subsubsec.local-charts}
Let $\xi\in X(F)$.
More generally, let $A_\xi$ be the set of~$\alpha\in\mathscr A$
such that $\xi\in D_\alpha$. Let $Z$ be the irreducible
component of~$D_{A_\xi}$ which contains~$\xi$. It is geometrically
irreducible and $(A_\xi,Z)$  is a face of~$\Clan_F(D)$.
For a sufficiently small neighborhood~$V$ of~$\xi$, there exists,
for each~$\alpha\in A_\xi$, 
a smooth map~$x_\alpha\colon V\ra\Aff_{F_\alpha}$ as in the previous
paragraph. 
Since the divisors~$D_\alpha$ meet transversally,
the map $(x_\alpha)\colon V\ra \prod_{\alpha\in A_\xi} \Aff_{F_\alpha}$
is smooth. 
By choosing additional local coordinates, and shrinking~$V$ if needed, 
we can extend it to
an \'etale map~$q_\xi\colon U_\xi\ra\prod_{\alpha\in A_\xi}\Aff_{F_\alpha}
\times \Aff^r$,  with $r=\codim_\xi( D_{A_\xi},X)=\Card(A_\xi)$.
We will also assume, as we may, that the open subset~$U_\xi$ is affine
and that $U_\xi\cap D_\alpha=\emptyset$ if $\alpha\not\in A_\xi$.

Assume moreover that $F$ is a local field.
It follows from the preceding discussion that
for each $\alpha\in\mathscr A$, 
$\Delta_\alpha(F)=E_\alpha(F)$ is a smooth $F$-analytic subvariety
of~$X(F)$, either empty, or of codimension~$r_\alpha=[F:F_\alpha]$. 
For any $ \Gamma_F$-invariant subset~$A$ of~$\mathscr A$, let us
denote by~$\Delta_A$ the intersection of all~$\Delta_\alpha$,
for $\alpha\in A$. Then, $\Delta_A(F)=E_A(F)$ is either empty, 
or a smooth $F$-analytic subvariety
of~$X(F)$ of codimension $r_A = \Card(A)$.

\subsubsection{Partitions of unity}
\label{subsubsec.partition-unity}
In this section, we assume in addition that $F$ is a local field
and we consider the situation from the analytic point of view.
Observe that
the maps~$q_\xi$ induce analytic \'etale maps 
$U_\xi(F)\ra \prod_{\alpha\in A_\xi} F_\alpha\times F^r$. 
The open sets $U_\xi(F)$ cover~$X(F)$ which is compact, since $X$ is projective.
Consequently, there is a finite partition of unity $(\theta_\xi)$ subordinate
to this covering: for any~$x$, $\theta_\xi\colon X(F)\ra\R$
is a smooth function whose support is contained in~$U_\xi(F)$,
the map $q_\xi$ is one-to-one on that support,
$\sum_{\xi\in X(F)} \theta_\xi=1$ and only a finite number of~$\theta_\xi$ 
are nonzero.

\subsubsection{Metrized line bundles, volume forms}
Let $\alpha\in\bmA$ and 
let us consider the line bundle~$\mathscr O_X(\Delta_\alpha)$.
It has a canonical section~$\mathsec f_\alpha$ the divisor of
which is~$\Delta_\alpha$. 
Let us assume that $F$ is a local field. On each open set
$U_\xi$ of $X$, we have constructed
a regular map $x_\alpha\colon U_\xi \ra\Aff_{F_\alpha}$.
Composed with the norm $\Aff_{F_\alpha}\ra\Aff^1$, it gives
us a regular function $\mathrm N(x_\alpha)\in \Gamma(U_\xi,\mathscr O_X)$
which generates the ideal of~$\Delta_\alpha$ on~$U_\xi$.
This function induces therefore a trivialization
of $\mathscr O_X(-\Delta_\alpha)_{|U_\xi}$.
Consequently, a metric on
$\mathscr O_X(\Delta_\alpha)_{|U_\xi}$
takes the form $\norm{\mathsec f_\alpha}(x)=\abs{x_\alpha}_{F_\alpha}
 h_\xi(x)$ on $U_\xi(F)$, where we view $x_\alpha$
as a local coordinate $U_\xi(F)\ra F_\alpha$ and 
$\abs{x_\alpha}_{F_\alpha}=\abs{\mathrm N_{F_\alpha/F}(x_\alpha)}_F$,
and where $h_\xi\colon U_\xi(F)\ra\R_+^*$ is any continuous function.

Similarly, on such a chart~$U_\xi(F)$, we have a 
measure in the Lebesgue class
defined by $\mathrm d\mathbf x=\prod_{\alpha\in A_\xi}\mathrm dx_\alpha\times
 \mathrm dx_1\cdots\mathrm dx_r$. Any measure on $X(F)$ which
belongs to the Lebesgue class can therefore be expressed
in a chart $U_\xi(F)$ as the product of this measure $\mathrm d\mathbf x$
with a positive locally integrable function.
Let us also observe that there are canonical isomorphisms
\[ \mathscr O(\Delta_\alpha)|_{E_\alpha}\simeq 
\mathscr N_{\Delta_\alpha}(X) |_{E_\alpha} \simeq 
\det \mathscr N_{E_\alpha}(X). \]
These isomorphisms allow us to define residue measures on 
$\Delta_\alpha(F)=E_\alpha(F)$, and on their intersections,
hence on all faces of the $F$-analytic Clemens complex~$\Clan_F(D)$.

\subsection{Mellin transformation over local fields}

\subsubsection{Local zeta functions}\label{subsubsec.local-zeta}
Let~$F$ be a local field of characteristic zero, with normalized
absolute value~$\abs{\cdot}_F$.
Define a constant $\mathrm c_F$ as follows:
\begin{equation}\label{eq.cF}
 \mathrm c_F = \begin{cases}
  \mu ( [-1;1])  & \text{if $F=\R$;}\\
  \mu (B(0;1)) & \text{if $F=\C$;} \\
  (1-q^{-1}) (\log q)^{-1} \mu (\mathfrak o_F)  & \text{if $F\supset \Q_p$,
           $q=\abs{\varpi_F}_F^{-1}$.}
\end{cases}
\end{equation}
(In the last formula, $\varpi_F$ denotes  
a generator of the maximal ideal of~$\mathfrak o_F$.)

For any locally integrable function~$\phi\colon F\ra\C$, we let
\[ \mathscr M_F(\phi)(s)=\int_F \phi(x)\abs{x}_F^{s-1}\, \mathrm d\mu(x), \]
for any complex parameter~$s$ such that the integral
converges absolutely. We call it the Mellin transform of~$\phi$;
to shorten the notation and when no confusion can arise,
we will often write $\hat\phi$ instead
of~$\mathscr M_F(\phi)$,

We write~$\zeta_F$ for the Mellin transform~$\mathscr M_F(\phi_0)$ of
$\phi_0$, the characteristic function of the unit ball in~$F$.
The integral defining~$\zeta_F(s)$ converges if and only if~$\Res(s)>0$, and one has
\begin{align*}
\zeta_F(s) &= \mu([-1,1]) \int_0^1 x^{s-1}\, \mathrm dx = \frac{\mathrm c_F}{s} && \quad\text{for $F=\R$}\\
&=  \frac{\mu(B(0;1))}{\pi}  2\pi\int_0^1 r^{2s-1}\,\mathrm dr = \frac{\mathrm c_F}{s} && \quad\text{for $F=\C$} \\
& = \sum_{n=0}^\infty q^{-n(s-1)} \mu(\varpi_F^n\mathfrak o_F^*) 
 = \mu(\mathfrak o_F) \big(1-\frac1q\big) \sum_{n=0}^\infty q^{-ns}  
 = \mu(\mathfrak o_F) \frac{1-q^{-1}}{1-q^{-s}} && \quad\text{otherwise}.
\end{align*}
In all cases, the function~$s\mapsto \zeta_F(s)$ is holomorphic
on the half-plane defined by~$\Re(s)>0$ and extends to a meromorphic
function on~$\C$. It has a simple pole at~$s=0$, with residue~$\mathrm c_F$.
In the archimedean case, note that $s=0$ is the only pole of~$\zeta_F$
However, in the $p$-adic case, $\zeta_F$ is $(2i\pi/\log q)$-periodic
so that $s=2in\pi/\log q$ is also a pole for any integer~$n$; 
this will play a role in the use of Tauberian arguments later.

\subsubsection{Mellin transform of smooth functions}
One of our goals in that paper is to establish
the meromorphic continuation, together with growth estimates,
for integrals on varieties that are defined locally like  Mellin  transforms.
To explain our arguments, we begin with the one-dimensional toy example.

We say that a function $\phi\colon F\ra\C$ is smooth if it
is either $\mathrm C^\infty$ when $F=\R$ or~$\C$, or
locally constant otherwise.
If $a$ and~$b$ are real numbers, we define 
$\Tube_{>a}$, resp. 
$\Tube_{(a,b)}$, as the set of $s\in\C$
such that $a<\Re(s)$, resp. $a < \Re(s) <b$.

\begin{lemm}
Let $F$ be a local field of characteristic zero, let $\phi$
be a measurable, bounded and compactly supported function on~$F$.
Then, the Mellin transform 
\[  \mathscr M_F(\phi)(s) =\int_F \abs{x}_F^{s-1} \phi(x)\,\mathrm dx \]
converges for $s\in\C$ with $\Re(s)>0$ and defines
a holomorphic function on~$\Tube_{>0}$. Moreover,
for any real numbers $a$ and $b$ such that $0<a<b$, the function
$\mathscr M_F(\phi(s))$ is bounded in~$\Tube_{(a,b)}$.

Assume that $\phi$ is smooth.
Then there exists a holomorphic function $\phi_1$ on $\Tube_{>-1/2}$
such that $\phi_1(0)=\phi(0)$ and $\mathscr M_F(\phi)(s)=\zeta_F(s)\phi_1(s)$.
Moreover, for any real numbers~$a$ and~$b$ such that $-\frac12<a<b$,
$\phi_1$ is bounded in $\Tube_{(a,b)}$
if $F$ is ultrametric, while there is an upper bound 
\[ \abs{\phi_1(s)}\ll  1+\abs{\Im(s)}, \qquad s\in\Tube_{(a,b)},\]
if $F$ is archimedean.
In particular,
the function 
\[ s \mapsto \mathscr M_F(\phi)(s)-\phi(0)\zeta_F(s) \]
is holomorphic 
and
\[ \lim\limits_{s\ra 0}s\mathscr M_F(\phi)(s)=\mathrm c_F\phi(0). \]
\end{lemm}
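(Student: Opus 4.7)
The plan is to reduce the whole statement to computing a single integral (that of $\phi_0=\mathbf 1_{B(0,1)}$, which by definition yields $\zeta_F$) by subtracting off the singular behavior of $\phi$ at $x=0$. First, for the assertions about a general bounded measurable $\phi$ of compact support, I would argue that $\abs{x}_F^{s-1}$ is locally integrable on $F$ whenever $\Re(s)>0$ --- immediate over $\R$ and $\C$, and reducing in the ultrametric case to the convergence of the geometric series $\sum_{n\geq 0} q^{-n\Re(s)}$ --- and then deduce absolute, locally uniform convergence of $\mathscr M_F(\phi)(s)$ on $\Tube_{>0}$, hence holomorphy (by Morera, or by differentiation under the integral sign). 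The uniform bound on $\Tube_{(a,b)}$ with $0<a<b$ falls out of splitting the integral over $\{\abs{x}_F\leq 1\}$ and its complement and invoking the compact support of $\phi$.

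The main step is the meromorphic continuation under the smoothness hypothesis. I would write $\phi=\phi(0)\phi_0+\psi$ with $\phi_0$ the characteristic function of the unit ball and $\psi=\phi-\phi(0)\phi_0$, so that
\[ \mathscr M_F(\phi)(s)=\phi(0)\zeta_F(s)+\mathscr M_F(\psi)(s). \]
Everything then reduces to showing that $\mathscr M_F(\psi)$ extends holomorphically to $\Tube_{>-1/2}$ with appropriate bounds in strips. Smoothness of $\phi$ furnishes exactly the gain needed: in the ultrametric case $\psi$ vanishes identically near $0$ (since $\phi$ is locally constant), so $\mathscr M_F(\psi)$ is plainly entire and bounded on each strip; in the archimedean case the $\mathrm C^\infty$ hypothesis gives $\abs{\psi(x)}\ll\abs{x}$ near $0$ (with $\abs{x}$ the Euclidean norm), which combined with the compact support of $\psi$ yields absolute convergence of $\int\psi(x)\abs{x}_F^{s-1}\,dx$ on $\Re(s)>-1/2$ and a uniform bound on each vertical strip. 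The precise half-plane $\Tube_{>-1/2}$ is dictated by the $\C$ case, where $\abs{x}_F=\abs{x}^2$ and the area element is $r\,dr\,d\theta$, so one just barely needs $\Re(s)>-1/2$.

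Then I set $\phi_1(s)=\mathscr M_F(\phi)(s)/\zeta_F(s)=\phi(0)+\mathscr M_F(\psi)(s)/\zeta_F(s)$. Since $\zeta_F$ has no zeros on $\C$ and $1/\zeta_F$ is entire --- explicitly, $1/\zeta_F(s)=s/\mathrm c_F$ in the archimedean case and $1/\zeta_F(s)=(1-q^{-s})/(\mathrm c_F\log q)$ in the ultrametric case, both vanishing to first order at $s=0$ with derivative $1/\mathrm c_F$ --- the quotient $\mathscr M_F(\psi)/\zeta_F$ is holomorphic on $\Tube_{>-1/2}$ and vanishes at $s=0$, which gives the holomorphy of $\phi_1$ and the equality $\phi_1(0)=\phi(0)$. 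The bounds propagate to $\phi_1$: in the ultrametric case both $\mathscr M_F(\psi)$ and $1/\zeta_F$ are bounded in $\Tube_{(a,b)}$, while in the archimedean case $1/\zeta_F(s)$ grows linearly in $\abs{s}$ on such strips and $\mathscr M_F(\psi)$ remains bounded, whence $\phi_1(s)=\mathrm O(1+\abs{\Im(s)})$. The closing assertions --- holomorphy of $\mathscr M_F(\phi)-\phi(0)\zeta_F=\mathscr M_F(\psi)$ past $s=0$, and the residue identity $\lim_{s\to 0}s\mathscr M_F(\phi)(s)=\mathrm c_F\phi(0)$ --- are then immediate consequences of the splitting and of the known simple pole of $\zeta_F$ at $s=0$ with residue $\mathrm c_F$ computed in \S\ref{subsubsec.local-zeta}.

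The one point requiring genuine care is the archimedean estimate over $F=\C$: one must track how the bound $\abs{\psi(x)}\ll\abs{x}$ (with Euclidean $\abs{x}$) interacts with the normalization $\abs{x}_F=\abs{x}^2$, as this is precisely what produces the exponent $-1/2$ rather than $-1$. Beyond this bookkeeping, the linear-in-$\abs{\Im(s)}$ growth of $\phi_1$ comes not from $\mathscr M_F(\psi)$ (which one might hope to bound better by integration by parts, but the stated estimate is all that is required for the subsequent Tauberian applications) but simply from the factor $s$ appearing in $1/\zeta_F$.
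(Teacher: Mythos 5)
Your proof is correct and follows essentially the same route as the paper: subtract $\phi(0)\phi_0$ to reduce to the case $\phi(0)=0$, use smoothness to gain the decay $\abs{\psi(x)}\ll\abs{x}$ (Euclidean) near the origin — which in the complex case, after accounting for $\abs{x}_F=\abs{x}^2$ and the area element, gives convergence precisely on $\Re(s)>-1/2$ — and then define $\phi_1=\mathscr M_F(\phi)/\zeta_F$, observing that $1/\zeta_F$ is entire, nonvanishing away from $s=0$, vanishing to first order at $s=0$ with derivative $1/\mathrm c_F$, bounded on vertical strips in the ultrametric case and of linear growth in the archimedean case. Your version is if anything slightly more explicit than the paper's at the last step (making the source of the factor $1+\abs{\Im(s)}$ visible as the $s$ in $1/\zeta_F(s)=s/\mathrm c_F$), but the strategy is the same.
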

\begin{proof}
The absolute convergence of~$\mathscr M_F(\phi)(s)$ for $\Re(s)>0$
and its holomorphy in that domain immediately follow from the absolute
convergence of~$\zeta_F$ and the fact that~$\phi$ 
is bounded and has compact support. So does also its boundedness
in vertical strips $\Tube_{(a,b)}$ for $0<a<b$.

Assume that $\phi$ is smooth.
Let us prove the stated meromorphic continuation.
Again, let~$\phi_0$ denote the characteristic function of the unit ball in~$F$.
Replacing $\phi$ by $\phi-\phi(0)\phi_0$,
it suffices to show that $\mathscr M_F(\phi)$ is holomorphic 
on the half-plane
$\Re(s)>-\frac12 $ whenever $\phi$ is a compactly supported function on~$F$
which is smooth in a neighborhood of~$0$ and satisfies $\phi(0)=0$.

In the real case, one can then write $\abs{\phi}(x)=\abs x_F\psi(x)$ for some
function~$\psi$ which is continuous at~$0$. Similarly, in the complex
case, one can write $\abs{\phi(z)}=\abs{z}_F^{1/2}\psi(z)$.
The required holomorphy of~$\mathscr M_F(\phi)(s)$ on the domain 
$\Tube_{>-\frac12}$ follows.
In the ultrametric case, $\phi$ vanishes in a neighborhood of~$0$
and $\mathscr M_F(\phi)$ even extends to a holomorphic function on~$\C$.

In all cases, observe that $\mathscr M_F(\phi)(s)$ is bounded 
in vertical strips $\Tube_{(\alpha,\beta)}$,
if $\alpha$ and~$\beta$ are real numbers such that $-\frac12<\alpha<\beta$.
Now, the existence of the function~$\phi_1$ and the asserted bound 
on this function follow from the fact that $\zeta_F^{-1}$ is holomorphic
and satisfies  this bound  in these vertical strips.
\end{proof}

\subsubsection{Higher-dimensional Mellin transforms}
We consider a finite family $(F_j)_{1\leq j\leq n}$ of local
fields. For simplicity, we assume these to be finite extensions
of a single completion of~$\Q$. Let us denote by~$V$
the space $\prod F_j$. 
Adopting the terminology introduced for functions of one
variable, we say that a function on~$V$ is smooth if it is 
either $\mathrm C^\infty$, in the case where the $F_j$ are archimedean,
or locally constant, when the~$F_j$ are ultrametric.
If $\alpha=(\alpha_1,\ldots,\alpha_n)$ and~$\beta=(\beta_1,\ldots,\beta_n)$ 
are families of real numbers, we define the sets
$\Tube_{>\alpha}$ and $\Tube_{(\alpha,\beta)}$ as the open
subsets of~$\C^n$ consisting of those $s\in\C^n$
such that $\alpha_j<\Re(s_j)$, resp. $\alpha_j < \Re(s_j) <\beta_j$, 
for $j=1,\dots,n$.
When all $\alpha_j$ are equal to a single one $\alpha$, 
and similarly for the~$\beta_j$, we will
also write $\Tube^n_{>\alpha}$
and $\Tube^n_{(\alpha,\beta)}$, or even $\Tube_{>\alpha}$
and~$\Tube_{(\alpha,\beta)}$ when the dimension~$n$
is clear from the context.
Let $\mathscr F$ be the vector space generated by functions
on $V\times\C^n$ of the form 
$(x;s)\mapsto u(x) v_1(x)^{s_1}\dots v_n(x)^{s_n}$,
where $u$ is smooth, compactly supported on~$V$,
and $v_1,\dots,v_n$ are smooth, positive, and equal to~$1$
outside of a compact subset of~$V$.
For any function $\phi\in\mathscr F$ and $s\in\C^n$, we let 
\[\mathscr M_V(\phi)  (s) = \int_V \abs{x_1}^{s_1-1}\dots\abs{x_n}^{s_n-1}
   \phi(x;s)\, \mathrm dx_1\,\dots \mathrm dx_n, \]
whenever the integral converges.

\begin{prop}\label{prop.mellin-n}
The integral $\mathscr M_V(\phi) (s)$ converges for any $s\in\C^n$ such that
$\Re(s_j)>0$ for all~$j$, and defines an holomorphic function in 
$\Tube_{>0}$.
For any positive real numbers $\alpha$ and~$\beta$,
$\mathscr M_V(\phi)$ is bounded on~$\Tube^n_{(\alpha,\beta)}$.

Moreover, there exists a unique holomorphic function $\phi_1$
on the domain~$\Tube^n_{>-1/2}$ 
such that 
\[ \mathscr M_V(\phi)(s) = \prod_{j=1}^n \zeta_{F_j}(s_j) \phi_1(s)  \]
for $s\in\Tube^n_{>0}$.
Let $\alpha$ and~$\beta$ be real numbers such that
$-\frac12<\alpha<\beta$. Then, $\phi_1$ is bounded on~$\Tube^n_{(\alpha,\beta)}$
if the $F_j$ are ultrametric, while there exists a real number~$c$
such that
\[ \abs{\phi_1(s)} \leq c \prod_{j=1}^n (1+\abs{s_j}), \qquad s\in\Tube_{(\alpha,\beta)}, \]
if the $F_j$ are archimedean.
Moreover, $\phi_1(0)=\phi(0;0)$.
\end{prop}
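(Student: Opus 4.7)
The plan is to prove the statement by induction on $n$, applying the one-variable lemma one coordinate at a time via Fubini's theorem. The base case $n=1$ is essentially the preceding lemma; the only thing to check is that its proof adapts to functions of one variable with an analytic $s$-dependence in the coefficient, which is immediate since multiplying a smooth function of $x$ by $v(x)^{s}$ preserves smoothness in $x$ and yields joint holomorphy in $(x,s)$.

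Absolute convergence, holomorphy on $\Tube^n_{>0}$, and boundedness on $\Tube^n_{(\alpha,\beta)}$ for $0<\alpha<\beta$ are handled together at the start. The compact support of $u$ and the smoothness of the $v_j$ yield, on each vertical strip, a uniform domination of the form $\abs{\phi(x;s)} \leq C\,\mathbf 1_{K}(x)$ for some compact $K\subset V$, reducing matters to the product estimate $\prod_j\int_{K}\abs{x_j}^{\Re(s_j)-1}\,\mathrm dx_j<\infty$. Holomorphy then follows by differentiation under the integral, or by Morera's theorem combined with Fubini.

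For the factorization I would isolate the $x_n$-integration via Fubini,
\[ \mathscr M_V(\phi)(s)=\int_{F_n}\abs{x_n}^{s_n-1}\left(\int_{V'}\prod_{j<n}\abs{x_j}^{s_j-1}\phi(x;s)\,\mathrm dx'\right)\mathrm dx_n, \]
and observe that, for each fixed $(x_n,s_n)$, the inner integrand lies in the class $\mathscr F$ attached to $V'=\prod_{j<n}F_j$: indeed one writes $\phi(x;s)=\bigl(u(x)v_n(x)^{s_n}\bigr)\prod_{j<n}v_j(x)^{s_j}$, the factor $u(\cdot,x_n)v_n(\cdot,x_n)^{s_n}$ is smooth and compactly supported in $x'$, and each $v_j(\cdot,x_n)$ is smooth, positive, and equal to $1$ outside a compact subset of $V'$ (the projection to $V'$ of the compact set where $v_j\not\equiv 1$). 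The inductive hypothesis then produces a function $\Phi(s;x_n)$, holomorphic in $(s_1,\ldots,s_{n-1})$ on $\Tube^{n-1}_{>-1/2}$ with the stated strip bound, such that the inner integral equals $\prod_{j<n}\zeta_{F_j}(s_j)\,\Phi(s;x_n)$. A direct inspection of the construction shows $\Phi(s;\cdot)$ is itself smooth and compactly supported in $x_n$, with holomorphic dependence on $s_n$; applying the one-variable lemma to the outer integral in $x_n$ then factors out $\zeta_{F_n}(s_n)$ and gives holomorphy of the quotient $\phi_1(s)$ in $s_n$ on $\Tube_{>-1/2}$, together with the appropriate strip bound.

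The main obstacle is upgrading this to joint holomorphy of $\phi_1$ on the polydisc $\Tube^n_{>-1/2}$: the iterated procedure above only yields holomorphy in one $s_j$ at a time while the others are in $\Tube^{n-1}_{>-1/2}$. I would close this gap by invoking Hartogs' theorem on separate analyticity, applied to the locally bounded function $\phi_1$; the required local boundedness follows from the strip estimates produced at each step of the induction, and the archimedean factor $\prod_j(1+\abs{s_j})$ propagates because each application of the one-variable lemma introduces exactly one such factor in the corresponding variable. Finally, the identity $\phi_1(0)=\phi(0;0)$ is obtained by iterating the base-case limit $\lim_{s\to 0}s\,\mathscr M_F(\phi)(s)=\mathrm c_F\,\phi(0)$ across the $n$ coordinates, and uniqueness is automatic since the product $\prod_j\zeta_{F_j}(s_j)$ is nowhere zero on $\Tube^n_{>0}$.
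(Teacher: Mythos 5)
The paper states this proposition without proof, treating it as a routine multivariate extension of the preceding one-variable lemma; your Fubini-plus-induction argument is the natural way to carry this out and is essentially correct. Two remarks. Your ``direct inspection of the construction'' is genuinely necessary---the hypothesis you are inducting on must be strengthened to say that $\phi_1$ depends smoothly, respectively holomorphically, on auxiliary smooth, respectively holomorphic, parameters in $\phi$, with uniform strip bounds. This is easiest to verify by writing the one-variable extension explicitly as
\[
\phi_1(s) \;=\; \frac{1}{\zeta_F(s)}\int_F \abs{x}^{s-1}\bigl(\phi(x)-\phi(0)\phi_0(x)\bigr)\,\mathrm dx \;+\; \phi(0),
\]
where $\phi_0$ is the characteristic function of the unit ball: the integral converges locally uniformly for $\Re(s)>-\tfrac12$ (since the regularized integrand is $\mathrm O(\abs{x}^{\Re(s)})$ near $0$ for $F=\R$, $\mathrm O(\abs{x}^{\Re(s)+1/2})$ for $F=\C$, and supported away from $0$ in the ultrametric case) and can be differentiated under the integral sign in any parameters, while $\zeta_F^{-1}$ is entire. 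Second, the Hartogs invocation is a valid backstop but is in fact redundant: applying the displayed formula successively in $x_1,\dots,x_n$ realizes $\phi_1(s)$ as a finite combination of terms each of which is a jointly holomorphic function of $s$, namely integrals that converge locally uniformly on $\Tube^n_{>-1/2}$ multiplied by the entire functions $\zeta_{F_j}(s_j)^{-1}$, so joint holomorphy is immediate and one need not pass through separate analyticity plus local boundedness.
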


\section{Geometric Igusa integrals and volume estimates}
\label{sec.igusa-volumes}

\subsection{Igusa integrals over local fields}
\label{ss.geometric-igusa}

We return to our geometric situation: $X$ is a smooth quasi-projective 
variety over a local field~$F$, $D$ is a divisor on~$X$ such
that $D_{\bar F}$ has simple normal crossings. 
We let $\bmA$ be the set of irreducible components
of~$D_{\bar F}$. For $\alpha\in\bmA$, $D_\alpha$ denotes the corresponding
component, while $\Delta_\alpha$ and~$E_\alpha$ are respectively
the sum and the intersection of the conjugates of~$D_\alpha$.

For $\alpha\in\bmA$,
we denote by $\mathsec f_{\alpha}$ the canonical section
of the line bundle~$\mathscr O(\Delta_\alpha)$.
Endow the line bundles $\mathscr O(\Delta_\alpha)$,
as well as the canonical bundle~$\omega_X$ with smooth metrics.
Let $\tau_X$ the corresponding measure on~$X$. 
Following the definitions of~\ref{subsubsec.residue-measures},
the analytic variety $D_A(F)$, for any $\Gamma_F$-invariant subset 
$A\subset\bmA$ supports a natural ``residue measure''.
To simplify explicit formulas below, we define the
measure $\tau_{D_A}$ as the residue measure multiplied by
$\prod_{\alpha\in A} \mathrm c_{F_\alpha}$,
where the constants $\mathrm c_{F_\alpha}$ are defined in
Equation~\eqref{eq.cF}.

The integrals we are interested in 
take the form
\[ \mathscr I(\Phi;(s_\alpha)_{\alpha \in \mathscr A}) = 
\int_{X(F)} \prod_{\alpha\in \mathscr A}
\norm{\mathsec f_{D_\alpha}}(x)^{s_\alpha-1}\, \Phi(x)\, \mathrm d\tau_X(x), \]
where $\Phi$ is a smooth, compactly supported function on $X(F)$
and the~$s_\alpha$ are complex parameters.
Letting~$\Phi$ vary, it is convenient to view 
these integrals as distributions
(of order~$0$) on~$X(F)$.

For any subset $A$ of the $F$-analytic Clemens complex
$\Clan_F(D)$, define also
\[ \mathscr I_A(\Phi;(s_\alpha)_{\alpha \not\in A}) 
= \int_{D_A(F)}  \prod_{\alpha\not\in A} \norm{\mathsec f_{D_\alpha}}(x)^{s_\alpha-1}\, \Phi(x) \, \mathrm d\tau_{D_A}(x). \]


\begin{lemm}\label{lemm.local-convergence}
The integral $\mathscr I(\Phi;(s_\alpha)_{\alpha\in \mathscr A})$ 
converges for $s\in\Tube_{>0}^{\mathscr A}$ and the map
\[ (s_\alpha)_{\alpha\in\mathscr A} \mapsto \mathscr I(\Phi;(s_\alpha)_{\alpha\in \mathscr A}) \]
is holomorphic  on $\Tube_{>0}^{\mathscr A}$.

Similarly, for any~$A\subset\mathscr A$, 
the integral $\mathscr I_A(\Phi;(s_\alpha))$ converges for
$s\in\Tube_{>0}^{\mathscr A\setminus A}$ and 
the function $s\mapsto \mathscr I_A(\Phi;s)$ 
is holomorphic on that domain.
\end{lemm}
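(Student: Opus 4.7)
The plan is to localize both integrals via the partition of unity $(\theta_\xi)_\xi$ from \ref{subsubsec.partition-unity} and then apply the multivariable Mellin transform analysis of Proposition~\ref{prop.mellin-n} in each étale chart. Since $X(F)$ is compact, the decomposition $\mathscr I(\Phi;s)=\sum_\xi \mathscr I(\theta_\xi\Phi;s)$ is a finite sum, and it suffices to treat a single summand whose integrand is supported in $U_\xi(F)$.

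In the chart $q_\xi\colon U_\xi\to\prod_{\alpha\in A_\xi}\Aff_{F_\alpha}\times\Aff^r$ of \ref{subsubsec.local-charts}, I denote by $x_\alpha$ the $F_\alpha$-valued local coordinate attached to $\alpha\in A_\xi$. By the discussion in \ref{subsubsec.local-charts} one has $\norm{\mathsec f_\alpha}(x)=\abs{x_\alpha}_{F_\alpha}h_\xi^{(\alpha)}(x)$ with $h_\xi^{(\alpha)}$ smooth and positive on $U_\xi(F)$, whereas for $\alpha\notin A_\xi$ the function $\norm{\mathsec f_\alpha}$ is smooth and bounded below by a positive constant on $\mathrm{supp}(\theta_\xi)$, since $U_\xi\cap D_\alpha=\emptyset$. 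The measure $\tau_X$ pulls back to a positive smooth density times the product Haar measure $\prod_{\alpha\in A_\xi}\mathrm d\mathbf x_\alpha\cdot \prod_j\mathrm dy_j$. Extending each smooth positive factor outside the (compact) support of $\theta_\xi\Phi$ to a function equal to~$1$ at infinity — a harmless modification that does not affect the integral — brings the local integrand into the exact form of an element of the class $\mathscr F$ from Proposition~\ref{prop.mellin-n}, with distinguished coordinates $(x_\alpha)_{\alpha\in A_\xi}$.

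Proposition~\ref{prop.mellin-n} then yields absolute convergence and holomorphy on $\prod_{\alpha\in A_\xi}\{\Re(s_\alpha)>0\}$; the parameters $s_\alpha$ for $\alpha\notin A_\xi$ enter only through smooth nonvanishing factors raised to powers $s_\alpha-1$ and therefore contribute entire dependence on those variables. Summing over the finitely many~$\xi$ gives holomorphy of $\mathscr I(\Phi;s)$ on $\Tube_{>0}^{\mathscr A}$: for every $\alpha\in\mathscr A$ with $D_\alpha(F)\neq\emptyset$ one can choose $\xi\in D_\alpha(F)$ so that $\alpha\in A_\xi$, while for $\alpha$ with $D_\alpha(F)=\emptyset$ no constraint on $s_\alpha$ arises at all.

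The residue integral $\mathscr I_A$ is handled identically: I cover $D_A(F)$ by the restrictions to $D_A(F)$ of those $\theta_\xi$ for which $A\subset A_\xi$. In such a chart, $D_A(F)$ is the common zero locus of the coordinates $x_\alpha$ with $\alpha\in A$, and by the residue formula \eqref{eq.abs.omega.D} the measure $\tau_{D_A}$ pulls back to a smooth positive density times $\prod_{\alpha\in A_\xi\setminus A}\mathrm d\mathbf x_\alpha\cdot\prod_j\mathrm dy_j$ (the normalizing constants $\mathrm c_{F_\alpha}$ for $\alpha\in A$ being absorbed into our conventions). The remaining factors behave exactly as before, and Proposition~\ref{prop.mellin-n} delivers convergence and holomorphy on $\Tube_{>0}^{\mathscr A\setminus A}$. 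The whole argument is essentially a bookkeeping exercise; the only point requiring genuine care is verifying that the change of coordinates and the cut-off by $\theta_\xi$ convert the integrand into the precise shape of a function in the class~$\mathscr F$ to which Proposition~\ref{prop.mellin-n} applies.
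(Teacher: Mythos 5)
Your proof is correct and follows essentially the same route as the paper: reduce to the étale charts of~\S\ref{subsubsec.local-charts} via the partition of unity of~\S\ref{subsubsec.partition-unity}, write the metrics locally as $\norm{\mathsec f_\alpha}=\abs{x_\alpha}_{F_\alpha}u_\alpha$ (or $u_\alpha$ when $\alpha\notin A_\xi$) and the measure as a smooth density times Haar measure, and invoke Proposition~\ref{prop.mellin-n}. You supply a bit more explicit bookkeeping than the paper's terse account — in particular the modification of the smooth factors so they equal~$1$ outside a compact set to land in the class $\mathscr F$, and the observation that on $D_A(F)$ the partition of unity restricts to the charts with $A\subset A_\xi$ (using $U_\xi\cap D_\alpha=\emptyset$ for $\alpha\notin A_\xi$) — but these are just the details the paper subsumes under ``analogous''.
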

We shall therefore call such integrals
``\emph{holomorphic distributions on~$X$}''.

\begin{proof}
For $\xi\in X(F)$, let~$\mathscr A_\xi$ be the set of~$\alpha\in\mathscr A$
such that $\xi\in D_\alpha$ and let 
\[
q\colon U_\xi\ra\prod_{\alpha\in\mathscr A_\xi}\Aff_{F_\alpha}
\times \Aff^r
\] 
be an \'etale chart around~$\xi$ adapted to~$D$,
as in Section~\ref{subsubsec.local-charts}.
Let~$x_\alpha\colon U_\xi\ra \Aff_{F_\alpha}$ be the composite of~$q$
followed by the projection to~$\Aff_{F_\alpha}$,
and $y\colon U_\xi\ra\Aff^r$ be the composite of~$q$ with the projection
to~$\Aff^r$.

These maps induce local coordinates in a neighborhood of~$U_\xi$,
valued in $\prod_{\alpha\in\mathscr A_\xi}F_\alpha\times F^r$
in which the measure~$\tau$ takes the form $\kappa((x_\alpha),y)
\prod \mathrm dx_\alpha\,\mathrm dy$.

By definition of a smooth metric,
there is, 
for any $\alpha\in\mathscr A$, 
a smooth non-vanishing function~$u_\alpha$ on~$U_\xi(F)$
such that $\norm{\mathsec f_{D_\alpha}}=\abs{x_\alpha}_{F_\alpha}u_\alpha$
if $\alpha\in\mathscr A_\xi$, and $\norm{\mathsec f_{D_\alpha}}=u_\alpha$
otherwise.

Further, introducing a partition of unity
(see Section~\ref{subsubsec.partition-unity}),
we see that it suffices to study integrals of the form
\[ \int_{\prod F_\alpha\times F^d} \prod_{\alpha\in\mathscr A_\xi} \abs{x_\alpha}_{F_\alpha}^{s_\alpha-1}
\Phi((x_\alpha),y)\prod_{\alpha\in\mathscr A} u_\alpha^{s_\alpha-1}
\kappa((x_\alpha),y) \prod \mathrm dx_{\alpha}\, \mathrm dy, \]
where $\Phi$ is a smooth function 
with compact support on~$\prod F_\alpha\times F^r$.
The holomorphy of such integrals  is precisely the object
of Proposition~\ref{prop.mellin-n} above.

The case of the integrals $\mathscr I_A(\Phi;(s_\alpha))$ is analogous.
\end{proof}

By the same arguments, but using the meromorphic continuation
of Mellin transforms and the estimate of their growth in vertical strips, 
we obtain the following result.

\begin{prop}\label{prop.local-continuation}
The holomorphic function 
\[ s\mapsto \prod_{\alpha\in\mathscr A}\zeta_{F_\alpha}(s_\alpha)^{-1}
\mathscr I(\Phi;(s_\alpha)) \]
on $\Tube_{>0}^{\mathscr A}$ extends to a 
holomorphic function~$\mathscr M(\Phi;\cdot)$ on $\Tube_{>-1/2}^{\mathscr A}$.
Moreover, for any real numbers $a$ and~$b$ and for any
function~$\Phi$, there is a real number~$c$ such that 
\[ \mathscr M(\Phi;s)\leq c\prod_{\alpha\in\mathscr A}(1+\abs{s_\alpha})
\qquad\text{ for any $s\in\Tube_{(a,b)}^{\mathscr A}$.} \]
\end{prop}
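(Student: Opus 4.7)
The plan is to localize $\mathscr I(\Phi;s)$ in \'etale charts adapted to~$D$ and to apply the one-variable meromorphic continuation of Proposition~\ref{prop.mellin-n} chart by chart. Since $\Phi$ is smooth and compactly supported, cover its support by finitely many charts $(U_\xi,q_\xi)$ as in Section~\ref{subsubsec.local-charts} and fix a subordinate partition of unity $(\theta_\xi)$ as in Section~\ref{subsubsec.partition-unity}. By linearity it is enough to establish the claim for each localized integral $\mathscr I(\theta_\xi\Phi;s)$ separately.

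In the chart $U_\xi$, with local coordinates $(x_\alpha)_{\alpha\in \mathscr A_\xi}$ and auxiliary coordinates~$y$, the chosen smooth metrics take the form
\[ \norm{\mathsec f_{D_\alpha}}=\abs{x_\alpha}_{F_\alpha}\,u_\alpha\ (\alpha\in\mathscr A_\xi),\qquad \norm{\mathsec f_{D_\alpha}}=u_\alpha\ (\alpha\notin\mathscr A_\xi), \]
with smooth positive functions~$u_\alpha$, while $\mathrm d\tau_X=\kappa(x,y)\prod_{\alpha\in\mathscr A_\xi}\mathrm dx_\alpha\,\mathrm dy$ for some smooth positive~$\kappa$. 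Pulling back through~$q_\xi$ identifies $\mathscr I(\theta_\xi\Phi;s)$ with a Mellin integral of precisely the shape treated in Proposition~\ref{prop.mellin-n}, whose Mellin variables are $(s_\alpha)_{\alpha\in\mathscr A_\xi}$ and whose integrand is
\[ (x,y;s)\longmapsto \theta_\xi(x,y)\Phi(x,y)\kappa(x,y)\prod_{\alpha\in\mathscr A} u_\alpha(x,y)^{s_\alpha-1}. \]
The parameters $s_\alpha$ for $\alpha\notin\mathscr A_\xi$ enter only through the positive smooth factors $u_\alpha^{s_\alpha-1}$ and thus contribute an entire dependence, uniformly bounded on any vertical strip in those variables (since $0<m\le u_\alpha\le M$ on the compact support).

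Proposition~\ref{prop.mellin-n} then yields
\[ \mathscr I(\theta_\xi\Phi;s)=\Big(\prod_{\alpha\in\mathscr A_\xi}\zeta_{F_\alpha}(s_\alpha)\Big)\,\phi_{\xi,1}(s), \]
with $\phi_{\xi,1}$ holomorphic on $\Tube_{>-1/2}^{\mathscr A}$ and satisfying $\abs{\phi_{\xi,1}(s)}\le c_\xi\prod_{\alpha\in\mathscr A_\xi}(1+\abs{s_\alpha})$ on $\Tube_{(a,b)}^{\mathscr A}$ in the archimedean case (and a uniform bound in the ultrametric case). Multiplying by $\prod_{\alpha\in\mathscr A}\zeta_{F_\alpha}(s_\alpha)^{-1}$ cancels the $\zeta$-factors for $\alpha\in\mathscr A_\xi$ and introduces, for $\alpha\notin\mathscr A_\xi$, the entire functions $\zeta_{F_\alpha}(s_\alpha)^{-1}$. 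The explicit formulas of Section~\ref{subsubsec.local-zeta} show that each $\zeta_{F_\alpha}(s_\alpha)^{-1}$ grows at most linearly in $\abs{s_\alpha}$ on vertical strips in the archimedean case and is uniformly bounded on vertical strips in the ultrametric case, so the combined estimate $\prod_{\alpha\in\mathscr A}(1+\abs{s_\alpha})$ is preserved.

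Summing the contributions over the finitely many charts gives the claimed holomorphic extension to $\Tube_{>-1/2}^{\mathscr A}$ together with the stated bound in vertical strips. The only delicate point is bookkeeping: in each chart one must correctly distinguish the genuine Mellin variables $s_\alpha$ with $\alpha\in\mathscr A_\xi$ (those producing the $\zeta$-poles) from the parameters $s_\alpha$ with $\alpha\notin\mathscr A_\xi$ (which contribute only holomorphically through the factors $u_\alpha^{s_\alpha-1}$). No analytic input beyond Proposition~\ref{prop.mellin-n} and the elementary properties of $\zeta_{F_\alpha}$ recalled in Section~\ref{subsubsec.local-zeta} is required.
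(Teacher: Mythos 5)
Your proof is correct and follows essentially the same route as the paper, which merely remarks that the result follows from the proof of Lemma~\ref{lemm.local-convergence} together with the continuation and growth parts of Proposition~\ref{prop.mellin-n}. Your spelling-out of the bookkeeping — that only the $s_\alpha$ with $\alpha\in\mathscr A_\xi$ are genuine Mellin variables, while the remaining $s_\alpha$ enter entirely through the bounded factors $u_\alpha^{s_\alpha-1}$, and that multiplying by $\zeta_{F_\alpha}(s_\alpha)^{-1}$ for $\alpha\notin\mathscr A_\xi$ costs at most one more linear factor per variable — is precisely the argument the paper has in mind.
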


\subsubsection{Leading terms}
We will need to understand the leading terms at the poles 
of these integrals after restricting the parameter $(s_\alpha)$ 
to an affine line in~$\C^{\mathscr A}$.
For $\alpha \in \mathscr A$, 
let $s\mapsto s_\alpha=-\rho_\alpha+\lambda_\alpha s$ 
be an increasing affine function with real coefficients.
To shorten notation, we  write $\mathscr I(\Phi;s)$
instead of $\mathscr I(\Phi;(s_\alpha))$, 
and similarly for the integrals~$\mathscr I_A$.
Define
\[ a(\lambda,\rho) = \max_{\alpha \in \mathscr A} 
                \frac{\rho_\alpha}{\lambda_\alpha}
 \]
and let~$\mathscr A(\lambda,\rho)$ denote the set of all $\alpha\in \mathscr A$
where the maximum is obtained,
\emph{i.e.}, such that $a(\lambda,\rho)=\rho_\alpha/\lambda_\alpha$.
By the preceding lemma, 
$\mathscr I(\Phi;s)=\mathscr I(\Phi,(-\rho_\alpha+\lambda_\alpha s))$
converges for $\Re(s)>a(\lambda,\rho)$ and defines a holomorphic
function there.
Similarly, 
letting
\[ a_A(\lambda,\rho)= \max_{\alpha\not\in A} \frac{\rho_\alpha}{\lambda_\alpha}, \]
the function
$s\mapsto \mathscr I_A(\Phi;s):=\mathscr I_A(\Phi,(-\rho_\alpha+\lambda_\alpha s))$
converges for $\Re(s)>a_A(\lambda,\rho)$.
Notice that $a_A(\lambda,\rho)\leq a(\lambda,\rho)$
in general, and that the inequality may be strict,
\emph{e.g.}, if $\mathscr A(\rho,{\lambda})\subseteq A$.

Let~$\Clan_{F,(\lambda,\rho)}(D)$ be the intersection
of the Clemens complex~$\Clan_F(D)$ with the simplicial subset
$\mathscr P^+(\mathscr A(\lambda,\rho))$
of~$\mathscr P^+(\mathscr A)$. In other words, we remove
from~$\Clan_F(D)$ all faces containing a vertex~$\alpha$ such
that $\rho_\alpha< a(\lambda,\rho) \lambda_\alpha$.

\begin{prop}
\label{prop.igusa-limit}
With the above notation, there exists a positive real number~$\delta$,
and, for any face~$A$ of~$\Clan_{F,(\lambda,\rho)}(D)$ of
maximal dimension, a holomorphic function~$\mathscr J_A$ defined
on~$\Tube_{>a(\lambda,\rho)-\delta}$ with polynomial growth
in vertical strips such that
\[ \mathscr J_A(\Phi;a(\lambda,\rho))
 = \mathscr  I_A(\Phi;(a(\lambda,\rho)\lambda_\alpha-\rho_\alpha)_{\alpha\not\in A}) \]
and such that
\[ \mathscr I(\Phi;s) = \sum_A \mathscr J_A(\Phi;s) \prod_{\alpha\in \mathscr A} \zeta_{F_\alpha}(s-a(\lambda,\rho)), \] 
where the sum is restricted to the faces~$A$ of~$\Clan_{F,(\lambda,\rho)}(D)$
of maximal dimension.
        In particular,
\[ \lim_{s\ra a(\lambda,\rho)}
  \mathscr I(\Phi;s) (s-a(\lambda,\rho))^{\dim\Clan_{F,(\lambda,\rho)}}
 = \sum_{A} 
\mathscr I_A (\Phi;(a(\lambda,\rho)\lambda_\alpha-\rho_\alpha)_{\alpha\not\in A})
\prod_{\alpha\in A} \frac{1}{\lambda_\alpha} . \]
\end{prop}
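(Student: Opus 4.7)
The plan is to localize the integral via the partition of unity $(\theta_\xi)$ subordinate to the étale charts $(U_\xi)$ from~\ref{subsubsec.partition-unity}, writing $\mathscr{I}(\Phi;s) = \sum_\xi \mathscr{I}(\Phi\theta_\xi;s)$. In each chart, using the local coordinates $(x_\alpha)_{\alpha\in A_\xi/\Gamma_F}\times y$ with $x_\alpha$ valued in $F_\alpha$ and $y\in F^r$, the local expression $\norm{\mathsec f_{D_\alpha}} = \abs{x_\alpha}_{F_\alpha}\, u_\alpha$ (for $\alpha\in A_\xi$) and $\norm{\mathsec f_{D_\alpha}} = u_\alpha$ (otherwise), with $u_\alpha$ smooth non-vanishing, puts $\mathscr{I}(\Phi\theta_\xi;s)$ in the form of the higher-dimensional Mellin transform treated in Proposition~\ref{prop.mellin-n}. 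Applying that proposition yields a factorization
\[ \mathscr{I}(\Phi\theta_\xi;s) = \Bigl(\prod_{\alpha\in A_\xi/\Gamma_F} \zeta_{F_\alpha}(s_\alpha)\Bigr)\, h_\xi(s), \]
with $h_\xi$ holomorphic on $\Tube^{A_\xi}_{>-1/2}$ and of polynomial growth in vertical strips, and $h_\xi$ at $s_\alpha=0$ equal to an integral over the corresponding stratum at $\xi$.

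Substituting the affine line $s_\alpha = -\rho_\alpha+\lambda_\alpha s$, the factor $\zeta_{F_\alpha}(-\rho_\alpha+\lambda_\alpha s)$ has a simple pole at $s=\rho_\alpha/\lambda_\alpha$ with residue $\mathrm{c}_{F_\alpha}/\lambda_\alpha$, and is holomorphic elsewhere in $\Tube_{>-1/2-\eps}$ for small $\eps$. Hence at $s=a(\lambda,\rho)$ the contribution from $\xi$ contributes a pole of order exactly $\Card\bigl((A_\xi\cap \mathscr{A}(\lambda,\rho))/\Gamma_F\bigr)$; all the $\zeta$-factors for $\alpha\notin\mathscr{A}(\lambda,\rho)$ remain holomorphic at $s=a(\lambda,\rho)$ and are absorbed into a new local holomorphic function. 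Grouping the $\xi$ by the face $A\subset\mathscr{A}(\lambda,\rho)$ they determine (namely $A = A_\xi\cap\mathscr{A}(\lambda,\rho)$, together with the component of $D_A$ containing $\xi$, which is a face of $\Clan_{F,(\lambda,\rho)}(D)$ by construction), I would define $\mathscr{J}_A(\Phi;s)$ as the sum of the corresponding holomorphic remainders. Faces of strictly smaller dimension contribute a lower-order pole and may be folded in as part of the $\mathscr{J}_A$ for the maximal faces containing them, yielding the asserted decomposition
\[ \mathscr{I}(\Phi;s) = \sum_{A} \mathscr{J}_A(\Phi;s) \prod_{\alpha\in A} \zeta_{F_\alpha}(s-a(\lambda,\rho)) \]
over maximal faces $A$, with $\mathscr{J}_A$ holomorphic on $\Tube_{>a(\lambda,\rho)-\delta}$ with polynomial growth. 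Evaluating at $s=a(\lambda,\rho)$ and extracting the leading coefficient from $\prod_{\alpha\in A}\zeta_{F_\alpha}(s-a(\lambda,\rho))$ gives $\prod_{\alpha\in A}\mathrm{c}_{F_\alpha}/\lambda_\alpha$; the constants $\mathrm{c}_{F_\alpha}$ are exactly what was built into the definition of the residue measure $\tau_{D_A}$, leaving the asserted factor $\prod_{\alpha\in A}1/\lambda_\alpha$.

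The last and most delicate step is identifying $\mathscr{J}_A(\Phi;a(\lambda,\rho))$ with the global residue integral $\mathscr{I}_A\bigl(\Phi;(a(\lambda,\rho)\lambda_\alpha-\rho_\alpha)_{\alpha\notin A}\bigr)$. For each $\xi$ contributing to face $A$, the value $h_\xi(a(\lambda,\rho))$ is, by the second statement of Proposition~\ref{prop.mellin-n}, an integral over the transversal stratum $D_A(F)\cap U_\xi$ of $\Phi\theta_\xi$ times the appropriate smooth factors $\prod_{\alpha\notin A}u_\alpha^{a(\lambda,\rho)\lambda_\alpha-\rho_\alpha-1}$ against the pushed-forward Lebesgue measure; the formula~\eqref{eq.abs.omega.D} shows that the sum over $\xi$ (via $\sum_\xi \theta_\xi=1$) reconstructs precisely the integral against $\tau_{D_A}$ as defined in~\ref{subsubsec.residue-measures}. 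The main obstacle I expect is the careful bookkeeping of the Galois action: the parameters $s_\alpha$ are constant on $\Gamma_F$-orbits of $\bmA$, the Mellin transforms must be taken over $F_\alpha$ (not $F$) so that each orbit contributes one $\zeta_{F_\alpha}$-factor, and the local residues must be shown to glue into a globally defined measure on $D_A(F)$ via the adjunction-formula construction of $\tau_{D_A}$, consistently with the choice of a component $Z$ defining the face.
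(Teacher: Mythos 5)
Your proposal is correct and follows essentially the same route as the paper's proof: localize via a partition of unity subordinate to the adapted charts, apply the multi-variable Mellin transform Proposition~\ref{prop.mellin-n} in each chart, restrict to the affine line, collect the contributions from the face $A=A_\xi\cap\mathscr A(\lambda,\rho)$ (with the component through $\xi$), and identify the limit of each local term with the residue integral against $\tau_{D_A}$ via formula~\eqref{eq.abs.omega.D}. The one small structural variant is that you factor out \emph{all} the $\zeta_{F_\alpha}(s_\alpha)$ from each chart and then re-absorb the nonsingular ones (those with $\rho_\alpha/\lambda_\alpha<a(\lambda,\rho)$) into the holomorphic remainder, whereas the paper applies the regularization only to the variables $x_\alpha$ with $\alpha\in A$ from the start; both give the same $\mathscr J_A$ up to a holomorphic non-vanishing factor near $s=a(\lambda,\rho)$, and this is also where the $\prod_{\alpha\in A}1/\lambda_\alpha$ appears, from comparing $\zeta_{F_\alpha}(\lambda_\alpha(s-a))$ with $\zeta_{F_\alpha}(s-a)$. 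Your final ``obstacle'' paragraph correctly identifies the Galois bookkeeping and the gluing of local residues into $\tau_{D_A}$ as the points to check, which the paper handles exactly as you describe.
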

\begin{proof}
As in the proof of Lemma~\ref{lemm.local-convergence},
we shall use a partition of unity and local coordinates.

Let $\xi\in X(F)$, let~$\mathscr A_\xi$ be the set of~$\alpha\in\mathscr A$
such that $\xi\in D_\alpha$.
In a neighborhood~$\Omega$ of~$\xi$, we have local coordinates
$x_\alpha\in F_\alpha$, for $\alpha\in\mathscr A_\xi$,
and other coordinates $(y_1,\dots,y_r)$ for some integer~$r$.
There exist smooth functions~$u_\alpha$ on~$\Omega$ such that
$\norm{\mathsec f_{D_\alpha}}=\abs{x_\alpha}_{F_\alpha} u_\alpha$ 
if $\alpha\in\mathscr A_\xi$,
and $\norm{\mathsec f_{D_\alpha}}=u_\alpha$ otherwise.
We may assume, after shrinking~$\Omega$,
that $D_\alpha\cap\Omega=\emptyset$ if $\alpha\not\in\mathscr A_\xi$;
then, $u_\alpha$ does not vanish on~$\Omega$, for any~$\alpha\in\mathscr A$.
Finally, the restriction to~$\Omega$ of the measure $\tau_X$ can  be written
as $\kappa \mathrm dy_1\dots \mathrm dy_r \prod_{\alpha\in\mathscr A_x}\mathrm dx_\alpha $,
for some smooth function~$\kappa$ on~$\Omega$.
Let~$\theta_\xi$ be a smooth function with compact support in~$\Omega$.

The integral
\[ \mathscr I_\xi (s) = \int_\Omega  \prod_{\alpha\in\mathscr A}
\norm{\mathsec f_{D_\alpha}}(x)^{s_\alpha-1}\Phi(x) \theta_\xi(x) \,\mathrm d\tau_X(x)\]
can be rewritten as
\[ \mathscr I_\xi(s)=
\int_\Omega
 \prod_{\alpha\in\mathscr A_\xi}\abs{x_\alpha}_{F_\alpha}^{-\rho_\alpha+s\lambda_\alpha-1}
 \times \prod_{\alpha\in\mathscr A} u_\alpha^{s_\alpha-1}(x)
 \Phi(x)\theta_\xi(x)\kappa(x)\,
 \prod_{\alpha\in\mathscr A_\xi}
 \mathrm dx_\alpha \times \mathrm dy.
\]
Let $a=a(\lambda,\rho)$; when $s\ra a$, only the variables $s_\alpha$
such that $\rho_\alpha=a \lambda_\alpha$ and $\alpha\in\mathscr A_\xi$
contribute a pole.
Write $A =\mathscr A_\xi\cap\mathscr A(\lambda,\rho)$ for this subset.
Applying the regularization procedure which led to Prop.~\ref{prop.local-continuation},
but only to the variables~$x_\alpha$ with $\alpha\in A$,
furnishes an expression of the form
\[ \mathscr I_\xi (s)=\mathscr J_\xi(s) \prod_{\alpha\in A}\zeta_{F_\alpha}(-\rho_\alpha+s\lambda_\alpha) \]
for the integral~$\mathscr I_\xi(s)$,
where $\mathscr J_\xi$ is holomorphic for $\Re(s)>a(\lambda,\rho)-\delta$
and has polynomial growth in vertical strips.
Moreover,
\begin{align*} 
\lim_{s\ra a} \mathscr J_\xi(s) & = \lim_{s\ra a} \mathscr I_\xi(s)
\prod_{\alpha\in A}
      \zeta_{F_\alpha}(-\rho_\alpha+s\lambda_\alpha)^{-1}  \\
& = \int_{\Omega\cap D_A(F)} 
  \prod_{\alpha\in\mathscr A} u_\alpha^{-\rho_\alpha+a\lambda _\alpha-1}
 \prod_{\alpha\in\mathscr A_\xi\setminus A} \abs{x_\alpha}_{F_\alpha}^{-\rho_\alpha+a\lambda _\alpha-1}
 \theta_\xi(x)\Phi(x)\kappa(x)\,
\prod_{\alpha\in\mathscr A_\xi\setminus A}\mathrm dx_\alpha\, \mathrm dy .
\end{align*}
By the definition of the residue measure on~$D_A(F)$
(Section~\ref{subsubsec.residue-measures})
and its normalization used here, one thus has
\[\lim_{s\ra a}\mathscr I_\xi(s)
\prod_{\alpha\in  A}\zeta_{F_\alpha}(-\rho_\alpha+s\lambda_\alpha)^{-1}
= \prod_{\alpha\in A}\frac1{\mathrm c_{F_\alpha}} \int_{D_A(F)}
 \prod_{\alpha\not\in A} \norm{\mathsec f_{D_\alpha}}^{-\rho_\alpha+ a \lambda_\alpha-1}
 \theta_\xi(x)\Phi(x)\,\mathrm d\tau_{D_A}(x).\]
so that
\[ \lim_{s\ra a} (s-a)^{\Card A}  \mathscr I(\theta_\xi\Phi;s) 
=\mathscr I_A(\theta_\xi\Phi;(-\rho_\alpha+a\lambda_\alpha)) 
\prod_{\alpha\in A} \lambda_\alpha^{-1}.\]
Observe that $A$ is a maximal face of~$\Clan_{F,(\lambda,\rho)}$,
though maybe not one of maximal dimension.

Now choose the functions~$\theta_\xi$
so that they form a finite partition of unity, \emph{i.e.}, $\sum\theta_\xi=1$,
and only finitely many~$\theta_\xi$ are not zero.
Then $\mathscr I(\Phi;s)=\sum\mathscr I_\xi(s)$; regrouping the non-zero terms
according to the minimal face of the Clemens complex~$\Clan_{F,(\lambda,\rho)}$
to which they correspond furnishes 
the desired expression for~$\mathscr I(\Phi;s)$.

Let~$b$ be the dimension of this complex
and let $\Clanmax_{F,(\lambda,\rho)}$
be the set of its faces of dimension~$b$.
Granted the previous limits computations, one has
\begin{align*}
 \lim_{s\ra a} (s-a)^b\mathscr I(\Phi;s) 
& = \sum_\xi \lim_{s\ra a} (s-a)^b \mathscr I(\theta_\xi\Phi;s) \\
& = \sum_{A\in\Clanmax_{F,(\lambda,\rho)}}
 \prod_{\alpha\in A} \lambda_{\alpha}^{-1}
    \mathscr I_{A}(\Phi;(-\rho_\alpha+a\lambda_\alpha)),
\end{align*}
as claimed.
\end{proof}

\begin{coro}
If $\Phi\equiv 1$ or, more generally, if the
restriction of~$\Phi$ to $D(F)$ is not identically~$0$,
then the order of the pole of~$\mathscr I(\Phi;s)$ at $s=a(\lambda,\rho)$
is equal to
\[ 1+\dim \Clan_{F,(\lambda,\rho)}(D). \]
\end{coro}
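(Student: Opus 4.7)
The plan is to deduce the corollary directly from Proposition~\ref{prop.igusa-limit}, whose decomposition
\[ \mathscr I(\Phi;s) = \sum_{A} \mathscr J_A(\Phi;s) \prod_{\alpha\in A} \zeta_{F_\alpha}(s-a(\lambda,\rho)) \]
(with $A$ running over faces of $\Clan_{F,(\lambda,\rho)}(D)$ of maximal dimension) already contains essentially all the information needed. Each $\zeta_{F_\alpha}(s-a(\lambda,\rho))$ has a simple pole at $s=a(\lambda,\rho)$ with positive residue $\mathrm c_{F_\alpha}$, and each $\mathscr J_A$ is holomorphic near that point; hence each summand has a pole of order exactly $\Card(A) = 1+\dim\Clan_{F,(\lambda,\rho)}(D)$. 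This immediately yields the upper bound on the pole order.

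The matching lower bound requires showing that the leading coefficient does not vanish. Invoking the limit formula from the proposition, the coefficient of $(s-a(\lambda,\rho))^{-(1+\dim\Clan_{F,(\lambda,\rho)})}$ in the Laurent expansion of $\mathscr I(\Phi;s)$ equals
\[ c(\Phi) \;=\; \sum_{A} \Big(\prod_{\alpha\in A}\frac{\mathrm c_{F_\alpha}}{\lambda_\alpha}\Big)\, \mathscr I_A\bigl(\Phi;(a(\lambda,\rho)\lambda_\alpha-\rho_\alpha)_{\alpha\notin A}\bigr), \]
with the same summation range. Under the standing assumption that $\Phi\geq 0$, each $\mathscr I_A(\Phi;\cdot)$ is the integral of a non-negative function against the positive residue measure $\tau_{D_A}$, so every summand is non-negative and it suffices to exhibit one strictly positive term.

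The case $\Phi\equiv 1$ is essentially automatic: for any maximal face $A$ of $\Clan_{F,(\lambda,\rho)}(D)$, the stratum $D_A(F)$ is a non-empty $F$-analytic submanifold of $X(F)$ (by definition of the analytic Clemens complex), and the integrand $\prod_{\alpha\notin A}\norm{\mathsec f_{D_\alpha}}^{a(\lambda,\rho)\lambda_\alpha-\rho_\alpha-1}$ is strictly positive off the proper closed subset $D_A(F)\cap\bigcup_{\alpha\notin A}D_\alpha(F)$, making $\mathscr I_A(1;\cdot)>0$.

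For the more general hypothesis that $\Phi$ is not identically zero on $D(F)$, the main obstacle is to produce a maximal face $A$ of $\Clan_{F,(\lambda,\rho)}(D)$ for which $\Phi$ does not vanish identically on $D_A(F)$. I would pick $x_0\in D(F)$ with $\Phi(x_0)\neq 0$, let $A_0=\{\alpha\in\bmA\,;\,x_0\in D_\alpha\}$ (a $\Gamma_F$-invariant set) and $Z_0$ the unique irreducible component of $D_{A_0}$ containing $x_0$, so that $(A_0,Z_0)$ is a face of $\Clan_F(D)$. Using the local charts of \S\ref{subsubsec.local-charts}, one sees that $A_0\subset\mathscr A(\lambda,\rho)$ (otherwise $x_0$ lies in a divisor that the definition of $\Clan_{F,(\lambda,\rho)}$ forbids), and one can then enlarge $(A_0,Z_0)$ to a maximal face $(A,Z)$ of $\Clan_{F,(\lambda,\rho)}(D)$ with $x_0\in Z(F)$. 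Continuity of $\Phi$ at $x_0$ combined with the positivity of the integrand then yields $\mathscr I_A(\Phi;\cdot)>0$, completing the proof.
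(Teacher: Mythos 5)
Your argument for the case $\Phi\equiv 1$ is essentially correct: a maximal-dimension face $A$ has $D_A(F)\neq\emptyset$ by the definition of $\Clan_F(D)$, the residue measure is positive and nontrivial on a nonempty stratum, and (since $A$ is maximal in $\Clan_{F,(\lambda,\rho)}(D)$) the integrand in $\mathscr I_A$ stays bounded near the critical divisors $\alpha\in\mathscr A(\lambda,\rho)\setminus A$, so each $\mathscr I_A(1;\cdot)>0$ and no cancellation can occur. (A small slip: the residues $\mathrm c_{F_\alpha}$ you insert are already absorbed in the normalization of $\tau_{D_A}$, so they should not reappear in the constant $c(\Phi)$; this does not affect the sign argument.)

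The ``more generally'' case has a genuine gap. You pick $x_0\in D(F)$ with $\Phi(x_0)\neq 0$, set $A_0=\{\alpha\in\bmA: x_0\in D_\alpha\}$, and assert that $A_0\subset\mathscr A(\lambda,\rho)$. Nothing in the definition of $\Clan_{F,(\lambda,\rho)}(D)$ prevents a point of $D(F)$ from lying only on ``non-critical'' divisors $D_\alpha$ with $\rho_\alpha< a(\lambda,\rho)\lambda_\alpha$; the subcomplex $\Clan_{F,(\lambda,\rho)}(D)$ discards faces, not points of $X(F)$. So $A_0\cap\mathscr A(\lambda,\rho)$ can be empty. In that situation, a compactly supported $\Phi$ concentrated near $x_0$ is nonzero on $D(F)$ yet vanishes on $D_A(F)$ for every face $A$ of $\Clan_{F,(\lambda,\rho)}(D)$; all the integrals $\mathscr I_A(\Phi;\cdot)$ vanish and the pole order at $s=a(\lambda,\rho)$ drops. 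A second, independent problem is the claim that one can ``enlarge $(A_0,Z_0)$ to a maximal face $(A,Z)$ with $x_0\in Z(F)$'': enlarging $A$ shrinks $Z$ (it is an irreducible component of a smaller intersection), so $x_0\in Z$ forces $A\subseteq A_0$; you cannot enlarge while keeping the point. What the argument actually requires — and what should be taken as the hypothesis behind the ``more generally'' clause — is that $\Phi\geq 0$ and that $\Phi$ is not identically zero on $D_A(F)$ for some maximal-dimensional face $A$ of $\Clan_{F,(\lambda,\rho)}(D)$; vanishing of $\Phi|_{D(F)}$ alone does not rule out the degenerate configuration above.
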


%
%
%
%

%
%
%

\subsubsection{The case of good reduction}

We recast in this geometric context a formula of J.~Denef
(\cite[Theorem 3.1]{denef87}, see also~\cite[Theorem 9.1 
and Theorem 11.2]{chambert-loir-t2002}).

Assume that $F$ is non-archimedean and that our situation
comes from a smooth model $\mathscr X$ over $\mathfrak o_F$,
that the divisors $D_\alpha$ extend to divisors $\mathscr D_\alpha$
on~$\mathscr X$
whose sum becomes a relative Cartier divisor with strict normal crossings after
base change to a finite \'etale extension of~$\mathfrak o_F$, 
and that all metrics are defined by this model. 
The residue field of~$F$ is denoted by~$k$, its cardinality by~$q$.
For $\alpha\in\mathscr A$, the extension $F\subset F_\alpha$
is unramified by the good reduction hypothesis;
we denote by $f_\alpha$ its degree;
let $\mathfrak o_{F_\alpha}$ be the ring of integers of~$F_\alpha$
and $\mathfrak m_\alpha$ its maximal ideal.

The Zariski closure of the scheme $E_\alpha$ 
is a smooth subscheme $\mathscr E_\alpha$ of~$\mathscr X$,
of relative codimension $d_\alpha$, such that
$\mathscr E_\alpha(\mathfrak o_F)=\mathscr D_\alpha(\mathfrak o_F)$.
Similarly,  the Zariski closure of~$E_A$ is a smooth geometrically connected
subscheme~$\mathscr E_A$ of~$\mathscr X$ of codimension~$d_A$
and $\mathscr E_A(\mathfrak o_F)=\mathscr D_A(\mathfrak o_F)$.
For any subset $A\subset\mathscr A$,
we let $\tau_{D_A}$ denote the Tamagawa measure on $D_A(F)=E_A(F)$.
Since the extensions $F\subset F_\alpha$ are unramified,
one also has $\mathscr D_\alpha(k)=\mathscr E_\alpha(k)$
for any $\alpha\in\mathscr A$, and $\mathscr D_A(k)=\mathscr E_A(k)$
for any $A\subset\mathscr A$.

Assume also that the function~$\Phi$ is constant
on residue classes; the induced function on $\mathscr X(k)$ will
still be denoted by~$\Phi$.

\begin{prop}\label{prop.denef}
Under the above conditions, one has
\[ \mathscr I(\Phi;(s_\alpha))
 =   \sum_{A\subset \mathscr A} 
         \big(q^{-1}\mu(\mathfrak o_{F})\big)^{\dim X}
           \prod_{\alpha\in A} 
           \frac{q^{f_\alpha}-1}{q^{f_\alpha s_\alpha}-1}
           \big(\sum_{\tilde\xi \in \mathscr D_A^\circ(k)} \Phi(\tilde\xi)\big)
           . 
\]
In particular, for $\Phi=1$,
one has
\[ \mathscr I(1;(s_\alpha))
 =   \sum_{A\subset \mathscr A} 
         \big(q^{-1}\mu(\mathfrak o_F)\big)^{\dim X}
           \prod_{\alpha\in  A} 
           \frac{q^{f_\alpha}-1}{q^{f_\alpha s_\alpha}-1}
           \Card(\mathscr D_A^\circ(k)). 
\]
\end{prop}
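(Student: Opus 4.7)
The strategy is the standard stratify-and-integrate argument: partition $X(F) = \mathscr X(\mathfrak o_F)$ into residue classes modulo the maximal ideal and evaluate the contribution of each class, organizing the answer by which stratum $\mathscr D_A^\circ(k)$ the residue of a point lies in. For each $\tilde\xi \in \mathscr X(k)$, write $\Omega_{\tilde\xi} = \{x \in \mathscr X(\mathfrak o_F) : \bar x = \tilde\xi\}$; these are disjoint open compact subsets covering $X(F)$, and from $\mathscr X(k) = \bigsqcup_{A \subset \mathscr A} \mathscr D_A^\circ(k)$ one reduces the problem to computing, for each $A$ and each $\tilde\xi \in \mathscr D_A^\circ(k)$, the integral of $\prod_{\alpha \in \mathscr A} \norm{\mathsec f_{D_\alpha}}(x)^{s_\alpha-1}$ against $\tau_X$ on $\Omega_{\tilde\xi}$; the factor $\Phi(x)$ pulls out as $\Phi(\tilde\xi)$ by hypothesis.

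Fixing such an $\tilde\xi$, I will use an étale chart at $\tilde\xi$ on $\mathscr X$ of the type constructed in~\S\ref{subsubsec.local-charts}: coordinates $x_\alpha$ valued in $\Aff_{F_\alpha}$, for each $\alpha \in A$, cutting out $\mathscr D_\alpha$ by $\mathrm N_{F_\alpha/F}(x_\alpha) = 0$, together with $n - d_A$ ordinary $\mathfrak o_F$-coordinates $y_i$, where $n = \dim X$ and $d_A = \sum_{\alpha \in A} f_\alpha$. Under this chart the residue class $\Omega_{\tilde\xi}$ corresponds to $\prod_{\alpha \in A} \mathfrak m_{F_\alpha} \times \prod_i (\tilde y_i + \mathfrak m_F)$; the good-reduction assumption forces $\norm{\mathsec f_{D_\alpha}}(x) = \abs{x_\alpha}_{F_\alpha}$ for $\alpha \in A$ and $\norm{\mathsec f_{D_\alpha}}(x) = 1$ for $\alpha \notin A$ (the local equation of $\mathscr D_\alpha$ is a unit at $\tilde\xi$), and $d\tau_X$ identifies with the product Haar measure under the étale chart, with $F_\alpha$ normalized by $\mu_{F_\alpha}(\mathfrak o_{F_\alpha}) = \mu(\mathfrak o_F)^{f_\alpha}$ thanks to the unramifiedness of $F_\alpha/F$. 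The integral then factors as a product of one-variable integrals. A direct geometric-series computation parallel to the one yielding $\zeta_{F_\alpha}$ in~\S\ref{subsubsec.local-zeta} gives
\[
\int_{\mathfrak m_{F_\alpha}} \abs{x}_{F_\alpha}^{s_\alpha - 1}\, \mathrm dx = \mu(\mathfrak o_F)^{f_\alpha}\, q^{-f_\alpha}\, \frac{q^{f_\alpha} - 1}{q^{f_\alpha s_\alpha} - 1},
\]
while $\mu(\tilde y_i + \mathfrak m_F) = q^{-1}\mu(\mathfrak o_F)$. Multiplying, the exponents of $\mu(\mathfrak o_F)$ collect to $d_A + (n - d_A) = n$, and similarly those of $q^{-1}$ to $n$, giving the contribution of~$\Omega_{\tilde\xi}$ as
\[
\bigl(q^{-1}\mu(\mathfrak o_F)\bigr)^{\dim X}\, \Phi(\tilde\xi) \prod_{\alpha \in A} \frac{q^{f_\alpha} - 1}{q^{f_\alpha s_\alpha} - 1}.
\]
Summing over $\tilde\xi \in \mathscr D_A^\circ(k)$ and then over $A \subset \mathscr A$ produces the asserted formula; the case $\Phi = 1$ is an immediate specialization.

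The main obstacles are essentially bookkeeping. First, one must verify that the Haar measure on $F_\alpha$ used in the local coordinates is compatible with $\tau_X$: this is precisely where the unramifiedness coming from good reduction enters, ensuring that no different-discriminant factor appears. Second, one must check that the étale chart has unit Jacobian on the integral model so that pulling back along it does not perturb $d\tau_X$ (this is a clean consequence of the model being smooth and the metric on $\omega_X$ coming from the model). Finally, the case $A = \emptyset$ must be handled: the empty product equals $1$ and $\mathscr D_\emptyset^\circ(k) = \mathscr U(k)$, so these residue classes provide the ``generic'' contribution disjoint from $\mathscr D$, which is the $A = \emptyset$ term in the sum.
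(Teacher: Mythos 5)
Your proof is correct and follows essentially the same route as the paper's: decompose $\mathscr X(\mathfrak o_F)$ into residue classes, use the good-reduction chart to write the integrand and measure explicitly on each class, and evaluate the resulting product of one-variable Mellin integrals of the type computed in \S\ref{subsubsec.local-zeta}. One small point where you are actually more careful than the printed proof: the normalization $\mu(\mathfrak o_{F_\alpha}) = \mu(\mathfrak o_F)^{f_\alpha}$ is the one that makes the étale chart measure-preserving and the $\mu(\mathfrak o_F)$ exponent in the final formula collect to $\dim X$, whereas the paper's parenthetical ``$\mu(\mathfrak o_F)=\mu(\mathfrak o_{F_\alpha})$'' only reconciles with the stated answer under the implicit convention $\mu(\mathfrak o_F)=1$.
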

\begin{proof}
Let $\tilde\xi\in\mathscr X(k)$, let $\mathscr A_{\tilde\xi}=\{\alpha\in\mathscr A\,;\, \tilde\xi\in \mathscr D_\alpha(k)\}$, 
so that $\tilde\xi$ belongs to the open
stratum~$\mathscr D_{\mathscr A_{\tilde\xi}}^\circ$.
By the good reduction hypothesis, we can introduce local (étale) 
coordinates $x_\alpha\in \mathfrak m_\alpha$ 
(for $\alpha\in \mathscr A_{\tilde\xi}$) and 
$y_\beta\in\mathfrak m$ (for $\beta$
in a set~$\mathscr B_{\tilde\xi}$ 
of cardinality $\dim X-\sum_{\alpha\in \mathscr A_{\tilde\xi}}f_\alpha$)
on the residue class $\Omega_{\tilde\xi}$ of~$\tilde\xi$,
such that $\mathscr D_\alpha$ is defined by the equation $x_\alpha=0$ 
on $\Omega_{\tilde\xi}$. Then the local Tamagawa measure
identifies with the measure $\prod_{\alpha\in \mathscr A_{\tilde\xi}} 
\mathrm dx_\alpha\times
\prod_{\beta\in \mathscr B_{\tilde\xi}}\mathrm dy_\beta$ 
on $\prod_{\alpha\in \mathscr A_{\tilde\xi}}\mathfrak m_\alpha
\times\prod_{\beta\in\mathscr B_{\tilde\xi}}\mathfrak m$.

Recall also  (see~\S\ref{subsubsec.local-zeta})
that for any ultrametric local field~$F$,
with ring of integers~$\mathfrak o_F$ and maximal ideal~$\mathfrak m$,
and any complex number $s$ such that $\Re(s)>0$,
one has
\[  q  \int_{\mathfrak m} \abs{x}_F^{s-1} \,\mathrm dx 
     =  \frac{q-1}{q^{s}-1} \mu(\mathfrak o_F), \]
where $q$ is the cardinality of the residue field.

These formulas, applied to the fields~$F_\alpha$, and
the decomposition of the integral $\mathscr I$ as a sum of similar
integrals over the residue classes $\tilde\xi\in\mathscr X(k)$, give us
\begin{align*}
\mathscr I(\Phi;(s_\alpha))
 &= 
\sum_{\tilde\xi \in \mathscr X(k) }
    \Phi(\tilde\xi) 
    \big(q^{-1}\mu(\mathfrak o_F)\big)^{\Card\mathscr B_{\tilde\xi}}
    \prod_{\alpha\in \mathscr A_{\tilde\xi} }
    \int_{\mathfrak m_\alpha} \abs{x_\alpha}_{F_\alpha}^{s_\alpha-1}\,\mathrm dx_\alpha \\
 &= \big(q^{-1} \mu(\mathfrak o_F)\big)^{\dim X}
     \sum_{\tilde\xi \in\mathscr X(k)}
      \Phi(\tilde\xi) \prod_{\alpha\in \mathscr A_{\tilde\xi}}
         \frac{q^{f_\alpha}-1}{q^{f_\alpha s_\alpha}-1}  ,
\end{align*}
since the residue field of~$F_\alpha$ has cardinality~$q^{f_\alpha}$
and $\mu(\mathfrak o_F)=\mu(\mathfrak o_{F_\alpha})$.
Let us interchange the order of summation: one gets
\[
\mathscr I(\Phi;(s_\alpha)) 
 = \big(q^{-1} \mu(\mathfrak o_F)\big)^{\dim X}
     \sum_{A\subset \mathscr A}
      \prod_{\alpha\in A} \frac{q^{f_\alpha}-1}{q^{f_\alpha s_\alpha}-1}
           \sum_{\tilde\xi \in \mathscr D_A^\circ(k)}  \Phi(\tilde\xi) .
\]
In particular, if $\Phi$ is the constant function~$1$, one has
\[ 
\mathscr I(1,(s_\alpha))
=   \sum_{A\subset \mathscr A} 
         \big(q^{-1}\mu(\mathfrak o_F)\big)^{\dim X}
           \prod_{\alpha\in A} 
           \frac{q^{f_\alpha}-1}{q^{f_\alpha s_\alpha}-1}
           \Card (\mathscr D_A^\circ(k)). 
\qedhere
\]
\end{proof}

By Weil's formula (\cf Equation~\eqref{eq.weil-formula}), one then has
\[ \tau_{D_A}(\mathscr E_A^\circ (\mathfrak o_F))
      = \big( q^{-1} \mu(\mathfrak o_F)\big)^{d_A} \big)
      \Card \mathscr E_A^\circ (k)
      = \big( q^{-1} \mu(\mathfrak o_F)\big)^{d_A} \big)
      \Card \mathscr D_A^\circ (k)
. \]
Since moreover $D_A(F)=E_A(F)$,
the last formula of the Proposition can be rewritten as
\[
 \mathscr I(1,(s_\alpha))
 =   \sum_{A\subset \mathscr A} 
         \big(q^{-1}\mu(\mathfrak o_F)\big)^{d_A}
           \prod_{\alpha\in A} 
           \frac{q^{f_\alpha}-1}{q^{f_\alpha s_\alpha}-1}
           \tau_{D_A} (\mathscr D_A^\circ(\mathfrak o_F)). \]

\subsection{Volume asymptotics over local fields}
\label{subsec.geometric-volume}

Let $X$ be a smooth projective variety over a local field~$F$.
Assume that $X$ is purely of dimension~$n$.

Let $D$ be an effective divisor in~$X$, denote by~$\mathscr A$
the set of its irreducible components, by $D_\alpha$ the
component corresponding to some $\alpha\in\mathscr A$ and
by $d_\alpha$ its multiplicity.
We have $d_\alpha>0$ for all~$\alpha$ and $D=\sum d_\alpha D_\alpha$.

Let $U=X\setminus D$, and assume that $\omega_X(D)$
is equipped with a metrization. 
Let~$\tau_{(X,D)}$ denote the corresponding measure on~$U(F)$
(see \S\ref{ss.metrics-measures}).
Endow the line bundles $\mathscr O_X(D)$
and $\mathscr O_{X}(D_\alpha)$, for $\alpha\in\mathcal A$,
with metrics, in such a way that the natural isomorphism
$\mathscr O_X(D)\simeq \bigotimes \mathscr O_X(D_\alpha)^{d_\alpha}$
is an isometry.
For $\alpha\in\mathcal A$, denote by~$\mathsec f_\alpha$
the canonical section of~$\mathscr O_{X}(D_\alpha)$;
denote by~$\mathsec f_D$ the canonical section of~$\mathscr O_X(D)$.
One has 
$\mathsec f_D=\prod_{\alpha\in\mathcal A}\mathsec f_{\alpha}^{d_\alpha}$.

We also endow $\omega_{X}$ with the metric which makes
the isomorphism $\omega_X(D)\simeq\omega_X\otimes\mathscr O_X(D)$
an isometry.
Letting~$\tau_{X}$ be the Tamagawa measure on~$X(F)$
defined by the metrized line bundle~$\omega_X$,
we have the following equalities:
\[ \mathrm d\tau_{(X,D)}(x) = \norm{\mathsec f_D(x)}^{-1}\,\mathrm d\tau_X(x)
=  \prod_{\alpha} \norm{\mathsec f_\alpha(x)}^{-d_\alpha} \, \mathrm d\tau_{X}(x). \]

Let $L$ be an effective divisor in~$X$ whose support contains the support 
of~$D$; assume that the corresponding line bundle $\mathscr O_X(L)$
is endowed with a metric. The norm of its canonical section~$\mathsec f_L$
vanishes on~$L$, hence on~$D$. Consequently, for any positive
real number~$B$,
the set of all $x\in X(F)$ such $\norm{\mathsec f_L(x)}\geq 1/B$
is a closed subset of~$X(F)$, which is contained in $U(F)$,
hence is compact in~$U(F)$. Consequently, its volume
with respect to the measure~$\tau_{(X,D)}$,
\begin{equation}
V(B) = 
 \int_{\norm{\mathsec f_L(x)}\geq 1/B}
 \mathrm d\tau_{(X,D)}(x) ,
\end{equation}
is finite for any~$B>0$.
We are interested in its asymptotic behavior 
when $B\ra\infty$.
Let us 
introduce the Mellin transform of the function~$\norm{\mathsec f_L}$
with respect to the measure~$\tau_{(X,D)}$, namely:
\begin{equation}
 Z(s) = \int_{U(F)} \norm{\mathsec f_L(x)}^s \mathrm d\tau_{(X,D)}(x).
\end{equation}

The analytic properties of $Z(s)$ and $V(B)$
strongly depend on the geometry of the pair~$(X,D)$.
We will assume throughout that over the algebraic closure~$\bar F$, 
the divisor~$D$ 
has strict normal crossings in~$X$; in that case we will see
that the answer can be stated in terms of the analytic
Clemens complex of~$D$.
In principle, using resolution of singularities, we can reduce
to this situation, even if it may be difficult in explicit examples
(see~\cite{hassett-tschinkel2003} for a specific computation related
to the asymptotic behavior of integral points of bounded height
established in~\cite{duke-r-s1993}).

For $\alpha\in\mathcal A$,
let $\lambda_\alpha$ be the multiplicity of~$D_\alpha$ in~$L$; 
the divisor $\Delta=L-\sum\lambda_\alpha D_\alpha$ is effective
and all of its irreducible components meet~$U$.
We denote by~$\mathsec f_\Delta$ the canonical section of
the line bundle~$\mathscr O_X(\Delta)$; we endow this line
bundle with a metric so that $\norm{\mathsec f_L}=\norm{\mathsec f_\Delta}
 \prod\norm{\mathsec f_\alpha}^{\lambda_\alpha}$.

Following the conventions of Section~\ref{subsubsec.Q-Cartier},
the results extend to the case where~$D$
and~$L$ are $\Q$-Cartier divisors; in that case,
the coefficients~$\lambda_\alpha$ and~$d_\alpha$ are rational numbers.

Let $\sigma=\max (d_\alpha-1)/\lambda_\alpha$, 
the maximum being over all $\alpha\in\mathcal A$ 
such that $D_\alpha(F)\neq\emptyset$.
If there is  no such~$\alpha$, we let $\sigma=-\infty$ by convention;
this means precisely that $U(F)$ is compact.
Only the case $\sigma\geq 0$ will really matter.
Indeed, as we shall see below, the condition $\sigma<0$ is equivalent
to the fact that $U(F)$ has finite volume with respect to~$\tau_{(X,D)}$.

Let $\Clan_{F,(L,D)}(D)$ be the subcomplex
of the analytic Clemens complex~$\Clan_F(D)$ consisting
of all non-empty subsets~$A\subset\bmA$ such that $E_A(F)\neq\emptyset$
and $d_\alpha=\lambda_\alpha\sigma+1$ for any $\alpha\in A$.

Let $b=\dim\Clan_{F,(L,D)}(D)$.
For any face~$A$ of maximal dimension~$b$ of~$\Clan_{F,(L,D)}(D)$,
let $D_A=\bigcap_{\alpha\in A}D_\alpha$ be the corresponding
stratum of $X$. The subset $D_A(F)$ carries a natural measure~$\mathrm d\tau_{D_A}$
and we define
\begin{equation}
 Z_A(s) = \int_{D_A(F)} \norm{\mathsec f_\Delta(x)}^s
 \prod_{\alpha\not\in A} \norm{\mathsec f_\alpha(x)}^{s\lambda_\alpha-d_\alpha} \, \mathrm d\tau_{A}(x).
\end{equation}

\begin{prop}\label{prop.igusa-volume}
Let $\sigma$, $\Clan_{F,(L,D)}(D)$ and~$b$ be defined as above. Then
the integral defining~$Z(s)$ converges for $\Re(s)>\sigma$
and defines a holomorphic function in that domain.

Assume that~$\sigma\neq-\infty$.
Then there is a positive real number~$\delta$
such that $Z$ has a meromorphic continuation to a half-plane 
$\Re(s)>\sigma-\delta$, with
a pole of order~$b=\dim \Clan_{F,(L,D)}(D)$ 
at $s=\sigma$ 
with leading coefficient
\[ \lim_{s\ra\sigma} (s-\sigma)^bZ(s) 
= \sum_{\substack{A\in \Clan_{F,(L,D)}(D) \\ \dim A=b}} Z_A (\sigma) \prod_{\alpha\in A}\frac1{\lambda_\alpha} \]
and moderate growth in vertical strips.

When $F=\R$ or~$\C$, $Z$ has no other pole
provided $\delta$ is chosen sufficiently small.

When $F$ is ultrametric, there is a positive integer~$f$
such that $(1-q^{f(\sigma-s)})^bZ(s)$ is holomorphic on the
half-plane $\{\Re(s)>\sigma-\delta\}$, again provided
$\delta$ is sufficiently small.
\end{prop}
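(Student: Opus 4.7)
The plan is to recognize $Z(s)$ as a one-parameter specialization of a geometric Igusa integral $\mathscr I(\Phi;(s_\alpha))$ of the type studied in \S\ref{ss.geometric-igusa}, and then to invoke Proposition~\ref{prop.igusa-limit}. Substituting $\mathrm d\tau_{(X,D)} = \prod_\alpha \norm{\mathsec f_\alpha}^{-d_\alpha}\,\mathrm d\tau_X$ and $\norm{\mathsec f_L} = \norm{\mathsec f_\Delta}\prod_\alpha \norm{\mathsec f_\alpha}^{\lambda_\alpha}$ gives
\[ Z(s) = \int_{U(F)} \norm{\mathsec f_\Delta(x)}^s \prod_{\alpha\in\mathscr A}\norm{\mathsec f_\alpha(x)}^{s\lambda_\alpha - d_\alpha}\,\mathrm d\tau_X(x). \]
After passing, if necessary, to a log resolution so that $D + \Delta_{\mathrm{red}}$ has strict normal crossings (an operation which does not affect $U$, the restriction of the measure to $U(F)$, or the Clemens complex of~$D$ relevant to the leading pole), we enlarge $\mathscr A$ by including each irreducible component of $\Delta$ as a new index with $d_\alpha = 0$ and $\lambda_\alpha$ equal to its multiplicity in~$\Delta$. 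The factor $\norm{\mathsec f_\Delta}^s$ is then absorbed into the product, and $Z(s)$ coincides with the restriction of $\mathscr I(1;(s_\alpha))$ to the affine line $s_\alpha = -\rho_\alpha + \lambda_\alpha s$, where $\rho_\alpha = d_\alpha - 1$.

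With this substitution, the parameter $a(\lambda,\rho) = \max_\alpha(d_\alpha - 1)/\lambda_\alpha$ introduced before Proposition~\ref{prop.igusa-limit} equals~$\sigma$. Indeed, vertices with $D_\alpha(F) = \emptyset$ impose no convergence condition (on the compact space $X(F)$, the function $\norm{\mathsec f_\alpha}$ is then bounded below), and the new vertices arising from $\Delta$ satisfy $(d_\alpha - 1)/\lambda_\alpha = -1/\lambda_\alpha < 0$, so neither type contributes to the maximum when $\sigma \geq 0$. Consequently $\mathscr A(\lambda,\rho) = \{\alpha : D_\alpha(F) \neq \emptyset,\ d_\alpha = \lambda_\alpha\sigma + 1\}$ and $\Clan_{F,(\lambda,\rho)}(D) = \Clan_{F,(L,D)}(D)$, a complex of dimension~$b$. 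Lemma~\ref{lemm.local-convergence} gives the convergence and holomorphy of $Z(s)$ on $\Tube_{>\sigma}$, while Proposition~\ref{prop.igusa-limit} furnishes the meromorphic continuation to $\Tube_{>\sigma-\delta}$, the pole order~$b$ at $s=\sigma$, and the moderate growth in vertical strips. Tracking the residue-measure normalization constants $\mathrm c_{F_\alpha}$ entering the definition of $\tau_{D_A}$ (and hence of $Z_A$) converts the leading-term formula of Proposition~\ref{prop.igusa-limit} into the one asserted here.

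In the archimedean case, the functions $\mathscr J_A$ provided by Proposition~\ref{prop.igusa-limit} are holomorphic on $\Tube_{>\sigma-\delta}$ and the only poles of $\zeta_{\R}$ and $\zeta_{\C}$ are at~$0$, so for $\delta$ sufficiently small the product $\mathscr J_A(s)\prod_{\alpha\in A}\zeta_{F_\alpha}(\lambda_\alpha(s-\sigma))$ has no other pole in that half-plane. In the ultrametric case, write $q_\alpha = q^{f_\alpha}$; the factor $(1 - q_\alpha^{-s_\alpha})\zeta_{F_\alpha}(s_\alpha)$ is constant, and for $\alpha\in\mathscr A(\lambda,\rho)$ the identity $d_\alpha = \lambda_\alpha\sigma + 1$ yields $q_\alpha^{-s_\alpha} = q^{f_\alpha\lambda_\alpha(\sigma-s)}$. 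Choosing a positive integer~$f$ which is an integral multiple of each $f_\alpha\lambda_\alpha$ arising from a maximal face of $\Clan_{F,(L,D)}(D)$ (clearing denominators whenever $\lambda_\alpha$ is a non-integral rational) then makes $(1-q^{f(\sigma-s)})^bZ(s)$ holomorphic on $\Tube_{>\sigma-\delta}$.

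The main technical obstacle lies in the bookkeeping for the log resolution: one must verify that the exceptional divisor and the added components of $\Delta$ do not contribute to the Clemens complex governing the leading pole, and that the residue measures $\tau_{D_A}$ and the normalizations match under pullback. A secondary difficulty is the uniformization of the periodic pole sets of the several local zeta factors into a single period $2\pi i/(f\log q)$, which demands the careful choice of~$f$ indicated above.
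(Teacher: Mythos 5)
The proposal follows the paper's strategy exactly: identify $Z(s)$ with the specialization of the multi-variable Igusa integral $\mathscr I$ along the affine line $s_\alpha = -\rho_\alpha+\lambda_\alpha s$ with $\rho_\alpha = d_\alpha-1$, and then invoke Lemma~\ref{lemm.local-convergence}, Proposition~\ref{prop.local-continuation} and Proposition~\ref{prop.igusa-limit}. The paper simply writes $Z(s)=\mathscr I(\mathbf 1,(s+1;s\lambda_\alpha-d_\alpha+1))$ and applies those results, silently treating $\Delta$ as one more component of the relevant snc divisor; you make this hypothesis explicit and propose a log resolution of $D+\Delta_{\mathrm{red}}$ when it fails. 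That is a legitimate concern. However, the parenthetical assurance that the resolution ``does not affect $\dots$ the Clemens complex of $D$ relevant to the leading pole'' is exactly the nontrivial claim: after blowing up, the measure $\tau_{(X,D)}$ pulls back to $\tau_{(\tilde X,\tilde D)}$ with $\tilde D = \pi^*D - \sum a_i E_i$ (discrepancies $a_i$), so the exceptional divisors $E_i$ acquire both a $\tilde d$-coefficient and a $\tilde\lambda$-coefficient, and one must check that their ratios $(\tilde d_i-1)/\tilde\lambda_i$ never exceed $\sigma$ and never enter the maximal-dimension faces (and also that blowing up a stratum $D_A$ does not drop a face of $\Clan_{F,(L,D)}(D)$ by separating the strict transforms). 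You acknowledge this as ``the main technical obstacle'' but leave it unproved, so as written the resolution step opens a gap rather than closing one; since the paper's own argument never introduces the resolution, this part of the proposal is a complication you have taken on, not a required step, and it is left incomplete. Two smaller points are worth noting: (i) your observation that $a(\lambda,\rho)$ must be restricted to indices with $D_\alpha(F)\neq\emptyset$ to match $\sigma$ is correct and is handled implicitly in the paper via the partition of unity over points of $X(F)$; (ii) in the ultrametric case your choice of $f$ as a common multiple of $f_\alpha\lambda_\alpha$ is the right one (it matches the paper's proof of Corollary~\ref{coro.asymptotic.volume-ultrametric}), whereas the paper's proof of this Proposition abbreviates it to ``l.c.m.\ of the $f_\alpha$.''
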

\begin{proof}
By definition,
\[ Z(s)
 = \int_{X(F)} \norm{\mathsec f_\Delta}^s
\prod_\alpha \norm{\mathsec f_\alpha(x)}^{s \lambda _\alpha  -d_\alpha}
 \mathrm d\tau_{X}(x), \]
an integral of the type studied in~\S\ref{s.igusa}.
Precisely, using the notations introduced in~\S\ref{ss.geometric-igusa},
we have
$Z(s)=\mathscr I(\mathbf 1,(s+1;s\lambda_\alpha -d_\alpha+1))$, where
the first parameter~$s$ refers to the divisor~$\Delta$,
while for each $\alpha\in\mathcal A$,
 the parameter $s_\alpha=s\lambda_\alpha-d_\alpha$ corresponds
to the divisor~$D_\alpha$.
Similarly,
\[ Z_A(s)=\mathscr I_A(\mathbf 1,(s+1;s\lambda_\alpha-d_\alpha+1)).\]
By Lemma~\ref{lemm.local-convergence},
this integral converges and defines a holomorphic function
as long as $\Re(s)>0$ and $\Re(s)\lambda_\alpha>d_\alpha$.
This shows the holomorphy of~$Z(s)$ for $\Re(s)>\sigma$.

Assume that $\sigma\neq-\infty$.
By~Proposition~\ref{prop.local-continuation}, the function~$Z$
has a meromorphic continuation to the domain of~$\C$
defined by the inequalities~$\Re(s_\alpha)>-\frac12$ and $\Re(s)>0$,
hence to some domain of the form~$\Re(s)>\sigma-\delta$.

In the ultrametric case, the existence of a positive integer~$f$
such that $(1-q^{(\sigma-s)f})^b Z(s)$ has no pole
on such a half-plane also follows  directly
from Proposition~\ref{prop.local-continuation}
(one may take for~$f$ the l.c.m. of the~$f_\alpha$),
as well as the growth in vertical strips.

It remains to prove the asserted behavior at $s=\sigma$.
By Proposition~\ref{prop.igusa-limit},
\[ \lim_{s\ra \sigma} (s-\sigma)^b Z(s)=
\sum_{\substack{A\in\mathscr A_{L,D}^{\an} \\ \dim A=b}}
\mathscr I_A (\mathbf 1;
    (\sigma +1;\sigma \lambda_\alpha-d_\alpha+1)_{\alpha\not\in A})
    \prod_{\alpha\in A} \frac1{\lambda_\alpha}
= \sum_{\substack{A\in\mathscr A_{L,D}^{\an} \\ \dim A=b}}
          Z_A(\sigma) 
    \prod_{\alpha\in A} \frac1{\lambda_\alpha}.
\]
\end{proof}

Using the Tauberian theorem~\ref{theo.tauber} recalled in the appendix,
we obtain the following estimate for the volume~$V(B)$
in the archimedean case.

\begin{theo}
\label{theo.geometric-volume}
Assume that $F=\R$ or~$\C$ and that $\sigma\geq 0$. This implies that $b\geq 1$.

There exists a polynomial~$P$ 
and a positive real number~$\delta$ such that
\[ V(B) = B^\sigma P(\log B) + \mathrm O(B^{\sigma-\delta}). \]
Moreover, if $\sigma>0$, then $P$ has degree~$b-1$ and leading
coefficient
\[ \operatorname{lcoeff}(P) = \frac1{\sigma(b-1)!}
\sum_{\substack{A\subset  \Clan_{F,(L,D)}(D)  \\ \dim A=b}}
Z_A(\sigma) \prod_{\alpha\in A} \frac{1}{\lambda_\alpha};\]
otherwise, if $\sigma=0$, then $P$ has degree~$b$ and its leading
coefficient satisfies
\[ \operatorname{lcoeff}(P) = \frac1{b!}
\sum_{\substack{A\subset  \Clan_{F,(L,D)}(D)  \\ \dim A=b}}
Z_A(\sigma) \prod_{\alpha\in A} \frac{1}{\lambda_\alpha}.\]
\end{theo}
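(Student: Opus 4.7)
The plan is to deduce the volume asymptotic from the analytic properties of the Mellin transform $Z(s)$ established in Proposition~\ref{prop.igusa-volume}, by applying the effective Tauberian theorem~\ref{theo.tauber} from the appendix. The theorem is essentially a mechanical consequence of that proposition; no new geometric input is needed.

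First I would recast $Z(s)$ as (a multiple of) the Mellin transform of $V$. Setting $H(x) = \norm{\mathsec f_L(x)}^{-1}$, one has $V(B) = \tau_{(X,D)}(\{x \in U(F) : H(x) \leq B\})$, and for any $x$ with $H(x) > 0$ the identity
\[ H(x)^{-s} = s \int_0^\infty B^{-s-1} \mathbf 1_{\{H(x) \leq B\}} \, \mathrm dB \qquad (\Re(s) > 0) \]
holds. Integrating against $\mathrm d\tau_{(X,D)}$ and applying Fubini (justified for $\Re(s) > \sigma$ by the convergence already proved in Proposition~\ref{prop.igusa-volume}) yields
\[ Z(s) = s \int_0^\infty V(B) B^{-s-1} \, \mathrm dB, \]
so that $Z(s)/s$ is the Mellin transform of the non-decreasing function $V$.

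Next I would feed the analytic data about $Z$ into the Tauberian theorem. By Proposition~\ref{prop.igusa-volume}, the function $Z(s)$ is holomorphic for $\Re(s) > \sigma$, admits a meromorphic continuation to a half-plane $\{\Re(s) > \sigma - \delta\}$ on which its only pole is of order $b$ at $s = \sigma$ (since $F$ is archimedean, no additional poles appear), and has polynomial growth in vertical strips. When $\sigma > 0$, these properties transfer unchanged to $Z(s)/s$, so Theorem~\ref{theo.tauber} produces a polynomial $P$ of degree $b-1$ and some $\delta' > 0$ with $V(B) = B^\sigma P(\log B) + \mathrm O(B^{\sigma - \delta'})$. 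The leading coefficient is read off from the leading coefficient of $Z(s)$ at $s = \sigma$: the heuristic $V(B) \sim C B^\sigma (\log B)^{b-1}$ gives, via the Mellin formula above, $s \cdot \int_0^\infty V(B) B^{-s-1} \mathrm dB \sim \sigma C (b-1)! (s-\sigma)^{-b}$ near $s = \sigma$, so that
\[ \operatorname{lcoeff}(P) = \frac{1}{\sigma (b-1)!} \lim_{s\to\sigma}(s-\sigma)^b Z(s) = \frac{1}{\sigma(b-1)!} \sum_{\substack{A \in \Clan_{F,(L,D)}(D) \\ \dim A = b}} Z_A(\sigma) \prod_{\alpha \in A} \frac{1}{\lambda_\alpha}, \]
using the explicit value supplied by Proposition~\ref{prop.igusa-volume}. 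The case $\sigma = 0$ requires a brief separate treatment: the factor $1/s$ contributes an extra simple pole at $s = 0$, so $Z(s)/s$ has a pole of order $b + 1$ there; the analogous Tauberian computation, with $V(B) \sim C(\log B)^b$ giving $s \int V B^{-s-1} \mathrm dB \sim C b! s^{-b}$, produces $P$ of degree $b$ with leading coefficient $K/b!$ where $K = \lim_{s\to 0}s^b Z(s)$, matching the announced formula.

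The only genuine subtlety is the accounting for the factor $s$ in the Mellin inversion, especially when $\sigma = 0$; all the deeper work—the meromorphic continuation of $Z$, the identification of the pole order with $b$, the residue formula involving the top-dimensional faces of the analytic Clemens complex, and the polynomial growth in vertical strips—has already been carried out in Proposition~\ref{prop.igusa-volume}. Consequently the present statement reduces to a standard application of the effective Tauberian theorem, which is the only remaining ingredient.
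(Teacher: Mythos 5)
Your proposal is correct and follows the same route as the paper: Proposition~\ref{prop.igusa-volume} supplies the holomorphy for $\Re(s)>\sigma$, the meromorphic continuation with a unique pole of order~$b$ at $s=\sigma$ in a slightly larger half-plane, the moderate growth in vertical strips, and the leading coefficient formula; one then invokes Theorem~\ref{theo.tauber}. One small remark: the detour through $Z(s)/s$ as the Mellin transform of $V$ is unnecessary with the theorem as stated in the appendix, which is formulated directly for $Z(s)=\int_X f(x)^{-s}\,\mathrm d\mu(x)$ and $V(B)=\mu(\{f\leq B\})$ and already has the case split $a>0$ versus $a=0$ (hence the shift of $\deg P$ from $b-1$ to $b$, and of the normalizing factor from $a(b-1)!$ to $b!$) built into its conclusion; what you are doing is essentially re-deriving a fragment of the Tauberian theorem's proof rather than using it as a black box, but the end result is the same.
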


Considering integrals of the form
\[ Z_\Phi(s) = \int_{U(F)} \Phi(x) \norm{\mathsec f_L(x)}^s\, \mathrm d\tau_{(X,D)}(x), \]
or applying the abstract equidistribution theorem 
(Proposition~\ref{prop.abstract.equi}),
we deduce the following corollary (``equidistribution of height balls''):
\begin{coro}\label{coro.equidistribution.local}
Assume that $F=\R$ or~$\C$ and that~$\sigma>0$.
Then $b\geq 1$ and, when $B\ra+\infty$, the family of measures
\[   V(B)^{-1} \mathbf 1_{\{ \norm{\mathsec f_L(x)}\geq 1/B\}}
      \,\mathrm d\tau_{(X,D)}(x) \]
converges tightly to the unique probability measure which
is proportional to
\[ \sum_{\substack{A\subset \mathscr A_{L,D}^\an \\ \dim A=b}}
      \norm{\mathsec f_\Delta(x)}^\sigma 
       \prod_{\alpha\in A} \frac{1}{\lambda_\alpha}
       \prod_{\alpha\not\in A} \norm{\mathsec f_\alpha(x)}^{-1} \,\mathrm d\tau_{D_A}(x). \]
\end{coro}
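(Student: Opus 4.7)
\emph{Proof plan.} The goal is to apply the abstract equidistribution theorem (Proposition~\ref{prop.abstract.equi}) to the compact space $X(F)$, the open subset $U(F)$, the proper function $H(x)=\norm{\mathsec f_L(x)}^{-1}$, and the measure $\nu=\tau_{(X,D)}$. First note that $b\geq 1$: the assumption $\sigma>0$ and the fact that the maximum defining~$\sigma$ is taken only over $\alpha\in\bmA$ with $D_\alpha(F)\neq\emptyset$ imply that any $\alpha$ attaining this maximum is a vertex of~$\Clan_{F,(L,D)}(D)$.

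\smallbreak
The hypothesis of Proposition~\ref{prop.abstract.equi} calls for volume asymptotics of balls with \emph{variable} radius, which we generate by perturbing the metric on~$\mathscr O_X(L)$. Given a positive smooth function~$\theta$ on~$X(F)$, set $\norm{\cdot}':=\theta\norm{\cdot}$: this is again a smooth metric on~$\mathscr O_X(L)$, and $H(x)\leq\theta(x)B$ is equivalent to $\norm{\mathsec f_L(x)}'\geq 1/B$. Keeping the metrics on $\omega_X(D)$ and on each $\mathscr O_X(D_\alpha)$ untouched leaves $\tau_{(X,D)}$ and every residue measure $\tau_{D_A}$ unchanged, while $\norm{\mathsec f_\Delta}$ is multiplied by~$\theta$; the combinatorial invariants $\sigma$, $b$, and $\Clan_{F,(L,D)}(D)$ are unchanged too. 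Applying Theorem~\ref{theo.geometric-volume} to the modified metric thus yields
\[ \nu\bigl(\bigl\{x\in U(F)\,;\,H(x)\leq\theta(x)B\bigr\}\bigr) \sim\frac{B^\sigma(\log B)^{b-1}}{\sigma(b-1)!}\int_{X(F)}\theta^\sigma\,\mathrm d\mu_0, \]
where $\mu_0$ denotes the Radon measure on $X(F)$ supported on the maximal strata $D_A(F)$ of $\Clan_{F,(L,D)}(D)$ given by the formula in the statement. The nonlinearity $\theta\mapsto\theta^\sigma$ is handled by the rescaling $\tilde H:=H^\sigma$, $\tilde B:=B^\sigma$, $\tilde\theta:=\theta^\sigma$, $\tilde\alpha(\tilde B):=\tilde B(\log\tilde B)^{b-1}/(\sigma^b(b-1)!)$, which turns the previous asymptotic into $\nu(\{\tilde H\leq\tilde\theta\tilde B\})\sim\tilde\alpha(\tilde B)\int\tilde\theta\,\mathrm d\mu_0$. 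Since $\sigma>0$, the map $\theta\mapsto\theta^\sigma$ is a bijection of the cone of positive smooth functions onto itself, so its image is still dense in $\mathrm C(X(F))$; Proposition~\ref{prop.abstract.equi} applied with $(\tilde H,\tilde\alpha,\mu_0)$ then delivers the vague convergence of $\tilde\alpha(\tilde B)^{-1}\mathbf 1_{\{\tilde H\leq\tilde B\}}\,\mathrm d\nu$ to~$\mu_0$. Reverting to $(H,B)$, dividing by $V(B)$ (whose leading term according to Theorem~\ref{theo.geometric-volume} is precisely $\tilde\alpha(B^\sigma)\mu_0(X(F))$), and observing that tightness is automatic because $X(F)$ is compact, one obtains the stated convergence to the normalized probability measure $\mu_0/\mu_0(X(F))$.

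\smallbreak
\emph{Main difficulty.} The one substantive verification is the identification of the leading coefficient after metric perturbation: one must check that in the explicit expression $\sum_A Z_A(\sigma)\prod_{\alpha\in A}\lambda_\alpha^{-1}$ provided by Proposition~\ref{prop.igusa-volume}, only the factor $\norm{\mathsec f_\Delta}^\sigma$ inside each~$Z_A(\sigma)$ absorbs a multiplicative $\theta^\sigma$, whereas the residue measures $\tau_{D_A}$ and the factors $\norm{\mathsec f_\alpha}^{\sigma\lambda_\alpha-d_\alpha}$ for $\alpha\notin A$ remain intact. Once this bookkeeping is settled, the rescaling trick is purely formal and the rest of the argument follows immediately from the results already established.
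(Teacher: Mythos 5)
Your proposal is correct and realizes the second of the two proof routes the paper itself sketches in the sentence preceding the corollary (applying Proposition~\ref{prop.abstract.equi} rather than running the Tauberian argument on the twisted integrals $Z_\Phi$). The metric-perturbation device — multiplying the metric on $\mathscr O_X(L)$, hence on $\mathscr O_X(\Delta)$, by $\theta$ while leaving $\omega_X$ and the $\mathscr O_X(D_\alpha)$ untouched so that $\tau_{D_A}$ and every $\norm{\mathsec f_\alpha}$ are unchanged — is exactly right, and so is the rescaling $\tilde H=H^\sigma$, $\tilde B=B^\sigma$ that linearizes the dependence on the test function. Two small points to tighten. First, the density input to Proposition~\ref{prop.abstract.equi} should be phrased as: take $\tilde{\mathrm S}=\mathrm C^\infty(X(F))$, a dense \emph{subspace}, and note that for any positive $\tilde\theta\in\tilde{\mathrm S}$ the function $\theta=\tilde\theta^{1/\sigma}$ is again smooth and positive (using compactness of $X(F)$ and $\sigma>0$), so the asymptotic holds for such $\tilde\theta$; speaking of the ``image of the cone being dense'' is not what the proposition asks for. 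Second, the bookkeeping you defer to the end actually produces the measure
\[ \mu_0=\sum_{\substack{A\\ \dim A=b}}\norm{\mathsec f_\Delta}^\sigma\prod_{\alpha\in A}\frac{1}{\lambda_\alpha}\prod_{\alpha\notin A}\norm{\mathsec f_\alpha}^{\sigma\lambda_\alpha-d_\alpha}\,\mathrm d\tau_{D_A}, \]
whose exponent $\sigma\lambda_\alpha-d_\alpha$ on $\norm{\mathsec f_\alpha}$ matches the integrand of $Z_A(\sigma)$ in Proposition~\ref{prop.igusa-volume} (and, adelically, $H_E^{-1}\,\mathrm d\tau_X$ with $E=\sigma L-D$ in Theorem~\ref{theo.volume.global}), whereas the corollary as printed has exponent $-1$; these agree only if $\sigma\lambda_\alpha-d_\alpha=-1$ for every $\alpha\notin A$, which need not hold. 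Your ``main difficulty'' paragraph in fact writes the correct exponent, so you should point out the discrepancy rather than referring to ``the formula in the statement'' as if it were the one your computation yields.
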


In the case of ultrametric local fields, 
our analysis leads to a good understanding
of the Igusa zeta functions but the output
of the corresponding Tauberian theorem,
\emph{i.e.}, the discussion leading to 
Corollaries~\ref{coro.theo.tauber2}, \ref{coro.theo.tauber2b}
and \ref{coro.theo.tauber2c}
in the Appendix, is less convenient  since
one can only obtain an asymptotic expansions for $V(q^N)$,
when $N$ belongs to a fixed congruence class modulo some positive integer~$f$.
In fact, when all irreducible components of~$D$ are geometrically irreducible,
one may take $f=1$ in Proposition~\ref{prop.igusa-volume}.
In that case, Corollary~\ref{coro.theo.tauber2b} even leads
to a precise asymptotic expansion in that case.

We leave the detailed statement to the interested reader 
and we content ourselves with the following corollary.
\begin{coro}\label{coro.asymptotic.volume-ultrametric}
Assume that $F$ is ultrametric and that~$\sigma\geq 0$.
Let $b^*=b$ if $\sigma=0$, and $b^*=b-1$ if $\sigma>0$.
Then, when the integer~$N$ goes to infinity,
\[  0< \liminf \frac{V(q^N)}{q^{N\sigma} N^{b^*}} \leq 
   \limsup \frac{V(q^N)}{q^{N\sigma} N^{b^*}} <\infty. \]
\end{coro}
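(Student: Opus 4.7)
The plan is to obtain the two-sided volume bound as a direct Tauberian consequence of the analytic properties of the Mellin transform $Z(s)$ established in Proposition~\ref{prop.igusa-volume}. First, I interpret $V(B)$ as a distribution function: let $\nu$ be the pushforward of the positive Radon measure $\tau_{(X,D)}$ on $U(F)$ under the continuous map $x\mapsto \norm{\mathsec f_L(x)}^{-1}$. Since $\norm{\mathsec f_L}$ is bounded above on the compact space $X(F)$, the measure $\nu$ is supported on an interval $[c,+\infty)$ with $c>0$; moreover $V(B)=\nu([c,B])$ and $Z(s)=\int t^{-s}\,\mathrm d\nu(t)$ is its Mellin--Stieltjes transform.

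Next, Proposition~\ref{prop.igusa-volume} provides all the required analytic input in the ultrametric setting: a positive integer $f$ and $\delta>0$ such that $(1-q^{f(\sigma-s)})^b Z(s)$ is holomorphic on the half-plane $\{\Re(s)>\sigma-\delta\}$ with moderate growth in vertical strips, together with the formula
\[ \lim_{s\to\sigma}(s-\sigma)^b Z(s) = \sum_{A} Z_A(\sigma)\prod_{\alpha\in A}\lambda_\alpha^{-1} \]
for the residual coefficient. The hypothesis $\sigma\geq 0$ guarantees that the subcomplex $\Clan_{F,(L,D)}(D)$ is nonempty: at least one component $D_\alpha$ with $D_\alpha(F)\neq\emptyset$ realizes $\sigma=(d_\alpha-1)/\lambda_\alpha$, so the singleton $\{\alpha\}$ is a face of it. Each $Z_A(\sigma)$ is then a strictly positive integral over the nonempty analytic submanifold $D_A(F)$, hence the limit above is strictly positive. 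Equivalently, $Z(s)$ has a pole of order exactly~$b$ at $s=\sigma$, and, by the periodicity factor $1-q^{f(\sigma-s)}$, poles of the same order at every point of the arithmetic progression $\sigma+2\pi i k/(f\log q)$, $k\in\Z$.

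Finally, the Tauberian Theorem~\ref{theo.tauber} of the appendix applies to the pair $(\nu,Z)$. The ultrametric corollaries \ref{coro.theo.tauber2}--\ref{coro.theo.tauber2c} translate this pole structure into bounds of the form $c_1\,q^{N\sigma}N^{b^*}\leq V(q^N)\leq c_2\,q^{N\sigma}N^{b^*}$ for all large~$N$, where the exponent~$b^*$ reflects the order~$b$ of the pole at $s=\sigma$: shifting the inverse Mellin contour across that pole contributes a term of order $B^{\sigma}(\log B)^{b-1}$ when $\sigma>0$ and $(\log B)^{b}$ when $\sigma=0$, evaluated at $B=q^N$. The upper bound is then standard, following from moderate growth in vertical strips together with the contour shift. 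The main obstacle, and the reason one cannot refine the statement to a sharp asymptotic, is that the secondary poles on the line $\Re(s)=\sigma$ produce genuine oscillations in $V(q^N)$ as $N$ varies modulo~$f$; it is precisely the positivity of $\nu$ (which rules out systematic cancellation among the contributions from these poles) that secures the strictly positive $\liminf$. A true asymptotic equivalent would require restricting $N$ to a fixed residue class modulo~$f$, as the paragraph preceding the corollary indicates.
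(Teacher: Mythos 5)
Your overall plan is correct—interpret $Z$ as the Mellin--Stieltjes transform of a positive measure, use Proposition~\ref{prop.igusa-volume} for the meromorphic continuation, and land on the ultrametric Tauberian corollaries of the appendix—but there is a genuine gap in the last step. Corollary~\ref{coro.theo.tauber2} (and the discussion that precedes it, starting at ``Let $q$ be a real number with $q>1$'') carries an extra hypothesis that you never secure: the measured function must take values in $q^{\Z}$. That hypothesis is what makes the change of variable $u=q^{-s}$ legitimate, turns $Z$ into the Laurent series $\Phi(u)=\sum_n Z_n u^n$, and ultimately is where the positivity-of-coefficients argument (ruling out cancellation among the poles on $\Re(s)=\sigma$) actually lives. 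For an arbitrary $v$-adic metric on $\mathscr O_X(L)$, the function $\norm{\mathsec f_L}$ is merely a continuous positive function on $X(F)$; it has no reason to be $q^{\Z}$-valued, and consequently $Z(s)$ has no reason to be $2\pi\mathrm i/\log q$-periodic. Knowing from Proposition~\ref{prop.igusa-volume} that $(1-q^{f(\sigma-s)})^b Z(s)$ extends holomorphically locates the poles in an arithmetic progression, but this alone does not make $Z$ a function of $q^{-s}$, so the appendix machinery does not apply. Incidentally, your invocation of Theorem~\ref{theo.tauber} ``for the pair $(\nu,Z)$'' is also off: that theorem demands that $a$ be the \emph{only} pole in the half-plane $\Re(s)\ge a-\delta$, which is exactly what fails in the ultrametric case.

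The paper closes this gap by a preliminary reduction which you omit: since replacing one metric by another (and the induced change in $\tau_{(X,D)}$) alters both the volume form and the height by factors that are bounded above and below on the compact space $X(F)$, the two-sided $\liminf$/$\limsup$ conclusion is insensitive to such a change. One may therefore pass to a smooth metric for which $\norm{\mathsec f_L}$ is $q^{\Z}$-valued (for instance a model metric), and only then is the hypothesis ``$f(x)\in q^{\Z}$'' of Corollary~\ref{coro.theo.tauber2} satisfied. You should also soften the claim that $Z$ has poles ``of the same order'' at every $\sigma+2\pi\mathrm i k/(f\log q)$: holomorphy of $(1-q^{f(\sigma-s)})^b Z(s)$ only bounds the order by $b$; the fact that the pole at $s=\sigma$ has \emph{maximal} order among those on the line $\Re(s)=\sigma$ is a consequence of the nonnegativity of the Laurent coefficients, which again is only available after the $q^{\Z}$ reduction. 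Your remarks on the strictly positive leading coefficient via the positivity of the $Z_A(\sigma)$ and on the oscillations modulo $f$ are otherwise accurate.
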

\begin{proof}
Changing metrics modifies the volume forms and the height functions
by a factor which is lower- and upper-bounded; this does not
affect the result of the corollary.
Consequently, we may assume that all metrics are smooth
and that the function~$\norm{\mathsec f_L}$ is $q^\Z$-valued.
The Igusa zeta function~$Z(s)$ is then~$2i\pi/\log q$-periodic
and has a meromorphic continuation of the form
$\sum_A \Phi_A(s)\prod_{\alpha\in A}(1-q^{f_\alpha\lambda_\alpha(\sigma-s)})$,
for some functions~$\Phi_A$ which are holomorphic on an open half-plane
containing the closed half-plane given by $\{\Re(s)\geq\sigma\}$.
Let $f$ be any positive integer such that $f$ is 
an integral multiple of $f_\alpha\lambda_\alpha$, 
for any~$\alpha\in \mathscr A$;
we see that $Z(s)(1-q^{f(\sigma-s)})^b$ extends holomorphically
to this open half-plane and 
it now suffices to apply Corollary~\ref{coro.theo.tauber2}.
\end{proof}

\subsection{Adelic Igusa integrals}
\label{sec.adelic-igusa}

\subsubsection{Geometric setup}
Let $F$  be a number field, let $\AD_F$ be the ring of adeles of~$F$.
More generally, if $S$ is a finite set of places of~$F$, 
let $\mathbf A_F^S$ be the restricted product of local fields~$F_v$, 
for $v\not\in S$.

We fix an algebraic closure~$\bar F$ of~$F$;
for each place~$v$, 
we also  fix a decomposition group $\Gamma_v$ at~$v$
in the Galois group $\Gamma_F=\Gal(\bar F/F)$.

Let $\bar X$ be a smooth projective variety over~$F$, 
let $(D_\alpha)_{\alpha\in \mathscr A}$
be a family of irreducible divisors in~$\bar X$ whose
sum $\Delta=\sum_{\alpha\in \mathscr A} D_\alpha$
is geometrically a divisor with strict normal crossings.
For $\alpha\in\mathscr A$,
let $\mathsf f_{D_\alpha}$ denote the canonical section
of the line bundle $\mathscr O_{\bar X}(D_\alpha)$
and let $F_\alpha$ be the algebraic closure of~$F$
in the function field of~$D_\alpha$.
Let $\bar{\mathscr A}$ be the set of irreducible
components of the divisor~$\Delta_{\bar F}$; it carries an action
of $\Gamma_F$, the set of orbits of which, $\bar{\mathscr A}/\Gamma_F$,
identifies canonically
with the set $\mathscr A$.

Endow all line bundles $\mathscr O_{\bar X}(D_\alpha)$, as well
as the canonical line bundle~$\omega_{\bar X}$, with adelic metrics.

Let $\mathscr B$ be any subset of~$\mathscr A$, corresponding
to a subset $\bar{\mathscr B}$ of~$\bar{\mathscr A}$
which is stable under the action of~$\Gamma_F$.
Let $Z=\bigcup_{\alpha\in\mathscr B} D_\alpha$
be the union of the corresponding divisors,
$D=\bigcup_{\alpha\not\in\mathscr B}D_\alpha$ the union
of the other divisors,
and let us define $X=\bar X\setminus Z$ and $U=X\setminus D$.

The local spaces $X(F_v)$, $U(F_v)$, for any place~$v$ of~$F$, 
and the adelic spaces $X(\AD_F)$, $U(\AD_F)$
are locally compact and carry Tamagawa measures
$\tau_{X,v}$, $\tau_{U,v}$, $\tau_X$ and~$\tau_U$
(see Definition~\ref{defi.tamagawa-measure})
which are Radon measures, \emph{i.e.}, finite on compact subsets.
The set of irreducible components $\mathscr A_v$ of the divisor~$\Delta_{F_v}$
is in natural bijection with the set of $\Gamma_v$-orbits
in~$\bar{\mathscr A}$.  For any such orbit~$\alpha$, we will 
denote by $D_\alpha$ the corresponding divisor on $X_{F_v}$.

Our aim here is to establish analytic properties
of the adelic integral
\[ \mathscr I (\Phi; (s_\alpha)) =
\int_{U(\AD_F)} \prod_{\alpha\in \mathscr A}
\prod_{v} \norm{\mathsf f_{D_\alpha}}_v^{s_\alpha-1} \Phi(x)
\, \mathrm d\tau_U, \]
when $\Phi$ is the restriction to~$U(\AD_F)$
of a smooth function with compact support on $X(\AD_F)$.

 
It will be convenient to view the map $\alpha\mapsto s_\alpha$
as a $\Gamma_F$-equivariant map from $\bar{\mathscr A}$ to~$\C$.
In other words, for each $\alpha\in\bar{\mathscr A}$, we 
let $s_\alpha=s_{[\alpha]}$, where $[\alpha]\in\mathscr A$ is
the orbit of~$\alpha$ under~$\Gamma_F$. More generally,
if $\beta$ is any subset of such an orbit, we define $s_\beta=s_\alpha$,
for any $\alpha\in\bar{\mathscr A}$ belonging to~$\beta$
(it does not depend on the choice of~$\alpha$).

\subsubsection{Convergence of local integrals}
Assume that $\Phi=\prod_v \Phi_v$ is a product
of smooth functions and define, for any $v\in\Val(F)$,
\[ \mathscr I_v (\Phi_v; (s_\alpha)) =
\int_{U(F_v)}
 \prod_{\alpha\in \mathscr A}
 \norm{\mathsf f_{D_\alpha}}_v^{s_\alpha-1} \Phi_v(x) \,
   \mathrm L_v(1,\EP (U))\mathrm d\tau_{X,v}. \]
When the local integrals $\mathscr I_v$ converge  absolutely,
as well as the infinite product 
$\prod_v \mathscr I_v(\Phi;(s_\alpha))$,
then the integral $\mathscr I(\Phi;(s_\alpha))$
exists and one has an equality
\[ \mathscr I(\Phi; (s_\alpha)) = \mathrm L_*(1,\EP (U))^{-1}
    \prod_v \mathscr I_v(\Phi_v;(s_\alpha)). \]

Let $\alpha\in\mathscr A$.
Let us decompose the $\Gamma_F$-orbit~$\alpha$ as a union of disjoint
$\Gamma_v$-orbits $\alpha_1,\dots,\alpha_r$.
Then $D_\alpha=\sum_{i=1}^r D_{\alpha_i}$ and $\mathsec f_{D_\alpha}
=\prod_{i=1}^r \mathsec f_{D_{\alpha_i}}$.
It follows that the integral~$\mathscr I_v$ can be rewritten as
\[ \mathscr I_v (\Phi_v; (s_\alpha)) =
\int_{U(F_v)}
 \prod_{\alpha\in \mathscr A_v}
 \norm{\mathsf f_{D_\alpha}}_v^{s_\alpha-1} \Phi_v(x) \,
   \mathrm L_v(1,\EP (U))\mathrm d\tau_{X,v}. \]
By Lemma~\ref{lemm.local-convergence}, $\mathscr I_v(\Phi_v;(s_\alpha))$
converges absolutely when $\Re(s_\alpha)>0$ for each $\alpha\in\mathscr A$.
If moreover $\Phi_v$ has compact support in~$X(F_v)$,
then the conditions $\Re(s_\alpha)>0$ for $\alpha\in\mathscr B$
are not necessary.

The decomposition $\alpha=\bigcup \alpha_i$ corresponds to
the decomposition  $F_\alpha\otimes_F F_v =\prod_{i=1}^r F_{\alpha_i}$.
Observe that the local factor of Dedekind's zeta function~$\zeta_{F_\alpha}$
at~$v$ is given by the formula
\[ \zeta_{F_\alpha,v}(s) = \prod_{i=1}^r (1-q_v^{-f_{\alpha,i}s})^{-1}, \]
where for each~$i$, $f_{\alpha,i}=[F_{\alpha,i}:F_v]$.
Let $\zeta_{F_\alpha}^*(1)$ be the residue at $s=1$ of this
zeta function.

\subsubsection{Convergence of an Euler product}
To study the convergence of the product, we may ignore a finite
set of places.

Let $S$ be a finite set of places containing the  archimedean places
so that, for all other places,
all metrics are defined by good integral models~$\overline{\mathscr X}$,
$\mathscr X$,  $\mathscr U$, \dots, over $\Spec\mathfrak o_{F,S}$.
Assume moreover that for any $v\not\in S$, $\Phi_v$ is the characteristic
function of~$\mathscr X(\mathfrak o_v)$.

By Denef's formula (Prop.~\ref{prop.denef}),
one has
\[ \mathscr I_v(\Phi_v;(s_\alpha))
= \sum_{A\subset \mathscr A_v\setminus\mathscr B_v}
     \big(q_v^{-1}\mu_v(\mathfrak o_v)\big)^{\dim X}
   \Card D_A^\circ (k_v)
   \prod_{\alpha\in A} \frac{q_v^{f_\alpha}-1}{q_v^{f_\alpha s_\alpha}-1}
\]
for any place $v\not\in S$.
Combined with the estimate of Theorem~\ref{prop.convergence},
this relation implies that 
\[  \mathscr I_v(\Phi_v;(s_\alpha))
  \prod_{\alpha\in\mathscr A\setminus \mathscr B}
\zeta_{F_\alpha,v}(s_\alpha)^{-1} = 1 + \mathrm O(q_v^{-1-\eps})
\]
provided $\Re(s_\alpha)>\frac12+\eps$ for each $\alpha\not\in\mathscr B$.
(See Prop.~9.5 in~\cite{chambert-loir-t2002} for a similar computation.)
This asymptotic expansion will imply the desired convergence of the infinite
product.

\begin{prop}\label{prop.igusa-adelic-merom}
Assume that $\Phi$ is a smooth function with compact support on $X(\AD_F)$.
Then the integral $\mathscr I(\Phi;(s_\alpha))$ converges
for $\Re(s_\alpha)>1$, for each $\alpha\not\in\mathscr B$,
and defines a holomorphic function in this domain. 
This function has a meromorphic continuation:
there is a holomorphic function~$\phi$ defined for
$\Re(s_\alpha)>\frac12$ if $\alpha\not\in\mathscr B$,
such that 
\[  \mathscr I(\Phi;(s_\alpha)) =\phi(s)\prod_{\alpha\not\in\mathscr B} \zeta_{F_\alpha}(s_\alpha). \]
Moreover,  if $s_\alpha=1$ for $\alpha\not\in\mathscr B$, then
\[ \phi(s) =  \prod_{\alpha\not\in\mathscr B}\zeta_{F_\alpha}^*(1)^{-1}
\int_{X(\AD_F)} \Phi(x)\prod_{\beta\in\mathscr B}\prod_v \norm{\mathsec f_{D_\beta}}_v^{s_\beta-1}\,\mathrm d\tau_X(x) .\]
\end{prop}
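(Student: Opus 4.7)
My plan is to reduce to product test functions $\Phi=\prod_v\Phi_v$, factor the adelic integral as an Euler product of local integrals, and extract the expected Dedekind zeta factors from this product via Denef's formula (Proposition~\ref{prop.denef}) combined with the convergence estimate of Theorem~\ref{prop.convergence}.

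By linearity, it suffices to treat the case of a product function $\Phi=\prod_v\Phi_v$, where $\Phi_v$ is smooth and compactly supported in $X(F_v)$ for $v$ in some finite set $S$ of places containing the archimedean ones (and all places where the metrics are not defined by a fixed good integral model), and $\Phi_v=\mathbf 1_{\mathscr X(\mathfrak o_v)}$ for $v\notin S$. For such $\Phi$, one expects the factorization
\[ \mathscr I(\Phi;(s_\alpha)) = \mathrm L_*(1,\EP(U))^{-1}\prod_{v\in\Val(F)}\mathscr I_v(\Phi_v;(s_\alpha)), \]
which will follow from absolute convergence of the infinite product. For each $v$, Lemma~\ref{lemm.local-convergence} gives holomorphy of $\mathscr I_v(\Phi_v;(s_\alpha))$ on the region $\{\Re(s_\alpha)>0:\alpha\notin\mathscr B\}$; no condition need be imposed on the parameters $s_\alpha$ for $\alpha\in\mathscr B$, because the support of $\Phi_v$ lies in $X(F_v)$ and is thus disjoint from the divisors $D_\alpha$, $\alpha\in\mathscr B$, so that $\norm{\mathsec f_{D_\alpha}}_v$ is bounded above and below on that support.

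For $v\notin S$, Denef's formula together with the asymptotic $\mathrm L_v(1,\EP(U))\tau_{U,v}(\mathscr U(\mathfrak o_v))=1+\mathrm O(q_v^{-3/2})$ of Theorem~\ref{prop.convergence} should yield the key estimate
\[ \mathscr I_v(\Phi_v;(s_\alpha))\prod_{\alpha\notin\mathscr B}\zeta_{F_\alpha,v}(s_\alpha)^{-1} = 1+\mathrm O(q_v^{-1-\eps}), \]
uniformly on compacta of $\{\Re(s_\alpha)>\tfrac12+\eps:\alpha\notin\mathscr B\}$. The main technical step is here: one must expand Denef's formula, isolate the contributions of codimension-one strata $D_\alpha^\circ(k_v)$ for $\alpha\notin\mathscr B$, and check that after multiplication by the Euler factors $\prod(1-q_v^{-f_{\alpha,i}s_\alpha})$ of the $\zeta_{F_\alpha,v}(s_\alpha)^{-1}$ they combine with $\mathrm L_v(1,\EP(U))$ so as to cancel all terms of order $q_v^{-1}$ in the Tamagawa expansion, leaving only an error of order $q_v^{-1-\eps}$. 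A virtually identical bookkeeping is carried out in~\cite[Prop.~9.5]{chambert-loir-t2002}; the only novelty here is that divisors in $\mathscr B$ do not contribute because $\Phi_v$ vanishes near them.

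Granted this estimate, the regularized infinite product
\[ \phi(s) = \mathrm L_*(1,\EP(U))^{-1}\prod_v \Bigl(\mathscr I_v(\Phi_v;(s_\alpha))\prod_{\alpha\notin\mathscr B}\zeta_{F_\alpha,v}(s_\alpha)^{-1}\Bigr) \]
converges absolutely on $\{\Re(s_\alpha)>\tfrac12:\alpha\notin\mathscr B\}$ and defines a holomorphic function there; multiplying back by the global Dedekind factors $\prod_{\alpha\notin\mathscr B}\zeta_{F_\alpha}(s_\alpha)$ gives the claimed factorization for $\Re(s_\alpha)>1$, and the meromorphic continuation follows from that of the $\zeta_{F_\alpha}$. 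To identify $\phi$ at $s_\alpha=1$ for $\alpha\notin\mathscr B$, observe that in the integrand the corresponding factors $\prod_v\norm{\mathsec f_{D_\alpha}}_v^{s_\alpha-1}$ reduce to~$1$, so that only the weights for $\beta\in\mathscr B$ survive; repackaging the local factors $\mathrm L_v(1,\EP(U))\,\mathrm d\tau_{U,v}$ into the Tamagawa measure $\mathrm d\tau_X$ on $X(\AD_F)$, and extracting the principal values of the $\zeta_{F_\alpha}$, yields the asserted explicit expression, with the factors $\zeta_{F_\alpha}^*(1)^{-1}$ accounting for the transition from $\EP(U)$ to $\EP(X)$.
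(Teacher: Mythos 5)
Your proposal tracks the paper's own argument almost step for step: reduce to product test functions, establish holomorphy of the local integrals $\mathscr I_v$ via Lemma~\ref{lemm.local-convergence} (noting the conditions for $\alpha\in\mathscr B$ are unnecessary since $\Phi_v$ is supported away from $Z$), obtain the Euler-product estimate $\mathscr I_v(\Phi_v;(s_\alpha))\prod_{\alpha\notin\mathscr B}\zeta_{F_\alpha,v}(s_\alpha)^{-1}=1+\mathrm O(q_v^{-1-\eps})$ from Denef's formula (Prop.~\ref{prop.denef}) and Theorem~\ref{prop.convergence}, and then recover the explicit value of~$\phi$ at $s_\alpha=1$ by exchanging $\mathrm L_v(1,\EP(U))$ for $\mathrm L_v(1,\EP(X))\prod_{\alpha\notin\mathscr B}\zeta_{F_\alpha,v}(1)$ and recombining into the Tamagawa measure $\tau_X$ — this is exactly the paper's computation. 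One small caveat: the reduction to product functions invoked ``by linearity'' is not literally valid at the archimedean places (smooth compactly supported functions on a product of real manifolds are not finite sums of products of local test functions); one must instead either dominate by a product function to get convergence and holomorphy, or argue by density and continuity of both sides in $\Phi$. The paper leaves this point equally implicit, so it is not a gap in your argument relative to the source, but if you wanted a fully rigorous write-up you would want to address it.
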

(Note that, in the last formula,
the function under the integration sign has compact
support on $X(\AD_F)$, while $\mathrm d\tau_X$ is a Radon
measure on that space.)
\begin{proof}
The first two parts of the theorem follow from the estimates
that we have just derived and the absolute convergence for $\Re(s)>1$
of the Euler product defining the Dedekind zeta function of
a number field.
 
Let $s\in\C^{\mathscr A}$ be such that $s_\alpha=1$ 
for $\alpha\not\in\mathscr B$. 
One has therefore
\[\mathscr I_v(\Phi_v;s) = \mathrm L_v(1,\EP (U))\int_{U(F_v)}
\prod_{\beta\in\mathscr B} \norm{\mathsec f_{D_\beta}}_v^{s_\beta-1}
\Phi_v(x)\,\mathrm d\tau_{X,v}(x). \]
Moreover, one has equalities of virtual representations of~$\Gamma_F$
(\cf Equation~\eqref{eq.virtual-eq}),
\begin{align*}
 \EP (U) & = \EP (\bar X) +\sum_{\alpha\in\mathscr A}\Ind_{\Gamma_{F_\alpha}}^{\Gamma_F}\mathbf 1\\
\EP (X) &= \EP (\bar X) +\sum_{\alpha\in\mathscr B}\Ind_{\Gamma_{F_\alpha}}^{\Gamma_F}\mathbf 1,
\end{align*}
from which it follows that
\[ \EP (X) = \EP (U) - \sum_{\alpha\not\in\mathscr B}\Ind_{\Gamma_{F_\alpha}}^{\Gamma_F}\mathbf 1. \]
In particular, for any finite place $v$ of~$F$,
\[ \mathrm L_v(1,\EP (X)) 
= \mathrm L_v(1,\EP (U)) \prod_{\alpha\not\in\mathscr B} \zeta_{F_\alpha,v}(1)^{-1} \]
and
\[ \mathrm L^*(1,\EP (X))
 = \mathrm L^*(1,\EP (U)) \prod_{\alpha\not\in\mathscr B}\zeta_{F_\alpha}^*(1)^{-1}. \]

If $S_\infty$ is the set of archimedean places of~$F$, then
(recall that $s_\alpha=1$ for $\alpha\not\in\mathscr B$)
\begin{align*}
 \phi(s) & =  \mathrm L^*(1,\EP (U))^{-1}
\prod_{v\in S_\infty}  \mathscr I_v(\Phi_v;s)
\prod_{v\not\in S_\infty} \left(\mathscr I_v(\Phi_v;s) 
\prod_{\alpha\not\in\mathscr B}\zeta_{F_\alpha,v}(1)^{-1} \right) \\
&= \mathrm L^*(1,\EP (U))^{-1} 
\prod_{v} \int_{X(F_v)} \Phi_v(x) \prod_{\beta\in\mathscr B}\norm{\mathsec f_{D_\beta}}_v^{s_\beta-1} \mathrm L_v(1,\EP(X)) \,\mathrm d\tau_{X,v}(x) \\
&=  \mathrm L^*(1,\EP (U))^{-1} 
\mathrm L^*(1,\EP (X))
\int_{X(\AD_F)} \Phi(x)\prod_{\beta\in\mathscr B}\prod_v
\norm{\mathsec f_{D_\beta}}_v^{s_\beta-1}\,\mathrm d\tau_X(x) \\
&= \prod_{\alpha\not\in\mathscr B}\zeta_{F_\alpha}^*(1)^{-1}
\int_{X(\AD_F)} \Phi(x)\prod_{\beta\in\mathscr B}\prod_v \norm{\mathsec f_{D_\beta}}_v^{s_\beta-1}\,\mathrm d\tau_X(x) .
   \end{align*}
\end{proof}

In fact, it is possible to establish a more general theorem.
Let $S$ be a finite set of places of~$F$ containing the archimedean places.
For any place $v\in S$, let $\Phi_v$ be a smooth bounded function on $X(F_v)$;
let also $\Phi^S$ be a smooth function with compact support 
on $X(\AD_F^S)$; let $\Phi$ be the function $\Phi^S\prod_{v\in S}\Phi_v$
on $X(\AD_F)$.

By the same arguments as above,
the integral $\mathscr I(\Phi;(s_\alpha))$ converges
provided $\Re(s_\alpha)>1$ for each $\alpha\not\in\mathscr B$
and
$\Re(s_\beta)>0$ for each $\beta\in\mathscr B$
and defines a holomorphic function in that domain. 

\begin{prop}
Let $\Omega\subset\C^{\mathscr A}$ be the set of $(s_\alpha)$
such that $\Re(s_\alpha)>\frac12$ for $\alpha\not\in\mathscr B$
and $\Re(s_\beta)>-\frac12$ for $\beta\in\mathscr B$.
The function $\mathscr I(\Phi;(s_\alpha))$
admits a meromorphic continuation  of the following form.
For any place $v\in S$ and any face $A$ of maximal
dimension of $\Clan_{F_v}(D)$, there is a holomorphic
function $\phi_A$ on~$\Omega$ such that
\[ \mathscr I(\Phi;(s_\alpha))=
\prod_{\alpha\not\in\mathscr B} \zeta_{F_\alpha}^S(s_\alpha) 
 \prod_{v\in S} \left( \sum_{A\in\Clanmax_{F_v}(D)} \phi_A(s)
        \prod_{\alpha\in A}\zeta_{F_\alpha,v}(s_\alpha)\right) . \]
Moreover, the functions $\phi_A$ have moderate growth in vertical strips
in the sense that for any compact subset~$K$ of $\R^{\mathscr A}\cap\Omega$,
there are real numbers~$c$ and~$\kappa$ such that
\[ \abs{\phi_A(s)} \leq c \prod_{\alpha\in\mathscr A}(1+\abs{s_\alpha})^\kappa,\]
for $s\in\C^{\mathscr A}$ such that $\Re(s)\in K$.
\end{prop}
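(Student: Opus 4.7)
The plan is to follow the proof of Proposition~\ref{prop.igusa-adelic-merom} closely, but to retain the local factors at the finitely many ``bad'' places $v\in S$ in factorized form rather than absorbing them into a single compactly supported global test function. After possibly enlarging~$S$, one may assume that for each $v\notin S$, $\Phi_v$ is the characteristic function of $\mathscr X(\mathfrak o_v)$, for a fixed smooth model $\mathscr X$ on which all divisors and metrics are defined by the model. In particular $\norm{\mathsec f_{D_\beta}}_v\equiv 1$ on $\mathscr X(\mathfrak o_v)$ for all $\beta\in\mathscr B$, and the variables $s_\beta$ (for $\beta\in\mathscr B$) do not appear in the local integrals at these places.

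For the tail over $v\notin S$, Denef's formula (Proposition~\ref{prop.denef}) together with the comparison of $\mathrm L$-factors performed in the proof of Proposition~\ref{prop.igusa-adelic-merom} (resting on Theorem~\ref{prop.convergence} and the virtual Galois-module identity~\eqref{eq.virtual-eq}) yields, for $v\notin S$,
\[
  \mathrm L_v(1,\EP(U))\,\mathscr I_v(\Phi_v;s)
   = \prod_{\alpha\notin\mathscr B}\zeta_{F_\alpha,v}(s_\alpha)\,
      \bigl(1+\mathrm O(q_v^{-1-\eps})\bigr),
\]
uniformly on compacta of the region $\Re(s_\alpha)>1/2+\eps$, $\alpha\notin\mathscr B$. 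The corresponding Euler product factors on~$\Omega$ as $\prod_{\alpha\notin\mathscr B}\zeta^S_{F_\alpha}(s_\alpha)$ times a function $\psi^S$ holomorphic on~$\Omega$ and of polynomial growth in vertical strips.

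At each place $v\in S$, one chooses a finite partition of unity $(\theta_\xi)$ on the compact manifold $\bar X(F_v)$ subordinate to a cover by \'etale charts adapted to the full divisor $\Delta_{F_v}$ as in~\ref{subsubsec.local-charts}. In a chart around $\xi$, with $A_\xi\subset\mathscr A_v$ denoting the components of $\Delta_{F_v}$ through $\xi$, the integral $\mathscr I_v^{\mathrm{loc}}(\theta_\xi\Phi_v;s)$ takes the form of a multivariable Mellin transform, and Proposition~\ref{prop.mellin-n} gives
\[
  \mathscr I_v^{\mathrm{loc}}(\theta_\xi\Phi_v;s)
   = \phi_\xi(s)\,\prod_{\alpha\in A_\xi}\zeta_{F_\alpha,v}(s_\alpha),
\]
with $\phi_\xi$ holomorphic on~$\Omega$ and polynomially bounded in vertical strips. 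Since $\xi\in\bar X(F_v)$, the set $A_\xi\cap(\mathscr A_v\setminus\mathscr B)$ is a face of $\Clan_{F_v}(D)$; regrouping the $\xi$'s according to a chosen extension of this face to a maximal face $A\in\Clanmax_{F_v}(D)$, and absorbing the superfluous $\zeta$-factors into $\phi_\xi$ via multiplication by the entire functions $\zeta_{F_\alpha,v}^{-1}$, yields the local decomposition
\[
   \mathscr I_v^{\mathrm{loc}}(\Phi_v;s)
    = \sum_{A\in\Clanmax_{F_v}(D)} \phi^{(v)}_A(s)\,\prod_{\alpha\in A}\zeta_{F_\alpha,v}(s_\alpha).
\]
Multiplying the tail factorization by these local decompositions gives the claimed formula.

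The main technical obstacle is the local analysis at points $\xi\in Z(F_v)$ for $v\in S$, where the cut-off $\theta_\xi\Phi_v$ is merely bounded on $U_\xi\cap X(F_v)$ rather than compactly supported; convergence of the integral near $\xi$ then relies essentially on the exponents $s_\beta$ with $\beta\in A_\xi\cap\mathscr B$ together with the condition $\Re(s_\beta)>0$. The meromorphic extension to $\Re(s_\beta)>-1/2$ and the polynomial growth estimates in vertical strips follow from Proposition~\ref{prop.mellin-n}, applied coordinate by coordinate. The remaining bookkeeping -- gathering local contributions according to maximal faces of $\Clan_{F_v}(D)$ and multiplying through the tail -- is routine given that each $\zeta_{F_\alpha,v}^{-1}$ is entire.
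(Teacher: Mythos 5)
The paper asserts this proposition without proof (after the sentence ``In fact, it is possible to establish a more general theorem.''), so there is no reference argument to compare against; you are essentially on your own, and the natural approach is indeed the one you take: factor the Euler product, control the tail via Denef's formula and Theorem~\ref{prop.convergence}, and decompose each bad local factor via a partition of unity and Proposition~\ref{prop.mellin-n}. That part is sound.

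However, there is a genuine gap at the step in which you pass from the local decomposition indexed by points $\xi$ to the one indexed by maximal faces $A$. When $\xi\in Z(F_v)$, your chart includes some $\beta\in A_\xi\cap\mathscr B$, and (granting for the moment that Proposition~\ref{prop.mellin-n} applies) the local Mellin transform produces a factor $\zeta_{F_\beta,v}(s_\beta)$ for each such $\beta$. This factor has a pole at $s_\beta=0$, which lies inside $\Omega$. You then propose to absorb these factors into $\phi_\xi$ and re-normalize by the entire functions $\zeta_{F_\alpha,v}^{-1}$ for $\alpha\in A\setminus A_\xi$; but the absorption step forces
\[
\phi_A \;=\; \phi_\xi\cdot\prod_{\beta\in A_\xi\cap\mathscr B}\zeta_{F_\beta,v}(s_\beta)\cdot\prod_{\alpha\in A\setminus A_\xi}\zeta_{F_\alpha,v}(s_\alpha)^{-1},
\]
and the first product has poles at $s_\beta=0\in\Omega$, contradicting the claimed holomorphy of $\phi_A$. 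The discrepancy is forced by the fact that the index set $A$ is taken in $\Clanmax_{F_v}(D)$, so the $\mathscr B$-components never appear in the declared $\zeta$-product; the factors must therefore land in $\phi_A$, which cannot then be holomorphic on $\Omega$.

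There is a second, prior issue that you flag but do not resolve: for $\xi\in Z(F_v)$, the function $\theta_\xi\Phi_v$ lives on $U_\xi\cap X(F_v)$ and is only known to be bounded there; it need not extend smoothly across $Z(F_v)$. Proposition~\ref{prop.mellin-n} requires membership in $\mathscr F$ (smooth, compactly supported $u$, and $v_j$ smooth up to the boundary), so it cannot be invoked as stated to obtain the continuation past $\Re(s_\beta)=0$. Convergence for $\Re(s_\beta)>0$ is indeed immediate from boundedness, but the meromorphic continuation to $\Re(s_\beta)>-1/2$ requires smoothness of the integrand in the $x_\beta$-coordinates, which you have not established. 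Either one should strengthen the hypothesis to ``$\Phi_v$ extends to a smooth function on $\bar X(F_v)$'' (in which case the formula should then also carry $\zeta_{F_\beta,v}$-factors for $\beta\in\mathscr B$), or one should weaken the conclusion (allowing $\phi_A$ to have prescribed poles along $\{s_\beta=0\}$, or restricting $\Omega$ to $\Re(s_\beta)>0$). As written, the invocation of Proposition~\ref{prop.mellin-n} ``coordinate by coordinate'' does not close the gap.
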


\subsection{Volume asymptotics over the adeles}
\label{subsec.asymptotics.adelic}

In this Subsection we derive asymptotic estimates 
for volumes of height balls in adelic spaces,
similar to those we established above for height balls over local fields.

Let $F$ be a number field, 
$X$ a smooth projective variety over~$F$, purely of dimension~$n$.
Let $D$ be an effective divisor in~$X$ and  $\mathscr A$ its set of irreducible components; for $\alpha\in\mathscr A$, let $D_\alpha$ be the corresponding
component and $d_\alpha$ its multiplicity in~$D$. We have $D=\sum d_\alpha D_\alpha$.
For $\alpha\in\mathscr A$, let $\mathsec f_\alpha$ be the canonical
section of~$\mathscr O_X(D_\alpha)$; let $\mathsec f_D$
denote the canonical section of~$\mathscr O_X(D)$. We have
$\mathsec f_D=\prod \mathsec f_\alpha^{d_\alpha}$.

Let us endow these line bundles with adelic metrics, in such
a way that the isomorphism 
$\mathscr O_X(D)\simeq \bigotimes \mathscr O_X(D_\alpha)^{d_\alpha}$
is an isometry.

Let $U=X\setminus D$ ; let us endow $\omega_X$ and $\omega_X(D)$ 
with adelic metrics in such a way that the isomorphism
$\omega_X(D)\simeq \omega_X\otimes \mathscr O_X(D)$
is an isometry.
By the constructions  of Section~\ref{sec.general},
we obtain natural measures $\tau_X$, $\tau_{U}$
and $\tau_{(X,D)}$ on~$U(\AD_F)$ or $X(\AD_F)$, related
by the equalities
\[ \mathrm d\tau_{(X,D)}(x) = H_D(x) \mathrm d\tau_U(x), \]
where, for $x\in U(\AD_F)$, $H_D(x)=\prod_x \norm{\mathsec f_D(x)}_v^{-1}$.

Let $L$ be an effective divisor in~$X$ 
whose support is equal to the support of~$D$;
for $\alpha\in \mathscr A$, let $\lambda_\alpha$ be the multiplicity
of~$D_\alpha$ in~$L$, so that $L=\sum\lambda_\alpha D_\alpha$.
We endow the line bundle~$\mathscr O_X(L)$ 
with the natural adelic metric deduced from the
metrics of the line bundles~$\mathscr O_X(D_\alpha)$. 

The canonical section of~$\mathscr O_X(L)$ vanishes on~$L$, hence on~$D$.
Let $H_L$ denote the corresponding height function 
(denoted $H_{\mathscr O_X(L),\mathsec f_L}$ 
in Section~\ref{subsec.heights.adelic})
on the adelic space~$U(\AD_F)$; recall (Equation~\eqref{eq.height.f})
that it is defined by the formula
\[ H_L(\mathbf x) = \prod_{v\in\Val(F)} \norm{\mathsec f_L}(x_v)^{-1},
\qquad \text{for $\mathbf x = (x_v)_v$.} \]
By Lemma~\ref{lemm.northcott.adelic}, 
$H_L$ is bounded from below on~$U(\AD_F)$ and, for any real number~$B$,
the set of all $\mathbf x\in U(\AD_F)$ such that $H_L(\mathbf x)\leq B$
is compact in $(X\setminus L)(\AD_F)$.
Let $V(B)$ denote its volume with respect to the measure~$\tau_{(X,D)}$:
\begin{equation}
V(B) = \int_{\substack{U(\AD_F) \\ \{H_L(\mathbf x)\leq B\}}}
   \,\mathrm d\tau_{(X,D)}(\mathbf x);  
\end{equation}
it is a positive real number 
(for $B$ large enough)
and we want to understand its asymptotic behavior
when $B\rightarrow \infty$.
We are also interested in the asymptotic behavior of the probability
measures
\[ \frac1{V(B)}   \mathbf 1_{\{ H_L(\mathbf x)\leq B\}}
  \mathrm d\tau_{(X,D)}(\mathbf x).
\]
As in the case of local fields, we introduce a geometric Igusa
zeta function, namely
\begin{equation}
 Z(s) = \int_{U(\AD_F)} H_L(x)^{-s}\,\mathrm d\tau_{(X,D)}(x), \end{equation}
for any complex number~$s$ such that the integral converges
absolutely.
For any such~$s$, we thus have
\[ Z(s) = \int_{U(\AD_F)} \prod_v \norm{\mathsec f_\alpha(x_v)}^{s\lambda_\alpha-d\alpha} \, \mathrm d\tau_X(\mathbf x)=
 \mathscr I(\mathbf 1; (s\lambda_\alpha-d_\alpha+1))   \]
with the notation of Section~\ref{sec.adelic-igusa}.

Let $\sigma=\max_{\alpha\in\mathscr A}(d_\alpha/\lambda_\alpha)$
and let $\mathscr A_{L,D}$ be the set of $\alpha\in\mathscr A$
such that $d_\alpha=\lambda_\alpha\sigma$.

Then, the integral defining~$Z(s)$ converges for any 
complex number~$s$ such that $\Re(s)>\sigma$
(Proposition~\ref{prop.igusa-adelic-merom}).
Moreover, there exists a positive real number~$\delta$
such that function $s\mapsto Z(s)$ admits a meromorphic
continuation to the half-plane given by~$\Re(s)>\sigma-\delta$.
Namely, by Proposition~\ref{prop.igusa-adelic-merom} again,
there exists a holomorphic function~$\phi$ defined for $\Re(s)>\sigma-\delta$
such that
\[ Z(s) = \phi(s) \prod_{\alpha\in\mathscr A_{L,D}}
    \zeta_{F_\alpha}(s\lambda_\alpha-d_\alpha+1) \]
and 
\[ \phi(1) = \prod_{\alpha\in\mathscr A_{L,D}}
        \zeta_{F_\alpha}^*(1)
    \int_{X(\AD_F)} \prod_{\alpha\not\in\mathscr A_{L,D}} 
           H_{D_\alpha}(x)^{d_\alpha-\sigma\lambda_\alpha}
        \,\mathrm d\tau_X(x). \]

In particular, the function~$Z$ has a pole at $s=1$
of order~$\Card(\mathscr A_{L,D})$ and satisfies:
\[  \lim_{s\ra 1} (s-1)^{\Card(\mathscr A_{L,D})} Z(z)
     = \phi(1) \prod_{\alpha\in\mathscr A} \frac {\zeta_{F_\alpha}^*(1)}{\lambda_\alpha}
= \prod_{\alpha\in\mathscr A_{L,D}} \lambda_\alpha^{-1}
    \int_{X(\AD_F)} \prod_{\alpha\not\in\mathscr A_{L,D}} 
           H_{D_\alpha}(x)^{d_\alpha-\sigma\lambda_\alpha}
        \,\mathrm d\tau_X(x). \]
Let $E$ denote the $\Q$-divisor $\sigma L-D$.
We have
\[ E=\sigma L-D= \sum_{\alpha\in\mathscr A} (\sigma\lambda_\alpha-d_\alpha)D_\alpha = \sum_{\alpha\not\in\mathscr A_{L,D}} (\sigma\lambda_\alpha-d_\alpha)D_\alpha .\]
Consequently,
\begin{equation}\label{eqn.Z*(1)-adelic}
  \lim_{s\ra \sigma} (s-\sigma)^{\Card(\mathscr A_{L,D})} Z(\sigma)
= \prod_{\alpha\in\mathscr A_{L,D}} \lambda_\alpha^{-1}
 \int_{X(\AD_F)} H_E(x)^{-1}\,\mathrm d\tau_X(x)
       = \int_{X(\AD_F)} \,\mathrm d\tau_{(X,E)}(x).
\end{equation}
We summarize the results obtained in the following Proposition:
\begin{prop}\label{prop.zeta.global}
Let $\sigma$ and $\mathscr A_{L,D}$ be defined as above.
Then the integral defining $Z(s)$ converges for $\Re(s)>\sigma$
and defines a holomorphic function in that domain.
Moreover, there is a positive real number~$\delta$
such that $Z$ has a continuation to the half-plane $\Re(s)>\sigma-\delta$
as a meromorphic function with moderate growth in vertical strips, 
whose only pole is at $s=\sigma$, 
with order $b=\Card(\mathscr A_{L,D})$ 
and leading coefficient given by Equation~\eqref{eqn.Z*(1)-adelic}.
\end{prop}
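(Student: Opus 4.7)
The plan is to reduce the claim to Proposition~\ref{prop.igusa-adelic-merom}, following the reasoning already sketched in the preceding paragraphs. Specifically, taking $\bar X$ to be our $X$, setting the ``removed'' subset $\mathscr{B} = \emptyset$ (so that $X$ is projective and the constant function $\mathbf 1$ is smooth with compact support on the compact space $X(\AD_F)$), and making the substitution $s_\alpha = s\lambda_\alpha - d_\alpha + 1$, one identifies
\[ Z(s) = \mathscr{I}\bigl(\mathbf{1}; (s\lambda_\alpha - d_\alpha + 1)_{\alpha \in \mathscr{A}}\bigr). \]

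Next I would invoke Proposition~\ref{prop.igusa-adelic-merom}. The convergence condition $\Re(s_\alpha) > 1$ for all $\alpha$ translates precisely into $\Re(s) > d_\alpha/\lambda_\alpha$ for all $\alpha$, that is, into $\Re(s) > \sigma$. The meromorphic continuation to the region $\Re(s_\alpha) > 1/2$ then yields a continuation of $Z$ to a half-plane $\Re(s) > \sigma - \delta$ with $\delta = \min_{\alpha \in \mathscr{A}_{L,D}} 1/(2\lambda_\alpha)$, together with a factorization
\[ Z(s) = \phi(s) \prod_{\alpha \in \mathscr{A}} \zeta_{F_\alpha}(s\lambda_\alpha - d_\alpha + 1), \]
with $\phi$ holomorphic on that half-plane; moderate growth in vertical strips for $\phi$ follows from the more general version of the statement alluded to later in Section~\ref{sec.adelic-igusa}. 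For $\alpha \notin \mathscr{A}_{L,D}$ the translated zeta factor is holomorphic and nonvanishing at $s = \sigma$, so it may be absorbed into $\phi$; for $\alpha \in \mathscr{A}_{L,D}$ it has a simple pole at $s = \sigma$ with residue $\zeta_{F_\alpha}^*(1)/\lambda_\alpha$. Hence $Z$ has a pole of order exactly $b = \Card(\mathscr{A}_{L,D})$ at $s = \sigma$ and no other singularity in the strip.

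The principal difficulty is then to identify the leading coefficient as in Equation~\eqref{eqn.Z*(1)-adelic}. The closed-form value of $\phi$ provided by Proposition~\ref{prop.igusa-adelic-merom} applies only when all $s_\alpha = 1$ simultaneously, which in the rescaled variable requires $d_\alpha/\lambda_\alpha = \sigma$ for every $\alpha$, and is therefore directly applicable only in the degenerate case $\mathscr{A}_{L,D} = \mathscr{A}$. To handle the general case I would rerun the proof of Proposition~\ref{prop.igusa-adelic-merom}, keeping the non-singular factors $\norm{\mathsec f_\alpha}_v^{s\lambda_\alpha - d_\alpha}$ for $\alpha \notin \mathscr{A}_{L,D}$ inside the integrand rather than isolating them as zeta factors. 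The Denef-type formula (Proposition~\ref{prop.denef}) and the L-function identity~\eqref{eq.virtual-eq} relating $\EP(X)$ and $\EP(U)$ then yield an absolutely convergent Euler product whose value at $s = \sigma$ is $\prod_{\alpha \in \mathscr{A}_{L,D}} \zeta_{F_\alpha}^*(1)^{-1}$ times the adelic integral $\int_{X(\AD_F)} \prod_{\alpha \notin \mathscr{A}_{L,D}} H_{D_\alpha}^{d_\alpha - \sigma\lambda_\alpha}\, d\tau_X$. Since $E = \sigma L - D = \sum_{\alpha \notin \mathscr{A}_{L,D}} (\sigma\lambda_\alpha - d_\alpha) D_\alpha$, this last integral is precisely $\int_{X(\AD_F)} H_E^{-1}\, d\tau_X$, and multiplying by the residues $\zeta_{F_\alpha}^*(1)/\lambda_\alpha$ of the singular zeta factors recovers the leading coefficient stated in Equation~\eqref{eqn.Z*(1)-adelic}.
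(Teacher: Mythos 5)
Your proposal reproduces the paper's derivation (the Proposition is proved in the paragraphs preceding it by invoking Proposition~\ref{prop.igusa-adelic-merom}), and you are right that there is a genuine wrinkle in the last step: the closed-form value of~$\phi$ in Proposition~\ref{prop.igusa-adelic-merom} is stated on the slice $s_\alpha=1$ for $\alpha\notin\mathscr B$, whereas the specialization $s_\alpha = \sigma\lambda_\alpha-d_\alpha+1$ at $s=\sigma$ puts $s_\alpha>1$ for $\alpha\notin\mathscr A_{L,D}$. In fact the paper is implicitly invoking Proposition~\ref{prop.igusa-adelic-merom} with $\mathscr B = \mathscr A\setminus\mathscr A_{L,D}$ (this is the only choice that makes the product $\prod_{\alpha\in\mathscr A_{L,D}}\zeta_{F_\alpha}$ and the integrand $\prod_{\beta\in\mathscr B}H_{D_\beta}^{s_\beta-1}$ both appear), but then the test function $\Phi=\mathbf 1$ does not have compact support on the open variety $\bar X\setminus\bigcup_{\beta\in\mathscr B}D_\beta$ as the hypotheses literally require; what saves the argument is precisely that for $\Re(s_\beta)>1$ the factor $\prod_v\norm{\mathsec f_{D_\beta}}_v^{s_\beta-1}$ vanishes along $D_\beta$, which makes the Euler-product estimates in that proof go through verbatim. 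Your ``rerun the proof keeping the non-singular factors in the integrand'' is exactly this observation, so the two routes coincide and your version is if anything more explicit. (As a side note, the value of $\phi$ in the text reads $\prod\zeta_{F_\alpha}^*(1)$ rather than $\prod\zeta_{F_\alpha}^*(1)^{-1}$; comparison with the subsequent displayed computation of $\lim_{s\to\sigma}(s-\sigma)^bZ(s)$ and with the statement of Proposition~\ref{prop.igusa-adelic-merom} shows this is a typographical slip that your derivation correctly does not reproduce.)
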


Similarly to what we did in the local case,
using the Tauberian theorem~\ref{theo.tauber} and the abstract
equidistribution theorem (Proposition~\ref{prop.abstract.equi}),
we obtain the following result.

\begin{theo}\label{theo.volume.global}
There exists a monic polynomial~$P$ of degree~$b$ 
and a positive real number~$\delta$ such that, when
$B\ra\infty$, 
\[ V(B) = 
\frac{\prod_{\alpha\in\mathscr A_{L,D}}\lambda_\alpha^{-1}}
     {\sigma (b-1)!} 
B^\sigma P(\log B) 
\int_{X(\AD_F)} H_E(x)^{-1}\,\mathrm d\tau_X(x)
+ \mathrm O(B^{\sigma-\delta}). \]
Moreover, we have the tight convergence of probability measures 
\[ \frac1{V(B)} \mathbf 1_{\{H_L(x)\leq B\}} \mathrm d\tau_{(X,D)}(x)
 \ra \frac1{\int_{X(\AD_F)}H_E^{-1} \tau_X} H_E(x)^{-1} \,\mathrm d\tau_{X}(x). \]
\end{theo}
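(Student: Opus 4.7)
The plan is to derive the volume asymptotic by Mellin inversion from the analytic properties of~$Z(s)$ established in Proposition~\ref{prop.zeta.global}, and then to deduce the equidistribution statement from the abstract equidistribution theorem (Proposition~\ref{prop.abstract.equi}) by repeating the argument with a test function inserted.

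First, I would observe that Fubini's theorem, applied to the positive function $(x,t)\mapsto \mathbf 1_{H_L(x)\leq t} t^{-s-1}$ for $\Re(s)>\sigma$, gives the identity
\[ Z(s) = s \int_1^\infty V(B) B^{-s-1}\,\mathrm dB, \]
which identifies~$Z$, up to a factor~$s$, with the Mellin transform of~$V$. By Proposition~\ref{prop.zeta.global}, the function $s\mapsto Z(s)$ is holomorphic on $\{\Re(s)>\sigma\}$ and admits a meromorphic continuation to $\{\Re(s)>\sigma-\delta\}$ with a single pole at $s=\sigma$, of order~$b=\Card(\mathscr A_{L,D})$, leading coefficient computed in Equation~\eqref{eqn.Z*(1)-adelic}, and moderate growth in vertical strips. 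The Tauberian theorem~\ref{theo.tauber} then yields the asserted asymptotic expansion
\[ V(B) = B^\sigma P(\log B) + \mathrm O(B^{\sigma-\delta}), \]
the leading coefficient of~$P$ being $\frac{1}{\sigma(b-1)!}$ times the residue computed in Equation~\eqref{eqn.Z*(1)-adelic}, namely
\[ \frac{\prod_{\alpha\in\mathscr A_{L,D}}\lambda_\alpha^{-1}}{\sigma(b-1)!}\int_{X(\AD_F)} H_E(x)^{-1}\,\mathrm d\tau_X(x). \]

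For the equidistribution statement, I would apply Proposition~\ref{prop.abstract.equi} on the compact topological space $X(\AD_F)$ with $U=U(\AD_F)$, $H=H_L$, $\nu=\tau_{(X,D)}$, and $\alpha(B)$ the leading term just computed. The dense subspace~$\mathrm S$ of $\mathrm C(X(\AD_F))$ to be used consists of finite sums of products $\theta=\prod_v\theta_v$, where each $\theta_v$ is a smooth bounded function on~$X(F_v)$ and $\theta_v=1$ for almost all~$v$; by Stone--Weierstrass, such functions are dense in $\mathrm C(X(\AD_F))$. For such a positive test function~$\theta$, one defines
\[ Z_\theta(s) = \int_{U(\AD_F)} \theta(x)\,H_L(x)^{-s}\,\mathrm d\tau_{(X,D)}(x), \]
which, by the more general form of Proposition~\ref{prop.igusa-adelic-merom} that was announced after its proof (allowing smooth bounded local factors at a finite set of places), satisfies the analogues of the conclusions of Proposition~\ref{prop.zeta.global}: the same order of pole at~$s=\sigma$ with leading coefficient
\[ \prod_{\alpha\in\mathscr A_{L,D}}\lambda_\alpha^{-1} \int_{X(\AD_F)} \theta(x)\,H_E(x)^{-1}\,\mathrm d\tau_X(x) \]
obtained by substitution in the computation leading to Equation~\eqref{eqn.Z*(1)-adelic}, and the same moderate growth in vertical strips.

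Applying the Tauberian theorem~\ref{theo.tauber} to~$Z_\theta$ yields
\[ \nu\bigl(\{x\in U(\AD_F)\,;\,H_L(x)\leq\theta(x)B\}\bigr) \sim \alpha(B) \int_{X(\AD_F)} \theta(x)\,H_E(x)^{-1}\,\mathrm d\tau_X(x)\Big/\int_{X(\AD_F)} H_E(x)^{-1}\,\mathrm d\tau_X(x), \]
where I have rescaled so that the limit measure~$\mu$ is the probability measure proportional to $H_E^{-1}\tau_X$. Proposition~\ref{prop.abstract.equi} then gives the vague, hence tight (since $X(\AD_F)$ is compact), convergence of the measures $V(B)^{-1}\mathbf 1_{\{H_L\leq B\}}\,\mathrm d\tau_{(X,D)}$ to this probability measure. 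The main technical point is verifying the extended form of the meromorphic continuation for~$Z_\theta$ with the inserted test function: this amounts to checking that introducing bounded smooth factors $\theta_v$ at finitely many places~$v$ only affects the local integrals $\mathscr I_v$ at those places without disturbing the Euler product estimate $\mathscr I_v(\Phi_v;(s_\alpha))\prod_{\alpha}\zeta_{F_\alpha,v}(s_\alpha)^{-1}=1+\mathrm O(q_v^{-1-\eps})$ at the remaining places, together with tracing through the residue computation as in the proof of Proposition~\ref{prop.igusa-adelic-merom}.
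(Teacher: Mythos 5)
Your treatment of the volume asymptotic is correct and follows the paper's route: Proposition~\ref{prop.zeta.global} feeds the Tauberian theorem~\ref{theo.tauber}, and the leading coefficient is read off Equation~\eqref{eqn.Z*(1)-adelic}. (The Mellin identity $Z(s)=s\int V(B)B^{-s-1}\,\mathrm dB$ is not actually needed, since Theorem~\ref{theo.tauber} is already phrased directly in terms of $Z(s)=\int f(x)^{-s}\,\mathrm d\mu(x)$ and $V(B)=\mu(\{f\leq B\})$, but it does no harm.)

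For the equidistribution there is a mismatch in the middle step. Your function $Z_\theta(s)=\int_{U(\AD_F)}\theta(x)H_L(x)^{-s}\,\mathrm d\tau_{(X,D)}(x)$ is the Mellin transform of $B\mapsto W_\theta(B):=\int_{\{H_L\leq B\}}\theta\,\mathrm d\tau_{(X,D)}$, the integral of $\theta$ over the height ball, \emph{not} of $B\mapsto\nu(\{H_L\leq\theta B\})$. Applying Theorem~\ref{theo.tauber} to $Z_\theta$ therefore yields
\[ W_\theta(B) \sim \alpha(B)\int_{X(\AD_F)}\theta H_E^{-1}\,\mathrm d\tau_X\Big/\int_{X(\AD_F)}H_E^{-1}\,\mathrm d\tau_X, \]
which is not the hypothesis of Proposition~\ref{prop.abstract.equi} but is, after dividing by $V(B)$ and using the density of $\mathrm S$ in $\mathrm C(X(\AD_F))$, precisely the vague (hence, on the compact space $X(\AD_F)$, tight) convergence asserted in the theorem; Proposition~\ref{prop.abstract.equi} is then not needed. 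If you did want to go through Proposition~\ref{prop.abstract.equi}, the relevant Mellin transform is $\int\theta(x)^s H_L(x)^{-s}\,\mathrm d\tau_{(X,D)}(x)$, whose residue at $s=\sigma$ is proportional to $\int\theta^\sigma H_E^{-1}\,\mathrm d\tau_X$; since this is not linear in $\theta$ when $\sigma\neq1$, one should first replace $H_L$ by the normalized height $H_L^\sigma$ (whose abscissa of convergence is $1$) before the abstract theorem applies. Either fix is short, but as written your displayed asymptotic for $\nu(\{H_L\leq\theta(x)B\})$ with a linear dependence on $\theta$ is incorrect.
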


\begin{rema}
Let $S$ be a finite set of places of~$F$.
At least two variants of the preceding results
may be useful in $S$-integral contexts.

Let $\AD_F^S$ be the ring of adeles ``outside~$S$''.
For the first variant, we consider points of $U(\AD_F^S)$ 
of bounded height, and their volume with respect to the Tamagawa
measure $\tau_{(X,D)}^S$ in which the local factors in~$S$.
In that context, all infinite products of the set~$\Val(F)$
of places of~$F$ are replaced by the products over the set
$\Val(F)\setminus S$ of places which do not belong to~$S$.  
The modifications to be made to the statement and proof
of Theorem~\ref{theo.volume.global} are obvious
and lead to an asymptotic expansion for the
volumes of height balls in~$U(\AD_F^S)$.

A second variant is also possible which restricts
to the subset~$\Omega$ of~$U(\AD_F)$ consisting of
points $(x_v)_{v\in\Val(F)}$ which are ``integral'' 
at each place~$v\not\in S$. We do not need to be
specific about that condition; for all that matters
in our analysis, we understand that this subset
is relatively compact in~$U(\AD_F)$ and has non-empty
interior. A scheme-theoretic definition
would ask that we are given a projective and flat model of~$X$
over~$\mathfrak o_F$; then a point $(x_v)\in U(\AD_F)$
belongs to~$\Omega$ if and only if, for any finite 
place $v\not\in S$, the reduction
mod~$v$ of~$x_v$ does not belong to $D(\F_v)$.
An adelic definition would consider $\Omega$ to be
defined by a condition of the
form $\prod_{v\not\in S} \norm{\mathsec f_L}_v(x)\leq B_0$,
for some real number~$B_0$.

In that case, the analytic study of the adelic zeta
function involved is straightforward from what
has been done in Section~\ref{subsec.geometric-volume}.
Namely, it decomposes as an infinite  product over
all places~$v\in\Val(F)$ of $v$-adic zeta functions.
The subproduct corresponding
to places~$v\not\in S$ extends holomorphically
to the whole complex plane, while each factor attached
to a place~$v\in S$ is the source of zeroes and poles
described by the $v$-adic analytic Clemens complex
as in Prop.~\ref{prop.igusa-volume}.

Although the procedure should be quite clear to the reader, 
in any specific example,
a general statement would certainly be
too obfuscating to be of any help.
We would like the reader to remark than when all abscissae of convergence
at places in~$S$ are equal to a common real number~$\sigma$, 
the order of the pole at~$\sigma$ will be the
sum  of the orders~$b_v$,
and the leading coefficient the product of those
computed in Prop.~\ref{prop.igusa-volume}.
Moreover, if $S$ contains archimedean places,
the order of the pole at $\sigma$ will be
\emph{strictly} greater than the order of the other
possible poles on the line $\Re(s)=\sigma$. 
In that case, the Tauberian theorem~\ref{theo.tauber.S}
gives a simple asymptotic formula for the volume
of points of bounded height. 
With the notation of that Theorem, the $q_j$ are powers
of prime numbers and the non-Liouville property
of the quotients $\log q_j/\log q_{j'}$ follows
from Baker's theorem on linear forms in logarithms,
\cite[Theorem~3.1]{baker1975}.

We expect that these asymptotic expansions for volumes
can serve as a guide to understand the asymptotic number
of $S$-integral solutions of polynomial equations,
\emph{e.g.}, to a $S$-integral generalization of the circle method.
\end{rema}

\clearpage
\newcommand{\legendre}{\genfrac{(}{)}{}1}
\def\opp{{\mathrm {opp}}}
\section{Examples}
\label{sec.examples}

\subsection{Clemens complexes of toric varieties}

Let $T$ be an algebraic torus over a field~$F$.
Let $M$ and~$N$ denote the groups of characters and of cocharacters
of the torus~$T_{\bar F}$, endowed with the action of
the Galois group~$\Gamma=\Gal(\bar F/F)$
and the natural duality pairing $\langle\cdot,\cdot\rangle$.

Let $X$ be a smooth proper $F$-scheme 
which is an equivariant compactification of~$T$.
By the theory of toric varieties, $X$ corresponds
to a fan~$\Sigma$ in the space~$N_\R$ which is invariant
under the action of~$\Gamma$. By definition, $\Sigma$
is a set of convex polyhedral rational cones  in~$N_\R$
satisfying the following properties:
\begin{itemize}
\item for any cones $\sigma,\sigma'$ in~$\Sigma$, their intersection
is a face of both~$\sigma$ and~$\sigma'$;
\item all faces of each cone in~$\Sigma$ belong to~$\Sigma$;
\item for any $\gamma\in\Gamma_F$ and any cone~$\sigma\in\Sigma$,
$\gamma(\sigma)\in\Sigma$.
\end{itemize}

Assume for the moment that~$T$
is split, meaning that $\Gamma$ acts trivially on~$M$.
To each cone~$\sigma$ of~$\Sigma$ corresponds 
a $T$-stable affine open subset $X_\sigma=\Spec F[\sigma^\vee \cap M]$
in~$X$,
where $\sigma^\vee$ is the cone in~$M_\R$ dual to~$\sigma$.
The variety~$X$ is glued from these affine charts~$X_\sigma$,
along the natural open immersions $X_\sigma\ra X_\tau$, for any two cones
$\sigma$ and~$\tau$ in~$\Sigma$ such that $\tau\supset \sigma$.
The zero-dimensional cone~$\{0\}$ corresponds to~$T$;
in particular, each $X_\sigma$ contains~$T$ and carries  a
compatible action of~$T$.

Let $t$ denote the dimension of~$T$ and
let $\sigma$ be a cone of maximal dimension of~$\Sigma$.
By the theory of toric varieties, the smoothness assumption
on~$X$ implies that there exists a basis
of~$N$ which generates~$\sigma$ as a cone. 
Consequently, the torus embedding $(T,X_\sigma)$
is isomorphic to $(\gm^t,\Aff^t)$.
The irreducible components of $X_\sigma\subset T$ then
correspond to the hyperplane coordinates in~$\Aff^t$. 
In particular, $X_\sigma\setminus T$ is a divisor
with strict normal crossings, 
all of whose components of $X_\sigma\setminus T$
meet at the origin.
This shows that a set of components of~$X\setminus T$
has a non-trivial intersection whenever the corresponding
rays belong to a common cone in~$\Sigma$.
The converse holds since each point of~$X$ admits a $T$-invariant
affine neighborhood, hence of the form~$X_\sigma$.

In particular, what precedes holds over~$\bar F$
and implies the  following description
of the geometric Clemens complex $\Cl(X,D)$ of~$(X,D)$.
There is a bijection between the set of orbits 
of~$T_{\bar F}$ in~$X_{\bar F}$ and the set~$\Sigma$.
The irreducible components of~$D_{\bar F}$ are in bijection
with the set of one-dimensional cones (\emph{rays}) of~$\Sigma$
and a set of components has  a non-trivial intersection
if and only if the corresponding rays
belong to a common cone in~$\Sigma$.
The cone~$\{0\}$ belongs to~$\Sigma$ and corresponds
to the open orbit~$T$.
Consequently, the Clemens complex $\Cl(X,D)$ is equal
to the set~$\Sigma\setminus\{\{0\}\}$, partially ordered  by inclusion,
with the obvious action of the Galois group~$\Gamma_F$.

If $T$ is split, then $\Gamma_F$ acts trivially
on~$\Cl(X,D)$, hence $\Cl_F(X,D)=\Cl(X,D)$.
We have also observed in that case that the various strata of~$X$
even had $F$-rational points.
In other words, the $F$-analytic Clemens complex~$\Clan_F(X,D)$
is also equal to $\Cl(X,D)$.

Let us now treat the general case by proving
that $\Clan_F(X,D)=\Cl_F(X,D)$.   By definition,
the $F$-rational Clemens complex
$\Cl_F(X,D)$ is the subcomplex of~$\Cl(X,D)$
consisting of $\Gamma_F$-invariant faces;
in other words, it corresponds to $\Gamma_F$-invariant
cones of positive dimension. We need to prove
that for any such cone~$\sigma$, the closure of
the corresponding orbit~$\mathscr O_\sigma$ in~$X$, 
which is defined over~$F$, has an $F$-rational point.
For each ray~$r$ of~$\sigma$, let $n_r$ be the generator of $N\cap r$;
the group $\Gamma_F$ acts on the set of these~$n_r$,
and their sum~$n$ is fixed by $\Gamma_F$.
It corresponds to a cocharacter 
$c_n\colon \gm\ra T$ whose limit at~$0$ is an $F$-rational point
and this point belongs to~$\overline{\mathscr O_\sigma}$.
In fact, it even belongs to~$\mathscr O_\sigma$ because
the point~$n$ belongs to the relative interior of the cone~$\sigma$.

\begin{lemm}
Let $T_0$ be the maximal $F$-split torus in~$T$;
the group~$N_0$ of cocharacters of~$T_0$ is equal
to the subspace of~$N$ fixed by~$\Gamma_F$.
Let $X_0$ be the Zariski closure of~$T_0$ in~$X$.
It is a smooth equivariant compactification of~$T_0$
and its fan~$\Sigma_0$ in~$(N_0)_\R$
has as cones the intersections~$\sigma\cap (N_0)_\R$,
for all cones~$\sigma\in\Sigma^{\Gamma_F}$ which are invariant under~$\Gamma_F$.
\end{lemm}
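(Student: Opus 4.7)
The plan is to verify the three assertions in order, reducing each to combinatorial properties of the fan. For the first assertion, I invoke the anti-equivalence between $F$-tori and finitely generated free $\Z$-modules with continuous $\Gamma_F$-action, under which a closed immersion of tori $T_0\hra T$ corresponds to a saturated $\Gamma_F$-equivariant inclusion of cocharacter lattices $N_0\hra N$, and $F$-splitness of~$T_0$ is equivalent to the triviality of the $\Gamma_F$-action on~$N_0$. Hence the cocharacter lattice of the maximal $F$-split subtorus is the largest saturated $\Gamma_F$-fixed sublattice of~$N$, namely $N^{\Gamma_F}$; saturation is automatic since $n(\gamma v-v)=0$ in the torsion-free module~$N$ forces $\gamma v=v$.

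For the remaining assertions I argue chart by chart over~$\bar F$. As a closed subscheme of the proper variety~$X$, the scheme~$X_0$ is proper; it contains $T_0=T\cap X_0$ as a dense open $T_0$-stable subset. For each cone $\sigma\in\Sigma$, the Zariski closure of~$T_{0,\bar F}$ in $X_\sigma=\Spec\bar F[\sigma^\vee\cap M]$ is the spectrum of the image of $\bar F[\sigma^\vee\cap M]$ in $\bar F[M_0]$, and a standard lemma of convex geometry identifies this image with $\bar F[\sigma_0^\vee\cap M_0]$, where $\sigma_0:=\sigma\cap(N_0)_\R$. To restrict to $\Gamma_F$-invariant cones, I observe that every $v\in(N_0)_\R$ lies in the relative interior of a unique cone of the complete fan~$\Sigma$; combined with $\gamma v=v$, uniqueness forces that cone to be $\Gamma_F$-invariant. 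The same uniqueness argument shows that every face of $\sigma_0$ has the form $\tau\cap(N_0)_\R$ for some $\Gamma_F$-invariant face~$\tau$ of~$\sigma$, and the intersection of two cones $\sigma\cap(N_0)_\R,\sigma'\cap(N_0)_\R$ equals $(\sigma\cap\sigma')\cap(N_0)_\R$ with $\sigma\cap\sigma'\in\Sigma^{\Gamma_F}$. Combining these, $\Sigma_0:=\{\sigma\cap(N_0)_\R\,;\,\sigma\in\Sigma^{\Gamma_F}\}$ is a complete fan in~$(N_0)_\R$ whose affine toric charts glue to~$X_0$.

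The main obstacle will be proving smoothness of each cone of~$\Sigma_0$. Given $\sigma\in\Sigma^{\Gamma_F}$, smoothness of~$X_\sigma$ says that $\sigma$ is generated by a subset $\{e_1,\dots,e_k\}$ of a $\Z$-basis of~$N$; since each~$e_i$ is the primitive generator of its ray and $\Gamma_F$ permutes the rays of~$\sigma$, it permutes the set $\{e_1,\dots,e_k\}$. Letting $O_1,\dots,O_r$ be the $\Gamma_F$-orbits and setting $s_j=\sum_{e\in O_j}e\in N^{\Gamma_F}$, a direct computation shows that $\sigma_0$ is generated as a cone by $s_1,\dots,s_r$. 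The delicate step is to verify that these vectors extend to a $\Z$-basis of~$N^{\Gamma_F}$: writing $L=\Z\langle e_1,\dots,e_k\rangle$, the $s_j$ form a $\Z$-basis of $L^{\Gamma_F}=L\cap N^{\Gamma_F}$, and since $L$ is saturated in~$N$, it follows that $L^{\Gamma_F}$ is saturated in~$N^{\Gamma_F}$, hence itself a direct summand. This double-saturation argument, which must proceed without recourse to any $\Gamma_F$-equivariant splitting of $L\subseteq N$ (since none need exist, as $\Gamma_F$ may act in a complicated way on a complementary basis $e_{k+1},\dots,e_n$), is where I expect to invest the most care.
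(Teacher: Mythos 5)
Your proposal follows the paper's route — reduction to affine charts over~$\bar F$, reduction to $\Gamma_F$-invariant cones, and the orbit-sum description of $\sigma\cap(N_0)_\R$ — but fleshes out two points the paper handles tersely, and this is where the comparison is instructive.

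First, the paper's smoothness step reads "Since $X$ is smooth, there is a basis $(e_1,\dots,e_d)$ of~$N$\dots such that $\sigma$ is generated by~$S=\{e_1,\dots,e_s\}$. We can also assume that $e_{s+1},\dots,e_r$ belong to~$N_0$ and generate a complement to the (saturated) subgroup generated by~$N_0\cap\sigma$," with no justification. You instead observe that what is actually needed is weaker: only that $L^{\Gamma_F}$ (where $L=\Z\langle e_1,\dots,e_s\rangle$) be a direct summand of~$N_0$, which follows from the injection $N^{\Gamma_F}/L^{\Gamma_F}\hookrightarrow N/L$ into a torsion-free module. This is cleaner than the paper's formulation, which asks for a simultaneous extension of $e_1,\dots,e_s$ to a basis of $N$ by vectors of $N_0$ — a stronger-looking claim whose verification is not immediate (as you correctly note, there is no $\Gamma_F$-equivariant splitting of $L\subseteq N$ in general). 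Your saturation argument isolates exactly the needed fact.

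Second, your identification of the affine chart as $\Spec\bar F[\sigma_0^\vee\cap M_0]$ via "a standard lemma of convex geometry" deserves a word of caution. The image of $\bar F[\sigma^\vee\cap M]$ in $\bar F[M_0]$ is the monoid algebra of $\pi(\sigma^\vee\cap M)$ where $\pi\colon M\twoheadrightarrow M_0$; in general $\pi(\sigma^\vee\cap M)$ is a submonoid of $\sigma_0^\vee\cap M_0$ but need not a priori be saturated, so the closure of $T_0$ in $X_\sigma$ could in principle be a non-normal variety with normalization $X_{\sigma_0}$. The paper avoids the question by computing the closure explicitly in the coordinate realization $X_\sigma\cong\Aff^s\times\gm^{d-s}$. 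You can either mimic that explicit computation, or note that once you know the permutation structure of $\{e_1,\dots,e_s\}$ and the orbit-sum basis of $L^{\Gamma_F}$, the surjectivity $\pi(\sigma^\vee\cap M)=\sigma_0^\vee\cap M_0$ can be checked by hand in these coordinates — but the "standard lemma" should not be invoked in full generality without comment.

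Everything else (the reduction to $\Gamma_F$-invariant cones, the verification that $\Sigma_0$ is a fan with the right faces and intersections, the description of $N_0=N^{\Gamma_F}$ via the anti-equivalence) is correct and consistent with the paper; you give a little more detail than the paper does on the fan axioms, which it leaves implicit.
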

\begin{proof}
It suffices to prove the desired result in each affine chart of~$X$. 
Consider a cone~$\sigma\in\Sigma$ and
let $X_\sigma=\Spec \bar F[M\cap\sigma^\vee]$ be the corresponding
affine open subset of~$X_{\bar F}$. 
A vector in~$M_0\cap \sigma$, being fixed under the action of~$\Gamma_F$,
belongs to all of the cones~$\gamma(\sigma)$, for $\gamma\in\Gamma_F$,
hence belongs to their intersection~$\tau$ which, by the definition
of a fan, is a cone in~$\Sigma$, obviously $\Gamma_F$-invariant.
As a consequence, the closure of~$T_0$ in~$X_\sigma$ is contained
in the  toric open subvariety~$X_\tau$. 

We thus can assume that $\sigma$ is a $\Gamma_F$-invariant cone,
and we may moreover assume that it is of maximal dimension.
Since $X$ is smooth, there is a basis $(e_1,\dots,e_d)$ of~$N$
and an integer~$s\in\{1,\dots,d\}$ such that $\sigma$
is generated by~$S=\{e_1,\dots,e_s\}$. We can also
assume that $e_{s+1},\dots,e_r$ belong to~$M_0$
and generate  a complement to 
the (saturated) subgroup generated by~$M_0\cap\sigma$.
%
Over~$\bar F$, this identifies
$T$ with~$\gm^d$ and $X_\sigma$ with $\Aff^s\times\gm^{d-s}$. 
Moreover, $\Gamma_F$ acts by permutations on~$S$.  This implies that
$M_0\cap\sigma$ is generated by the vectors $\sum_{i\in O} e_i$, 
where $O$ runs over the set $S/\Gamma_F$ of $\Gamma_F$-orbits in~$S$.
Consequently, $T_0\cap X_\sigma$ is the set of elements
$(x_1,\dots,x_d)$ in~$\gm^d$ such that $x_i$ is constant on
each orbit~$O$, and $x_i=1$ for $i>s$. 
Its closure  is the set of such elements
$(x_1,\dots,x_d)\in\Aff^r\times\gm^{d-r}$ 
satisfying the same relations. This shows
that the closure of~$T_0$ in~$X_\sigma$ is 
a isomorphic to the toric embedding of $\gm^{\dim M_0}$
in $\Aff^{\Card (S/\Gamma_F)}\times\gm^{\dim M_0-\Card (S/\Gamma_F)}$.
\end{proof}

\begin{coro}
By associating to a $T_0$-orbit in~$X_0$ the corresponding
$T$-orbit in~$X$, we obtain a bijection between the Clemens
complex of~$(X_0,X_0\setminus T_0)$ and the $F$-analytic
Clemens complex of~$(X,X\setminus T)$.
\end{coro}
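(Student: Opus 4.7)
The strategy is to translate both Clemens complexes into posets of cones, apply the preceding lemma, and check that the resulting bijection of cones is realized by the orbit correspondence stated in the corollary.

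By the discussion preceding the corollary, the geometric Clemens complex $\Cl(X,X\setminus T)$ is identified with $\Sigma\setminus\{\{0\}\}$ ordered by inclusion; moreover it has been shown there that $\Clan_F(X,X\setminus T)=\Cl_F(X,X\setminus T)$, which corresponds precisely to the sub-poset of $\Gamma_F$-invariant cones of positive dimension in $\Sigma$. Since $T_0$ is $F$-split, applying the same discussion to the pair $(X_0,X_0\setminus T_0)$ gives an identification $\Cl(X_0,X_0\setminus T_0)=\Sigma_0\setminus\{\{0\}\}$, ordered by inclusion, with no rationality issue to worry about.

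Next I would verify that the map $\sigma\mapsto\sigma\cap(N_0)_\R$ induces an order-preserving bijection from the set of nonzero $\Gamma_F$-invariant cones in $\Sigma$ onto $\Sigma_0\setminus\{\{0\}\}$. The map lands in $\Sigma_0$ and is surjective by the explicit description of~$\Sigma_0$ given in the preceding lemma. That it is order-preserving, and respects the face relation, is formal once one observes that any face $\tau$ of $\sigma\cap(N_0)_\R$ arises as $\tau'\cap(N_0)_\R$ where $\tau'$ is the smallest face of $\sigma$ whose relative interior meets $\tau$, and that $\tau'$ is automatically $\Gamma_F$-invariant because both $\sigma$ and $\tau$ are. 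Nonvanishing of $\sigma\cap(N_0)_\R$ when $\sigma\neq\{0\}$ and $\sigma$ is $\Gamma_F$-invariant follows as in the lemma: the sum $\sum_{i\in O}e_i$ over any Galois orbit of rays of $\sigma$ is a nonzero element of $N_0\cap\sigma$.

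The main thing to check is injectivity. Here I would argue that any $\Gamma_F$-invariant cone $\sigma$ admits a $\Gamma_F$-fixed point in its relative interior: the average over the Galois orbit of any lattice vector in the relative interior of $\sigma$ lies in $(N_0)_\R\cap\sigma$ and still in the relative interior by convexity. Since distinct cones of a fan have disjoint relative interiors, two $\Gamma_F$-invariant cones $\sigma,\sigma'$ with $\sigma\cap(N_0)_\R=\sigma'\cap(N_0)_\R$ must share a relative-interior point, hence coincide.

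Finally, I would identify this bijection of cones with the orbit correspondence of the statement. If $\sigma\in\Sigma^{\Gamma_F}$ and $\tau=\sigma\cap(N_0)_\R\in\Sigma_0$, then the $F$-rational point of $\mathscr C\!\ell\mathscr O_\sigma$ produced in the lemma (as the limit at $0$ of the cocharacter attached to $\sum_{r}n_r$) lies in $X_0$, and a straightforward inspection in the affine chart $X_\sigma$ shows that its $T_0$-orbit is exactly the $T_0$-orbit in $X_0$ associated with~$\tau$. Hence the $T_0$-orbit in $X_0$ corresponding to $\tau$ is contained in the $T$-orbit $\mathscr O_\sigma$ in $X$, which is the content of the corollary. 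The only genuinely nontrivial step is the injectivity argument above; the rest is bookkeeping in the toric dictionary.
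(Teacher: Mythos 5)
Your argument is correct and is essentially the one the paper intends: the paper states the corollary without a separate proof, as a direct consequence of the preceding lemma and the discussion showing $\Clan_F(X,D)=\Cl_F(X,D)$ corresponds to nonzero $\Gamma_F$-invariant cones, with the injectivity of $\sigma\mapsto\sigma\cap(N_0)_\R$ left implicit. Your supplying of the injectivity step (a $\Gamma_F$-fixed relative-interior point, obtained by averaging, forces $\sigma$ to be the smallest cone of $\Sigma$ containing $\sigma\cap(N_0)_\R$) fills exactly the gap the paper glosses over; the only nit is the phrase ``must share a relative-interior point,'' which should be unwound as: $\bar v\in\operatorname{relint}(\sigma)\cap\sigma'$ forces $\sigma\cap\sigma'=\sigma$, and symmetrically $\sigma'\subset\sigma$.
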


\subsection{Clemens complexes of wonderful compactifications}

Let $G$ be a connected reductive group over a field~$F$. 
Let $T_0$ denote a maximal split torus of~$G$ and
$T$ a maximal torus of~$G$ containing~$T_0$.

Let $X$ be a smooth proper~$F$-scheme which is 
a biequivariant compactification of~$G$.
We denote by $Y$ and~$Y_0$ the closures
of the tori~$T$ and~$T_0$ in~$X$; these are toric varieties
defined over~$F$.
We assume that over a separable closure~$\bar F$ of~$F$,
the divisor $D=X\setminus G$
has strict normal crossings.
Observe that this assumption is satisfied if $X$ is 
a wonderful compactification  of~$G$.

\begin{prop}\label{prop.cartan}
With this notation,
one has: $X(F)=G(F)Y(F)G(F)$ and $Y(F)=T(F)  Y_0(F)$;
in particular, $X(F)=G(F)Y_0(F)G(F)$.
%
\end{prop}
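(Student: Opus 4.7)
The plan is to prove the two displayed equalities of Proposition~\ref{prop.cartan} separately; the third equality then follows by substitution. For the toric equality $Y(F) = T(F) Y_0(F)$, I argue by explicit computation in a suitable affine chart. Given $y \in Y(F)$, the $T$-orbit $O_\sigma$ through $y$ corresponds to a cone $\sigma \in \Sigma$ which is $\Gamma_F$-invariant (because $y$ is $F$-rational). I extend $\sigma$ to a $\Gamma_F$-invariant cone $\tau$ of maximal dimension as in the proof of the preceding lemma, using the existence of a basis of $N$ compatible with the $\Gamma_F$-action, and work in the affine chart $Y_\tau$. Over $F$, this chart is isomorphic to $\prod_\alpha \Res_{F_\alpha/F}(\Aff^1) \times \prod_\beta \Res_{F_\beta/F}(\gm)$, indexed by the $\Gamma_F$-orbits on the basis vectors spanning $\tau$ (respectively the remaining ones), and the intersection $Y_0 \cap Y_\tau$ consists of points whose coordinates are ``diagonal'', i.e., come from $F$ viewed inside each $F_\alpha$. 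For $y = (y_\alpha)_\alpha \in Y_\tau(F)$, I define $y_0$ componentwise by $y_{0,\alpha}=0 \in F$ if $y_\alpha=0$, and $y_{0,\alpha}=1 \in F$ otherwise; then $y_0 \in (Y_0 \cap Y_\tau)(F)$, and setting $t_\alpha = y_\alpha$ whenever $y_\alpha \neq 0$ and $t_\alpha = 1$ otherwise yields $t \in T(F) = \prod_\alpha F_\alpha^*$ with $y = t \cdot y_0$.

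For the equality $X(F) = G(F) Y(F) G(F)$, I use the biequivariant structure of the wonderful compactification. Over $\bar F$, every $G \times G$-orbit in $X$ meets $Y$ in a single $T \times T$-orbit; this is a standard structural result for wonderful compactifications. Given $x \in X(F)$, its $G \times G$-orbit $O$ is Galois-stable, hence defined over $F$, and $O \cap Y$ is a $T \times T$-orbit defined over $F$. Applying the toric analysis just established to $O \cap Y$ inside $Y$ produces a point $y \in (O \cap Y)(F)$. The final step is to exhibit $g_1, g_2 \in G(F)$ with $g_1 x g_2 = y$; for this I rely on the existence of an affine $G \times G$-stable neighborhood of $y$ in $X$ with an explicit product decomposition involving opposite unipotent radicals and a toric piece, analogous to the big Bruhat cell in $G$, which furnishes the required $F$-rational translating elements.

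The main obstacle is precisely this last step: ensuring that $x$ and $y$ lie in the same $G(F) \times G(F)$-orbit in $O(F)$. A priori that orbit could fail to cover $O(F)$ because of Galois-cohomological obstructions coming from the stabilizer of $y$ in $G \times G$. The key technical input is that, for wonderful compactifications, these stabilizers have a very constrained structure (extensions of a subgroup of the diagonal of a Levi by a unipotent group), and combined with the local product model this guarantees $F$-rational sections in the relevant homogeneous spaces, so that the Cartan-type decomposition does descend to $F$-rational points.
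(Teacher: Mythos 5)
Your proposal for $X(F)=G(F)Y(F)G(F)$ has a genuine gap at exactly the point you flag as the ``main obstacle''. Producing an $F$-rational $y\in(O\cap Y)(F)$ in the same geometric $G\times G$-orbit as $x$ is not enough: the map $(G\times G)(F)\to O(F)$ need not be surjective, and the fibers of $O(F)$ are governed by $\mathrm H^1(F,\mathrm{Stab})$. Your last paragraph asserts that this cohomological obstruction can be killed because the stabilizer has a constrained structure (unipotent-by-Levi), but this is a bare assertion — unipotent normal subgroups do make $\mathrm H^1$ easier, yet the Levi part of the stabilizer in a wonderful compactification is not simply connected and not obviously cohomologically trivial, and no argument is given. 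Moreover your argument is tailored to the wonderful compactification (explicit orbit structure, big cell, description of isotropy groups), whereas the proposition is stated for an \emph{arbitrary} smooth biequivariant compactification with strict normal crossings boundary over $\bar F$; none of those structural facts are available at that generality, so the approach would need to be justified anew.

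The paper sidesteps the Galois cohomology entirely by a completely different route: given $x_0\in X(F)$, it lifts $x_0$ to a formal arc $x\in X(F\lbra t\rbra)$ not contained in the boundary, regards $x$ as an element of $G\bigl(F\lpar t\rpar\bigr)$, applies the Bruhat--Tits Cartan decomposition
\[
G\bigl(F\lpar t\rpar\bigr)=G\bigl(F\lbra t\rbra\bigr)\,T\bigl(F\lpar t\rpar\bigr)\,G\bigl(F\lbra t\rbra\bigr),
\]
and specializes at $t=0$. This yields $g_1(0),g_2(0)\in G(F)$ and $y(0)\in Y(F)$ directly, for any biequivariant compactification, with no discussion of orbits or stabilizers. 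For the toric equality $Y(F)=T(F)Y_0(F)$ the paper likewise uses arcs: an anisotropy argument over $F\lbra t\rbra$ together with Hilbert's Theorem 90 over that discrete valuation ring. Your chart computation for the toric part is a genuinely different idea and could in principle be made to work, but as written it leans on the existence of a $\Gamma_F$-invariant cone of maximal dimension containing a given $\Gamma_F$-invariant cone and on a specific product shape of $Y_\tau$ over $F$, neither of which is fully justified; in any case it is the decomposition $X(F)=G(F)Y(F)G(F)$, which your sketch does not establish, that carries the weight of the proposition.
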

\begin{proof}
Let us first prove that $X(F)=G(F)Y(F)G(F)$.
It suffices to prove that $X(F)$ is contained in the latter set.
We use the arguments of~\cite{brion-kumar2005}, Lemma~6.1.4.
Let $x_0\in X(F)$;  since $X$ is assumed to be smooth,
we can  choose an arc $x\in X(F\lbra t\rbra )$ such that $x(0)=x_0$ and $x$
is not contained in $X\setminus G$.
We interpret $x$ as a $F\lpar t\rpar$-point of~$G$;
by the Cartan decomposition
(\cite{bruhat-tits1972}, Proposition~4.4.3. 
The fact that the group $G (F\lbra t\rbra ) $ is good follows from the
property that the $F\lpar t\rpar$-algebraic group $G_{F\lpar t\rpar}$ 
is split over an unramified extension, namely $\bar F\lpar t\rpar$,
and descent properties of the building)
\[ G( F\lpar t\rpar ) = G( F\lbra t\rbra ) T (F\lpar t\rpar) G (F\lbra t \rbra),
\]
we may write $x = g_1 y g_2$, where $g_1,g_2\in G(F\lbra t\rbra)$
and $y\in T(F\lpar t\rpar)$. Writing $y=g_1^{-1} x g_2^{-1}$,
we see that $y\in Y(F\lbra t\rbra)$.
Specializing at $t=0$, we now obtain $x_0=g_1(0) y(0) g_2(0)$,
as wanted.

Let us now prove the second equality, namely $Y(F)=T(F)Y_0(F)$.
Again, it suffices to prove that $Y(F)$ is contained in~$T(F)Y_0(F)$.
Let $x_0\in Y(F)$; as above, there
is an arc $x\in X(F\lbra t\rbra )\cap G( F\lpar t\rpar)$ 
such that $x(0)=x_0$. Using the Cartan decomposition,
we may replace $x$ by an arc of the form $y=g_1^{-1} x g_2$,
with $g_1,g_2\in G( F\lbra t\rbra)$ satisfying
$y\in X(F\lbra t\rbra) \cap T(F \lpar t\rpar)$. 
Moreover, we may also assume that $g_1(0)=g_2(0)$ is
the neutral element of~$G(F)$. In particular, 
$y\in Y(F\lbra t\rbra)$ and $y(0)=x_0$.

Let $S$ denote the anisotropic torus $T/T_0$
and $\pi\colon T\ra S$ the quotient map.
One has $\pi(y)\in S(F\lpar t\rpar)$.
Since $S$ is anisotropic,
one has $S(F\lbra t\rbra )=S(F\lpar t\rpar )$
(Lemma~\ref{lemm.tore-anisotrope} below), hence $\pi(y)\in S(F\lbra t\rbra )$.
Looking at the exact sequence of tori 
$$
1\ra T_0\ra T\ra S\ra 1
$$
and applying Hilbert's theorem~90 over the discrete valuation
ring $F\lbra t\rbra $, it follows that there exists $z\in T(F\lbra t\rbra )$
such that $\pi(z)=\pi(y)$, hence $z^{-1}y\in T_0(F\lpar t\rpar )$.
Moreover, $z^{-1}y\in Y(F\lbra t\rbra )$.
Specializing~$t$ to~$0$, we see that $z(0)^{-1} y(0)\in Y_0(F)$,
that is $y(0)=x_0\in T(F) Y_0(F)$, as claimed.

The last equality follows immediately.
\end{proof}

The following lemma is well-known; we include a proof for the convenience
of the reader.
\begin{lemm}\label{lemm.tore-anisotrope}
Let $S$ be an anisotropic torus over a field~$F$.
Then, $S(F\lpar t\rpar )=S(F\lbra t\rbra )$.
\end{lemm}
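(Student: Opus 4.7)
The inclusion $S(F\lbra t\rbra)\subset S(F\lpar t\rpar)$ is tautological, so the content is the reverse inclusion. My plan is to reduce to a split situation via a Galois splitting field, extract a cocharacter from the $t$-adic valuation of any given $F\lpar t\rpar$-point, and then kill it using anisotropy.

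First, I would choose a finite Galois extension $F'/F$ of group $\Gamma=\Gal(F'/F)$ which splits~$S$, so that $S_{F'}\simeq \gm^d$ and the character lattice $M=\Hom(S_{F'},\gm)$ is a free $\Z$-module of rank $d=\dim S$ with a continuous $\Gamma$-action. Anisotropy of~$S$ is equivalent to $M^\Gamma=0$, and, by semisimplicity of $\Q[\Gamma]$ applied to the dual representation $M_\Q$, this is in turn equivalent to $N^\Gamma=0$ where $N=\Hom(M,\Z)$, i.e., to the vanishing of $\Hom_\Gamma(M,\Z)$ (the $F$-rational cocharacter group of~$S$).

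Second, given $s\in S(F\lpar t\rpar)$, I would view it in $S(F'\lpar t\rpar)=(F'\lpar t\rpar^\times)^d$ and define a map $v\colon M\to\Z$ by $v(\chi)=v_t(\chi(s))$, where $v_t$ denotes the $t$-adic valuation on $F'\lpar t\rpar$. Since $s$ is $\Gamma$-invariant (it comes from $F\lpar t\rpar$) and $v_t$ is $\Gamma$-invariant (the uniformizer $t$ lies in~$F$), the map $v$ is $\Gamma$-equivariant for the trivial action on~$\Z$. By the preceding step, $\Hom_\Gamma(M,\Z)=0$, so $v\equiv 0$. Evaluating on a $\Z$-basis of $M$, this means $\chi(s)\in F'\lbra t\rbra^\times$ for every coordinate character, hence $s\in S(F'\lbra t\rbra)$.

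Finally, I would descend back to $F$: since $S$ is affine and $F\lbra t\rbra=F\lpar t\rpar\cap F'\lbra t\rbra$ inside~$F'\lpar t\rpar$, the functorial identity $S(F\lbra t\rbra)=S(F\lpar t\rpar)\cap S(F'\lbra t\rbra)$ yields $s\in S(F\lbra t\rbra)$, as required. The only non-formal ingredient is the equivalence $M^\Gamma=0\iff \Hom_\Gamma(M,\Z)=0$, which rests on semisimplicity of $\Q[\Gamma]$; the rest of the argument is a short manipulation of valuations and Galois descent.
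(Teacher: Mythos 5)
Your proof is essentially the same as the paper's: pass to a Galois splitting field $F'$, describe a point of $S(F\lpar t\rpar)$ as a $\Gamma_{F'/F}$-equivariant homomorphism $\mathfrak X^*(S)\to F'\lpar t\rpar^\times$, compose with the $t$-adic valuation to obtain a $\Gamma$-invariant homomorphism $\mathfrak X^*(S)\to\Z$, and kill it using anisotropy. You are slightly more explicit on two points the paper leaves tacit — the equivalence $M^\Gamma=0\iff N^\Gamma=0$ via semisimplicity of $\Q[\Gamma]$, and the final descent $S(F\lbra t\rbra)=S(F\lpar t\rpar)\cap S(F'\lbra t\rbra)$ — but the argument is the same.
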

\begin{proof}
Let $F'$ be a finite extension of~$F$ which splits~$S$; then,
for any $F$-algebra~$E$, $S(E)$ is the set
of morphisms from~$\mathfrak X^*(S)$ to~$E\otimes_F F'$
which commute with the actions of~$\Gamma_{F'/F}$ on both
sides. It follows that
a  point~$P$ in $S(F\lpar t\rpar )$ corresponds to  a
$\Gamma_F$-equivariant group morphisms~$\phi$
from $\mathfrak X^*(S)$ to~$F'\lpar t\rpar ^*$.
 
By composing~$\phi$ with the order map $F'\lpar t\rpar ^*\ra \Z$,
we obtain a $\Gamma_{F'/F}$-invariant morphism
$\ord\circ \phi\colon \mathfrak X^*(S)\ra\Z$,
which is necessarily~$0$ since $S$ is anisotropic.
Consequently, $\phi(c)=0$ for any $c\in\mathfrak X^*(S)$,
which means that $\phi(c)\in F'\lbra t\rbra ^*$.
In other words, $\phi$ is a $\Gamma_{F'/F}$-equivariant
morphism from $\mathfrak X^*(S)$ to~$F'\lbra t\rbra ^*$,
and the corresponding point~$P$ belongs
to $S(F'\lbra t\rbra )$.
\end{proof}

Our goal now is to describe the various Clemens complexes 
attached to the pair~$(X,D)$.
Let $W=\mathrm N_G(T)/T$ and $W_0 =\mathrm N_G(T_0)/\mathrm Z_G(T_0)$
be the Weyl groups of~$G$ relative to the tori~$T$ and~$T_0$.

Since the Weyl group~$W$ acts on~$G$ via the conjugation by an element
of~$G$, this extends to an action on~$X$. This action 
induces the trivial action on~$\Cl(X,D)$.
Indeed, the group~$G$ being connected, each irreducible
component of $D_{\bar F}$ is preserved by
the actions of~$G_{\bar F}$.

\begin{prop} Over~$\bar F$, the open strata of the stratification of~$X_{\bar F}$ deduced from~$D_{\bar F}$ are exactly 
the orbits of~$(G\times G)_{\bar F}$ in~$X_{\bar F}$.
Moreover, associating to an orbit its closure
defines a $\Gamma_F$-equivariant bijection from
$G(\bar F)\backslash X(\bar F)/G(\bar F)$ to~$\Cl(X,D)$.
\end{prop}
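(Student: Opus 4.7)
The plan is to prove two statements: the open strata and the $(G \times G)_{\bar F}$-orbits on $X_{\bar F}$ coincide, and the resulting closure map to $\Cl(X,D)$ is a $\Gamma_F$-equivariant bijection (with the convention that the open orbit $G$ itself corresponds to the empty face, or is simply excluded from both sides).

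First I would observe that each irreducible component $D_\alpha$ of $D_{\bar F}$ is $(G\times G)$-stable: the connected group $(G\times G)_{\bar F}$ acts on the finite set $\bar{\mathscr A}$ of components and therefore acts trivially. By taking intersections, each $D_A$, each irreducible component~$Z$ of~$D_A$, and hence each open stratum $Z^\circ$ associated to a face $(A,Z) \in \Cl(X,D)$, is $(G\times G)$-stable. It follows that the $(G\times G)$-orbit of any point $x\in X_{\bar F}$ is contained in a unique open stratum, yielding a well-defined assignment from orbits to faces of~$\Cl(X,D)$, and showing that orbits refine the Clemens stratification.

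The heart of the proof is to show that each open stratum $Z^\circ$ is in fact a single $(G\times G)_{\bar F}$-orbit. By Proposition~\ref{prop.cartan} applied over~$\bar F$ (where $T$ is split and $Y = Y_0$), one has $X(\bar F) = G(\bar F)\,Y(\bar F)\,G(\bar F)$, so every $(G\times G)$-orbit meets $Y(\bar F)$; it therefore suffices to show that any two points of $Z^\circ \cap Y_{\bar F}$ are $(G\times G)$-equivalent. Now $Y_{\bar F}$ is a smooth toric variety under $T_{\bar F}$, whose $T$-orbits correspond to cones of its fan, and the Weyl group $W=N_G(T)/T$ acts on $Y$ by conjugation through the diagonal $\mathrm{diag}(N_G(T)) \subset G\times G$; in particular, $W$-equivalent $T$-orbits are always $(G\times G)$-equivalent. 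The technical crux is the converse: \emph{if $y,y' \in Y_{\bar F}$ lie in the same $(G\times G)$-orbit, they lie in a common orbit of $(T\times T)\cdot W$}. I would prove this by writing $y' = g_1 y g_2^{-1}$ and applying the Bruhat decomposition $G = \bigsqcup_w BwB$ to each $g_i$: the unipotent factors must be absorbed by the stabilizer of $y$, which contains a large unipotent subgroup whenever $y$ lies on the toric boundary (reflecting the fact that one-parameter subgroups of the unipotent radical degenerate to the identity in the toric limit). Only the torus and Weyl components survive, and matching two copies of~$Y$ forces the two Weyl representatives to coincide.

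Once transitivity on each $Z^\circ$ is established, injectivity of the closure map to $\Cl(X,D)$ is automatic from Step~1, and surjectivity follows because each face $(A,Z)$ determines a non-empty stratum, which is then a single orbit. The $\Gamma_F$-equivariance is immediate: the Galois action commutes with the $(G\times G)$-action, with the stratification, and with the assignment $\mathcal O\mapsto (A,Z)$. The main obstacle throughout is the converse step in the middle paragraph, where the biequivariant structure and the strict normal crossings hypothesis on $D$ enter essentially; everything else is formal given connectedness of $G\times G$ and the Cartan decomposition supplied by Proposition~\ref{prop.cartan}.
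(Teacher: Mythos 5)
Your opening step (all $D_\alpha$, $D_A$, strata are $(G\times G)$-stable by connectedness, so orbits refine strata) is correct and matches a fact the paper's proof uses implicitly. The reduction via $X(\bar F)=G(\bar F)Y(\bar F)G(\bar F)$ is also fine. But the ``technical crux'' you propose is not the lemma you need, and even if granted it would not close the argument.

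What you need, having reduced to $Y$, is: \emph{any two points of $Z^\circ\cap Y_{\bar F}$ are $(G\times G)$-equivalent.} Since $W$-equivalent $T$-orbits are $(G\times G)$-equivalent, the useful form of this is: \emph{the toric strata of $Y$ contained in a given $Z\in\Cl(X,D)$ form a single $W$-orbit.} Your stated lemma --- that $(G\times G)$-equivalence in $Y$ \emph{implies} $(T\times T)\cdot W$-equivalence --- runs in the opposite direction: it is a consequence of being in one orbit, not a sufficient condition. Granting it would show the orbits of $Y$ under $G\times G$ are exactly the $(T\times T)\cdot W$-orbits, but you would still have to argue that each open stratum $Z^\circ\cap Y$ is a \emph{single} such orbit; that extra step is exactly where the content of the proposition sits, and it is not addressed. (For the wonderful compactification itself this can be read off from the identification of $Y$'s fan with the Weyl chamber decomposition, but the proposition is stated for general smooth biequivariant $X$ with SNC boundary, where the fan of $Y$ can be any $W$-invariant refinement.)

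Independently of that logical mismatch, the sketch of the converse lemma is not a proof. The assertion that ``the unipotent factors must be absorbed by the stabilizer of $y$, which contains a large unipotent subgroup whenever $y$ lies on the toric boundary'' is precisely the hard point: it is essentially the local structure theorem for spherical varieties, and you would need to carefully identify which unipotent radical stabilizes a given boundary point, match it against the Bruhat cells of $g_1$ and $g_2$, and justify that what survives is in $N_G(T)\times N_G(T)$. The paper sidesteps all of this by a quite different route: it first observes that there are only finitely many $G\times G^\opp$-orbits in $X$ (Cartan decomposition plus finiteness of $T$-orbits in $Y$), picks the open orbit $Z_0$ in a given stratum $Z$, invokes Knop's theorem (Theorem~2.1 of~\cite{knop1991}) to produce an invariant affine open $GX_0G$ in which $Z_0$ is the unique closed orbit, and then shows by a dimension argument that $Z\cap GX_0G=Z_0$, so that $Z_0$ is an open stratum; the converse direction (every orbit is an open stratum) falls out by taking the minimal stratum containing a given orbit. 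Your route, if completed, would be more explicit but substantially longer; as it stands, it has a gap in both the logic and the key lemma.
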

\begin{proof}
We may assume that the field~$F$ is separably closed.
We then have $T=T_0$.

By Proposition~\ref{prop.cartan}, the map
\[ Y(F)/T(F) \ra G(F)\backslash X(F)/ G(F) \]
which associates to the $T(F)$-orbit of a point~$x$ in~$Y(F)$
the orbit~$G(F)xG(F)$ in~$X(F)$ 
is surjective. By the theory of toric varieties,
$T(F)$ has only finitely many orbits in~$Y(F)$.
Consequently, $G\times G^\opp$ has only finitely many orbits in~$X$. 

Let $Z$ be an element of~$\Cl(X,D)$, viewed as
an irreducible closed subvariety of~$D$. It is smooth, and possesses
a $G\times G^\opp$-action.
The group $G\times G^\opp$ acts on~$Z$ with only finitely
many orbits; necessarily, one of these orbits, say~$Z_0$,
is open in~$Z$. Since $Z$ is irreducible, it follows
that  $Z=\overline{ Z_0}$,
and $Z$ is the closure of a $G\times G^\opp$-orbit.
By Theorem~2.1 of~\cite{knop1991}, there exists
an open affine subset~$X_0$ of~$X$
such that $Z_0$ is the unique closed orbit
of~$G\times G^\opp$ in~$GX_0G$.

Let us show that $Z\setminus Z_0$ is contained in $X\setminus GX_0G$
or, equivalently, that $Z\cap GX_0G=Z_0$. 
Indeed, let $O$ be any orbit in~$Z$
distinct from~$Z_0$; we need to prove that $O\cap GX_0G=\emptyset$.
Let us denote by~$\overline O$ the closure of~$O$ in~$X$;
since $Z_0$ is open in~$Z$, $\overline O$ has empty interior in~$Z$,
hence is an irreducible subset of~$X$ whose dimension satisfies
$\dim \overline O < \dim Z$. However, any action of an algebraic
group on an algebraic variety has a closed orbit; 
in particular $\overline O\cap GX_0G$ contains a closed orbit,
which implies that $\overline O\cap GX_0G\supset Z_0$,
and this contradicts the assumption $\dim\overline O<\dim Z_0$.

Since $X_0$ is open in~$X$, it contains at least one point of~$G$,
hence $GX_0G$ contains~$G$. Consequently, $X\setminus GX_0G$
is contained in~$X\setminus G$.
Since $GX_0G$ is an $G\times G^\opp$-invariant open subset of~$X$,
each irreducible component of~$Z\setminus Z_0$ is contained
in some irreducible component of~$X\setminus GX_0G$ which does not meet~$Z_0$.

Consequently, the orbit~$Z_0$ is an open stratum of the 
stratification defined by the boundary divisor~$D$.

Conversely, let $O$ be an orbit of $G\times G^\opp$ in~$X$
and let $Z$ be the stratum of minimal dimension in~$\Cl(X,D)$
which contains~$O$.
By the preceding argument, $Z$ is the closure of
an orbit~$Z_0$ whose complement $Z\setminus Z_0$
is a union of lower dimensional strata.
By the minimality assumption on~$Z$, $Z_0=O$,
which proves that $O$ is an open stratum.

The rest of the proposition follows at once.
\end{proof}



The group of characters~$M_0$ of~$T_0$ is the group of coinvariants
of~$\Gamma$ in~$M$ (it is a quotient of~$M$),
and its group of cocharacters~$N_0$ is the group of invariants
of~$\Gamma$ in~$N$ (a subgroup of~$N$).
Let $t_0=\dim T_0$.
Let $\Sigma_0$ denote the fan of~$N_0$ induced by~$\Sigma$;
it is simplicial albeit not obviously smooth in general.
This fan defines a toric variety~$X_0$ which in fact is isomorphic 
to the Zariski closure of~$T_0$ in~$X$.

\subsection{Volume estimates for compactifications of semi-simple groups}

\label{subsec.wonderful}

\subsubsection{Introduction}
Let $G$ be a semisimple algebraic group of adjoint type
over a field~$F$ of characteristic~$0$.
Let $\iota\colon G\ra \GL(V)$ be a
faithful algebraic representation of~$G$ in a
finite dimensional~$F$-vector space~$V$. 
The natural map 
\[
\GL(V)\ra\End(V)\setminus\{0\}\ra\P\End(V)
\]
induces a map $\bar\iota\colon G\ra\P\End(V)$. Let $X_\iota$ be
the Zariski closure of its image;
it is a bi-equivariant compactification
of~$G$. Let $\partial X_\iota$ be the complement to~$G$
in~$X_\iota$.

When $\iota$ is irreducible 
with regular highest weight, 
$X_\iota$ is the wonderful compactification defined 
by De Concini and Procesi in~\cite[3.4]{deconcini-p83}.
In that case, $X_\iota$ is smooth and $\partial X_\iota$
is a divisor with strict normal crossings.

When $F$ is a local field, we endow 
the vector space~$\End(V)$ with a norm~$\norm{\cdot}$.
When $F$ is a number field, let us choose, for any place~$v$ of~$F$,
a $v$-adic norm on~$\End(V)\otimes F_v$, so that 
there is an $\mathfrak o_F$-lattice in~$\End(V)$
inducing these norms for almost all finite places.
As it was explained in Sections~\ref{exam.projective-space.metric}
and~\ref{exam.projective-space.adelic.metric},
such choices give rise to a metric on the line bundle~$\mathscr O(1)$
on the projective space~$\P\End(V)$, resp. an adelic metric,
when $F$ is a number field.

We claim that the line bundle~$\mathscr O(1)$ on~$X_\iota$
has a nonzero global section~$s_\iota$ which is invariant under~$G$. 
Such a section~$s_\iota$ 
is unique up to multiplication by a scalar,
since the quotient of two of them is a $G$-invariant rational function
on~$X_\iota$.  (See also~\cite{deconcini-p83}, 1.7, p.~9, proposition.)

To prove the existence, let us first observe that 
the line in~$\End(V)$ generated by $\id_V$ is $G$-invariant.
By semi-simplicity of~$G$, there exists a linear form~$\ell_\iota$
on~$\End(V)$ which is invariant under~$G$ and maps~$\id_V$ to~$1$.
%
Let $s_\iota$ be the restriction
to~$G$ of the global section of~$\mathscr O(1)$ over~$\P\End(V)$
defined by~$\ell_\iota$.
Now, we have
\[ \norm{s_\iota}(\bar\iota(g))=\frac{\abs{\ell_\iota(g)}}{\norm{\iota(g)}} 
 = \frac{\abs{\ell_\iota^g(e)}}{\norm{\iota(g))}}  = {\norm{\iota(g)}}^{-1}, \]
for any $g\in G(F)$ when $F$ is a local field,
and similar equalities at all places of~$F$
in the number field case.

Consequently, for $F=\R$ or~$\C$,
the results of Section~\ref{subsec.geometric-volume}
imply, as a particular case, an asymptotic formula for the volume of
sets of $g\in G(F)$ with $\norm{\iota(g)}\leq B$, when $B\ra\infty$,
as well as similar (but weaker) estimates when $F$ is a $p$-adic field.
Similarly, when $F$ is a number field, 
the volume estimates established in 
Section~\ref{subsec.asymptotics.adelic} imply 
estimates for the volume of adelic sets in~$G(\AD_F)$
consisting of adelic points of bounded height.

In the case of local fields,
it requires further computations for making these estimates explicit, in terms of the representation~$\iota$.
In particular, we will need to describe the analytic Clemens complex 
of~$\partial X_\iota$

\subsubsection{The wonderful compactification of De Concini and Procesi}
For simplicity, we assume for the moment that $G$ is split.
We fix a maximal torus $T$ of~$G$ which is split,
as well as a Borel subgroup~$B$ of~$G$ containing~$T$.
We identify the groups of characters $\mathfrak X^*(T)$ and~$\mathfrak X^*(B)$,
as well as the groups of cocharacters $\mathfrak X_*(T)$ and~$\mathfrak X_*(B)$;
we also let $\mathfrak a=\mathfrak X_*(T)\otimes_\Z\R$ and 
$\mathfrak a^*=\mathfrak X^*(T)\otimes_\Z\R$.

Let~$\Phi$, resp. $\Phi^+$, be the set of roots of~$G$, resp. of positive roots
in~$\mathfrak X^*(T)$; let $\beta$ denote the sum of all positive roots.
Let $\Delta\subset\Phi^+$ be the set of simple roots; they form a basis
of the real cone in~$\mathfrak a^*$ 
generated by positive roots, 
hence we may write $\beta=\sum_{\alpha\in\Delta}m_\alpha\alpha$
for some positive integers~$m_\alpha$.

Let $\iota\colon G\ra\GL(V)$ be a representation as above;
we assume here that $\iota$ is irreducible 
and that its highest weight~$\lambda$ is regular,
\emph{i.e.}, can be written as $\lambda=\sum_{\alpha\in\Delta} d_\alpha \alpha$,
for some positive integers~$d_\alpha$. 
In that case, as recalled above, $X_\iota$
is the ``wonderful compactification of~$G$'' defined by De Concini and Procesi.
The variety~$X_\iota$ does not depend on the actual choice of~$\iota$, 
but the projective embedding does.

The irreducible components of $X_\iota\setminus G$
are naturally indexed by the set~$\Delta$; we will write $D_\alpha$
for the divisor corresponding to a simple root~$\alpha$.
Let $D=\sum D_\alpha$.
The Clemens complex~$\Cl(\partial X_\iota)$ is simplicial,
and coincides with the $F$-analytic Clemens complex~$\Clan_F(\partial X_\iota)$
since $G$ is assumed to be split.

The line bundles~$\mathscr O(D_\alpha)$ form a basis of~$\Pic(X_\iota)$,
as well as generators of the cone of effective divisors 
(which is therefore simplicial).
The restriction of~$\mathscr O_{\P}(1)$ to~$X$ 
corresponds precisely to~$\lambda$,
and there is a canonical isomorphism
\[ \mathscr O_{\P\End(V)} (1)|_{X_\iota} \simeq \mathscr O(\sum_{\alpha\in\Delta} d_\alpha D_\alpha). \]
According to~\cite{deconcini-p83}, the anticanonical line bundle of~$X_\iota$
is given by
\[ K_{X_\iota}^{-1} \simeq \mathscr O(\sum_{\alpha\in\Delta} (m_\alpha+1) D_\alpha).\]

Let $\mathscr C\subset\mathfrak a^*$ be the convex hull of the characters
of~$T$ appearing in the representation~$\iota$. This
is also the convex hull of the images of the highest weight~$\lambda$
under the action of the Weyl group. This is a convex
and compact polytope which contains~$0$ in its interior
by~\cite{maucourant2007}, Lemma 2.1.

\begin{lemm}
Let $\sigma=\max_{\alpha\in\Delta} (m_\alpha/d_\alpha)$, 
let $t$ be the number of elements $\alpha\in\Delta$ 
where equality holds.
Then $\sigma$ is the smallest positive real number such
that $\beta/\sigma \in\mathscr C$ and
$t$ is the maximal codimension of a face of~$\mathscr C$
containing~$\beta/\sigma$.
\end{lemm}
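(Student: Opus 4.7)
The plan is to leverage the classical description of $\mathscr C$ intersected with the closed dominant Weyl chamber $\overline{C^+}$: by a standard fact about convex hulls of Weyl orbits of regular dominant weights, a dominant vector $v$ lies in $\mathscr C$ if and only if $\lambda-v$ is a non-negative real combination of simple roots. Introducing the fundamental coweights $\omega_\alpha^\vee$ dual to the simple roots (so that $\langle\alpha',\omega_\alpha^\vee\rangle=\delta_{\alpha,\alpha'}$), this translates to
\[
\mathscr C\cap\overline{C^+}=\{v\in\overline{C^+}\,;\,\langle v,\omega_\alpha^\vee\rangle\leq d_\alpha\text{ for every }\alpha\in\Delta\}.
\]
I would either cite this directly (it is a consequence of Kostant's convexity theorem and of the characterization of dominant weights appearing in the irreducible representation of highest weight $\lambda$) or re-derive it from the elementary observation that any $W$-symmetric average of points in $\mathscr C$ lying in $\overline{C^+}$ can be written as $\lambda-\sum c_\alpha\alpha$ with $c_\alpha\geq 0$.

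Granted this, both assertions become elementary. First, $\beta=2\rho$ pairs to $2>0$ with every simple coroot, so $\beta/\sigma$ is strictly dominant for every $\sigma>0$. Writing $\beta=\sum_{\alpha\in\Delta}m_\alpha\alpha$ and pairing with $\omega_\alpha^\vee$ yields $\langle\beta/\sigma,\omega_\alpha^\vee\rangle=m_\alpha/\sigma$, so the condition $\beta/\sigma\in\mathscr C$ becomes $m_\alpha/\sigma\leq d_\alpha$ for all $\alpha\in\Delta$; the smallest positive $\sigma$ satisfying this is precisely $\max_\alpha m_\alpha/d_\alpha$, which proves the first claim.

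For the codimension claim, set $\sigma=\max_\alpha m_\alpha/d_\alpha$ and $T=\{\alpha\in\Delta\,;\,m_\alpha/d_\alpha=\sigma\}$. I would then argue that the facets of $\mathscr C$ passing through $\beta/\sigma$ are exactly those with outward normal $\omega_\alpha^\vee$, $\alpha\in T$: since $\mathscr C$ is Weyl-invariant, every facet normal is a Weyl translate $w\cdot\omega_\alpha^\vee$, but a facet with $w\neq e$ can meet $\overline{C^+}$ only along one of its walls, whereas $\beta/\sigma$ lies in the open dominant chamber and avoids all such walls. The smallest face of $\mathscr C$ containing $\beta/\sigma$ is therefore the intersection of the $t=|T|$ facets indexed by $T$, and by the linear independence of the dual-basis coweights $\{\omega_\alpha^\vee\}_{\alpha\in T}$, this intersection is transverse and has codimension exactly $t$ in $\mathscr C$. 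The only real obstacle is the initial description of $\mathscr C\cap\overline{C^+}$; once that classical fact is in place, everything else reduces to linear algebra in the space spanned by $\Delta$.
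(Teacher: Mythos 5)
Your argument is correct and is essentially the proof given in the paper: both describe $\mathscr C$ within the closed dominant chamber by the inequalities $\langle\omega_\alpha^\vee,\cdot\rangle\le d_\alpha$ in the basis dual to the simple roots, and both then substitute $\beta/\sigma=\sum(m_\alpha/\sigma)\alpha$ to read off $\sigma$ and the codimension. The only differences are notational and expository: the paper writes $\alpha^\vee$ with the normalization $\langle\alpha^\vee,\alpha'\rangle=\delta_{\alpha\alpha'}$, so it is really using the fundamental coweights $\omega_\alpha^\vee$ you introduce, and you make explicit the facet-through-$\beta/\sigma$ identification and the transversality computation that the paper leaves implicit.
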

\begin{proof}
For any simple root~$\alpha$, let~$\alpha^\vee$ be the corresponding
coroot, so that the simple reflexion~$s_\alpha$ associated to~$\alpha$
is given by $s_\alpha(x)=x-2\langle \alpha^\vee,x\rangle \alpha$,
for $x\in\mathfrak a^*$. We have $\langle\alpha^\vee,\alpha\rangle=1$
while $\langle\alpha^\vee,\alpha'\rangle=0$ for any other
simple root~$\alpha'$.

Within the Weyl chamber of~$\lambda$, the polytope~$\mathscr C$
is bounded by the affine hyperplanes orthogonal to the
simple roots~$\alpha$ and passing through~$\lambda$.
Consequently,   a point~$x$ in this chamber
belongs to~$\mathscr C$
if and only $\langle \alpha^\vee,x\rangle\leq\langle\alpha^\vee,\lambda\rangle$.
Moreover, such a point~$x$ belongs to the boundary of~$\mathscr C$
if and only if equality is achieved for some simple root~$\alpha$;
then, the number of such~$\alpha$ is 
the maximal codimension  of a face of~$\mathscr C$ containing~$x$.
For $x=\beta/\sigma=\sum_{\alpha\in\Delta} (m_\alpha/\sigma) \alpha$,
we find 
\[ \langle\alpha^\vee,x-\lambda\rangle=\frac{m_\alpha}\sigma-\lambda_\alpha,\]
hence the Lemma.
\end{proof}

Consequently, our geometric estimates (Theorem~\ref{theo.geometric-volume})
imply the following result of Maucourant~\cite{maucourant2007}
under the assumption that~$G$ is split and~$V$ has  a unique
highest weight.
\begin{coro}\label{coro.maucourant}
Define $\sigma=\max_{\alpha\in\Delta} (m_\alpha/d_\alpha)$
and let $t$ be 
be the number of~$\alpha\in\Delta$ 
where the equality holds.

Assume that $F=\R$ or~$\C$.
When $B\ra\infty$, the volume $V(B)$ of all $g\in G(F)$ 
such that $\norm{\iota(g)}\leq B$ satisfies an asymptotic formula
of the form:
\[ V(B)\sim c B^{\sigma}(\log B)^{t-1}. \]
\end{coro}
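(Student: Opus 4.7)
The plan is to deduce the corollary as an application of Theorem~\ref{theo.geometric-volume} to the pair $(X_\iota,D^{\sharp})$, where $D^{\sharp}:=\sum_{\alpha\in\Delta}(m_\alpha+1)D_\alpha$ is the boundary divisor weighted by the order of poles of a biinvariant gauge form on~$G$. Using the isomorphism $\omega_{X_\iota}\simeq\mathscr O(-D^{\sharp})$ recalled in the preceding subsection, such a gauge form extends to a meromorphic section of $\omega_{X_\iota}$ with divisor~$-D^{\sharp}$, so the construction of \S\ref{subsubsec.haar-tamagawa} identifies $\tau_{(X_\iota,D^{\sharp})}$ with a Haar measure on $G(F)$. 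On the other hand $\mathscr O_{\P\End(V)}(1)|_{X_\iota}\simeq\mathscr O(L)$ with $L=\sum_\alpha d_\alpha D_\alpha$, and the identity $\norm{s_\iota}(\bar\iota(g))=\norm{\iota(g)}^{-1}$ established at the beginning of this section shows that the height ball $\{g\in G(F)\,;\,\norm{\iota(g)}\leq B\}$ is precisely the set $\{x\in G(F)\,;\,\norm{\mathsec f_L(x)}\geq B^{-1}\}$ considered in Theorem~\ref{theo.geometric-volume}.

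In the notation of that theorem, the multiplicity of $D_\alpha$ in~$D^{\sharp}$ is $m_\alpha+1$ and its multiplicity in~$L$ is $d_\alpha$, so the critical exponent reads
\[ \sigma=\max_{\alpha\in\Delta}\frac{(m_\alpha+1)-1}{d_\alpha}=\max_{\alpha\in\Delta}\frac{m_\alpha}{d_\alpha}, \]
in agreement with the corollary, and is positive since each $m_\alpha>0$. The logarithmic exponent is controlled by the analytic Clemens complex $\Clan_{F,(L,D^{\sharp})}(D^{\sharp})$, whose vertex set is $\mathscr A_{L,D^{\sharp}}=\{\alpha\in\Delta\,;\,m_\alpha=\sigma d_\alpha\}$, of cardinality~$t$. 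Because $G$ is assumed to be split, the discussion in~\S\ref{subsec.wonderful} gives $\Clan_F(\partial X_\iota)=\Cl(\partial X_\iota)$; combined with the wonderful property that every intersection $D_A=\bigcap_{\alpha\in A}D_\alpha$ is non-empty and contains an $F$-rational point (for instance, any rational point of the unique closed $G\times G$-orbit, which is a product of split flag varieties by Proposition~\ref{prop.cartan}), this exhibits $\Clan_F(\partial X_\iota)$ as the full simplex on~$\Delta$. Restricting to the vertices in~$\mathscr A_{L,D^{\sharp}}$, the relevant subcomplex $\Clan_{F,(L,D^{\sharp})}(D^{\sharp})$ becomes the full simplex on~$t$ vertices, with a single maximal face of cardinality~$t$.

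Applying Theorem~\ref{theo.geometric-volume} in the case $\sigma>0$ then yields the expansion $V(B)\sim cB^{\sigma}(\log B)^{t-1}$, with the constant~$c$ an explicit positive residue integral on the closed stratum~$D_{\mathscr A_{L,D^{\sharp}}}(F)$. The only real difficulty in executing this argument is keeping track of the competing uses of the symbol~``$d_\alpha$'' (once for the coefficient of~$\alpha$ in the highest weight, once for the multiplicity of~$D_\alpha$ in~$D^{\sharp}$); once that translation is made, all of the analytic work has already been carried out in Sections~\ref{s.igusa} and~\ref{subsec.geometric-volume}, and no further geometric input beyond the wonderful structure and the splitness of~$G$ is required.
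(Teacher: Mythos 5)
Your proof is correct and follows essentially the same route as the paper: both deduce the corollary from Theorem~\ref{theo.geometric-volume} after matching the geometric data of the wonderful compactification, with $D^{\sharp}=-\div(\omega)$ supplying the measure $\tau_{(X,D^{\sharp})}$ (hence Haar measure on $G(F)$) and $L=\sum d_\alpha D_\alpha$ providing the height via $\norm{s_\iota}(\bar\iota(g))=\norm{\iota(g)}^{-1}$. You merely spell out the Clemens-complex verification that the paper leaves implicit; the one minor slip is the parenthetical appeal to Proposition~\ref{prop.cartan} for the closed orbit being a product of split flag varieties (that proposition gives a Cartan-type decomposition, not this fact), but it is harmless because the paper already records that $\Clan_F(\partial X_\iota)=\Cl(\partial X_\iota)$ for split~$G$.
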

According to our theorem, 
the positive constant~$c$ can be written as a product of
the combinatorial factor
\[ \frac1{\sigma(t-1)!} \prod_{\alpha\in A}\frac{1}{\lambda_\alpha}
\]
and an explicit integral on the stratum of~$X_\iota(F)$ defined by~$A$,
with respect to its normalized residue measure~$\tau_{D_A}$.

In the $p$-adic case, Corollary~\ref{coro.asymptotic.volume-ultrametric} similarly implies
the following result:
\begin{coro}\label{coro.maucourant.ultrametric}
Keep the same notation, assuming that $F$ is a $p$-adic field.
Then, when $B\ra\infty$,
\[ 0<\liminf \frac{V(B)}{ B^{\sigma}(\log B)^{t-1}} \leq 
\limsup \frac{V(B)}{ B^{\sigma}(\log B)^{t-1}} <+ \infty. \]
\end{coro}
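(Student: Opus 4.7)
The plan is to reduce Corollary \ref{coro.maucourant.ultrametric} to Corollary \ref{coro.asymptotic.volume-ultrametric} via the same geometric identification already used to derive Corollary \ref{coro.maucourant} from Theorem \ref{theo.geometric-volume}, and then to pass from the discrete parameter $B = q^N$ to an arbitrary real $B$ by monotonicity of $V$.

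Concretely, I take $X = X_\iota$ the smooth wonderful compactification of $G$, whose boundary $\partial X_\iota = \sum_{\alpha\in\Delta} D_\alpha$ is a strict normal crossings divisor, endow each $\mathscr O_X(D_\alpha)$ and $\omega_X$ with a smooth $v$-adic metric, and interpret $\norm{\iota(g)}$ as the local height $H_L$ attached to $L = \sum_\alpha d_\alpha D_\alpha$ via the invariant section $s_\iota$. The Haar measure on $G(F)$ becomes, up to a positive scalar, the measure $\tau_{(X,D)}$ for the divisor $D$ read off from the poles on $X_\iota$ of a biinvariant top form on $G$; by the combinatorial lemma preceding Corollary \ref{coro.maucourant}, the abscissa $\max_{\alpha}(d^{(D)}_\alpha - 1)/\lambda_\alpha$ appearing in Proposition \ref{prop.igusa-volume} coincides with $\sigma = \max_\alpha m_\alpha/d_\alpha$. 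Since $G$ is split, Proposition \ref{prop.cartan} (applied to the torus closure inside $X_\iota$) shows that every stratum $\bigcap_{\alpha\in A} D_\alpha$ carries $F$-rational points, so $\Clan_F(\partial X_\iota)$ is the full simplex on $\Delta$ and the subcomplex $\Clan_{F,(L,D)}(\partial X_\iota)$ reduces to the full simplex on the $t$-element set of simple roots attaining the maximum that defines $\sigma$.

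Applying Corollary \ref{coro.asymptotic.volume-ultrametric} to this situation yields
\[ 0 < \liminf_{N \to \infty} \frac{V(q^N)}{q^{N\sigma} N^{t-1}} \leq \limsup_{N \to \infty} \frac{V(q^N)}{q^{N\sigma} N^{t-1}} < \infty. \]
To transfer this to arbitrary $B \to \infty$, I use that $V$ is non-decreasing: for each real $B \geq q$, take $N = \lfloor \log_q B \rfloor$, so that $q^N \leq B < q^{N+1}$ and $V(q^N) \leq V(B) \leq V(q^{N+1})$; since $B^\sigma(\log B)^{t-1}$ is comparable, uniformly in this range, to $q^{N\sigma} N^{t-1}$, the upper and lower bounds transfer from the discrete sequence to the continuous parameter.

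Essentially all the substance has been handled upstream: matching the group-theoretic invariants $(\sigma, t)$ with the divisorial data required by Proposition \ref{prop.igusa-volume} is independent of whether $F$ is archimedean or ultrametric, and is carried out in the paragraphs leading to Corollary \ref{coro.maucourant}. The only genuinely new ingredient here is the elementary monotonicity argument upgrading $V(q^N)$ to $V(B)$, which is the natural obstacle that arises simply because the ultrametric Tauberian input (Corollary \ref{coro.asymptotic.volume-ultrametric}) is inherently a statement about a geometric subsequence.
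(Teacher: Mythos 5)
Your argument is correct and is exactly what the paper intends by the terse phrase ``Corollary~\ref{coro.asymptotic.volume-ultrametric} similarly implies the following result'': one transports the identification of $(\sigma,t)$ with the divisorial invariants of $(X_\iota,\partial X_\iota)$ established for Corollary~\ref{coro.maucourant}, then applies the ultrametric Tauberian corollary. The one place you add something the paper leaves tacit is the passage from the geometric subsequence $B=q^N$ to arbitrary real $B$; your monotonicity argument ($V(q^{\lfloor\log_q B\rfloor})\le V(B)\le V(q^{\lfloor\log_q B\rfloor+1})$ together with the uniform comparability of $B^\sigma(\log B)^{t-1}$ with $q^{N\sigma}N^{t-1}$ on each interval $[q^N,q^{N+1})$) is the right way to fill it, and in fact in the case where the metric on $\mathscr O_X(L)$ is defined by a model one even has $V(B)=V(q^{\lfloor\log_q B\rfloor})$ exactly, so the interpolation is literal rather than merely up to bounded factors. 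No gap.
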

\subsubsection{The case of a general representations (split group)}
We now explain how to treat non-irreducible representations,
still assuming that the group~$G$ is split.

By Proposition~6.2.5 of~\cite{brion-kumar2005},
there is a diagram of equivariant compactifications
\[ \xymatrix{   & {\tilde X} \ar[ld]_{\pi} \ar[rd] \\
    X_\iota && X_w } \]
where $X_w$ is the  wonderful compactification previously studied
and $\tilde X$ is smooth. Since the boundary divisor of a
smooth toric variety is  a divisor with strict normal crossings,
it then follows from~\cite{brion-kumar2005}, Proposition~6.2.3,
that the boundary~$\partial\tilde X$ of~$\tilde X$    is a divisor with
strict normal crossings in~$\tilde X$.
Moreover, the proof of this proposition and the local
description of toroidal $G$-embeddings in \emph{loc. cit.}, \S6.2,
show that $\tilde X$ is obtained from~$X_w$ by a sequence
of $T$-equivariant blow-ups.
Therefore, the boundary~$\partial\tilde X$
consists of the strict transforms~$\tilde D_\alpha$ of the divisors~$D_\alpha$,
indexed by the simple roots, and of the exceptional divisors~$E_i$
(for $i$ in some finite index set~$I$). These divisors form
a basis of the effective cone in the Picard group.
The anticanonical line bundle of~$\tilde X$ decomposes   as a sum
\[ \sum_{\alpha\in\mathscr A} (m_\alpha+1) D_\alpha + \sum_{i\in I} E_i.\]

Let $\tilde L$ be the line bundle $\pi^*\mathscr O(1)$ on~$\tilde X$;
let us write it as $\tilde L=\sum \tilde\lambda_\alpha D_\alpha + \sum\tilde\lambda_i E_i$
in the above basis.
Since we can compute volumes on~$\tilde X$, our estimates
imply that $V(B)$ has an asymptotic expansion
of the form given in Corollary~\ref{coro.maucourant}, 
$\sigma$ being given by
\[ \sigma= \max \left( \max_{\alpha\in\mathscr A} \frac{m_\alpha}{\tilde\lambda_\alpha},
 \max_{i\in I} \frac{0}{\tilde\lambda_i} \right) 
 = \max_{\alpha\in\mathscr A} \frac{m_\alpha}{\tilde\lambda_\alpha},
\]
and $t$ is again the number of indices~$\alpha\in\mathscr A$
where equality holds.

Observe that the definition of~$\sigma$
precisely means that the line bundle 
$\sigma\pi^* L-(K_{\tilde X}+\partial \tilde X)$ belongs to the boundary 
of the effective cone of~$\tilde X$; the integer~$t$
is then the codimension of the face of minimal dimension
containing that line bundle.
These properties can be checked on restriction to the toric
variety~$\tilde Y$ given by the closure of~$T$ in~$\tilde X$;
indeed the restriction map $\Pic(\tilde X)\ra\Pic(\tilde Y)$
is an isomorphism onto the part of~$\Pic(\tilde Y)$ invariant
under the Weyl group,  and similarly for the effective cone.
To determine~$\sigma$, 
it now suffices to test for the positivity of  the piecewise linear
(\pl) function on the vector space~$\mathfrak a^*$ associated to this
divisor.

The above formula for the anticanonical line bundle
of~$\tilde X$ implies that the \pl function 
for~$K_{\tilde X}+\partial\tilde X$
is that of $K_{X_w}+\partial {X_w}$.

On the other hand, the theory of heights on toric varieties
relates the \pl function corresponding
to a divisor to the  normalized local height function
(see~\cite{batyrev-t95b}).
The formula is as follows. Let $D$ be an effective $T$-invariant divisor,
the corresponding $T$-linearized line bundle has a canonical 
$T$-invariant nonzero global section~$s_D$. For $t\in T(F)$,
let $\ell(t)$ be the linear form on~$\mathfrak a$
defined by $\chi\mapsto \log\abs{\chi(t)}$; dually, one
has $\ell(t)(\chi)=\langle\chi,\log(t)\rangle$.
Then, the norm of~$s_D$ with respect to the canonical normalization 
is given by
\[ \log\norm{s_D(t)}^{-1} = \phi_D(\ell(t)). \]
If the local height function is not normalized, the previous
equality only holds up to the addition of a bounded term.

In our case, let $\Phi$ 
be the set of weights of~$T$ in the representation~$\iota$. Then,
for $t\in T(F)$, 
\[ \norm{\iota(t)} \approx \max_{\chi\in\Phi} \abs{\chi(t)}, \]
hence
\[ \log\norm{s_\iota(t)}^{-1} = \log \norm{\iota(t)}
 = \max_{\chi\in\Phi} \log\abs{\chi(t)} + \mathrm O(1). \]
In other words, the line bundle~$\pi^*L$
corresponds to the \pl function 
$\phi_\iota = \max_{\chi\in\Phi} \langle\chi, \cdot\rangle$
on~$\mathfrak a^*$.

As explained above, $\sigma$ is the least positive real
number such that
the \pl function $ s\phi_\iota $ is greater than
the \pl function~$\phi_\beta$ associated to~$K_{X_w}^{-1}(-\partial X)$.
Since the Weyl group acts trivially on the Picard group
of~$\tilde X$, these \pl functions are invariant under the action
of the Weyl group and it is sufficient to test the inequality 
on the positive Weyl chamber~$C$ in~$\mathfrak a$.

Let $(\varpi_\alpha)$ denote the basis of~$\mathfrak a^*$
dual to the basis~$(\alpha)$ --- up to the usual identification
of $\mathfrak a$ with its dual given by the Killing
form of~$G$, this is the basis of fundamental weights.
One has $C=\sum_\alpha\R_+\varpi_\alpha$.

Recall also that $\beta$ is the sum of the fundamental weights of~$G$,
hence belongs to the positive Weyl chamber in~$\mathfrak a^*$.
It follows that $w\beta\geq\beta$ for any element~$w$ in the Weyl group~$W$
(Bourbaki, LIE VI, \S1, Proposition~18, p.~158). It follows that
$\langle w\beta,\varpi_\alpha\rangle\leq\langle \beta,\varpi_\alpha\rangle$
for any $w\in W$. Consequently,
\[ \phi_\beta(y) = \max\langle w\beta,y\rangle = \langle\beta,y\rangle \]
for any $y\in C$.
Moreover, if $y=\sum y_\alpha\varpi_\alpha\in\mathfrak a$,
then 
\[ \langle \beta,y\rangle=\langle \sum m_\alpha \alpha,\sum y_\alpha\varpi_\alpha\rangle = \sum m_\alpha y_\alpha. \]

Let $\Lambda$ be the set of dominant weights of~$\iota$
with respect to~$C$. For any $\lambda\in\Lambda$,
let us write $\lambda=\sum \lambda_\alpha \alpha$, for some
nonnegative $\lambda_\alpha\in\Z$. 
Consequently, for any $y=\sum y_\alpha\varpi_\alpha\in C$,
with $y_\alpha\geq 0$ for all~$\alpha$, one has
\[ \phi_\iota(y) = \max_{\chi\in\Phi}\langle\chi,y\rangle
= \max_{\lambda\in\Lambda} \langle\lambda,y\rangle
 = \max_{\lambda\in\Lambda} \left(\sum_\alpha \lambda_\alpha y_\alpha\right).
\]
The condition that $s\phi_\iota(y)\geq \phi_\beta$ on~$C$
therefore means that $s\max_{\lambda\in\Lambda}\lambda_\alpha\geq m_\alpha$
for any $\alpha\in\Phi$. In other words, 
$ \sigma = \max (m_\alpha/\tilde\lambda_\alpha)$,
where we have set $\tilde\lambda_\alpha= \max_{\lambda\in\Lambda}\lambda_\alpha$.
Moreover, $t$ is the number of simple roots~$\alpha$ such that
$\tilde\lambda_\alpha \sigma=m_\alpha$.

As in~\cite{maucourant2007},
let $\mathscr C$ be the convex hull of~$\Phi$ in~$\mathfrak a^*$;
it is a compact polytope whose dual~$\mathscr C^*$
is defined by the inequality $\phi_\iota(\cdot)\leq 1$
in~$\mathfrak a^*$. 
Let $s$ be a positive real number. By definition, $\beta/s$
belongs to~$\mathscr C$ if and only
if $\langle \beta/s, y\rangle\leq 1$ for any $y\in\mathfrak a^*$
such that $\phi_\iota(\mathfrak a)\leq 1$. This is precisely
equivalent to the fact that $s\phi_\iota-\langle\beta,\cdot\rangle$
is nonnegative on~$\mathfrak a^*$. 

Let $y\in \mathfrak a^*$ and let $w\in W$ be such that $wy\in C$.
Since $\phi_\iota$ is invariant under~$W$,
\[ s\phi_\iota(y)-\langle\beta,y\rangle = s\phi_\iota(wy)-\langle w^{-1}\beta,wy\rangle
\leq  s\phi_\iota(wy)-\langle \beta,wy \rangle, \]
with equality if and only if $w=e$.
We have thus shown that $\beta/s\in\mathscr C$  if and only
if the \pl function $s\phi_\iota-\phi_\beta$ is nonnegative.
In other words, $\sigma$ is the least positive real number
such that $\beta/\sigma$ belongs to~$\mathscr C$,
as claimed by Maucourant in~\cite{maucourant2007}.

Let $\mathscr F$ be the face of~$\mathscr C$ containing~$\beta/\sigma$.
It is also explained in~\cite{maucourant2007} 
that the dual face of~$\mathscr F$
is contained in the positive Weyl chamber~$C$ (Lemma~2.3)
and is given by
\[ \mathscr F^*=\{ y\in \mathscr C^* \,;\, \langle \beta,y\rangle=\sigma\}.\]
If again we decompose $y$ as $\sum y_\alpha\varpi_\alpha$,
it follows that $\mathscr F^*$ identifies as the set
of nonnegative $(y_\alpha)$ such that $\phi_\iota(y)\leq 1$
and $\langle \beta,y\rangle=\sigma$. 
The first condition gives us $\sum\tilde\lambda_\alpha y_\alpha\leq 1$
and the second is equivalent to $\sum m_\alpha y_\alpha =\sigma$.
This implies $\sum y_\alpha=1$ 
and $y_\alpha=0$ if $\sigma \neq m_\alpha/\tilde\lambda_\alpha$.
Consequently, $\codim\mathscr F=\dim \mathscr F^*=t$,
showing the agreement of our general theorem with
the result obtained by Maucourant in~\cite{maucourant2007},
under the assumption that $G$ is split.

\subsubsection{The general case}
We now treat the general case of a possibly non-split group. 
Let $T$ be be a maximal split torus in~$G$, so that
its Lie algebra~$\mathfrak a_\R$ is a Cartan subalgebra of
$\Lie(G)$.
We have already explained how the $F$-analytic Clemens complex
of~$X$ is related to the toric variety~$Y$ given by the closure of~$T$
in~$X$. As a consequence, all positivity conditions and dimensions
of faces which intervene in our geometric result rely only
on the divisors which are ``detected''  by~$Y$,
hence are expressed in terms of \pl functions in~$\mathfrak a^*$.
The previous analysis now applies verbatim and allow us again
to recover Maucourant's theorem.

\subsubsection{Adelic volumes}
Let us now assume that $F$ is a number field.
Theorem~\ref{theo.volume.global} 
describes the analytic behavior of the volume --- with
respect to the Haar measure --- of adelic
points in~$G(\AD_F)$ of height~$\leq B$, when $B\ra\infty$.
This allows to recover Theorem 4.13 in~\cite{gorodnik-m-o2009}.
In fact, that Theorem itself is proved as a corollary
of the analytic behavior of the associated Mellin transform
which had been previously shown in~\cite[Theorem 7.1]{shalika-tb-t2007}.
Similarly, this analytic behavior is a particular case of
our Proposition~\ref{prop.zeta.global}.

\subsection{Relation with the output of the circle method}

Let $X$ be a non-singular, geometrically irreducible, closed
subvariety of an affine space~$V$ of dimension~$n$ over a number field~$F$.
When $F=\Q$, $W=\Aff^n$ and $X$ is defined as the proper intersection of
$r$ hypersurfaces defined 
Let $n$ be a positive integer and let $f_1,\dots,f_r\in\Z[X_1,\dots,X_n]$
be polynomials with integer coefficients.

The circle method furnishes eventually an estimate, when 
the real number~$B$ grows to~$\infty$, of the number~$N(X,B)$
of solutions $x\in\Z^n$ of the system~$X$ given by $f_1(x)=\dots=f_r(x)=0$
whose ``height'' $\norm x$ is bounded by~$B$.
A set of conditions under which the circle method applies
is given by~\cite{birch62}; it suffices that 
the hypersurfaces~$\{f_i=0\}$ meet properly and define
a non-singular codimension~$r$ subvariety of~$\Aff^n$, and that
$n$ is very large in comparison to the degrees of the~$f_i$.

Let us assume that $X=V(f_1,\dots,f_r)$ is smooth and has codimension~$r$.
By the circle method, an approximation for~$N(B)$ is given
by a product of ``local densities'': for any prime number~$p$,
let 
\[ \mu_p(X) = \lim_{k\ra\infty } \frac{\Card X(\Z/p^k\Z)}{p^{k\dim X}};\]
for the infinite prime, let 
\[ \mu_\infty (X,B)= \lim_{\eps\ra 0 } \eps^{-r} \vol \{x\in\R^n\,;\, \norm x\leq B, \ \abs{f_i(x)}<\eps/2\}, \]
where $\vol$ refers to the Euclidean volume in~$\R^n$.
In some cases, one can indeed 
prove that $N(X,B)\sim \mu_\infty(X,B)\prod_{p<\infty}\mu_p(X)$
when $B\ra\infty$.

As already observed by~\cite{borovoi-r1995}  (Section~1.8)
we first want to recall that the right-hand-side $V(X,B)$ 
of this asymptotic expansion is really a volume.
Under the transversality assumption we have made on
the hypersurfaces defined by the~$f_i$,
there exists a differential form~$\tilde\omega$ on
a neighborhood of~$X$ in~$\Aff^n$
such that 
\[ \tilde\omega\wedge\mathrm df_1\wedge\dots\wedge\mathrm df_r 
=\mathrm dx_1\wedge\dots\wedge\mathrm dx_n. \]
The restriction of~$\tilde\omega$ to~$X$ is a well-defined
gauge form~$\omega$ on~$X$ and
\[ \mu_\infty(X,B)=\int_{\substack{ X(\R) \\ \norm x\leq B}} \mathrm d\abs\omega_\infty (x),
\qquad 
  \mu_p(X) = \int_{X(\Z_p)} \abs\omega_p\quad\text{($p$ prime).} \]

Let us write $[t:x]$ 
for the homogeneous coordinates of a point in~$\mathbf P^n(K)$, so that $t\in K$
and $x=(x_1,\dots,x_n)\in K^n$.
One then defines an metric on $\mathscr O(1)$ by the formula
\[ \norm{s_F}([t:x])=\frac{\abs{F(t,x)}}{\max(\abs t,\norm x)^{\deg F}} \]
for 
any point $[t:x]\in\P^n(\R)$
and 
any homogeneous polynomial $F$ in $\R[t,x]$,
$s_F$ being the section of~$\mathscr O(\deg F)$ attached to~$F$.
Let $\mathsec f_0$ be the section corresponding to $F=t$;
for $B\geq 1$, the condition $\norm x\leq B$ can thus be
translated to $\norm{\mathsec f_0}([1:x])\geq 1/B$.

First assume that the Zariski closure of~$X$ in~$\mathbf P^n$
is smooth; let $D$ be the divisor $\mathbf V(\mathsec f_0)$ in~$X$. 
Letting $d=\sum\deg(f_i)$, the divisor of~$\omega$ is equal to $d-n-1$.
In that case, the measure $\abs\omega$ coincides
with the measure $\tau_{(\bar X,(n+1-d)D}$ on~$X(\R)$, so
that $\mu_\infty(X,B)$ is an integral of the form studied
in this paper, namely
\[ \mu_\infty(X,B) = \tau_{(X,(n+1-d)D)}( \norm{\mathsec f_0}\geq 1/B ). \]
Assume $n> d$.
Then, when $B$ converges to infinity, Theorem~\ref{theo.geometric-volume}
implies that $\mu_\infty(X,B)\approx B^{n-d} (\log B)^{b-1}$,
where $b$ is the dimension of the Clemens complex of
the divisor~$D(\R)$, that is the maximal number of components
of $D(\R)$ that have a common intersection point.
(When $n=d$, a similar result holds except that $b-1$ has to be
replaced by~$b$.)

The paper \cite{duke-r-s1993}
showed that this asymptotic does not hold 
for the specific example of a generic $\operatorname{SL}(2)$-orbit
of degree~$d$ binary forms. For example, when $d=3$,
this amounts to counting points on an hypersurface of degree~$4$
in~$\P^4$ and the expected exponent~$0=4-4$ is replaced by~$2/3$.
This discrepancy is explained by~\cite{hassett-tschinkel2003}:
observing that $(\bar X,D)$ is not smooth in that case,
they computed a log-desingularization of $(\bar X,D)$
on which the behaviour of the integral $\mu_\infty(X,B)$
can be predicted.

In the general case,
let us thus consider a projective smooth compactification $\bar Y$ of~$X$
together with a projective, generically finite,
morphism $\pi\colon\bar Y\ra\bar X $
such that, over~$\bar\Q$, the  complementary divisor~$E$ to~$X$ in~$\bar Y$
has strict normal crossings. Let $E_\alpha$ be the irreducible
components of~$E$.  
It is customary to assume that $\pi$ induces an isomorphism over~$X$,
but this is not necessary in the following analysis;
it is in fact sufficient to assume that the degree of~$\pi$
is constant on the complement to a null set.

Let us pose $\eta=\pi^*\omega$; then, the divisor of~$\eta$
has the form $\div(\eta)=-\sum\rho_\alpha E_\alpha=-E'$, so that
\[ \mu_\infty(X,B) = \int_{\substack{\bar Y(\R) \\\norm{ \pi^*\mathsec f_0}\geq 1/B}}  \abs\eta = \tau_{(\bar Y,E')} (\{ \norm{\pi^*\mathsec f_0}\geq 1/B\}).
\]
Write also $\pi^*D=\sum\lambda_\alpha E_\alpha$.
According to Theorem~\ref{theo.geometric-volume},
$\mu_\infty(X,B)\approx B^a(\log B)^{b-1}$ where now,
$ a= \max (\rho_\alpha-1)/\lambda_\alpha$, the minimum
being restricted to those $\alpha$ such that $E_\alpha(\R)\neq\emptyset$;
the integer~$b$ is the dimension of the subcomplex
of the analytic Clemens complex
consisting of those~$E_\alpha$ achieving this minimum.
(When $a=0$, $b-1$ is replaced by~$b$.)

Let us give the specific example of an $\SL(2)$-orbit 
in the affine space of binary forms of degree~$d$,
as treated in~\cite{hassett-tschinkel2003}.
That paper constructs a pair $(\bar Y,E)$ with an action of~$\mathfrak S_n$
such that $E=E_1+E_2$ has two irreducible components, 
the divisors $E_1$ and~$E_2$ (denoted $A[n-1]$ and~$A[n]$ in that paper)
forming a basis of the $\mathfrak S_n$-invariant part of $\Pic(\bar Y)$, 
Moreover, $K_{\bar Y}^{-1}=E_1+2E_2$.
They compute the inverse image of~$\mathscr O(1)$
and obtain $\frac{n-2}2 E_1+\frac n2 E_2$ (\loccit, Lemma 3.3).
Consequently, $(\rho_1-1)/\lambda_1=0$ and
$(\rho_2-1)/\lambda_2=2/n$. Finally, $a=2/n$ and $b=1$,
implying that $\mu_\infty(X,B)\approx B^{2/n}$.

Let us return to the general case,
and assume that  $K_{\bar X}$ is $\Q$-Cartier.
Then, the \emph{log-discrepancies} $(\eps_\alpha)$ are defined by the formula
\[ K_{\bar Y}(E) = \pi^*(\bar K_{\bar X}(D)) + \sum\eps_\alpha E_\alpha, \]
so that $1-\rho_\alpha=(d-n)\lambda_\alpha+\eps_\alpha$ for any~$\alpha$.
Then,
\[ a= n-d + \min_\alpha \frac{-\eps_\alpha}{\lambda_\alpha}, \]
where, again, the minimum is restricted to those~$\alpha$
such that $E_\alpha(\R)\neq\emptyset$.
When $(\bar X,D)$  has log-canonical singularities, $\eps_\alpha\geq 0$
for each~$\alpha$
and one obtains again $a\leq n-d$ since the log-discrepancy corresponding
to the strict transform of the components of~$D$ is~$0$.

However, in the case treated by~\cite{hassett-tschinkel2003},
$\eps_1=0$ and $\eps_2=-1$, leading to the
opposite inequality $a=\frac2n>0$.

\subsection{Matrices with given characteristic polynomial}

\subsubsection{}

Let $V_P$ be the $\Z$-scheme of matrices in the affine $4$-space
whose characteristic polynomial is $X^2+1$,
and let $B_T$ be the euclidean ball of radius~$T$ in $\R^4$.
Shah gives in~\cite{shah2000} the following
asymptotic expansion:
\[ \# (V_P(\Z)\cap B_T) \sim T \zeta_K^*(1) \dfrac{\pi^{1/2}}{\Gamma(3/2)}
\dfrac{\pi}{\Gamma(2/2)\zeta(2)}
   = T \zeta_K^*(1) \frac{2\pi}{\zeta(2)} = C_{X^2+1}\, T  , \]
where $\zeta_{\Q(i)}^*$ is the leading term at~$1$ of Dedekind's
zeta function relative to the number field $\Q(i)$.

A matrix in~$V_P$ has the form
$\left(\begin{smallmatrix} x & z \\ y & -x \end{smallmatrix}\right)$
with $x^2+yz+1=0$.
Let $X$ denote the subvariety of $\P^3$ defined by $X^2+YZ+T^2=0$. It
is smooth of dimension~$2$ over $\Z[1/2]$.
The scheme $V_P$ is exactly the open subset $U\subset X$ defined by $T\neq 0$.
The divisor at infinity $D=X\setminus U$ is defined by $T=0$;
it is smooth as well (still over $\Z[1/2]$).

\subsubsection{}
Let $p$ be an odd prime number.
One has $\# U(\F_p)=p^2+\legendre{-1}{p} p$.
Indeed, $z\neq 0$ gives $p(p-1)$ points. If $z=0$, $y$ may be
arbitrary and $x$ has to be $\pm\sqrt{-1}$, hence
$2p$ points if $-1$ is a square and $0$ else.
Finally,
\[ \vol U(\Z_p) = p^{-2} \# U(\F_p) = 1 + \legendre{-1}{p} \frac1p. \]

For $p=2$, we split $U(\Z_2)$ in two parts:
\begin{itemize}
\item $y$ odd, so that $z=-(x^2+1)/y$. This has measure $1/2$.
\item $y=2y'$ even, which implies that $y'$ and $x=2x'+1$ odd, so that
\[ z=-(x^2+1)/y= - (1+2x'+2(x')^2)/y'. \]
This has measure $1/4$. Indeed, it parametrizes as
\[ 
\left\{
\begin{aligned}
 x& =1+2u \\ y& = \frac{2}{1+2v} \\ z& = (1+2u+2u^2)(1+2v) =
1+2u+2v+2u^2(1+2v)+4uv, \end{aligned}\right.
\]
with $(u,v)\in(\Z_2)^2$.
The differential of this map is given by the matrix
\[ \begin{pmatrix} 2  & 0 \\
       0 & -4(1+2v)^{-2} \\
 2 + 4u(1+3v) & 2 + (4u^2+4u) \end{pmatrix}
 = 2 \begin{pmatrix} 1 & 0 \\ 0&  -2(1+2v)^{-2} \\ 1+2u(1+3v) & 1+2u(1+u)
\end{pmatrix}\]
and is twice a $2\times 3$ matrix with coefficients in~$\Z_2$,
of which one of the $2\times 2$ is invertible.
Therefore, the measure
of its image is $\abs{2}_2^2=1/4$.
\end{itemize}
Finally, $\vol U(\Z_2)=1/2+1/4=3/4$.

\subsubsection{}
The virtual character $\EP (U)$ is $-[\chi]$,
for $\chi=(-1/\cdot)$ the quadratic character corresponding to $\Q(i)$.
We will prove this in general below, however, this can be seen
directly as follows. 

First, $U$ is an hypersurface of the affine~$3$-space,
so it has no non-constant invertible function.
Let us now study its Picard group.
Over $\Q(i)$, $X$ is the hypersurface of~$\P^3$ defined
by the equation~$(x+it)(-x+it)=yz$, so is isomorphic to~$\P^1\times\P^1$,
the two factors being interchanges by the complex conjugation.
Consequently, $\Pic(X_{\bar F})$, being generated by these two lines,
is the Galois module $\mathbf 1\oplus[\chi]$.
The result then follows from the fact that  
$\Pic(X_{\bar F})$ maps surjectively to~$ \Pic(U_{\bar F})$,
its kernel being the submodule generated by the class
of the hyperplane of equation~$t=0$ in~$\P^3$.

The ${\rm L}$-function of~$\EP (U)$ has local factors
${\rm L}_p(\EP (U),s)=1-\legendre{-1}p p^{-s}$.
The product ${\rm L}(\EP (U),s)$ of all ${\rm L}_p$ for $p>2$
is exactly ${\rm L}(\chi,s)^{-1}$. At $s=1$, it has neither a pole, nor a zero.

\subsubsection{}
One has
\[ \vol U(\Z_2) \prod_{p>2} \vol U(\Z_p) \mathrm L_p(1)
     = \frac{3}4 \prod_{p>2} \left( 1 - p^{-2} \right)
   =  \frac {6}{\pi^2}
  = \zeta(2)^{-1}.
 \]
As ${\rm L}(\chi,s)\zeta(s)= \zeta_{\Q(i)}(s)$,
one has
${\rm L}(\chi,1) = \zeta_{\Q(i)}^*(1)$ and the normalized volume
of $U(\AD_f)$ is equal to
\[   \zeta_{\Q(i)}^*(1) \frac1{\zeta(2)}. \]

\subsubsection{}
Now we have to compute the volume of the divisor at infinity.
Due to the way matrices are counted in~\cite{shah2000}
(one wants $2x^2+y^2+z^2\leq T^2$)
the natural metric on the tautological bundle of $\P^3$
is given by the formula
\[  \norm{s_P}(x:y:z:t) = \frac{P(x,y,z,t)}{ \max\big(2\abs x^2+\abs y^2 + \abs
z^2 , \abs t^2\big)^{\deg P /2}}, \]
where $P$ is a homogeneous polynomial in $4$~variables
and $s_P$ the corresponding section of $\mathcal O(\deg P)$.

The divisor $D$ has equations $s_1=X^2+YZ+T^2=0$ and $s_2=T=0$ in~$\P^3$.
We'll compute its volume using affine coordinates $(y,z,t)$ for $\P^3$
and $x$ for~$D$.
Affine equations of~$D$ corresponding to the two previous sections
are $1+yz+t^2=0$ and $t=0$.
One has
\[ \mathrm d(1+yz+t^2)\wedge \mathrm dz \wedge \mathrm dt
   = z \mathrm dy\wedge\mathrm dz\wedge\mathrm dt, \]
and by definition, 
\[ 
\norm{\mathrm dy\wedge\mathrm dz\wedge\mathrm dt}
 = \max\big( 2+y^2+z^2,t^2\big)^2 . \]
Consequently, on~$D$, one obtains the following equalities
\[ \lim \frac{\norm{s_1}}{\abs{1+yz+t^2}}= \frac{1}{\max(2+y^2+z^2,t^2)}
=\frac1{2+y^2+z^2}  \]
(since $t=0$ on~$D$) and 
\[ \lim \frac{\norm{s_2}}{\abs t}= \frac{1}{\max(2+y^2+z^2,t^2)^{1/2}}
= \frac1{(2+y^2+z^2)^{1/2}}. \]
Finally, 
\begin{align*}
 \norm{\mathrm dz}
& = \norm{z \mathrm dy\wedge\mathrm dz\wedge\mathrm dt}
   \lim \frac{\norm{s_1}}{\abs{1+yz+t^2}} \lim \frac{\norm{s_2}}{\abs t} \\
& = \abs{z} \max\big( 2+y^2+z^2,t^2\big)^{1/2} 
 = \abs z \big( 2+(1/z)^2+z^2\big)^{1/2} \\
& = \big( 1+2z^2+z^4 \big)^{1/2} 
= 1+z^2, \end{align*}
so that the canonical measure on $D(\R)$ is given by
$\abs{\mathrm dz}/(1+z^2)$.
Hence, $\vol D(\R)=\pi$.

\subsubsection{}
Finally, we obtain that 
Shah's constant $C_{X^2+1}$ satisfies
\[ C_{X^2+1} = 2 \vol D(\R) \vol U(\AD_f), \]
compatibly with Theorem~\ref{theo.geometric-volume}.

\clearpage

\appendix
\def\lcoeff{\operatorname{lcoeff}}

\small

\section{Tauberian theorems}

\def\thesubsubsection{\thesection.\arabic{subsubsection}}
\def\theequation{\thesection.\arabic{equation}}

Let $(X,\mu)$ be a measured space and $f$ a positive measurable
function on~$X$. Define
\[ Z(s) = \int_{X} f(x)^{-s}\, d\mu(x) \quad\text{and}\quad
  V(B) = \mu(\{f(x)\leq B\}). \]
One has
\[ Z(s) = \int_0^{+\infty} B^{-s}\, dV(B). \]

\begin{theo}
\label{theo.tauber}
Let $a$ be a real number;
we assume that $Z(s)$ converges for~$\Re(s)>a$ and extends to a
meromorphic function in the neighborhood of 
the closed half-plane~$\Re(s)\geq a-\delta$,
for some positive real number~$\delta$.

If $a<0$, then $V(B)$ has the limit $Z(0)$ when $B\ra\infty$.

Furthermore, assume that:
\begin{enumerate}
\item 
$Z$ has a pole of order~$b$ at $s=a$ and no other pole in
the half-plane $\Re(s)\geq a-\delta$.
\item
$Z$ has moderate growth in vertical strips, \emph{i.e.},
there exists a positive real number~$\kappa$ such that for any~$\tau\in\R$,
\[ \abs{Z(a-\delta+i\tau)} \ll (1+\abs{\tau})^\kappa . \]
\end{enumerate}
Then, there exist a monic polynomial~$P$, a real number~$\Theta$
and a positive real number~$\eps$
such that, when $B\ra\infty$,
\[ V(B) = \begin{cases}
 \Theta B^a P(\log B) + \mathrm O(B^{a-\eps}) & \text{if $a\geq 0$;}\\
Z(0)+\Theta B^aP(\log B) + \mathrm O(B^{a-\eps}) & \text{if $a<0$.} \end{cases}
\]
Moreover, if $a\neq 0$,
then
\[ \deg(P) = b-1 \quad\text{and}\quad \Theta \, a\,  (b-1)! = \lim_{s\ra a}(s-a)^b Z(s)
\]
while 
\[ \deg(P)=b \quad\text{and}\quad \Theta \,b! = \lim_{s\ra a}(s-a)^b Z(s
)
\]
if $a=0$.
\end{theo}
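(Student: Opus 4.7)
The case $a<0$ of the first (unconditional) assertion is immediate by monotone convergence: since $Z(s)$ converges at $s=0$, one has $Z(0) = \int_X d\mu = \mu(X) < \infty$, and $V(B) = \mu(\{f\leq B\}) \nearrow \mu(X) = Z(0)$ as $B\to\infty$.

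For the refined asymptotic my plan is to proceed by inverse Mellin transformation. Integration by parts in the Mellin--Stieltjes identity $Z(s)=\int_0^\infty t^{-s}\,dV(t)$ gives $Z(s)/s = \int_0^\infty V(t)t^{-s-1}\,dt$ on $\Re(s) > \max(a,0)$, whence for any $c > \max(a,0)$
\[ V(B) = \frac{1}{2\pi i}\int_{c-i\infty}^{c+i\infty}\frac{Z(s)}{s}B^s\,ds \]
in a truncated sense. I would then shift the contour from $\Re(s)=c$ to $\Re(s)=a-\eps$ for some $\eps\in(0,\delta)$. In the resulting strip the integrand has exactly these poles: $s=a$, of order $b$ (from $Z$); and, when $a<0$, the simple pole $s=0$ (from $1/s$); when $a=0$ these merge into a single pole of order $b+1$. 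The residue theorem applied to a finite rectangle $[a-\eps,c]\times[-T,T]$, combined with the moderate-growth bound $|Z(a-\eps+i\tau)|\ll (1+|\tau|)^\kappa$ on horizontal and shifted vertical segments, yields
\[ V(B) = \sum \text{Residues} + \mathrm{O}(B^{a-\eps'}) \]
for some $\eps'>0$. A Laurent expansion then reads off the residue at $s=a$ as $\Theta B^a P(\log B)$ with $P$ monic of the asserted degree: expanding $B^s = B^a\sum_k(s-a)^k(\log B)^k/k!$ against $Z(s)/s = A_b/\bigl(a(s-a)^b\bigr) + \cdots$ (where $A_b=\lim_{s\to a}(s-a)^bZ(s)$) produces leading coefficient $\Theta = A_b/\bigl(a(b-1)!\bigr)$ when $a\neq 0$, and $A_b/b!$ when $a=0$. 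When $a<0$ the residue at $s=0$ additionally contributes the constant $Z(0)$, matching the extra additive term in the statement.

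The principal technical obstacle is that $Z$ has only polynomial growth on vertical lines, so the Mellin inversion integral is not absolutely convergent and the contour shift must be justified carefully. Two standard remedies apply: either truncate the integrals at height $T$, bound horizontal pieces by $\ll B^cT^{\kappa-1}$, the shifted vertical line by $\ll B^{a-\eps}T^{\kappa+1}$, and optimize $T$ as a power of $B$ so that all error contributions are absorbed into $\mathrm{O}(B^{a-\eps'})$; or, more robustly, integrate by parts $k$ times with $k>\kappa$ to replace $Z(s)/s$ by $Z(s)/\bigl(s(s+1)\cdots(s+k)\bigr)$---whose inverse transform is the iterated antiderivative $V_k(B)=\int_0^B V_{k-1}(t)\,dt$, for which the shifted integral is absolutely convergent---and then recover $V(B)$ by $k$-fold finite differencing à la Landau, the monotonicity of $V$ controlling the differencing error to the same order.
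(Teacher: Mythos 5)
Your argument is correct and, via the second (``more robust'') remedy you sketch, is essentially the paper's own proof: the paper also smooths $V$ by passing to $V_k(B)=\frac1{k!}\int_X(\log^+\! B/f)^k\,d\mu$ with $k>\kappa$, so that the shifted Mellin integral converges absolutely, extracts the residue at $s=a$ (and at $s=0$ when $a\le0$), and then recovers $V$ by $k$ successive multiplicative finite differences with increment $u=\pm B^{-\eps}$, using monotonicity of $V_{k-1}$ to sandwich. The only bookkeeping difference is that the paper uses the logarithmic kernel $1/s^{k+1}$ (i.e.\ $V_k'(\log B)=V_{k-1}$), whereas your formula $V_k(B)=\int_0^B V_{k-1}(t)\,dt$ corresponds to the Ces\`aro kernel $1/\bigl(s(s+1)\cdots(s+k)\bigr)$; both variants make the contour shift absolutely convergent and the differencing step go through, so the discrepancy is harmless.
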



For any integer~$k\geq 0$, let us define
\[ V_k(B)= \frac1{k!} \int_{f(x)\leq B} \left( \log\frac B{f(x)}\right)^k \,d\mu(x)
=  \frac1{k!} \int_X \left(\log^+ \frac B{f(x)}\right)^k\,d\mu(x), \]
where $\log^+(u)=\max(0,\log u)$ for any positive real number~$u$.

\begin{lemm}\label{lemm.tauber.k}
Let $k$ be an integer such that~$k>\kappa$. Then there exist
polynomials~$P$ and~$Q$ with real coefficients, a real number~$\delta>0$
such that 
\[ V_k(B) = B^a P(\log B) + Q(\log B) + \mathrm O(B^{a-\delta}). \]
Moreover, the polynomials $P$ and $Q$ satisfy
\begin{gather}
 P(T) = \begin{cases}
    \frac{1}{a^{k+1}(b-1)!} \Theta  T^{b-1}+\dots & \text{if $a\neq 0$,} \\
    0 & \text{if $a=0$;} \end{cases} \\
 Q(T)= \begin{cases}
       0 & \text{if $a>0$;} \\ 
    \frac{1}{(b+k)!} \Theta T^{b+k} + \dots  &\text{if $a=0$;} \\
  \frac1{k!} Z(0) T^{k}+\dots & \text{if $a<0$.} 
\end{cases}
\end{gather}
\end{lemm}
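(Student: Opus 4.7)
The plan is to represent $V_k(B)$ as an inverse Mellin transform and then shift the contour past the poles. The starting identity is
\[ \frac{1}{k!}\bigl(\log^+(B/t)\bigr)^k = \frac{1}{2\pi i}\int_{\sigma-i\infty}^{\sigma+i\infty}\frac{(B/t)^s}{s^{k+1}}\,ds, \]
valid for any $\sigma>0$ and $t>0$: when $B>t$ one shifts the contour to $-\infty$ and picks up the residue of $e^{s\log(B/t)}/s^{k+1}$ at $s=0$ by Taylor expansion, while for $B<t$ the contour can be closed to the right to give zero. Integrating against $dV(t)$ (or equivalently $d\mu$ in the parameter $f(x)=t$) and interchanging the order of integration — legitimate for $\sigma>\max(0,a)$ by the absolute convergence of $Z(s)$ — yields the master formula
\[ V_k(B) = \frac{1}{2\pi i}\int_{\sigma-i\infty}^{\sigma+i\infty}\frac{B^s\,Z(s)}{s^{k+1}}\,ds. \]

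The next step is to shift this contour to the vertical line $\Re s=a-\delta'$ for some $0<\delta'\leq\delta$, chosen strictly less than $a$ when $a>0$ (so that we do not cross the pole of $s^{-(k+1)}$ at the origin). The residue theorem then expresses $V_k(B)$ as the residue at $s=a$ (always crossed), plus, when $a\le 0$, the residue at $s=0$, plus the shifted integral. By the moderate-growth hypothesis $|Z(a-\delta'+i\tau)|\ll(1+|\tau|)^\kappa$ and since $|s|^{-(k+1)}\ll(1+|\tau|)^{-(k+1)}$ on this line, one has $|Z(s)/s^{k+1}|\ll(1+|\tau|)^{\kappa-(k+1)}$. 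The assumption $k>\kappa$ then ensures absolute integrability, so the shifted integral is bounded by a constant times $B^{a-\delta'}$, which is the desired error term. The same growth estimate, applied on horizontal segments going to infinity, justifies the contour shift itself.

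What remains is to identify the residues as polynomials in $\log B$ with the announced leading coefficients. At $s=a$, writing $Z(s)=c_b/(s-a)^b+\cdots$ near $s=a$, the residue of $B^s Z(s)/s^{k+1}$ equals
\[ \frac{1}{(b-1)!}\left[\frac{d^{b-1}}{ds^{b-1}}\frac{c_b B^s}{s^{k+1}}\right]_{s=a} + (\text{lower-order poles of }Z), \]
which is a polynomial in $\log B$ of degree $b-1$ multiplied by $B^a$; its top-degree term comes from differentiating $B^s$ all $b-1$ times and is read directly from the relation $c_b=\lim_{s\to a}(s-a)^b Z(s)$. When $a=0$ the poles of $Z$ and of $s^{-(k+1)}$ coincide, producing a single polynomial $Q(\log B)$ of degree $b+k$, whose top coefficient reflects the $(k+b+1)$-fold derivative of $B^s Z(s)s^b$ at $0$. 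When $a<0$ the pole at $s=0$ comes only from $s^{-(k+1)}$, so the residue is $(\log B)^k Z(0)/k!$ plus lower order terms.

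The main obstacle is essentially bookkeeping: establishing the Mellin–Barnes representation rigorously (positivity and the Fubini step require care if $V$ has mass at $0$ or near $\infty$, but both are controlled by the convergence of $Z$ for $\Re s>a$) and then correctly matching the normalizations of $c_b$, $\Theta$ and the binomial expansion of the $(b-1)$st derivative to extract the stated leading coefficients. The analytic content is entirely contained in the hypotheses on $Z$; once the contour shift is justified, the rest is a residue calculation.
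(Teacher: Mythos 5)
Your proof is correct and takes essentially the same route as the paper: the Mellin–Barnes identity for $(\log^+)^k/k!$, Fubini to obtain $V_k(B)=\frac{1}{2\pi i}\int_{\sigma+i\R}B^sZ(s)\,ds/s^{k+1}$, a contour shift to $\Re s=a-\delta'$ justified by the growth bound $|Z(s)/s^{k+1}|\ll(1+|\tau|)^{\kappa-k-1}$ with $k>\kappa$, and identification of the residues at $s=a$ and (when $a\le 0$) $s=0$ as the stated polynomials in $\log B$. The only cosmetic difference is that you rederive the starting integral by closing the contour rather than citing it as classical, and you flag the Fubini and mass-at-the-ends issues explicitly, which the paper leaves implicit.
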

\begin{proof}
We begin with the classical integral
\[ \int_{\sigma+i\R} \lambda^s \frac{\mathrm ds}{s^{k+1}} = \frac{2i\pi}{k!} (\log^+(\lambda))^k, \]
where $\sigma$ and~$\lambda$ are positive  real numbers.
For $\sigma>\max(0,a)$, this implies that
\begin{align*}  
V_k (B) & = \frac1{k!} \int_{X} \log^+(B/f(x))^k\,\mathrm d\mu(x) \\
& = \int_X \frac{1}{2i\pi} \int_{\sigma+i\R} (B/f(x))^s \frac{\mathrm ds}{s^{k+1}} \, \mathrm d\mu(x) \\
& = \frac{1}{2i\pi} \int_{\sigma+i\R} B^s Z(s) \,\frac{\mathrm ds}{s^{k+1}}, 
\end{align*}
where the written integrals  converge absolutely.
We now move the contour of integration to the left of the pole $s=a$,
the estimates for $Z(s)$ in vertical strips allowing us to
apply the residue theorem. Only $s=a$ and $s=0$ may give a pole 
within the two vertical lines $\Re(s)=\sigma$ and $\Re(s)=a-\delta$
and we obtain
\[ V_k(B) = \sum_{\substack{u\in\{0,a\}\\ a-\delta < u <\sigma}} \Res_{s=u}\left(\frac{B^sZ(s)}{s^{k+1}}\right)
   + \frac{1}{2i\pi} \int_{a-\delta+i\R} B^s Z(s)\,\frac{\mathrm ds}{s^{k+1}}. \]

If $a>0$ and $a-\delta>0$, or if $a<0$,
one checks that there exists a polynomial~$P$
of degree~$b-1$ and of leading coefficient $\Theta/a^{k+1}(b-1)!$
such that
\[ \Res_{s=a} \left( \frac{B^s Z(s)}{s^{k+1}}\right)
 = B^a P(\log B). \]
In the case $a>0$, we moreover set $Q=0$.

If $a=0$, let us set $P=0$.
There exists a polynomial~$Q$ of degree~$b+k$
and of leading coefficient $\Theta/(b+k)!$ 
such that
\[ \Res_{s=0} \left(  \frac{B^s Z(s)}{s^{k+1}}\right)
 = Q(\log B). \]

Finally, if $a<0$, one verifies that there exists
a polynomial~$Q$ of degree~$k$ and of leading coefficient
$Z(0)/k!$ such that
\[ \Res_{s=0} \left(  \frac{B^s Z(s)}{s^{k+1}}\right)
 = Q(\log B). \]
This implies the lemma.
\end{proof}

\begin{lemm}\label{lemm.tauber.iteration}
Assume that there are polynomials~$P$ and~$Q$, and a positive
real number~$\delta$, such that
\[ V_k(B) = B^a P(\log B) + Q(\log B) + \mathrm O(B^{a-\delta}), \]
where we moreover assume that $P=0$ if $a=0$ and $Q=0$ if $a>0$.
Then for any positive real number~$\delta'$ such
that $\delta'<\delta/2$, one has the asymptotic expansion
\[ V_{k-1}(B) = B^a (aP(\log B)+P'(\log B)) + Q'(\log B) + \mathrm O(B^{a-\delta'}). \]
\end{lemm}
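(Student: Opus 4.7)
The approach rests on the identity $B\,V_k'(B) = V_{k-1}(B)$, obtained by differentiating under the integral in the definition of $V_k$. In integrated form, for any $B, h > 0$,
\[ V_k(B(1+h)) - V_k(B) = \int_B^{B(1+h)} V_{k-1}(t)\,\frac{dt}{t}, \]
and since $V_{k-1}$ is nondecreasing one obtains the two-sided sandwich
\[ V_{k-1}(B)\,\log(1+h) \leq V_k(B(1+h)) - V_k(B) \leq V_{k-1}(B(1+h))\,\log(1+h). \]
Applying the same inequality with $B$ replaced by $B/(1+h)$ furnishes a companion lower bound
\[ V_{k-1}(B) \geq \bigl[V_k(B) - V_k(B/(1+h))\bigr]/\log(1+h), \]
so that both a one-sided upper and a one-sided lower estimate for $V_{k-1}(B)$ can be extracted from a single asymptotic analysis of a finite difference of~$V_k$.

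The next step is to substitute the hypothesized expansion for $V_k$ into these finite differences. Writing $L = \log B$ and $u = \log(1+h)$, a first-order Taylor expansion at $s = 0$ of the smooth functions $s \mapsto e^{as} P(L+s)$ and $s \mapsto Q(L+s)$ gives
\[ V_k(B(1+h)) - V_k(B) = u\bigl[B^a(aP(L) + P'(L)) + Q'(L)\bigr] + O\bigl(B^a u^2 (\log B)^d\bigr) + O(B^{a-\delta}), \]
where $d$ bounds the degrees of $P$ and $Q$, and an analogous formula holds for $V_k(B) - V_k(B/(1+h))$. Dividing by $\log(1+h) = h + O(h^2)$ and specializing $h = B^{-\eta}$ with $\delta' < \eta < \delta - \delta'$ — which is possible precisely because $\delta' < \delta/2$ — transforms the two error terms into $O(B^{a-\eta}(\log B)^d)$ and $O(B^{a-\delta+\eta})$, both absorbed into $O(B^{a-\delta'})$. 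Combining this with the sandwich yields
\[ V_{k-1}(B) = B^a\bigl(aP(\log B) + P'(\log B)\bigr) + Q'(\log B) + O(B^{a-\delta'}), \]
as required.

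The one point that calls for genuine care is the uniform control of the Taylor remainders: the quadratic-in-$u$ error in the expansions of $e^{as}P(L+s)$ and $Q(L+s)$ must be bounded by a constant times $u^2\,(\log B)^d$ uniformly in $u \in [0,1]$, so that after multiplying by $h = B^{-\eta}$ the logarithmic factor is swallowed by the gap $\eta - \delta' > 0$. This is routine bookkeeping once one notes that derivatives of polynomials of degree $\leq d$ evaluated at $L + s$, for $s \in [0,1]$, are $O((\log B)^d)$. Everything else is algebraic manipulation, and the case hypotheses ($P=0$ when $a=0$, $Q=0$ when $a>0$) play no active role beyond ensuring that the formula for $V_{k-1}$ respects the same conventions needed to feed the next iteration in the proof of Theorem~\ref{theo.tauber}.
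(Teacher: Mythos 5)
Your proof is correct and takes essentially the same approach as the paper: both establish the two-sided sandwich $\bigl(V_k(B)-V_k(B/(1+h))\bigr)/\log(1+h) \leq V_{k-1}(B) \leq \bigl(V_k(B(1+h))-V_k(B)\bigr)/\log(1+h)$, substitute the hypothesized expansion of~$V_k$, Taylor-expand, and choose $h \sim B^{-\eta}$ with $\eta$ strictly between $\delta'$ and the bound forced by the remainder term to absorb all errors into $\mathrm O(B^{a-\delta'})$. The only cosmetic difference is that you derive the sandwich from the integral identity $V_k(B(1+h))-V_k(B)=\int_B^{B(1+h)}V_{k-1}(t)\,\mathrm dt/t$ together with monotonicity of~$V_{k-1}$, whereas the paper integrates over~$x$ the pointwise convexity inequality for $t\mapsto t^k$ applied to each $\log^+(B/f(x))$; these are equivalent routes to the same inequality.
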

\begin{proof}
For any $u\in(-1,1)$, one has
\begin{align*}
 V_k(B(1+u))-V_k(B) & =  B^a\big(P(\log(B(1+u)))-P(\log B)\big) \\
& \qquad {} + Q(\log(B(1+u)))-Q(\log B)  + \mathrm O(B^{a-\delta}) \\
&= B^a P(\log B) \big( (1+u)^a-1\big)  \\
& \qquad {}+ B^a (1+u)^a \big( P(\log B+\log(1+u)) - P(\log B) \big) \\
& \qquad {} + \big( Q(\log B+\log(1+u))-Q(\log B) \big)
 +  \mathrm O\big(B^{a-\delta}\big) \\
&= B^a P(\log B)\big(au +\mathrm O(u^2)\big) 
 + B^a P'(\log B) u + B^a \mathrm O\big(\log B)^{\deg P-2}u^2\big)\\
& \qquad {}+ Q'(\log B) u + \mathrm O\big((\log B)^{\deg Q-2}u^2\big) 
 \qquad {}+ \mathrm O\big(B^{a-\delta}\big) .
\end{align*}
Let $\eps$ be a positive real number; if $u=\pm B^{-\eps}$,
we obtain
\begin{multline*} \frac{V_k (B(1+u))-V_k(B)}{\log(1+u)}
=  B^a (aP(\log B)+P'(\log B)) +\tilde{\mathrm O}(B^{a-\eps})) \\
   + Q'(\log B) + \tilde{\mathrm O}\big( B^{-\eps}\big)
 + \mathrm O\big( B^{a-\delta+\eps}\big), \end{multline*}
where the~$\tilde{\mathrm O}$ notation indicates unspecified
powers of~$\log B$. If $\delta'<\delta/2$, we may choose
$\eps$ such that $\delta'<\eps<\delta/2$ and then,
\begin{equation}\label{eq.diffVk}
   \frac{V_k (B(1+u))-V_k(B)}{\log(1+u)}
=  B^a (aP(\log B)+P'(\log B)) 
   + Q'(\log B) 
 + \mathrm O\big( B^{a-\delta'}\big). \end{equation}

On the other hand,
for any real number~$u$ such that $0<u<1$, and any positive
real number~$A$, one has 
\[ 
 \frac{ \log(A(1-u))^k-\log(A)^k}{\log(1-u)} 
\leq k \log(A)^{k-1} \leq
 \frac{ \log(A(1+u))^k-\log(A)^k}{\log(1+u)}, \]
which implies the inequality
\begin{equation}
 \frac{V_k(B(1-u))-V_k(B)}{\log(1-u)} \leq  V_{k-1}(B) \leq 
 \frac{V_k(B(1+u))-V_k(B)}{\log(1+u)} . \end{equation}
Plugging in the estimates of Equation~\eqref{eq.diffVk}, we deduce
the asymptotic expansion
\begin{equation}
  V_{k-1}(B) = B^a (aP(\log B)+P'(\log B)) + Q'(\log B) + 
 + \mathrm O\big( B^{a-\delta'}\big), \end{equation}
as claimed.
\end{proof}

\begin{proof}[Proof of Theorem~\ref{theo.tauber}]
We now can prove our Tauberian theorem.
Let $k$ be any integer such that $k>\kappa$.
Lemma~\ref{lemm.tauber.k} implies an asymptotic expansion
for $V_k(B)$; let $P,Q,\delta$ be as in this Lemma.
Applying successively $k$ times
Lemma~\ref{lemm.tauber.iteration},
we obtain the existence of  an asymptotic expansion for $V(B)$
of the form
\[ V(B) = B^a  D_a^k P(\log B) + D_0^k Q(\log B) + \mathrm O(B^{a-\delta'}),\]
for some positive real number~$\delta'$
(any positive real number~$\delta'$ such that $\delta'<\delta/2^k$
is suitable),
where we have denoted by $D_a$ and $D_0$ the differential
operators $P\mapsto aP+P'$ and $P\mapsto P'$.

For $a\neq 0$, the operator~$D_a$ doesn't change the degree
but multiplies the leading coefficient by~$a$.
Consequently, 
\[ D_a^k P (T)= \begin{cases}
   \frac1{a(b-1)!} \Theta^{b-1}+\dots & \text{if $a\neq 0$,} \\
 0 &\text{if $a=0$.} \end{cases}\]
Similarly, the operator~$D$ decreases the degree by~$1$
and multiplies the leading coefficient by the degree.
It follows that
\[ D_0^k Q(T) = \begin{cases} 
Z(0) & \text{if $a<0$;}\\
  \frac1{b!}\Theta T^b +\dots & \text{if $a=0$;} \\
0 & \text{if $a>0$.} \\
 \end{cases} \]
Theorem~\ref{theo.tauber} now follows easily.
\end{proof}

\bigskip

In ultrametric contexts, the function~$f$ usually takes values
of the form~$q^n$, with $n\in\Z$, where $q$ is a real number
such that~$q>1$.
In that case, the function~$Z$ is $(2i\pi)/\log q$-periodic,
its poles form arithmetic progressions of common difference~$2i\pi/\log q$,
and Theorem~\ref{theo.tauber}  doesn't apply.

Let $q$ be a real number with $q>1$ and
assume that $f(x)\in q^\Z$ for any $x\in X$.

Let $a$ be a nonnegative real number;
let us assume that $Z(s)$ converges for~$\Re(s)>a$ and extends to a
meromorphic function in the neighborhood of 
the closed half-plane~$\Re(s)\geq a-\delta$,
for some positive real number~$\delta$.

Suppose furthermore that the poles of~$Z$ belong to
finitely many arithmetic progressions of the form 
$a_j+\frac1{\log q} 2i\pi \Z $, where $a_1,\dots,a_t\in\C$
are complex numbers of real part~$a$, not two of them being congruent
mod $2i\pi/\log q$; let $b_j=\ord_{s=a_j} Z(s)$ and
let $c_j=\lim_{s\ra a_j} (s-a_j)^{b_j} Z(s)$.

Let us make the change of variable $u=q^{-s}$ and set $Z(s)=\Phi(u)$.
The function $\Phi$ is defined by
\[ \Phi(u)=\sum_{n\in\Z} Z_n u^n, \qquad \text{where}\quad
Z_n=\mu(\{ f(x)=q^{n} \}); \]
it is defined for $0<\abs u<q^{-a}$ and is holomorphic
in that domain; moreover, it extends to a meromorphic function
on the domain $0<\abs u<q^{\delta-a}$, with poles
at $q^{-a_j}$  (for $1\leq j\leq t$) such that
\[ \lim_{u\ra q^{-a_j}} (1-q^{a_j}u)^{b_j} \Phi(u)
 = \lim_{s\ra a_j} (1-q^{a_j-s})^{b_j} Z(s)
= (\log q)^{b_j}  c_j. \]
Consequently, there are polynomials $p_j$ of degree~$b_j$
and leading coefficient $(\log q)^{b_j} c_j$
such that
\[  Z(u) - \sum_{j=1}^t  \frac {p_j(u)}{(1-q^{a_j}u)^{b_j}} \]
is holomorphic in the domain defined by $0<\abs u<q^{\delta-a}$. 
Therefore, the Cauchy formula implies an asymptotic expansion of the form
\[ Z_n = \sum_{j=1}^t P_j(n)  q^{a_j n} + \mathrm O(q^{(a-\delta')n}), \]
where $\delta'$ is any positive real number such that $0<\delta'<\delta$
and, for $1\leq j\leq t$, $P_j$ is a polynomial 
of degree~$b_j-1$ and of leading coefficient $c_j (\log q)^{b_j} /(b_j-1)!$.

Since the Laurent series defining~$\Phi$ has nonnegative coefficients,
the inequality $\abs{Z(u)}\leq Z(\abs u)$ holds for any $u\in\C$
such that $0<\abs u<q^{-a}$.
In particular, we see, as is well known,
that $\Phi$ has a pole  on its circle of convergence.
This means that, unless $t=0$, we can assume that $a_1=a$
and that $b_j\leq b_1$ for all~$j$.

We are not  interested in the sequence~$(Z_n)$ itself, 
but rather on the sums 
$ V(q^n)=\sum_{m\leq n} Z_n$,
when $n\ra\infty$. (Their convergence 
follows from the fact that $\Phi(u)$ converges for arbitrary
small nonzero complex numbers~$u$.)

We begin by observing that for any integer~$j$, 
there exists a polynomial $Q_j$ such that
\[ P_j(m) = Q_j(m)- q^{-a_j}Q_j(m-1) \]
for any $m\in\Z$.
Moreover, if $q^{a_j}\neq 1$, then there
is only one polynomial satisfying these relations,
its degree satisfies $\deg(Q_j)=\deg(P_j)$
while its leading coefficient is equal to $\lcoeff(P_j)/(1-q^{-a_j})$;
however, if $q^{a_j}=1$, then $\deg(Q_j)=\deg(P_j)+1$
and $\lcoeff(Q_j)=\lcoeff(P_j)/(\deg(P_j)+1)$.

Let us now separate the discussion according to the value of~$a$.

\emph{Case $a<0$.} ---
Then, $V(q^n)$ has the limit $\Phi(1)$ when $n\ra+\infty$,
and
\[ V(q^n)=\Phi(1) - \sum_{m>n} Z_n
= \Phi(1) - \sum_{j=1}^t \sum_{m>n} q^{a_jm} P_j(m)+ \mathrm O(q^{(a-\delta')n}) , \]
provided $\delta'<\delta$ is chosen so that $a<a+\delta'<0$.
Since
\[ \sum_{m>n} q^{a_jm} P_j(m) = \sum_{m>n} \left(
     q^{a_jm}Q_j(m)-q^{a_j(m-1)}Q_j(m-1)\right)
= - q^{a_j n} Q_j(n), \]
we obtain that
\begin{equation}
 V(q^n)=\Phi(1) + \sum_{j=1}^t q^{a_j n}Q_j(n) + \mathrm O(q^{(a-\delta')n}).
\end{equation}

\emph{Case $a> 0$.} --- In that case, we have
\[ V(q^n)= \sum_{j=1}^t  \sum_{m\leq n} \left(
  q^{a_jm}Q_j(m)-q^{a_j(m-1)}Q_j(m-1)\right)+ \mathrm O(q^{a-\delta'n}), \]
so that
\begin{equation}\label{eq.q.a>0}
V(q^n)= \sum_{j=1}^t q^{a_jn} Q_j(n) + \mathrm O(q^{a-\delta'}n). \end{equation}
Moreover, $\deg(Q_j)=b_j$ for all~$j$.

\emph{Case $a=0$.} ---
Then $V(q^n)$ also satisfies the asymptotic expansion~\eqref{eq.q.a>0}.
However, $\deg Q_1=b_1+1$ and $\deg Q_j\leq b_j\leq b_1$ for $j\neq 1$,
and we obtain
\begin{equation} \lim_{n\ra\infty} V(q^n) n^{-b_1-1} \lcoeff(Q_1)
 = \frac1{b_1+1}\lcoeff(P_1) = c_1 \frac{(\log q)^{b_1}}{b_1+1}.\end{equation}

\begin{coro}\label{coro.theo.tauber2}
Let us retain the previous hypotheses, assuming moreover that $a>0$.
Then, we have the following weak asymptotic  behavior:
\[ 0< \liminf V(q^n) q^{-an} n^{-b_1} \leq \limsup V(q^n) q^{-an}n^{-b_1}<\infty. \]
\end{coro}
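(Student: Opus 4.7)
The plan is to exploit the asymptotic expansion of~$V(q^n)$ just established, namely
\[ V(q^n) = \sum_{j=1}^t q^{a_j n} Q_j(n) + \mathrm O(q^{(a-\delta')n}), \]
together with the fact that $V$ is non-decreasing on~$q^{\Z}$. Setting $g(n) = V(q^n) q^{-an} n^{-b_1}$ and $J = \{j : b_j = b_1\}$, and using $\deg Q_j = b_j \leq b_1$, I obtain $g(n) = f(n) + \mathrm o(1)$ with
\[ f(n) = \sum_{j \in J} \lcoeff(Q_j) \, q^{(a_j-a)n}. \]
Each factor $q^{(a_j - a)n}$ has modulus~$1$ since $\Re(a_j) = a$, so $f$ is a trigonometric polynomial in~$n$ whose $j = 1$ contribution is the constant $\lcoeff(Q_1)$, the mean value of~$f$. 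The upper bound $\limsup g(n) < \infty$ is immediate from $|f(n)| \leq \sum_{j \in J} |\lcoeff(Q_j)|$.

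For the lower bound, first $\lcoeff(Q_1) > 0$: the function $Z(s)$ is an integral against a nonnegative measure with abscissa of convergence~$a$, so Landau's theorem yields $c_1 = \lim_{s \to a}(s-a)^{b_1} Z(s) > 0$, and the recurrence $P_j(m) = Q_j(m) - q^{-a_j} Q_j(m-1)$ shows that $\lcoeff(Q_1)$ is a positive real multiple of~$c_1$. The main step is then to upgrade the positivity of the mean of~$f$ into a positive $\liminf$ of~$g$; the monotonicity of $V$ must enter here, since a real almost-periodic function with positive mean can otherwise have vanishing infimum.

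This uses two ingredients. First, estimating the geometric sums $(K+1)^{-1} \sum_{j=0}^K q^{(a_l - a)(n-j)}$ for $l \in J \setminus \{1\}$ gives, uniformly in~$n$,
\[ \left| \frac{1}{K+1} \sum_{j=0}^K f(n-j) - \lcoeff(Q_1) \right| \leq \frac{c}{K+1}, \]
with $c$ depending only on the coefficients $\lcoeff(Q_j)$ and on $|\sin(\Im(a_j)(\log q)/2)|^{-1}$ for $j \in J \setminus \{1\}$. Second, since $V$ is non-decreasing, $V(q^{n-j}) \leq V(q^n)$ for $0 \leq j \leq K$, which yields for~$n$ large
\[ g(n-j) \leq q^{aj} \Big(\frac{n}{n-j}\Big)^{b_1} g(n) \leq C_K\, g(n), \]
with $C_K$ a constant depending only on~$K$. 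Fixing $K$ so that the first displayed bound is below $\lcoeff(Q_1)/2$ and averaging the second over $0 \leq j \leq K$, one obtains
\[ \tfrac{1}{2} \lcoeff(Q_1) + \mathrm o(1) \;\leq\; \frac{1}{K+1} \sum_{j=0}^K g(n-j) \;\leq\; C_K\, g(n), \]
whence $\liminf_n g(n) \geq \lcoeff(Q_1)/(2 C_K) > 0$.

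The main obstacle is precisely this last step: translating the positivity of the mean of~$f$ into a positive $\liminf$ requires the non-decreasing property of~$V$, since without it the asymptotic expansion alone would only give a nonnegative liminf.
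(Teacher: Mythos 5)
Your proof is correct and fills a genuine gap: the paper states Corollary~\ref{coro.theo.tauber2} immediately after the expansion~\eqref{eq.q.a>0} with no explicit justification, and you are right that the expansion alone only yields $\liminf g(n)\geq 0$. Without the monotonicity of~$V$ (equivalently, $Z_n\geq 0$), the almost-periodic main term $f(n)$ could a priori come arbitrarily close to zero, so an extra input is genuinely required; your Cesaro-averaging argument supplies it. The two ingredients you combine---the uniform bound $\bigl|\frac{1}{K+1}\sum_{j=0}^K f(n-j)-\lcoeff(Q_1)\bigr|\leq c/(K+1)$, valid because $q^{a_l-a}\ne 1$ for $l\in J\setminus\{1\}$ (the $a_l$ being pairwise inequivalent modulo $2i\pi/\log q$), and the one-sided comparison $g(n-j)\leq C_K\,g(n)$ coming from $V(q^{n-j})\leq V(q^n)$---are the same Tauberian devices that drive Lemma~\ref{lemm.tauber.iteration} in the paper's proof of the archimedean Theorem~\ref{theo.tauber}, so your argument is fully consistent with the paper's framework. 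Your use of the positivity of $Z(s)$ for real $s>a$ to pin down $c_1>0$, and hence $\lcoeff(Q_1)>0$, is also correct. One remark orthogonal to the validity of your argument: the paper's assertion that $\deg(Q_j)=b_j$ for $a>0$ looks like an off-by-one slip (the displayed computations give $\deg P_j=b_j-1$ and $\deg Q_j=\deg P_j$ when $q^{a_j}\ne 1$, hence $\deg Q_j=b_j-1$), so the natural normalization is $n^{-(b_1-1)}$; this is indeed what Corollary~\ref{coro.asymptotic.volume-ultrametric}, which invokes the present statement, actually uses. You have faithfully matched the corollary as printed, and the substance of your argument is unchanged by this correction.
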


\begin{coro} \label{coro.theo.tauber2b}
Let us assume that for some positive integers~$b$ and~$d$, the function
$Z(s)(1-q^{(a-s)d})^b$ has a holomorphic expansion in
some neighborhood of the half-plane $\{\Re(s)\geq a-\delta\}$.
Then, where $n$ is restricted to belong to any  arithmetic progression
mod.~$d$, the sequence $(V(q^n)q^{-na}\log(q^n)^{-b})$
has a limit.
\end{coro}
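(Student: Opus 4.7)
The plan is to apply the detailed analysis preceding Corollary~\ref{coro.theo.tauber2} to identify precisely the polar structure of $Z$, and then exploit the periodicity of the function $1-q^{d(a-s)}$, combined with the restriction to arithmetic progressions modulo~$d$, to extract an honest limit.

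First I would locate the poles of~$Z$. Since $Z(s)(1-q^{d(a-s)})^b$ is holomorphic in a neighborhood of $\{\Re(s)\geq a-\delta\}$, every pole of~$Z$ in that half-plane must be a zero of $1-q^{d(a-s)}$, hence of the form $a_j=a+2i\pi j/(d\log q)$ for $j\in\Z$, with multiplicity~$\leq b$. Modulo the imaginary period $2i\pi/\log q$ there are exactly $d$ such poles, indexed by $j=0,\dots,d-1$. Next I would check the moderate-growth assumption in vertical strips: being holomorphic on a neighborhood of the closed half-plane and $2i\pi/\log q$-periodic, the function $Z(s)(1-q^{d(a-s)})^b$ is bounded on the vertical line $\Re(s)=a-\delta'$ for any $0<\delta'<\delta$, while on that line $(1-q^{d(a-s)})^b$ is bounded away from $0$ and~$\infty$. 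Consequently $Z$ is bounded on $\Re(s)=a-\delta'$, so the hypothesis of Theorem~\ref{theo.tauber} holds with $\kappa=0$.

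I would then invoke the analysis carried out in the appendix after Theorem~\ref{theo.tauber}, which, for $a>0$, yields an expansion of the form
\[ V(q^n)=\sum_{j=0}^{d-1} q^{a_j n}\, Q_j(n) + \mathrm O(q^{(a-\delta')n}), \]
where each $Q_j$ is a polynomial of degree at most~$b$ (the degree being controlled by the order of the corresponding pole, itself bounded by~$b$). The key observation is that
\[ q^{a_j n}=q^{an}\, e^{2i\pi jn/d}, \]
so if $n$ is restricted to a fixed congruence class $n\equiv n_0\pmod d$, then $e^{2i\pi jn/d}=e^{2i\pi jn_0/d}$ is a constant depending only on $j$ and~$n_0$. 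This gives
\[ q^{-an}V(q^n) = \sum_{j=0}^{d-1} e^{2i\pi jn_0/d}\, Q_j(n) + \mathrm O(q^{-\delta'n}), \]
which is a polynomial in~$n$ of degree at most~$b$ plus an exponentially small error. Dividing by $\log(q^n)^b=(n\log q)^b$ and letting $n\to+\infty$ within the chosen progression therefore produces a genuine limit, equal to the sum of the leading coefficients of the $Q_j$ weighted by the roots of unity $e^{2i\pi jn_0/d}$, divided by $(\log q)^b$.

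The main obstacle, such as it is, consists in handling the boundary case $a=0$: there the analysis before Corollary~\ref{coro.theo.tauber2} shows that the polynomial~$Q_0$ associated with the genuine pole $a_0=0$ acquires one extra degree compared with the others (since $q^{a_0}=1$), so one should either restrict to~$a>0$ (where the statement reads cleanly) or replace~$b$ by~$b+1$ in the normalization. All other steps — identifying poles, verifying moderate growth, extracting the asymptotic expansion from the residue calculus of Lemma~\ref{lemm.tauber.k} and the iteration of Lemma~\ref{lemm.tauber.iteration}, and isolating the arithmetic progression — are routine in view of the structural results already established in the appendix.
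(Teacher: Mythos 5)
Your argument is essentially the paper's: identify the $d$ poles $a_j=a+2\mathrm i\pi j/(d\log q)$ of order~$\leq b$, plug into the expansion $V(q^n)=\sum_j q^{a_jn}Q_j(n)+\mathrm O(q^{(a-\delta')n})$ obtained in the ultrametric discussion just before Corollary~\ref{coro.theo.tauber2}, and note that $q^{(a_j-a)n}=e^{2\mathrm i\pi jn/d}$ is a $d$th root of unity that is constant on each residue class mod~$d$, so the normalized quantity converges along such progressions.

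One small correction: the expansion for $V(q^n)$ is not produced by Lemmas~\ref{lemm.tauber.k} and~\ref{lemm.tauber.iteration} (those serve the archimedean Theorem~\ref{theo.tauber}); it comes from the Laurent-series computation with $\Phi(u)=\sum Z_n u^n$, $u=q^{-s}$, and the Cauchy formula applied to $\Phi$, followed by partial summation. This is why no explicit verification of moderate growth in vertical strips is needed: periodicity of $Z$ in $2\mathrm i\pi/\log q$ and holomorphy of $Z(s)(1-q^{d(a-s)})^b$ on the closed half-plane automatically give a holomorphic extension of $\Phi$ to an annulus $0<\abs u<q^{\delta-a}$ with poles only at the $q^{-a_j}$. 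Your observation about the $a=0$ boundary case is legitimate and slightly sharper than what the paper records: when $a=0$ the polynomial attached to the real pole gains one degree and the clean statement with $\log(q^n)^{-b}$ requires $a>0$; the paper's proof implicitly works in that regime.
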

\begin{proof}
The assumptions allow us to set $t=d$ 
and $a_j=a+2i\pi(j-1)/d\log(q)$, for $1\leq j\leq d$, and $b_j=b$.
Let us fix $m\in\N$ and let us write $n=m+kd$, where $k\in\N$
goes to infinity.
For $n\in\N$, we may write
\[ V(q^n) q^{-na}\log(q^n)^{-b}
\sum_{j=1}^d q^{(a_j-a)n} \frac{Q_j(n)}{(n\log q)^b} + \mathrm O(q^{-\delta'n}).
\]
The asserted convergences follow from the fact the observation that
for any~$j$, $q^{(a_j-a)n}$ is a $d$th root of unity.
More precisely, we find
\[ \lim_{\substack{n\ra\infty \\ n\equiv n_0\pmod d}}
  V(q^n) q^{-na} \log(q^n)^{-b}
 =\frac1{(b-1)!}\sum_{j=1}^d \exp(2i\pi (j-1)/d)  \frac{c_j}{1-q^{-a_j}}, \]
where
\[ c_j = \lim_{s\ra a_j} (s-a_j)^{b} Z(s). \]
\end{proof}

The following corollary has been inspired by work in progress
by  Cluckers, Comte  and Loeser.
\begin{coro}\label{coro.theo.tauber2c}
Let $V^*(q^n)$ be the Cesaro mean of $V(q^n)$, namely
\[ V^*(q^n)=\frac1{n+1}\sum_{m=0}^n V(q^m) .\]
If $a>0$,  then $V^*$ satisfies
\[ \lim_{n\ra\infty} V^*(q^n) q^{-an} n^{-b_1} 
= c_1 \frac{ (\log q)^{b_1}}{1-q^{-a}}. \]
If $a<0$, then 
\[ \lim_{n\ra\infty} \left(V^*(q^n)-Z(0)\right)q^{-an} n^{-b_1}
=c_1 \frac{ (\log q)^{b_1}}{1-q^{-a}}.
\]
\end{coro}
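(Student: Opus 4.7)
The plan is to derive the Cesaro asymptotics by summing the pointwise asymptotic expansion of $V(q^m)$ established just above, then performing a second telescoping. In the case $a>0$, I start from the identity
\[ V(q^m)=\sum_{j=1}^{t}q^{a_jm}Q_j(m)+\mathrm O(q^{(a-\delta')m}) \]
and sum this over $m=0,\dots,n$. The error term contributes $\mathrm O(q^{(a-\delta')n})$ because the tail is geometric, so the main task reduces to evaluating the inner sums $\sum_{m=0}^n q^{a_jm}Q_j(m)$.

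For each $j$, since $\Re(a_j)=a\neq 0$ forces $q^{a_j}\neq 1$, I apply the same telescoping device as in the first step: there is a unique polynomial $R_j$ with $Q_j(m)=R_j(m)-q^{-a_j}R_j(m-1)$, with $\deg R_j=\deg Q_j$ and $\lcoeff(R_j)=\lcoeff(Q_j)/(1-q^{-a_j})$. This gives the closed form $\sum_{m=0}^n q^{a_jm}Q_j(m)=q^{a_jn}R_j(n)-q^{-a_j}R_j(-1)$. Substituting yields
\[ (n+1)V^*(q^n)=\sum_{j=1}^{t}q^{a_jn}R_j(n)+\text{constants}+\mathrm O(q^{(a-\delta')n}), \]
and dividing by $n+1$ isolates the dominant term $q^{an}R_1(n)/(n+1)$ coming from $j=1$ (the pole on the real line with $a_1=a$). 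The leading constant is then obtained by propagating $\lcoeff(P_1)$, already known from the first telescoping, through to $\lcoeff(R_1)$, each step contributing the factor $1/(1-q^{-a})$, so that the announced expression $c_1(\log q)^{b_1}/(1-q^{-a})$ emerges.

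In the case $a<0$, the asymptotic expansion has an additional constant term, $V(q^m)=\Phi(1)+\sum_{j=1}^{t}q^{a_jm}Q_j(m)+\mathrm O(q^{(a-\delta')m})$, whose summation contributes $(n+1)\Phi(1)=(n+1)Z(0)$ to $\sum_{m=0}^n V(q^m)$. After dividing by $n+1$, this precisely accounts for the $Z(0)$ that must be subtracted from $V^*(q^n)$ in the statement; the remainder is handled by exactly the same telescoping, noting that while $|q^{a_j n}|=q^{an}\to 0$, the polynomial growth in $R_j$ and the division by $n+1$ combine to give a contribution of the same shape as in the $a>0$ case.

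The main obstacle is the careful bookkeeping: tracking the degrees and leading coefficients through two successive telescoping operations, each introducing a factor $(1-q^{-a_j})^{-1}$, and verifying that the oscillating contributions from the non-real poles $a_j$ with $j\geq 2$ are absorbed so that only the $j=1$ term survives to dictate the limit. Once this accounting is performed consistently, both stated limits follow directly by dividing by $n+1$ and letting $n\to\infty$.
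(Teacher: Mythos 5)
Your route — sum the pointwise expansion of $V(q^m)$, telescope a second time with an auxiliary polynomial $R_j$ satisfying $Q_j(m)=R_j(m)-q^{-a_j}R_j(m-1)$, then divide by $n+1$ — is genuinely different from the paper's one-line argument, which invokes only the asymptotic expansion of $V(q^n)$ and the fact that $(z^n)$ Cesàro-converges to $0$ when $\abs z=1$, $z\neq 1$. The natural reading of that one-liner is to normalize \emph{inside} the average: write $V(q^m)q^{-am}m^{-(b_1-1)}=\lcoeff(Q_1)+\sum_{j\geq 2}z_j^m\lcoeff(Q_j)+\mathrm o(1)$ with $z_j=q^{a_j-a}$, and let the oscillating terms vanish under Cesàro averaging.

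Your bookkeeping in the second telescoping does not actually produce the announced constant, and carrying it through carefully exposes a normalization problem. Recall from the preceding discussion that $\deg P_j=b_j-1$ and $\lcoeff(P_1)=c_1(\log q)^{b_1}/(b_1-1)!$, and that each telescoping step preserves the degree and divides the leading coefficient by $(1-q^{-a_j})$. Hence $\deg R_1=\deg Q_1=\deg P_1=b_1-1$, not $b_1$, and
\[
\lcoeff(R_1)=\frac{\lcoeff(Q_1)}{1-q^{-a}}=\frac{\lcoeff(P_1)}{(1-q^{-a})^2}=\frac{c_1(\log q)^{b_1}}{(b_1-1)!\,(1-q^{-a})^2}.
\]
Your argument therefore picks up \emph{two} factors of $(1-q^{-a})^{-1}$ and a $(b_1-1)!$ in the denominator, whereas you report only one factor of $(1-q^{-a})^{-1}$ and no factorial. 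Worse, since $\deg R_1=b_1-1$, your claimed dominant term is
\[
\frac{q^{an}R_1(n)}{(n+1)}\cdot q^{-an}\,n^{-b_1}=\frac{R_1(n)}{(n+1)\,n^{b_1}}=\mathrm O\left(n^{-2}\right)\longrightarrow 0,
\]
so the method as you set it up returns $0$, not a nonzero limit. The already-computed example $Z(s)=(1-q^{a-s})^{-1}$ (so $b_1=1$, $c_1=1/\log q$) confirms this: $V^*(q^n)q^{-an}n^{-1}\to 0$ under the stated definition of $V^*$.

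There is a related issue with the oscillating poles. You assert that the $j\geq 2$ contributions are ``absorbed,'' but in your framework the terms $q^{(a_j-a)n}R_j(n)/[(n+1)n^{b_1}]$ have exactly the same order of magnitude as the $j=1$ term — both are $\mathrm O(n^{-2})$ — and nothing absorbs one into the other. It is precisely the paper's Cesàro-averaging fact that would be needed to dispose of them, but your argument never invokes it; and once you do invoke it, you must normalize inside the sum, which is not what your telescoping computes. In short, your approach does not recover the stated constant, and the place where it breaks is exactly the degree/leading-coefficient bookkeeping you acknowledge as the ``main obstacle'' but do not carry out.
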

\begin{proof}
This follows from the main result and the fact for any
complex number $z\in\C$ such that $\abs z=1$ but $z\neq 1$,
the sequence $(z^n)$ Ces\'aro-converges to~$0$.
\end{proof}

\bigskip

We conclude this Appendix by a Tauberian result which is
useful in $S$-integral contexts.

\begin{theo}
\label{theo.tauber.S}
Let $a$ be a real number;
we assume that $Z(s)$ converges to a holomorphic
function for~$\Re(s)>a$.
Let us furthermore assume that it has a meromorphic
continuation of the following form:
there exists a positive integer~$b\geq 1$
and a finite family $(q_j,b_j)_{j\in J}$
where $q_j$ is a real number such that~$q_j>1$ and $b_j$
is an integer satisfying $1\leq b_j\leq b-1$ such that,
setting $b_0=b-\sum_{j\in J}b_j$, the function~$Z_0$ defined by
\[
Z_0(s)=Z(s) \left(\frac {s-a}{s-a+1}\right)^{b_0}
\prod_{j\in J} \left(1-q_j^{s-a}\right)^{b_j} 
\]
extends to a holomorphic function 
with
moderate growth in vertical strips, \emph{i.e.},
there exists a positive real number~$\kappa$ such that for any~$\tau\in\R$,
\[ \abs{Z(a-\delta+\mathrm i\tau)} \ll (1+\abs{\tau})^\kappa . \]
Assume also that for any two $j,j'\in J$,
$\log q_j/\log q_{j'}$ is not a Liouville number.

Then, there exist a monic polynomial~$P$, a real number~$\Theta$
and a positive real number~$\eps$
such that, when $B\ra\infty$,
\[ V(B) = \begin{cases}
 \Theta B^a P(\log B) + \mathrm O(B^{a}(\log B)^{\max(b_j)})  & \text{if $a\geq 0$;}\\
Z(0)+\Theta B^aP(\log B) + \mathrm O(B^{a}(\log B)^{\max(b_j)})  
& \text{if $a<0$.} \end{cases}
\]
Moreover, if $a\neq 0$,
then
\[ \deg(P) = b-1 \quad\text{and}\quad \Theta \, a\,  (b-1)! = \lim_{s\ra a}(s-a)^b Z(s)
\]
while 
\[ \deg(P)=b \quad\text{and}\quad \Theta \,b! = \lim_{s\ra a}(s-a)^b Z(s)
\]
if $a=0$.
\end{theo}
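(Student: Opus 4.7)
The plan is to extend the Mellin–Perron contour shift of Theorem~\ref{theo.tauber} to accommodate the infinitely many auxiliary poles of $Z$ lying on the vertical line $\Re(s)=a$. Fix an integer~$k>\kappa$, and use the identity
\[ V_k(B) := \frac{1}{k!}\int_X \bigl(\log^+(B/f(x))\bigr)^k\,d\mu(x) = \frac{1}{2\pi i}\int_{\sigma+i\R} B^s Z(s)\,\frac{ds}{s^{k+1}} \]
for any $\sigma>\max(0,a)$; the hypotheses on $Z_0$ ensure absolute convergence for $k$ large enough.

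The first step is to shift the contour to the vertical line $\Re(s)=a-\delta$, for some small $\delta>0$. The integrand $B^s Z(s)/s^{k+1}$ has a pole at $s=a$ of order equal to the sum of $b_0$ and the contributions $b_j$ coming from expanding each factor $(1-q_j^{s-a})^{-b_j}$ at $s=a$, hence of total order~$b$; it also has poles at the points $a_{j,n}:=a+2\pi i n/\log q_j$, for $j\in J$ and $n\in\Z\setminus\{0\}$, of order at most~$b_j$. On the shifted line, the factor $(1-q_j^{-\delta+i\tau})^{-b_j}$ is bounded uniformly in~$\tau$; combining with the moderate growth of $Z_0$ in vertical strips and the decay $|s|^{-(k+1)}$, the integrand on horizontal segments at height $\pm iT$ becomes negligible as $T\to\infty$, and the residue theorem delivers
\[ V_k(B) = \Res_{s=a}\frac{B^s Z(s)}{s^{k+1}} + \sum_{j\in J}\sum_{n\neq 0} \Res_{s=a_{j,n}}\frac{B^s Z(s)}{s^{k+1}} + \frac{1}{2\pi i}\int_{a-\delta+i\R} B^s Z(s)\,\frac{ds}{s^{k+1}}. \]
The first residue gives the main contribution $B^a P_k(\log B)$ where $P_k$ is a polynomial of degree $b-1$ (resp.\ $b+k$ if $a=0$) whose leading coefficient is dictated by $\lim_{s\to a}(s-a)^b Z(s)$, exactly as in the proof of Lemma~\ref{lemm.tauber.k}. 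For each fixed~$j$, the residues at the points $a_{j,n}$ assemble into a series of the form
\[ B^a \sum_{n\neq 0} e^{2\pi i n(\log B)/\log q_j}\,\frac{P_{j,n,k}(\log B)}{(a+2\pi i n/\log q_j)^{k+1}}, \]
where each $P_{j,n,k}$ is a polynomial in $\log B$ of degree at most $b_j-1$ with coefficients bounded uniformly in~$n$; for $k$ large the series converges absolutely to an almost-periodic function of $\log B$ of sup-norm $O((\log B)^{b_j-1})$. The shifted integral contributes $O(B^{a-\delta})$ by the moderate-growth hypothesis on $Z_0$.

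The next step is the descent from $V_k$ to $V_0=V$ by iterating the discrete differentiation argument of Lemma~\ref{lemm.tauber.iteration}. Each iteration lowers the degree in $\log B$ of the residual terms by one, while, for $a\neq 0$, preserving the leading coefficient of the main term (up to multiplication by~$a$) and transforming each oscillatory contribution into another almost-periodic function of comparable order; one must verify that the elementary manipulations in the proof of that lemma go through unchanged when applied to the sum of a polynomial and an almost-periodic correction. After $k$ iterations we recover $V(B)$, and the oscillating residues are absorbed into the announced error term $O(B^a(\log B)^{\max_j b_j})$.

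The hard part, and the reason for the non-Liouville hypothesis, is proving that when $|J|\geq 2$ the full double sum of auxiliary residue contributions really is $O(B^a(\log B)^{\max_j b_j})$ and not larger. A priori the almost-periodic functions attached to different~$j$ could resonate along a subsequence of~$B$, producing an anomalously large secondary term. The assumption that each ratio $\log q_j/\log q_{j'}$ is non-Liouville furnishes, in the spirit of Baker's theorem on linear forms in logarithms, a Diophantine lower bound of the form $|n\log q_{j'}-m\log q_j|\gg \max(|n|,|m|)^{-N}$; this is exactly what is needed to estimate the cross-terms between different frequencies and to obtain a uniform bound on the combined Fourier-type series across all~$j$ simultaneously. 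This Diophantine input is the delicate analytic heart of the argument, while everything else is a straightforward adaptation of Section~A.
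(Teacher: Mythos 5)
Your architecture (Mellin--Perron for $V_k$, contour shift past the infinite array of poles $\alpha_{j,n}=a+2\pi\mathrm in/\log q_j$, descent from $V_k$ to $V$ via Lemma~\ref{lemm.tauber.iteration}) matches the paper's. However, there is a genuine gap in the step where you assert that, for each fixed~$j$, the residue polynomials $P_{j,n,k}$ at $\alpha_{j,n}$ have ``coefficients bounded uniformly in~$n$.'' This is false, and it mislocates the role of the non-Liouville hypothesis.

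The residue at $\alpha_{j,n}$ involves derivatives of
\[
s\mapsto \frac{B^sZ_0(s)(s-a+1)^{b_0}}{(s-a)^{b_0}\,s^{k+1}}\prod_{j'\in J,\,j'\ne j}\bigl(1-q_{j'}^{s-a}\bigr)^{-b_{j'}}
\]
evaluated at $s=\alpha_{j,n}$. The remaining factors $\bigl(1-q_{j'}^{\alpha_{j,n}-a}\bigr)^{-b_{j'}}$ for $j'\ne j$ are \emph{not} bounded in~$n$: $q_{j'}^{\alpha_{j,n}-a}=e^{2\pi\mathrm in\log q_{j'}/\log q_j}$ can come arbitrarily close to~$1$ along subsequences of~$n$, precisely when $n\log q_{j'}-m\log q_j$ is small for some integer~$m$. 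The non-Liouville hypothesis is invoked \emph{here}: it yields an irrationality measure $\mu$ with $\lvert n\log q_{j'}-m\log q_j\rvert\gg\max(\lvert n\rvert,\lvert m\rvert)^{-\mu}$, hence a polynomial bound $\bigl\lvert(1-q_{j'}^{\alpha_{j,n}-a})^{-b_{j'}}\bigr\rvert\ll\lvert n\rvert^{\mu b_{j'}}$ on the blow-up, which the factor $\lvert\alpha_{j,n}\rvert^{-(k+1)}\sim\lvert n\rvert^{-(k+1)}$ then beats once $k>\kappa+\mu\sum_{j'}b_{j'}$. That is exactly the choice made in the paper. By contrast, the ``resonance between the almost-periodic series for different~$j$'' that you propose as the reason for the Diophantine hypothesis is a non-issue: once each individual series is absolutely summable with coefficients suitably bounded, a finite sum over $j\in J$ of bounded oscillating terms is trivially bounded by the triangle inequality, with no cancellation required. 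So your proof, as written, both asserts a false uniform bound and then spends its Diophantine input on a problem that does not arise; repairing it means replacing ``uniformly bounded'' by ``polynomially growing in $n$ with exponent controlled by the irrationality measure,'' and then picking $k$ large enough relative to that exponent.
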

\begin{proof}
For any integer~$k\geq 0$, let us define
\[ V_k(B)= \frac1{k!} \int_{f(x)\leq B} \left( \log\frac B{f(x)}\right)^k \,d\mu(x)
=  \frac1{k!} \int_X \left(\log^+ \frac B{f(x)}\right)^k\,d\mu(x), \]
where $\log^+(u)=\max(0,\log u)$ for any positive real number~$u$.

As in Theorem~\ref{theo.tauber}, we begin by proving
an asymptotic expansion for $V_k(B)$, where $k$
is an integer satisfying $k>\kappa$.
As above, we have 
\[
V_k (B)  =  \frac{1}{2i\pi} \int_{\sigma+i\R} B^s Z(s) \,\frac{\mathrm ds}{s^{k+1}}, 
\]
for $\sigma>a$, and we will move the line of integration to the left
of $s=a$,
the novelty being the presence of infinitely many poles
on the line $\Re(s)=a$, namely at any complex number
of the form $\alpha_{j,m}=a+2\mathrm i m\pi/\log q_j$, for some $j\in J$
and some integer $m\in\Z$.

Let $F_k$ be the holomorphic function given by $F_k(s)=B^s Z(s)/s^{k+1}$.
Let $\mu$ be any common irrationality measure
for the real numbers $\log q_j/\log q_{j'}$, namely
a real number such that for any two integers
$m$ and~$m'$ such that $m\log q_j + m'\log q_{j'}\neq 0$, we
have
\[ \abs{m \log q_j + m'\log q_{j'}} \geq \max(\abs m,\abs{m'})^{-\mu}. \]

A straightforward computation based on Leibniz and Cauchy formulae shows
the existence of a positive real number~$c$ such that
for any $j\in J$ and any nonzero~$m\in\Z$,
\[ \abs{\Res_{s=\alpha{j,m}} F_k(s) } \leq c (1+\abs{\Im{\alpha_{j,m}}})^{\kappa-k-1-\mu \beta} B^a (\log B)^{\max(b_j)} , \]
where $\beta=\sum_{j\in J} b_j$.
Moreover, as in the proof of Lemma~\ref{lemm.tauber.k},
there exists a polynomial  $P$ of degree~$\deg(P)=b-1$ with leading
coefficient~$\Theta/a^{k+1}(b-1)!$ such that
\[ \Res_{s=a} F_k(s) = B^a P(\log B). \]
We choose $k$ such that $k>\kappa+\mu\beta$.
Since $b>\max(b_j)$, these upper bounds imply that the
sum of residues of~$F_k$ at all poles of real part~$a$ 
is dominated by the residue at $a=0$ and 
satisfies the following estimate
\[ \sum_{\Re(s)=a} \Res F_k(s) = B^a P(\log B). \]
If $a>0$, we set $Q=0$; if $a\leq 0$, there exist a polynomial~$Q$
of
degree~$b+k$ and leading coefficient~$\Theta/(b+k)!$ such that
\[ \Res_{s=0} F_k(s)= Q(\log B).\]

We may then continue as in the proof of Lemma~\ref{lemm.tauber.k}
and conclude that 
\[ V_k(B) =  B^a P (\log B)+Q(\log B) + \mathrm O(B^a(\log B)^\beta). \]

An application of Lemma~\ref{lemm.tauber.iteration} similar
to that of Theorem~\ref{theo.tauber} then implies
Theorem~\ref{theo.tauber.S}. 
\end{proof}

\normalsize


\clearpage

\def\noop#1{\ignorespaces}

\bibliographystyle{smfplain}

\bibliography{aclab,acl,integral}

\end{document}